\documentclass[10pt]{amsart}
\usepackage{amsmath}
\usepackage{amsthm}
\usepackage[alphabetic,initials]{amsrefs}
\usepackage[all]{xy}
\usepackage{tikz}
\usepackage{tikz-cd}
\usetikzlibrary{%
  matrix,%
  calc,%
  arrows%
}
\usepackage[parfill]{parskip}    % Activate to begin paragraphs with an empty line rather than an indent
\usepackage{graphicx}
\usepackage{amssymb}
\usepackage{epstopdf}
\usepackage[utf8]{inputenc}
\DeclareGraphicsRule{.tif}{png}{.png}{`convert #1 `dirname #1`/`basename #1 .tif`.png}

\DeclareMathOperator{\coker}{coker}

\newcommand{\SDR}[5]{\xymatrix{*[r]{#1} \ar@<1ex>[r]^-{#3} \ar@(ul,dl)[]_{#5} & #2 \ar@<1ex>[l]^-{#4}}}
\newcommand{\bigSDR}[5]{\xymatrix{*[r]{#1} \ar@<1ex>[rr]^-{#3} \ar@(ul,dl)[]_{#5} && #2 \ar@<1ex>[ll]^-{#4}}}
\newcommand{\bigbigSDR}[5]{\xymatrix{*[r]{#1} \ar@<1ex>[rrr]^-{#3} \ar@(ul,dl)[]_{#5} &&& #2 \ar@<1ex>[lll]^-{#4}}}
\newcommand{\kk}{\Bbbk}
\newcommand{\tensor}{\otimes}
\newcommand{\CC}{\mathbb{C}}
\newcommand{\QQ}{\mathbb{Q}}
\newcommand{\RR}{\mathbb{R}}
\newcommand{\ZZ}{\mathbb{Z}}
\newcommand{\FF}{\mathbb{F}}
\newcommand{\Der}{\operatorname{Der}}
\newcommand{\Coder}{\operatorname{Coder}}
\newcommand{\Hom}{\operatorname{Hom}}
\newcommand{\Ext}{\operatorname{Ext}}
\newcommand{\Tw}{\operatorname{Tw}}
\newcommand{\ad}{\operatorname{ad}}
\newcommand{\loopr}{\bullet}
\newcommand{\as}{\text{\normalfont{<}}} % $A^{\text{!`}}$

\newcommand{\tDer}{\widetilde{\operatorname{\Der}}}
\newcommand{\tCoder}{\widetilde{\operatorname{\Coder}}}
\newcommand{\UU}{\mathbb{U}}
\newcommand{\LL}{\mathbb{L}}

\newcommand{\CP}[1]{\CC \hspace{-2.5pt} \operatorname{P}^{#1}}

\newcommand{\rtree}{\xygraph{
!{<0pt,0pt>;<11pt,0pt>:<0pt,11pt>::}
!{(-4,3)}*{{}_g}="a"
!{(0,3)}*{{}_g}="b"
!{(4,3)}*{{}_g}="c"
!{(-2,1)}*+{{}_{m_2}}="d"
!{(0,-1)}*+{{}_{m_2}}="e"
!{(0,-3)}*+{{}_f}="f"
"a"-"d"
"b"-"d"
"c"-"e"
"d"-"e"|{h}
"e"-"f"
}}

\newcommand{\ltree}{\xygraph{
!{<0pt,0pt>;<-11pt,0pt>:<0pt,11pt>::}
!{(-4,3)}*{{}_g}="a"
!{(0,3)}*{{}_g}="b"
!{(4,3)}*{{}_g}="c"
!{(-2,1)}*+{{}_{m_2}}="d"
!{(0,-1)}*+{{}_{m_2}}="e"
!{(0,-3)}*+{{}_f}="f"
"a"-"d"
"b"-"d"
"c"-"e"
"d"-"e"|{h}
"e"-"f"
}}

\theoremstyle{plain}
\newtheorem{theorem}{Theorem}[section]
\newtheorem{proposition}[theorem]{Proposition}
\newtheorem{lemma}[theorem]{Lemma}
\newtheorem{corollary}[theorem]{Corollary}

\theoremstyle{definition}
\newtheorem{definition}[theorem]{Definition}
\newtheorem{example}[theorem]{Example}
\newtheorem{observation}[theorem]{Observation}

\theoremstyle{remark}
\newtheorem{remark}[theorem]{Remark}

\title{Free loop space homology of highly connected manifolds}
\author{Alexander Berglund and Kaj B\"orjeson} 
\address{Department of Mathematics, Stockholm University, SE-106 91 Stockholm, Sweden}
\email{alexb@math.su.se, kaj@math.su.se}

% \date{\today} % Activate to display a given date or no date

\setcounter{tocdepth}{1}

\begin{document}

\begin{abstract}
We calculate the homology of the free loop space of $(n-1)$-connected closed manifolds of dimension at most $3n-2$ ($n\geq 2$), with the Chas-Sullivan loop product and loop bracket. Over a field of characteristic zero, we obtain an expression for the BV-operator. We also give explicit formulas for the Betti numbers, showing they grow exponentially. Our main tool is the connection between formality, coformality and Koszul algebras that was elucidated by the first author \cite{Berglund}.
\end{abstract}

\maketitle
% \tableofcontents

\section{Introduction}
In \cite{ChasSullivan}, Chas and Sullivan defined a loop product and a loop bracket on the homology of the free loop space $LM$ of an orientable $d$-manifold $M$ and showed that these operations make the shifted homology $H_{*+d}(LM)$ into a Gerstenhaber algebra. Moreover, this structure extends to a BV-algebra structure, where the BV-operator is induced by the $S^1$-action. A number of calculations have been made for specific classes of manifolds (see e.g. \cites{CohenJonesYan,Menichi,Tamanoi,Vaintrob,Hepworth,ChataurLeBorgne}). In this paper we consider manifolds that are highly connected relative to their dimension. Unless otherwise specified we take homology and cohomology with coefficients in a field $\kk$ of arbitrary characteristic.

\begin{theorem} \label{thm:main1}
Let $n\geq 2$ and suppose that $M$ is an $(n-1)$-connected closed manifold of dimension $d\leq 3n-2$ such that $\dim H^*(M) > 4$. Choose a basis $x_1,\ldots,x_r$ for the indecomposables of $H^*(M)$ and let $c_{ij} = \langle x_ix_j,[M]\rangle$.

The homology of the based loop space $U:=H_*(\Omega M)$, with the Pontryagin product, is freely generated as an associative algebra by classes $u_1,\ldots,u_r$, whose homology suspensions are dual to the classes $x_1,\ldots,x_r$ (in particular $|u_i| = |x_i|-1$), modulo the single quadratic relation
$$\sum_{i,j} (-1)^{|x_i|}c_{ji} u_iu_j = 0.$$

There is an isomorphism of Gerstenhaber algebras,
$$H_{*+d}(LM) \cong \kk \oplus s^{-1} \Der U / \ad U \oplus s^{-d} U/[U,U],$$
where the structure on the right hand side is specified as follows: Write $\theta,\eta,\ldots$ for elements of $\Der U/\ad U$ and $u,v,\ldots$ for elements of $U/[U,U]$. The product is given by
$$s^{-1}\theta \loopr s^{-1}\eta = s^{-d}\sum_{i,j} \pm c_{ij} \theta(u_i)\eta(u_j),\quad s^{-1}\theta \loopr s^{-d}u = s^{-d}u \loopr s^{-d}v = 0.$$
The Gerstenhaber bracket is given by
\begin{align*}
\{ s^{-1}\theta, s^{-1}\eta \} & = s^{-1} \theta \circ \eta - (-1)^{|\theta||\eta|} s^{-1}\eta \circ \theta,\\
\{ s^{-1}\theta, s^{-d}u\} & = s^{-d}\theta(u), \\
\{s^{-d}u,s^{-d}v\} & = 0.
\end{align*}

Moreover, the weight-graded vector spaces $s^{-1}\Der U / \ad U$ and $s^{-d}U/[U,U]$ are degreewise isomorphic in weight $3$ and above. In characteristic zero, the BV-operator is given by
$$\Delta(s^{-d}u_{i_1}\dots u_{i_r})=s^{-1}\sum_{k,\ell}\pm s^{-1}c_{i_k\ell}^{-1} u_{i_{k+1}}\dots u_{i_{k-1}}\frac{\partial}{\partial u_\ell},$$
for $u_{i_1}\dots u_{i_r}\in U/[U,U]$, and zero otherwise. In this situation, the BV-operator yields an isomorphism between $s^{-d}U/[U,U]$ and $s^{-1}\Der U / \ad U$ in weight $3$ and above.
\end{theorem}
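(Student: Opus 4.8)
\emph{Proof proposal.}
The plan is to replace the topology by algebra using the formality and coformality of $M$, reduce the loop homology to a Hochschild cohomology computation for the Koszul dual algebra $U$, and then transport the Chas--Sullivan structures and Connes' operator across these identifications.

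First I would pin down the ring $H^*(M)$. Since $M$ is $(n-1)$-connected, its reduced cohomology is concentrated in degrees $n\le *\le d-n$ together with the fundamental class in degree $d$; and because $d\le 3n-2$ forces $|x_i|+|x_j|\ge 2n> d-n$, every product of two positive-degree classes lands in the top degree. Hence $H^*(M)$ is a Poincar\'e-duality algebra of the shape $\kk\oplus V\oplus s^{-d}\kk$ whose only nontrivial multiplication $V\tensor V\to s^{-d}\kk$ is the intersection form $(c_{ij})$. By the formality/coformality package of \cite{Berglund}, such an algebra is Koszul, $M$ is both formal and coformal, and $U=H_*(\Omega M)$ is the Koszul dual; unwinding the quadratic dual of the intersection form gives exactly the free associative algebra on $u_1,\dots,u_r$ modulo the single relation $\sum_{i,j}(-1)^{|x_i|}c_{ji}u_iu_j=0$. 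This settles the first assertion.

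Next I would compute the loop homology as Hochschild cohomology. Via the Cohen--Jones isomorphism $H_{*+d}(LM)\cong \operatorname{HH}^*(C^*M)$ of Gerstenhaber algebras, together with the Koszul-duality identification $\operatorname{HH}^*(C^*M)\cong \operatorname{HH}^*(U)$, it suffices to understand $\operatorname{HH}^*(U)$. Because $U$ is free on $r$ generators modulo a single quadratic relation it has global dimension two, so a length-two bimodule resolution collapses $\operatorname{HH}^*(U)$ into three pieces: $\operatorname{HH}^0(U)=\kk$ (the center is just the scalars), $\operatorname{HH}^1(U)=\Der U/\ad U$, and $\operatorname{HH}^2(U)$, which the Poincar\'e duality of the closed oriented $M$ (a derived Calabi--Yau duality on $U$) identifies with $U/[U,U]$. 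Keeping track of the internal grading reproduces the additive decomposition $\kk\oplus s^{-1}\Der U/\ad U\oplus s^{-d}U/[U,U]$. I would then read off the Gerstenhaber structure from the standard one on $\operatorname{HH}^*$: the loop bracket becomes the Gerstenhaber bracket, giving composition of outer derivations for $\{s^{-1}\theta,s^{-1}\eta\}$, evaluation $\theta(u)$ for $\{s^{-1}\theta,s^{-d}u\}$, and vanishing on $U/[U,U]$; the loop product becomes the cup product, which on derivations is computed by pairing through the duality form to yield $s^{-1}\theta\loopr s^{-1}\eta=s^{-d}\sum_{i,j}\pm c_{ij}\theta(u_i)\eta(u_j)$, the remaining products vanishing because they would land outside the surviving Hochschild degrees.

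Finally, for the BV-operator in characteristic zero I would identify $\Delta$ with a shift of Connes' boundary $B$ on the cyclic complex computing $H_*(LM)$, and evaluate $B$ explicitly on the $\operatorname{HH}_0$-part $U/[U,U]$ using the almost-free presentation of $U$: cyclically permuting a word $u_{i_1}\cdots u_{i_r}$ and contracting one letter against the invertible form $c^{-1}$ produces the stated derivative expression. The degreewise isomorphism between $s^{-1}\Der U/\ad U$ and $s^{-d}U/[U,U]$ in weights $\ge 3$ holds over any field, and I would prove it by matching the Poincar\'e series of the two summands (an Euler-characteristic computation on the Koszul complex); in characteristic zero this isomorphism is realized explicitly by $B$. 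The hard part will be the faithful transport of structure: carrying the Chas--Sullivan Gerstenhaber \emph{and} BV operations with correct signs along the chain of models passing from the loop space to the Hochschild and then the cyclic complex, and through the Poincar\'e-duality twist, together with verifying that Connes' operator becomes an isomorphism precisely from weight $3$ onward rather than in all weights.
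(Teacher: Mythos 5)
Your proposal follows the same overall architecture as the paper (formality/coformality $\Rightarrow$ Koszul duality $\Rightarrow$ a three-term small complex $\Rightarrow$ Gerstenhaber/BV structure $\Rightarrow$ Hilbert-series dimension count), but two steps that you treat as automatic are precisely where the paper has to work, and one of them is stated incorrectly. First, Koszulness of $H^*(M)$: you write that ``by the formality/coformality package of \cite{Berglund}, such an algebra is Koszul.'' This is backwards. In Berglund's two-out-of-three theorem, Koszulness of $H^*(M)$ is an \emph{input} (together with formality) used to deduce coformality; it is not a consequence of the shape $\kk\oplus V\oplus s^{-d}\kk$ of the ring. Indeed $\CP{2}$ has cohomology of exactly this shape with nondegenerate pairing, is formal, and satisfies the connectivity and dimension constraints with $n=2$, yet its cohomology is not even quadratic, hence not Koszul, and $\CP{2}$ is not coformal. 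Koszulness genuinely uses the hypothesis on $\dim H^*(M)$ and requires an argument: the paper (Theorem \ref{thm:koszul}) produces a PBW basis from a class $x$ with $x^2=0$, which in the middle-dimensional case requires an isotropic vector for the intersection form and hence a field extension to $\kk[\sqrt{-1}]$ together with descent of Koszulness along field extensions. Without this step, coformality, the presentation of $U$, and everything downstream are unproven. (Formality over fields of positive characteristic likewise needs the $A_\infty$-transfer degree argument of Theorem \ref{thm:formal}, and the triviality of the center $Z(U)=\kk$ --- which you need both for $HH^0$ and for your Euler-characteristic count --- is B{\o}gvad's theorem \cite{Bogvad} and again uses $\dim H^*(M)>4$.)

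Second, the transport of the Gerstenhaber bracket and the BV operator to the small model is not a formality, and you explicitly defer it (``the hard part''). The cup product transports because the small complex is a dga quasi-isomorphic to the Hochschild cochains, but the bracket is not defined on the small complex; the paper needs Proposition \ref{prop:lie bracket} (a pullback of surjective quasi-isomorphisms of dg Lie algebras identifying the bracket with the commutator in $\tDer\Omega A^{\as}$) and then the delicate choice of cycle representatives in Lemma \ref{lemma:rep}: the lift of an outer derivation must carry a correction term $\eta\,\partial/\partial\gamma$ with $\eta\in[\UU,\UU]$, and without controlling $\eta$ the formula $\{s^{-1}\theta,s^{-d}u\}=s^{-d}\theta(u)$ acquires spurious terms. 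For the BV operator the paper uses Tradler's Frobenius-algebra formula (Theorem \ref{tradlertheorem}) together with F\'elix--Thomas, and proves that $\Delta$ is well defined and injective by a snake-lemma comparison of cyclic invariants and coinvariants (Proposition \ref{isomorphismproposition}); your plan via Connes' operator is plausible in outline, but the identification of the Chas--Sullivan $\Delta$ with $B$ on the small complex, with signs, and its injectivity in weights $\geq 3$ are exactly the content that must be supplied rather than corollaries of the identification. So the skeleton is right, but as it stands the proposal asserts the two results that constitute the actual proof.
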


Perhaps more interesting than the result itself are the techniques we are using. Our approach is algebraic and we use that $H_{*+d}(LM)$ may be calculated in terms of Hochschild cohomology (see Remark \ref{rem:disclaimer} below for a discussion about this). We also make heavy use of \emph{Koszul algebras} and the fact that highly connected manifolds are both \emph{formal} and \emph{coformal} (see \S\ref{sec:formal} for the definitions of these notions). Koszul algebras were first introduced by Priddy \cite{Priddy} as a tool for studying the cohomology of the Steenrod algebra. In this paper, we take advantage of the connection between formality, coformality and Koszul algebras that was elucidated in \cite{Berglund} to produce small chain algebra models for Hochschild cochains. Recall that the \emph{transgression} is the additive relation $\tau\colon H_*(X)\rightharpoonup H_{*-1}(\Omega X)$ induced by the differential $d^p\colon E_{p,0}^p\to E_{0,p-1}^p$ in the Serre spectral sequence of the path-loop fibration (see e.g.~\cite{MacLane}).

\begin{theorem} \label{thm:small-intro}
Let $\kk$ be a field and let $X$ be a simply connected space of finite $\kk$-type. If $X$ is both formal and coformal over $\kk$, then the transgression admits a lift to a twisting morphism $\tau\colon H_*(X) \to H_{*-1}(\Omega X)$ such that the twisted convolution algebra
$$\Hom^\tau(H_*(X),H_*(\Omega X))$$
is dga quasi-isomorphic to the Hochschild cochains of $C_*(\Omega X)$.
\end{theorem}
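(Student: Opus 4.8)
The plan is to reduce everything to Koszul duality theory, using coformality to replace the chains on the loop space by their homology, and formality together with the main comparison of \cite{Berglund} to identify $H^*(X)$ as a Koszul algebra with Koszul dual $U=H_*(\Omega X)$. First I would invoke coformality to produce a quasi-isomorphism of dgas $C_*(\Omega X)\xrightarrow{\simeq} U$, where $U$ carries the Pontryagin product and zero differential. Since the Hochschild cochain dga is invariant up to quasi-isomorphism under dga quasi-isomorphisms (over a field, via the bar model), this reduces the problem to producing a small model for the Hochschild cochains of $U$. The standard model is the convolution dga $\Hom(BU,U)$, where $BU$ is the reduced bar coalgebra of $U$: its convolution product is the cup product, and the total differential (bar plus internal) is the Hochschild differential. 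So the goal becomes a comparison between $\Hom(BU,U)$ and $\Hom^\tau(H_*(X),U)$.

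Next I would bring in formality. By the equivalence of \cite{Berglund}, $X$ being simultaneously formal and coformal is equivalent to $H^*(X)$ being a Koszul algebra, in which case $U$ is its Koszul dual algebra and, dually, $H_*(X)$ is the Koszul dual coalgebra of $U$. Koszulness supplies the central input: the canonical inclusion $\iota\colon H_*(X)\hookrightarrow BU$ of the (quadratic) Koszul dual coalgebra into the bar construction is a quasi-isomorphism of conilpotent dg coalgebras. The twisting morphism associated to $\iota$ is the Koszul twisting morphism $\kappa\colon H_*(X)\to U$ of degree $-1$; because both $H_*(X)$ and $U$ have zero differentials, its Maurer--Cartan equation collapses to $\kappa\star\kappa=0$, which holds precisely because $U$ is quadratic. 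I would then check that $\kappa$ lifts the transgression by comparing values on cogenerators: the transgression identifies the indecomposables of $H^*(X)$ (the cogenerators of $H_*(X)$) with the generators $u_i$ of $U$ via the desuspension $|u_i|=|x_i|-1$, which is exactly the defining formula for $\kappa$. Setting $\tau:=\kappa$ then furnishes the required twisting morphism.

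Finally I would compare convolution algebras. Precomposition with the coalgebra map $\iota$ is a morphism of convolution dgas $\iota^*\colon\Hom(BU,U)\to\Hom(H_*(X),U)$, and by construction the induced differential on the target is the twisted differential $\partial+[\tau,-]$, so the target is precisely $\Hom^\tau(H_*(X),U)$. Since $\iota$ is a quasi-isomorphism of conilpotent dg coalgebras and $U$ is a dga, $\iota^*$ is a quasi-isomorphism of dgas; here I would argue via the weight filtration coming from the bar/Koszul grading, whose associated spectral sequence degenerates by Koszulness. Chaining this with the reductions above yields the dga quasi-isomorphism $\Hom^\tau(H_*(X),H_*(\Omega X))\simeq CH^*(C_*(\Omega X))$.

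The main obstacle, I expect, is this last comparison: establishing not merely that the Koszul complex computes Hochschild cohomology additively, but that the convolution product on the small model $\Hom^\tau(H_*(X),U)$ agrees with the cup product and that $\iota^*$ is a genuine quasi-isomorphism of dgas rather than only of complexes. Controlling the total differential and the weight filtration on $\Hom(BU,U)$ simultaneously with the multiplicative structure, so that the spectral sequence argument is both valid and compatible with products, is the technical heart of the proof.
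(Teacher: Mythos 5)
Your proposal follows essentially the same route as the paper: coformality to replace $C_*(\Omega X)$ by $U=H_*(\Omega X)$, the formal-plus-coformal criterion of \cite{Berglund} (Theorem \ref{thm:formal-coformal}) to make $U$ Koszul with Koszul dual coalgebra $H_*(X)$, the identification of the resulting twisting morphism $\kappa$ with a lift of the transgression, and finally the comparison map given by precomposition with the dgc quasi-isomorphism $\iota=\psi_\kappa\colon H_*(X)\to BU$ --- this last step being exactly the paper's Theorem \ref{hochschildcocomplextheorem}. The only real divergence is how that key lemma is proved: the paper uses homological perturbation theory (Koszulness gives a genuine contraction of $BU$ onto $H_*(X)$, which survives $\Hom(-,U)$ and is then perturbed by $t=[\pi,-]$ via Theorem \ref{basicperturbationlemma}), whereas you propose a filtration spectral sequence.

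That last step, which you correctly identify as the technical heart, is also where your sketch needs repair. The filtration by bar weight (tensor length in $BU$) cannot work as stated: \emph{both} summands of the Hochschild differential --- the part induced by $d_{BU}$ and the part $[\pi,-]$ --- raise tensor length by exactly one, so the associated spectral sequence degenerates for trivial reasons and its $E_1$-comparison is merely a weight-homogeneous restatement of the original problem; ``degeneration by Koszulness'' there proves nothing. What does work is filtering by the \emph{internal} Koszul weight grading of $U$ (the total $U$-weight of the factors of a bar word): the $d_{BU}$-part preserves this grading while $[\pi,-]$ strictly raises it, so over a field $E_1\cong \Hom(H_*(BU,d_{BU}),U)\cong \Hom(H_*(X),U)$ by Koszulness, with $d_1=[\kappa,-]$, and completeness of the filtration then gives convergence and the comparison with $\Hom^\kappa(H_*(X),U)$. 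Note that this internal weight grading is precisely what the paper invokes to guarantee pointwise convergence of the perturbation series $\sum_{n\geq 0}(h^*t)^n$; once your filtration is corrected, the two arguments are two faces of the same computation, with the paper's contraction-based version avoiding the convergence bookkeeping for the spectral sequence.
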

See Definitions \ref{def:twisting morphism} and \ref{def:twisted convolution} for the definitions of twisting morphisms and the twisted convolution algebra. Being simultaneously formal and coformal is a rather restrictive condition on a space, but there are many interesting examples appearing `in nature' apart from highly connected manifolds, see \cite{Berglund}. In fact, when the results of this paper were announced by the first author at the conference `Loop spaces in Geometry and Topology' in Nantes on September 4th 2014, we learned that Kallel and Salvatore \cite{KallelSalvatore} are using similar techniques to calculate the free loop space homology of (ordered) configuration spaces of points in $\RR^n$. In a forthcoming paper \cite{BerglundBorjeson}, we consider free loop space homology of certain moment-angle manifolds.

One of the main motivations for studying the homology of free loop spaces comes from the connection to closed geodesics (see e.g.~\cites{FelixOpreaTanre,GoreskyHingston}). The following result is a consequence of our explicit calculations, and verifies a conjecture of Gromov (see \cite{FelixOpreaTanre}*{Conjecture 5.3}) for the class of highly connected manifolds considered here.

\begin{theorem} \label{thm:geodesics}
Let $\kk$ be a field and let $M$ be an $(n-1)$-connected closed manifold of dimension at most $3n-2$ ($n\geq 2$) with $\dim H^*(M;\kk)>4$. For a generic metric on $M$, the number of geometrically distinct closed geodesics of length $\leq T$ grows exponentially in $T$.
\end{theorem}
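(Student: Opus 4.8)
The plan is to prove Theorem \ref{thm:geodesics} by combining the exponential growth of the Betti numbers of $LM$ (which follows from the explicit calculations underlying Theorem \ref{thm:main1}) with the classical Gromov--Ballmann--Ziller machinery relating the topology of the free loop space to counts of closed geodesics. The overall strategy rests on two independent pillars: a purely algebraic estimate on $\dim H_*(LM;\kk)$, and a differential-geometric input that converts such homological growth into a geodesic count.

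First I would establish that the Betti numbers of $LM$ grow exponentially. Theorem \ref{thm:main1} identifies $H_{*+d}(LM)$ with $\kk \oplus s^{-1}\Der U/\ad U \oplus s^{-d} U/[U,U]$, where $U = H_*(\Omega M)$ is the free associative algebra on generators $u_1,\dots,u_r$ modulo a single quadratic relation. Since we assume $\dim H^*(M) > 4$, we have $r \geq 3$ generators, and a free associative algebra on $r \geq 2$ generators modulo one relation has exponentially growing dimensions in each weight; the quotient $U/[U,U]$ and the derivation module $\Der U/\ad U$ inherit this exponential growth weight-by-weight. Concretely, I would extract the generating function for $\dim U$ in each weight from the quadratic presentation (the relation cuts the free algebra's growth by a bounded factor), show its radius of convergence is strictly less than $1$, and deduce that $\sum_i \dim H_i(LM;\kk)\,t^i$ likewise has radius of convergence below $1$, i.e. the total Betti number up to degree $N$ grows exponentially in $N$. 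The degree bookkeeping requires care because weight and homological degree are different gradings, but since each generator $u_i$ sits in a fixed positive degree, weight $w$ classes live in a bounded band of degrees, so exponential growth in weight forces exponential growth in the sum of Betti numbers over degrees $\leq N$.

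Next I would feed this into the geodesic-counting theorem. The relevant result (Gromov, as formulated in \cite{FelixOpreaTanre}, building on Ballmann--Ziller) states that for a closed simply connected manifold $M$ with a generic (bumpy) metric, the number $N(T)$ of geometrically distinct closed geodesics of length $\leq T$ satisfies $N(T) \geq C \cdot \max_{i \leq aT} \dim H_i(LM;\kk)$ for suitable constants, so that exponential growth of the Betti number sums yields exponential growth of $N(T)$ in $T$. Here one uses Morse theory on the free loop space: the energy functional on $LM$ is, for a bumpy metric, a Morse--Bott function whose critical submanifolds are the closed geodesics and their iterates, and the subexponential contribution of iterates (controlled by Bott's iteration theory) is dominated by the exponential homological growth. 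I would verify that our manifolds satisfy the hypotheses of this theorem: $M$ is simply connected since $n \geq 2$ implies $M$ is $(n-1)$-connected with $n-1 \geq 1$, and it is closed by assumption.

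The main obstacle I anticipate is the bookkeeping that bridges the weight grading intrinsic to the algebraic model and the homological grading that the Morse theory sees. One must confirm that the exponential growth in weight does not get diluted when one reorganizes by total homological degree, and that the relevant constant $a$ in the count $\max_{i \leq aT}$ is positive for our degree conventions; this amounts to checking that classes of large weight do not all pile up into a single narrow degree range or escape to infinitely high degree faster than linearly in weight. A secondary technical point is ruling out that the single quadratic relation degenerates the growth---for instance if the form $c_{ij}$ were to force the algebra $U$ to be finite-dimensional or polynomial-growth. Since $M$ is a closed oriented manifold the pairing $c_{ij}$ is nondegenerate (Poincar\'e duality), which guarantees the relation is genuinely a single quadric in $r \geq 3$ variables and hence does not collapse the exponential growth; making this quantitative is the crux of the first pillar.
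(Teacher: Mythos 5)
Your two-pillar strategy is exactly the paper's own proof: the paper deduces this theorem by combining exponential growth of the Betti numbers of $LM$ --- read off from the summand $s^{-d}U/[U,U]$ of the model $H_{*+d}(LM)\cong \kk\oplus s^{-1}\Der U/\ad U\oplus s^{-d}U/[U,U]$ --- with the Gromov--Ballmann--Ziller bumpy-metric theorem as presented in \cite{FelixOpreaTanre}; your second pillar and your weight-versus-degree bookkeeping (the $u_i$ lie in a fixed range of positive degrees, so weight and homological degree are linearly comparable) match what the paper does. The gap sits at what you yourself call the crux of the first pillar, and the justification you sketch for it would not survive scrutiny. Exponential growth of $\dim U(w)$ --- which does follow from the Hilbert series $1/(1-rt+t^2)$ of the Koszul algebra $U$ --- does not formally pass to the quotient $U/[U,U]$: a quotient can be arbitrarily small. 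Moreover, the reason you offer, that nondegeneracy of $(c_{ij})$ ``guarantees the relation does not collapse the exponential growth,'' controls the wrong object: it bears on the growth of $U$ itself, not on the size of the commutator subspace $[U,U]$, which is what could in principle swallow almost all of $U$. The missing idea is combinatorial and is the content of the paper's Dimension Counting section: for a tensor algebra $T$, the weight-$w$ part of $[T,T]$ is the image of $1-\sigma$ for the signed cyclic rotation $\sigma$, so $T/[T,T]$ is spanned by cyclic words (necklaces) and the quotient loses at most a factor of $w$; the paper then obtains a bound valid over any field by picking one generator and noting that the words avoiding it span a subspace of $U/[U,U]$ isomorphic to $T'/[T',T']$ for the tensor algebra $T'$ on the remaining $r-1\geq 2$ generators (with some care about parities), whence $\dim H_{2,w}$ is bounded below by a necklace count, roughly $(r-1)^w/w$. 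In characteristic zero the paper instead runs a snake-lemma/averaging argument giving exact necklace formulas. Some such concrete mechanism is needed to complete your first pillar.

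A smaller but substantive slip: your claim that a free associative algebra on $r\geq 2$ generators modulo one quadratic relation has exponentially growing weight components is false for $r=2$; for instance $T(u_1,u_2)/([u_1,u_2])$ is a polynomial algebra with $\dim U(w)=w+1$. This boundary case (realized by manifolds like $S^n\times S^n$, whose free loop space has polynomially growing Betti numbers) is exactly what the hypothesis $\dim H^*(M;\kk)>4$, i.e.\ $r\geq 3$, is there to exclude. Since your argument only invokes $r\geq 3$, the slip does not break the logic, but the dichotomy $r\leq 2$ (polynomial growth) versus $r\geq 3$ (exponential growth) is precisely the content of that hypothesis and should not be blurred.
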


Free loop space homology of simply connected closed 4-manifolds has been studied in \cite{BasuBasu}, but the methods used there do not extend to higher dimensions. Theorem \ref{thm:geodesics} generalizes \cite{BasuBasu}*{Theorem C(1)}. Free loop space homology of $(n-1)$-connected $2n$-dimensional manifolds has been studied in \cite{BebenSeeliger} using different methods, but the calculations there are not complete.

% \begin{remark}
% Using standard techniques of homotopical algebra, such as the homotopy transfer theorem and homological perturbation theory, it is possible to prove that for any space $X$, there exists an $A_\infty$-algebra (in fact $E_2$-algebra, for a suitably chosen model for the operad $E_2$) structure on $H^*(X)\tensor H_*(\Omega X)$ making it $A_\infty$-quasi-isomorphic to the Hochschild cochains on $C_*(\Omega X)$ (this may be viewed as a chain level refinement of the Cohen-Jones-Yan spectral sequence \cite{CohenJonesYan}). However, mere existence results of this kind are of limited use when making calculations. One may view Theorem \ref{thm:small-intro} as giving an explicit calculation and geometric interpretation of this $A_\infty$-structure for formal and coformal spaces. Another main point, which we hope that this paper will illustrate, is that one can use Theorem \ref{thm:small-intro} to carry out calculations by hand.
% \end{remark}

\begin{remark} \label{rem:disclaimer}
Our claims in Theorem \ref{thm:main1} rely on certain identifications of $H_{*+d}(LM)$ with Hochschild cohomology. We should spell out exactly what we are using. It is well-known that $H_{*+d}(LM)$ is isomorphic to the Hochschild cohomology of the singular cochain algebra $C^*(M)$, at least as a graded algebra \cite{CohenJones}. For simply connected $M$ and with coefficients in a field of arbitrary characteristic, F\'elix-Menichi-Thomas \cite{FelixMenichiThomas} have shown that there is an isomorphism of Gerstenhaber algebras between $HH^*(C^*(M),C^*(M))$ and $HH^*(C_*(\Omega M),C_*(\Omega M))$. In effect, it is this Gerstenhaber algebra we compute. According to the PhD thesis \cite{Malm}, there is an isomorphism of Gerstenhaber algebras $H_{*+d}(LM) \cong HH^*(C_*(\Omega M),C_*(\Omega M))$, without any restriction on the coefficients. In characteristic zero, F\'elix-Thomas \cite{FelixThomas}, building on \cite{Tradler}, extend the Gerstenhaber algebra structure on the Hochschild cohomology $HH^*(C^*(M),C^*(M))$ to a BV-algebra structure, and construct an isomorphism of BV-algebras $HH^*(C^*(M),C^*(M))\cong H_{*+d}(LM)$. It is this BV-operator we compute. In view of Menichi's calculation of $H_{*+2}(S^2;\FF_2)$ \cite{Menichi}, one should be careful about the BV-operator in positive characteristics.
\end{remark}

\subsection*{Conventions}
Unless otherwise specified, we work over a field $\kk$ of arbitrary characteristic. A chain complex is a $\ZZ$-graded $\kk$-vector space $A = \{A_n\}_{n\in \ZZ}$ with a differential $d_A\colon A_n\to A_{n-1}$ of degree $-1$. We use the convention $A^n = A_{-n}$ and think of cochain complexes as negative chain complexes. If $V$ is a graded vector space and $k$ is an integer, then we let $s^kV$ denote the graded vector space with $(s^k V)_i = V_{i-k}$. All unadorned tensor products are over $\kk$, i.e., $\tensor = \tensor_\kk$. As usual, the tensor product of two chain complexes $A$ and $B$ is defined by
$$(A\tensor B)_n = \bigoplus_{p+q=n} A_p\tensor B_q,$$
with differential $d_{A\tensor B} = d_A\tensor 1 + 1\tensor d_B$, and the chain complex $\Hom(A,B)$ is defined by
$$\Hom(A,B)_n = \prod_{p+q=n} \Hom(A^p,B_q),$$
with differential $\partial(f) = d_B \circ f - (-1)^{|f|} f\circ d_A$. By a \emph{dga} we mean a differential graded augmented associative algebra. We usually denote the structure maps of a dga $A$ by $\mu_A\colon A\tensor A\to A$ (multiplication), $\eta_A\colon \kk\to A$ (unit) and $\epsilon_A\colon A\to \kk$ (augmentation). Similarly, by a \emph{dgc} we mean a differential graded coassociative coaugmented coalgebra. We will mostly be concerned with dgas that are non-negatively graded and \emph{connected} in the sense that $H_0(A)\cong \kk$, or negatively graded and \emph{simply connected} in the sense that $H^0(A) \cong \kk$ and $H^1(A) = 0$. Similarly, we will mostly work with dgcs $C$ that are non-negatively graded and simply connected, in the sense that $H_0(C) \cong \kk$ and $H_1(C) = 0$.

\section{Formality, coformality and Koszul algebras}
In this section we will review the notions of formality, coformality and Koszul algebras and the ``2-out-of-3'' property for these notions \cite{Berglund}, along the way introducing notation and definitions that we will need in later sections.

\subsection{Twisting morphisms}
We begin by reviewing some facts about twisting morphisms (or twisting cochains). Standard references are \cite{HusemollerMooreStasheff}, \cite{LodayVallette} or \cite{Neisendorfer}.

\begin{definition}
Let $C$ be a dgc with comultiplication $\Delta_C\colon C\to C\tensor C$ and let $A$ be a dga with multiplication $\mu_A\colon A\tensor A\to A$. The \emph{convolution algebra} is the chain complex $\Hom(C,A)$ together with the \emph{convolution product} (or \emph{cup product}),
$$f\star g = \mu_A \circ (f\tensor g) \circ \Delta_C.$$
The unit is the map $\eta_A\circ \epsilon_C$ and the augmentation $\Hom(C,A)\to \kk$ is adjoint to the map $\eta_C \circ \epsilon_A$.
\end{definition}

\begin{definition} \label{def:twisting morphism}
An element $\tau$ in $\Hom(C,A)$ of degree $-1$ is called a \emph{twisting morphism} if it satisfies the Maurer-Cartan equation $$\partial(\tau)+\tau\star\tau = 0,$$
and if it is zero when composed with the (co)unit or (co)augmentation maps.
\end{definition}
The set of twisting morphisms $\Tw(C,A)$ is the value at $(C,A)$ of a bifunctor, contravariant in $C$ and covariant in $A$. Both functors $\Tw(-,A)$ and $\Tw(C,-)$ are representable; there are universal twisting morphisms $\pi\colon BA \to A$ and $\rho \colon C\to \Omega C$ that give rise to natural bijections
$$
\xymatrix{\Hom_{dga}(\Omega C,A) \ar[r]^-{\rho^*} & \Tw(C,A) & \ar[l]_-{\pi_*} \Hom_{dgc}(C,BA).}
$$
The representing objects $BA$ and $\Omega C$ are the classical bar and cobar constructions.

% \begin{definition}
% Let $(A,\partial)$ be a differential non-negatively graded augmented algebra over a field $k.$ Consider the graded module $$B_n(A):=k\otimes s\bar A\otimes \dots s\bar A \otimes k\cong s\bar A\otimes \dots s\bar A,$$ where there are $n$ instances of $s\bar A$, $\bar A$ denotes the augmentation ideal and $s$ the suspension. Then $$BA:=\oplus_{n\geq 0}B_n(A)$$ is a chain complex with differential $d=d_1+d_2$ where $$d_1(sa_1\otimes \dots \otimes sa_n)=\sum_{1\leq i\leq n} (-1)^{i+deg(a_1+\dots+deg(a_{i-1}))}sa_1\otimes \dots \otimes s\partial(a_i)\otimes \dots \otimes sa_n$$ and $$d_2(sa_1\otimes \dots \otimes sa_n)=-\sum_{1\leq i\leq n-1} (-1)^{i+deg(a_1+\dots+deg(a_{i-1}))}sa_1\otimes \dots \otimes s(a_ia_{i+1})\otimes \dots \otimes sa_n.$$ $BA$ is a coalgebra by identifying it with $ T^c(s\bar A);$ the free coalgebra. It has an augmentation given by the projection on $B_0(A)\cong k.$
% \end{definition}
% 
% \begin{proposition}
% There is a twisting morphism $\pi: BA\rightarrow A$ defined by first projecting onto $B_1(A)\cong s\bar A$ and then applying the desuspension to get to $A.$ Furthermore, the morphism $\pi$ is Koszul.
% \end{proposition}
% 
% \begin{proof}
% See \cite{LodayVallette}, chapter $2.$
% \end{proof}
% 

\begin{definition}
Given a twisting morphism $\tau$, the \emph{twisted tensor product} $C\otimes_\tau A$ is the tensor product of graded vector spaces with the differential $d:=d_{C\otimes A}+d_\tau$, where $d_{C\otimes A}$ is the usual differential on the tensor product of chain complexes and
$$d_\tau:=(Id_C\otimes \mu_A) \circ (Id_C\otimes \tau\otimes Id_A)\circ (\Delta_C \otimes Id_A).$$
\end{definition}

% \begin{remark}
% Twisted tensor products go back to E.H. Brown \cite{Brown}. For every space $X$, there is a twisting morphism $\tau\colon C_*(X)\to C_*(\Omega X)$. Given a fibration $F\to E\to X$, the twisted tensor product $C_*(X)\tensor_\tau C_*(F)$ is chain homotopy equivalent to $C_*(E)$. This may be seen as a refinement of the Serre spectral sequence.
% \end{remark}

\begin{theorem} \label{thm:koszul twisting}
The following are equivalent for a twisting morphism $\tau\colon C\to A$.
\begin{enumerate}
\item The twisted tensor product $C\tensor_\tau A$ is contracible.
\item The dga morphism $\phi_\tau \colon \Omega C \to A$ is a quasi-isomorphism.
\item The dgc morphism $\psi_\tau \colon C\to BA$ is a quasi-isomorphism.
\end{enumerate}
\end{theorem}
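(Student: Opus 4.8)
The plan is to prove the three-way equivalence by establishing $(1)\Leftrightarrow(2)$ and $(1)\Leftrightarrow(3)$, exploiting the symmetry between the bar and cobar constructions. The central object is the twisted tensor product, which interpolates between the two one-sided bar constructions. First I would recall that the universal twisting morphisms $\pi\colon BA\to A$ and $\rho\colon C\to\Omega C$ give rise to \emph{acyclic} twisted tensor products $BA\tensor_\pi A$ and $C\tensor_\rho\Omega C$; these are the (co)bar resolutions and their contractibility is the classical statement that the two-sided bar construction computes a resolution. Given an arbitrary twisting morphism $\tau\colon C\to A$, it factors through these universal ones via $\phi_\tau\colon\Omega C\to A$ and $\psi_\tau\colon C\to BA$, and these factorizations induce morphisms of twisted tensor products
\begin{equation*}
C\tensor_\tau A \xleftarrow{\ \psi_\tau\tensor\mathrm{Id}\ } BA\tensor_\pi A,\qquad C\tensor_\tau A \xleftarrow{\ \mathrm{Id}\tensor\phi_\tau\ } C\tensor_\rho\Omega C.
\end{equation*}

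The key technical step is a \emph{comparison} (or \emph{filtration}) argument. I would filter the twisted tensor products by the degree in $C$ (or the number of bar/cobar syllables), so that the differential $d_\tau$ raises filtration while the internal differentials preserve it. On the associated graded, the twisting term disappears and the resulting spectral sequence (or the map on associated graded complexes directly) reduces the comparison to the tensor product of $C$ with the underlying map $\phi_\tau$, respectively the tensor product of $\psi_\tau$ with $A$. Concretely, to prove $(2)\Rightarrow(1)$ I would argue that if $\phi_\tau\colon\Omega C\to A$ is a quasi-isomorphism, then $\mathrm{Id}\tensor\phi_\tau\colon C\tensor_\rho\Omega C\to C\tensor_\tau A$ is a quasi-isomorphism by a standard comparison of filtered complexes (the K\"unneth theorem on associated graded, using that we work over a field so there are no Tor terms); since $C\tensor_\rho\Omega C$ is acyclic by the universal case, so is $C\tensor_\tau A$. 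For the converse $(1)\Rightarrow(2)$, I would run the same comparison in reverse: contractibility of $C\tensor_\tau A$, together with contractibility of the universal $C\tensor_\rho\Omega C$, forces the map on associated graded induced by $\phi_\tau$ to be a quasi-isomorphism, and a convergence/boundedness argument then yields that $\phi_\tau$ itself is a quasi-isomorphism. The equivalence $(1)\Leftrightarrow(3)$ is entirely dual, replacing the cobar resolution $C\tensor_\rho\Omega C$ by the bar resolution $BA\tensor_\pi A$ and filtering by degree in $A$.

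The main obstacle I anticipate is the \textbf{convergence of the filtration spectral sequences}. Because $C$ is a general dgc and $A$ a general dga, the filtration by $C$-degree need not be bounded, so I would need to impose or invoke the standing connectivity hypotheses (the paper works with simply connected dgcs and connected dgas, so the relevant filtrations are bounded below and exhaustive with finite-dimensional pieces in each total degree). This ensures the comparison spectral sequences converge strongly and that a quasi-isomorphism on associated graded lifts to a quasi-isomorphism on the filtered complexes. A secondary subtlety is verifying that $\psi_\tau\tensor\mathrm{Id}$ and $\mathrm{Id}\tensor\phi_\tau$ are genuinely chain maps for the twisted differentials, which amounts to the compatibility of $\phi_\tau$ and $\psi_\tau$ with the twisting morphism $\tau$ — this is a direct but bookkeeping-heavy consequence of the adjunctions $\rho^*$ and $\pi_*$ recorded above. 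Once the chain-map property and convergence are in hand, the equivalences follow formally from the two-out-of-three property for quasi-isomorphisms applied to the comparison triangles.
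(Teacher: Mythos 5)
The paper offers no proof of this theorem --- it is recalled as a classical result, with references to Husemoller--Moore--Stasheff, Loday--Vallette and Neisendorfer --- and your proposal correctly reconstructs the standard argument from those sources: compare $C\tensor_\tau A$ against the acyclic universal twisted tensor products $BA\tensor_\pi A$ and $C\tensor_\rho\Omega C$ via filtration-by-coalgebra-degree spectral sequences, using the connectivity hypotheses for convergence and a two-out-of-three property for quasi-isomorphisms. One cosmetic correction: since $\psi_\tau\colon C\to BA$ is a dgc morphism with $\pi\circ\psi_\tau=\tau$, the first comparison map runs $\psi_\tau\tensor\mathrm{Id}\colon C\tensor_\tau A\to BA\tensor_\pi A$, opposite to the arrow you drew, though this does not affect the structure of the argument.
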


\begin{definition} \label{def:koszul twisting morphism}
A twisting morphism $\tau\colon C\to A$ is called a \emph{Koszul twisting morphism} if the equivalent conditions in Theorem \ref{thm:koszul twisting} are fulfilled.
\end{definition}

\subsection{Koszul algebras}
Koszul algebras were introduced by Priddy \cite{Priddy}*{\S2}. For introductory accounts, see e.g.~\cites{PolishchukPositselski,LodayVallette}.

\begin{definition} \label{ref:koszul algebra}
A \emph{quadratic algebra} is a graded algebra $A$ that admits a presentation $A\cong TV/(R)$, where $V$ is a graded vector space of finite type and $(R)$ is the two-sided ideal in the tensor algebra $TV$ generated by a subspace $R\subseteq V^{\tensor 2}$. Since the relations are homogeneous, we may equip $A$ with an extra grading induced by the tensor length in $V$. This extra grading is inherited by the cohomology $\Ext_A^*(\kk,\kk) = H^*(\Hom(BA,\kk))$. By definition, a \emph{Koszul algebra} is a quadratic algebra $A$ such that $\Ext_A^{s,t}(\kk,\kk) = 0$ if $s\ne t$. There is a similar definition of Koszul coalgebras, see \cite{LodayVallette}.
\end{definition}

There is a variety of techniques for checking whether an algebra is Koszul without having to know $\Ext_A^*(\kk,\kk)$ beforehand, e.g., the PBW-criterion~\cite{Priddy}*{\S5}. The principal feature of Koszul algebras is that one can read off a presentation for the cohomology algebra $\Ext_A^*(\kk,\kk)$ by simple linear algebra.

\begin{definition}
Two quadratic algebras $A = TV/(R)$ and $B=TW/(S)$ are said to be \emph{Koszul dual} if there is a non-degenerate pairing of degree $+1$,
$$\langle -,-\rangle \colon W\tensor V \to \kk,$$
such that the subspaces $S\subseteq W^{\tensor 2}$ and $R\subseteq V^{\tensor 2}$ are orthogonal complements of one another under the induced pairing (of degree 2)
$$\langle - ,- \rangle\colon W^{\tensor 2} \tensor V^{\tensor 2} \to \kk,\quad \langle f\tensor g, u\tensor v\rangle = \pm \langle f,u\rangle \langle g, v\rangle.$$

Every quadratic algebra $A$ admits a unique up to isomorphism Koszul dual, denoted $A^!$. Clearly, $(A^!)^! \cong A$, because both are Koszul dual to $A^!$.
\end{definition}

\begin{theorem}[Priddy {\cite{Priddy}}]
If $A$ is Koszul then $\Ext_A^*(\kk,\kk)$ is isomorphic to $A^!$ as a graded algebra.
\end{theorem}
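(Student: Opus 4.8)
\noindent\emph{Proof proposal.} The plan is to exploit the extra weight grading on $\Ext_A^*(\kk,\kk)$ coming from the tensor length in $V$, reducing the statement to a computation on the ``diagonal'' part of the bar complex, where Koszulness is precisely what makes the diagonal exhaust everything. I isolate the key point: for \emph{any} quadratic algebra $A = TV/(R)$ the diagonal subalgebra of $\Ext_A^*(\kk,\kk)$ is already $A^!$, and Koszulness is used only to see that there is nothing off the diagonal.

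First I would recall that the reduced bar construction $BA$ carries a bigrading by \emph{resolution degree} $s$ (the number of tensor factors from the augmentation ideal $\bar A$) and \emph{internal weight} $t$ (inherited from the grading of $A$ by tensor length in $V$), and that the bar differential preserves $t$ while lowering $s$ by one. Dualizing, $\Hom(BA,\kk)$ and hence $\Ext_A^{s,t}(\kk,\kk)$ inherit this bigrading, and the convolution product of the excerpt, being dual to deconcatenation on $BA$, adds bidegrees. Since every factor of $\bar A$ has weight at least $1$, one has $(BA)^{s,t}=0$ for $s>t$, so $\Ext_A^{s,t}=0$ whenever $s>t$; the bigraded $\Ext$-algebra is therefore ``upper triangular'', and the diagonal $\bigoplus_s \Ext_A^{s,s}(\kk,\kk)$ is a subalgebra.

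The heart of the argument is to identify this diagonal subalgebra with $A^! = TW/(S)$, where $S = R^\perp$. For fixed weight $t$, the top resolution degree $s=t$ forces every tensor factor to lie in weight $1$, i.e.\ in $A_1=V$, so $(BA)^{t,t}=V^{\tensor t}$, while $(BA)^{t-1,t}=\bigoplus_{i+j=t-2}V^{\tensor i}\tensor A_2\tensor V^{\tensor j}$, the single weight-$2$ factor recording one multiplication. Because nothing sits above the diagonal, $\Ext_A^{t,t}(\kk,\kk)$ is the cokernel of the dual differential $\Hom((BA)^{t-1,t},\kk)\to\Hom((BA)^{t,t},\kk)$, which is the transpose of the surjection $V^{\tensor 2}\to A_2=V^{\tensor 2}/R$ applied in each middle slot; this transpose identifies $A_2^*$ with $R^\perp=S\subseteq W^{\tensor 2}$, so the image is $\sum_{i+j=t-2}W^{\tensor i}\tensor S\tensor W^{\tensor j}$ and
$$\Ext_A^{t,t}(\kk,\kk)\cong W^{\tensor t}\Big/\!\sum_{i+j=t-2}W^{\tensor i}\tensor S\tensor W^{\tensor j}=A^!_t.$$
I would then check that the convolution product on the diagonal, induced by deconcatenation, corresponds under this identification to concatenation of tensors, i.e.\ to the multiplication of $A^!=TW/(S)$. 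This product comparison is the step requiring the most care, as it means matching the cup product and its signs against the tensor-algebra multiplication modulo relations; everything else is bookkeeping with the bigrading.

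Finally I would invoke the hypothesis. The preceding paragraph shows $\bigoplus_t\Ext_A^{t,t}(\kk,\kk)\cong A^!$ for any quadratic algebra, and the assumption that $A$ is Koszul says exactly that $\Ext_A^{s,t}(\kk,\kk)=0$ for $s\ne t$, so the $\Ext$-algebra collapses onto its diagonal and $\Ext_A^*(\kk,\kk)\cong A^!$ as graded algebras. The main obstacle is the product comparison on the diagonal. One could alternatively route the argument through the canonical Koszul twisting morphism from the Koszul dual coalgebra and Theorem~\ref{thm:koszul twisting}, but the bigrading computation is most direct given the $\Ext$-theoretic definition of Koszulness adopted here.
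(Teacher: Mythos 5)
The paper itself offers no proof of this statement---it is quoted directly from Priddy's paper and used as a black box---so there is nothing internal to compare against; what you have written is the classical argument, and it is correct. Your reduction is exactly the standard one: the bar complex $BA$ is bigraded by resolution degree $s$ and weight $t$ with $(BA)^{s,t}=0$ for $s>t$, so $\Ext_A^{t,t}(\kk,\kk)$ is the cokernel of the dual of the last bar differential, which for any quadratic algebra computes $W^{\tensor t}\big/\sum_{i+j=t-2}W^{\tensor i}\tensor S\tensor W^{\tensor j}=A^!_t$ because dualizing the surjection $V^{\tensor 2}\to A_2$ identifies $A_2^*$ with $R^\perp=S$; Koszulness is then used only to collapse the $\Ext$-algebra onto this diagonal. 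Two small points deserve attention if you were to write this out in full. First, the identification $\Hom((BA)^{s,t},\kk)\cong$ (tensor products of $W$'s and $A_2^*$'s) uses that each weight-graded piece is finite dimensional, which is guaranteed by the finite-type hypothesis in the paper's definition of a quadratic algebra. Second, the product comparison you flag as the delicate step is indeed where the degree $+1$ convention on the pairing $W\tensor V\to\kk$ enters: the convolution product is dual to deconcatenation, and matching it with concatenation in $TW/(S)$ produces precisely the Koszul signs that the paper's sign convention $\langle f\tensor g,u\tensor v\rangle=\pm\langle f,u\rangle\langle g,v\rangle$ is designed to absorb; this is routine but not vacuous. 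Your suggested alternative route---through the canonical twisting morphism $\kappa\colon A^{\as}\to A$ and Theorem \ref{thm:koszul twisting}---is also viable and is closer in spirit to how the rest of the paper uses Koszulness, but the diagonal computation is the more self-contained proof given the $\Ext$-theoretic definition adopted here.
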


\begin{remark}
Given a quadratic algebra $A$, we let $A^{\as}$ denote the linear dual coalgebra of $A^!$. There is a twisting morphism $\kappa\colon A^{\as} \to A$, defined as the composite $A^{\as} \to W^* \cong V \to A$, where $W^*\cong V$ is the degree $-1$ isomorphism induced by the non-degenerate pairing. It is a basic fact that a quadratic algebra $A$ is a Koszul algebra, in the sense of Definition \ref{ref:koszul algebra}, if and only if the associated twisting morphism $\kappa\colon A^{\as} \to A$ is a Koszul twisting morphism in the sense of Definition \ref{def:koszul twisting morphism}. (As far as we understand, this is the reason for the name `\emph{Koszul} twisting morphism').
\end{remark}

Priddy's definition of a Koszul algebra may seem somewhat unsatisfactory, because it is not a priori clear whether the Koszul property depends on the choice of presentation for the algebra. The next theorem gives an intrinsic characterization of the Koszul property expressed without reference to any choice of presentation. Recall that a dga (dgc) is called \emph{formal} if it is quasi-isomorphic to its own homology, viewed as a dga (dgc) with trivial differential.

\begin{theorem}[Berglund {\cite{Berglund}}] \label{thm:formal-coformal-koszul}
Consider a dgc $C$ and a dga $A$ and suppose that $\tau\colon C\to A$ is a Koszul twisting morphism. The following are equivalent:
\begin{enumerate}
\item Both $A$ and $C$ are formal.
\item The dga $A$ is formal and $H_*(A)$ is a Koszul algebra.
\item The dgc $C$ is formal and $H_*(C)$ is a Koszul coalgebra.
\end{enumerate}
When the conditions hold, $H_*(C)$ is isomorphic to the Koszul dual coalgebra of $H_*(A)$.
\end{theorem}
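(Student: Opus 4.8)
The plan is to prove Theorem \ref{thm:formal-coformal-koszul} by exploiting the duality between the bar and cobar constructions that underlies the Koszul twisting morphism $\tau\colon C\to A$. Since $\tau$ is Koszul, Theorem \ref{thm:koszul twisting} gives quasi-isomorphisms $\phi_\tau\colon \Omega C\to A$ of dgas and $\psi_\tau\colon C\to BA$ of dgcs. The first observation is that these identify the relevant (co)homology algebras with Ext and Tor: because $\phi_\tau$ is a quasi-isomorphism, $H_*(A) \cong H_*(\Omega C)$, and dually $H_*(C)\cong H_*(BA)$. The idea is that formality of $A$ (resp.\ $C$) lets one promote these statements about the underlying (co)chain complexes to statements about the extra \emph{weight grading} coming from the bar/cobar length filtration, and it is precisely the collapse of this weight grading onto the diagonal that is the Koszul condition.

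First I would set up the equivalence $(1)\Leftrightarrow(2)$. Assume $(2)$, so $A$ is formal and $H_*(A)$ is a Koszul algebra. Formality of $A$ means $A\simeq H_*(A)=:H$ as dgas, whence $BA\simeq BH$ as dgcs, so $C\simeq BH$ via $\psi_\tau$. Now I would invoke the standard fact that for a Koszul algebra $H$ the bar construction $BH$ is formal with homology the Koszul dual coalgebra $H^{\as}$ (this is essentially the reformulation, recorded in the remark preceding the theorem, that Koszulness of $H$ is equivalent to $\kappa\colon H^{\as}\to H$ being a Koszul twisting morphism, i.e.\ $\psi_\kappa\colon H^{\as}\to BH$ being a quasi-isomorphism of dgcs between a coalgebra with trivial differential and $BH$). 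Composing, $C\simeq BH\simeq H^{\as}$, which simultaneously shows $C$ is formal and that $H_*(C)\cong H^{\as}$ is the Koszul dual coalgebra --- giving $(1)$ and the final identification. The converse $(1)\Rightarrow(2)$ runs the same argument backward: given formality of both $A$ and $C$, formality of $A$ already yields the first clause of $(2)$, and one recovers Koszulness of $H_*(A)$ from the fact that $C\simeq H_*(C)$ has trivial differential while being quasi-isomorphic to $BH$, forcing $\Ext_H^{s,t}(\kk,\kk)$ to be concentrated on the diagonal.

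The equivalence $(1)\Leftrightarrow(3)$ is formally dual, obtained by replacing the bar construction by the cobar construction and Koszul algebras by Koszul coalgebras, using $\phi_\tau\colon \Omega C\to A$ in place of $\psi_\tau$ and the analogous statement that a Koszul coalgebra $D$ has $\Omega D$ formal with homology the Koszul dual algebra $D^!$. Alternatively, once $(1)\Leftrightarrow(2)$ is established one can deduce $(2)\Leftrightarrow(3)$ by symmetry: Koszul duality is an involution ($(H^!)^!\cong H$), so the roles of $A$ and $C$ are interchangeable under passing to Koszul duals, and the final sentence of the theorem --- that $H_*(C)$ is the Koszul dual coalgebra of $H_*(A)$ --- is exactly the bridge that makes clauses $(2)$ and $(3)$ mirror images.

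The main obstacle I anticipate is the bookkeeping of the two gradings: every object here carries an internal degree and an auxiliary weight (the bar/cobar word length, equivalently the resolution degree in $\Ext^{s,t}$), and the crux of the theorem is that formality allows the weight grading to be transported along what are a priori only internal-degree quasi-isomorphisms. Concretely, the delicate point is to argue that $\psi_\tau\colon C\to BA$, when $A$ is formal, respects the weight grading up to the required homotopy --- i.e.\ that the abstract formality quasi-isomorphism $A\simeq H_*(A)$ can be chosen (or its induced map on bar constructions corrected) so as to be compatible with the extra grading, so that the Koszul collapse condition $\Ext^{s,t}=0$ for $s\ne t$ is genuinely equivalent to the vanishing of the differential on $H_*(C)$. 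This is where the real content of Berglund's ``2-out-of-3'' principle lies, and where I would expect to spend the bulk of the argument, the rest being formal manipulation of the adjunctions from Theorem \ref{thm:koszul twisting}.
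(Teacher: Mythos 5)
The first thing to note is that this paper contains no proof of the statement: it is imported verbatim from \cite{Berglund} (the remark that follows it in the text only discusses removing the characteristic-zero hypothesis), so there is no internal argument to compare yours against, and your proposal has to stand on its own. It does not, although much of it is correct. The reductions you perform are sound and essentially cost-free: for $(2)\Rightarrow(1)$, the chain of dgc quasi-isomorphisms $C \simeq BA \simeq BH \simeq H^{\as}$ (where $H:=H_*(A)$, using that $B$ preserves quasi-isomorphisms of connected dgas and the standard Koszul criterion --- the ``basic fact'' remark in the paper --- that $\kappa\colon H^{\as}\to H$ is a Koszul twisting morphism when $H$ is Koszul) does yield formality of $C$ together with the identification $H_*(C)\cong H^{\as}$, and $(3)\Rightarrow(1)$ is dual. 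The genuine gap is the converse $(1)\Rightarrow(2)$: from formality of both $A$ and $C$ you correctly deduce that $BH$ is quasi-isomorphic as a dgc to a coalgebra with zero differential, but you then simply assert that this ``forces $\Ext_H^{s,t}(\kk,\kk)$ to be concentrated on the diagonal.'' That assertion \emph{is} the theorem. Formality of $BH$ is a statement about its quasi-isomorphism type that makes no reference to the bar-length (weight) grading, whereas Koszulness is a bigraded condition which moreover presupposes that $H$ admits a quadratic presentation at all --- a point your proposal never addresses. Note also that within this paper the equivalence ``$H$ Koszul $\iff$ $BH$ formal'' appears only in the remark \emph{after} the theorem, as a consequence of it, so invoking that equivalence here would be circular.

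What is missing is an actual mechanism for converting the homotopical statement into the bigraded one. One such: transport the universal twisting morphism along the formality quasi-isomorphisms to obtain a Koszul twisting morphism $\tau'\colon C'\to H$ in which both $C'=H_*(C)$ and $H$ carry zero differential, and then prove the key lemma that a Koszul twisting morphism between a simply connected graded coalgebra and a connected graded algebra, both with zero differential, forces both to be quadratic, Koszul, and Koszul dual to one another. This lemma has real content; for instance, since $\tau'$ lands in the augmentation ideal and kills scalars, the twisted tensor product $C'\otimes_{\tau'}H$ is a \emph{minimal} free resolution of $\kk$ over $H$, and from minimality one must extract the identification of generators, of relations (whence quadraticity), and the diagonality of $\Ext$. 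Your own closing paragraph concedes that this is ``where the real content of Berglund's 2-out-of-3 principle lies'' and that you would ``expect to spend the bulk of the argument'' there; as written, then, the proposal is a correct formal reduction plus an acknowledgment that the core is missing, not a proof.
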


\begin{remark}
The paper \cite{Berglund} is written with the assumption that the field $\kk$ has characteristic zero, but this restriction is unnecessary if one works with algebras over non-symmetric operads, such as associative algebras.
\end{remark}

\begin{remark}
Suppose that $A$ is a dga with trivial differential. When applied to the universal twisting morphism $\pi\colon BA \to A$, Theorem \ref{thm:formal-coformal-koszul} says that $A$ is a Koszul algebra if and only if the bar construction $BA$ is a formal dgc. This gives an intrinsic characterization of the Koszul property that is not expressed in terms of any presentation of $A$. (Note however that Priddy's notion of an \emph{inhomogeneous} Koszul algebra is not an intrinsic property of the algebra; it is a property of the chosen presentation.)
\end{remark}

\subsection{Formality and coformality for topological spaces} \label{sec:formal}
In this section, we will apply the algebraic results of the previous section to the dgc $C_*(X)$ and the dga $C_*(\Omega X)$ associated to a based topological space $X$. Here $C_*(-)$ stands for normalized singular chains with coefficients in $\kk$ and $\Omega X$ is the based loop space.
\begin{definition}
Let $\kk$ be a field and let $X$ be a based topological space.
\begin{enumerate}
\item We say that $X$ is \emph{formal over $\kk$} if the dgc $C_*(X)$ is formal.
\item We say that $X$ is \emph{coformal over $\kk$} if the dga $C_*(\Omega X)$ is formal.
\end{enumerate}
\end{definition}

Recall that we have the \emph{homology suspension},
$$\sigma_*\colon H_{*-1}(\Omega X) \to H_*(X),$$
which may be defined as the homomorphism $H_{*-1}(\Omega X) \cong H_*(\Sigma \Omega X) \to H_*(X)$ induced by the canonical map $\Sigma \Omega X\to X$. It is well-known that $\sigma_*$ vanishes on elements that are decomposable with respect to the Pontryagin product and that every class in the image of $\sigma_*$ is primitive (see \cite{Whitehead}*{Chapter VIII}). Therefore, $\sigma_*$ induces a well-defined pairing (of degree +1) on indecomposables,
\begin{equation*}
QH_*(\Omega X) \tensor QH^*(X)\to \kk,\quad \langle \alpha,x \rangle = \langle \sigma_*(\alpha), x \rangle,
\end{equation*}
which we will refer to as the \emph{homology suspension pairing}.

\begin{theorem} \label{thm:formal-coformal} \label{thm:fck-space}
Let $\kk$ be a field and let $X$ be a simply connected space of finite $\kk$-type. Consider the following statements:
\begin{enumerate}
\item The space $X$ is both formal and coformal over $\kk$.
\item The space $X$ is formal over $\kk$ and $H^*(X)$ is a Koszul algebra.
\item The space $X$ is coformal over $\kk$ and $H_*(\Omega X)$ is a Koszul algebra.
\item The homology suspension pairing is non-degenerate, both algebras $H^*(X)$ and $H_*(\Omega X)$ are Koszul algebras and they are Koszul dual via the homology suspension pairing;
$$H_*(\Omega X) \cong H^*(X)^!.$$
\end{enumerate}
The first three statements are equivalent and imply the fourth.
\end{theorem}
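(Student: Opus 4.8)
The plan is to deduce everything from the purely algebraic Theorem~\ref{thm:formal-coformal-koszul}, applied to the dgc $C = C_*(X)$ and the dga $A = C_*(\Omega X)$. The bridge between these is Adams' cobar theorem: for a simply connected space of finite $\kk$-type there is a natural quasi-isomorphism of dgas $\Omega C_*(X) \xrightarrow{\sim} C_*(\Omega X)$, which by the adjunction $\Hom_{dga}(\Omega C, A) \cong \Tw(C,A)$ corresponds to a twisting morphism $\tau\colon C_*(X) \to C_*(\Omega X)$. Since the associated dga map $\phi_\tau$ is precisely the Adams quasi-isomorphism, Theorem~\ref{thm:koszul twisting} shows that $\tau$ is a Koszul twisting morphism. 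This is exactly the input required to invoke Theorem~\ref{thm:formal-coformal-koszul}.

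Next I would set up the dictionary between the three conditions of Theorem~\ref{thm:formal-coformal-koszul} and statements (1)--(3). By definition $X$ is formal iff $C_*(X)$ is formal and coformal iff $C_*(\Omega X)$ is formal, so condition (a) (``both $A$ and $C$ formal'') is verbatim statement~(1), and condition (b) (``$A$ formal and $H_*(A)$ Koszul'') is verbatim statement~(3). Condition (c) reads ``$C_*(X)$ formal and $H_*(X)$ a Koszul coalgebra''; to match statement~(2) I would invoke the finite-type duality between Koszul coalgebras and Koszul algebras: since $X$ has finite $\kk$-type, $H^*(X)$ is the graded linear dual of the coalgebra $H_*(X)$, and one is Koszul iff the other is. With these identifications the equivalence (a)$\Leftrightarrow$(b)$\Leftrightarrow$(c) of Theorem~\ref{thm:formal-coformal-koszul} becomes exactly (1)$\Leftrightarrow$(2)$\Leftrightarrow$(3).

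For the implication to (4), I would use the final clause of Theorem~\ref{thm:formal-coformal-koszul}: when the equivalent conditions hold, $H_*(X)$ is the Koszul dual coalgebra of $H_*(\Omega X)$. Dualizing and using that Koszul duality is an involution yields the algebra isomorphism $H_*(\Omega X) \cong H^*(X)^!$, and both algebras are Koszul by (2) and (3). It then remains to identify the duality pairing with the homology suspension pairing. Here I would argue that the Koszul pairing on indecomposables $QH^*(X) \tensor QH_*(\Omega X) \to \kk$ is, up to sign, the pairing induced by the linear part of $\tau$ on homology, and that this linear part computes the transgression in the path--loop fibration modeled by the twisted tensor product $C_*(X) \tensor_\tau C_*(\Omega X)$. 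Since the transgression is adjoint to the homology suspension $\sigma_*$, the Koszul pairing coincides with $\langle -,-\rangle$, and its non-degeneracy is then automatic from the non-degeneracy built into the definition of Koszul dual.

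The main obstacle is this last identification. Establishing (1)--(3) is essentially bookkeeping once Adams' theorem and Berglund's theorem are in hand, but pinning down that the abstract Koszul duality pairing is the geometric homology suspension pairing requires genuine work: one must trace the linear component of the Adams twisting morphism $\tau$ through the formality quasi-isomorphisms and match it with the transgression $d^p\colon E^p_{p,0}\to E^p_{0,p-1}$ in the Serre spectral sequence of $\Omega X \to PX \to X$. The compatibility of $\tau$ with the spectral sequence of $C_*(X)\tensor_\tau C_*(\Omega X)$ is what makes this precise, and I would expect the careful handling of signs and degree conventions (cochains as negative chains) to account for the bulk of the effort.
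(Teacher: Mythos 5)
Your proposal is correct and its skeleton coincides with the paper's: produce a Koszul twisting morphism from the chains of $X$ to the chains of $\Omega X$, feed it into Theorem \ref{thm:formal-coformal-koszul}, and dualize for statement (4). The differences are in the key input and in how the suspension pairing is identified. The paper passes to a $1$-reduced simplicial set model $K$ of $X$ and takes Szczarba's \emph{explicit} Koszul twisting morphism $C_*(K)\to C_*(GK)$, with $GK$ the Kan loop group; you instead invoke Adams' cobar equivalence $\Omega C_*(X)\xrightarrow{\sim} C_*(\Omega X)$ and extract the twisting morphism from the adjunction $\Hom_{dga}(\Omega C,A)\cong \Tw(C,A)$. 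Functionally these are the same move, but note that the classical Adams theorem requires a reduced coalgebra, and singular chains on $X$ are not reduced; so your appeal to Adams tacitly requires the same replacement of $C_*(X)$ by a $1$-reduced model that the paper makes explicit (this is precisely why the paper phrases the input via the Kan loop group rather than via singular chains). Second, for the identification of the abstract Koszul duality pairing with the homology suspension pairing, the paper sidesteps your Serre spectral sequence chase entirely: the homology suspension may be realized as the map induced in homology by the projection of the cobar construction $\Omega C_*(K)\to s^{-1}C_*(K)$ onto its weight-one part, and since the Koszul duality pairing is concentrated on exactly the weight-one (indecomposable/primitive) parts, the two pairings coincide by inspection. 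Your route through the transgression of the path--loop fibration would also work --- it is the same fact in a different guise --- but the cobar-projection description of the suspension is the standard shortcut that removes the sign-and-degree bookkeeping you flag as the main obstacle.
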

\begin{proof}
We may pass to a 1-reduced simplicial set model $K$ for $X$. For such $K$, Szczarba \cite{Szczarba} has constructed an explicit Koszul twisting morphism $C_*(K) \to C_*(GK)$. Here, $GK$ denotes the Kan loop group of $K$, which is a simplicial group model for the based loop space $\Omega X$. The result then follows by applying Theorem \ref{thm:formal-coformal-koszul}. The statement about the homology suspension follows from the fact that it may be realized as the map induced in homology by the projection from the cobar construction $\Omega C_*(K)$ to $s^{-1}C_*(K)$.
\end{proof}

\begin{remark}
It is conceivable that the fourth condition in Theorem \ref{thm:fck-space} implies the other three. We have not been able to find a counterexample.
\end{remark}

Being simultaneously formal and coformal is a rather restrictive constraint, but there are several interesting examples of spaces that fulfill it, see \cite{Berglund}. We will see in Section \ref{sec:fchcm} below that highly connected manifolds are formal and coformal over any field.
\begin{remark}
By a celebrated result due to Deligne-Griffiths-Morgan-Sullivan (see \cite{DGMS}), every compact K\"ahler manifold is formal over $\RR$ (and hence over any field of characteristic zero). If, in addition, the cohomology ring is a Koszul algebra then it is also coformal. It would be interesting to find a geometric characterization of what K\"ahler manifolds have this property.
\end{remark}
One might ask whether coformality together with Koszul cohomology implies formality, or whether formality together with Koszul loop space homology implies coformality. The following examples show that none of these implications hold.
\begin{example}
Consider the example of a non-formal closed simply connected $7$-manifold $M$ described in \cite{FelixOpreaTanre}*{Example 2.91}. Its minimal model is
$$\big(\Lambda(a,b,u,v,t),da=0,db=0,du=a^2,dv=b^2,dt=ab\big),$$
where $|a|=|b|=2$ and $|u|=|v|=|t|=3$. Formality is obstructed by the non-zero Massey operations $x=\langle a,b,b\rangle$ and $y = \langle a,a,b\rangle$, but the rational cohomology algebra is Koszul, because it admits the quadratic presentation
$$H^*(M;\QQ) \cong \Lambda(a,b,x,y)/(a^2,ab,b^2,ay,bx,ax-by),\quad |a|=|b|=2,\,\, |x|=|y|=5,$$
and it is easy to see that $1,a,b,x,y,ax$ is a PBW-basis. On the other hand, $M$ is coformal, because the minimal model has purely quadratic differential. In fact, the minimal model is isomorphic to the Chevalley-Eilenberg cochain algebra of the graded Lie algebra $\LL(\alpha,\beta)/([\alpha,[\alpha,\beta]],[\beta,[\alpha,\beta]])$, where $|\alpha|=|\beta|=1$, so it follows that the homology of the based loop space is the cubic algebra
$$H_*(\Omega M;\QQ) \cong \QQ\langle \alpha,\beta\rangle/([\alpha,[\alpha,\beta]],[\beta,[\alpha,\beta]]),\quad |\alpha|=|\beta|=1.$$
Thus, $M$ is an example of a coformal but non-formal manifold that has Koszul cohomology, but non-Koszul loop space homology.
\end{example}

\begin{example}
Complex projective space $\CP{n}$ is formal over $\QQ$, e.g., because it is a K\"ahler manifold. The cohomology algebra
$$H^*(\CP{n};\QQ) \cong \QQ[x]/(x^{n+1}),\quad |x|=2,$$
is not quadratic, and hence not Koszul, if $n\geq 2$. This implies that $\CP{n}$ is not coformal. However, the homology of the based loop space is a free graded commutative algebra,
$$H_*(\Omega \CP{n};\QQ) \cong \Lambda(\alpha)\tensor \QQ[\beta],\quad |\alpha|=1,\,\,|\beta|=2n,$$
which is Koszul. Thus, $\CP{n}$ ($n\geq 2$) is an example of a formal but non-coformal manifold, with Koszul loop space homology but non-Koszul cohomology.
\end{example}

\section{Hochschild cohomology}
In this section we explain how to construct a small dga model for the Hochschild cochains of the chain algebra $C_*(\Omega X)$ when $X$ is a formal and coformal space, by exploiting the connection to Koszul algebras discussed in the previous section. Small cochain complexes for computing the Hochschild cohomology of a Koszul algebra, such as the one described in Theorem \ref{hochschildcocomplextheorem} below, are presumably well-known. What is new here is the application to formal and coformal spaces and the interpretation of the twisting morphism as a lift of the transgression. We also discuss how to calculate the Gerstenhaber algebra structure and, when $X$ is a Poincar\'e duality space, the BV-algebra structure.

\begin{definition} \label{def:twisted convolution}
Let $\tau\colon C\to A$ be a twisting morphism. We define the \emph{twisted convolution algebra} to be the dga
$$\Hom^\tau(C,A) = \big(\Hom(C,A),\star,\partial^\tau \big),$$
with differential $\partial^\tau = \partial + [\tau,-]$, where, as usual,
$$\partial(f) = d_A \circ f - (-1)^f f\circ d_C,\quad [\tau,f] = \tau \star f -(-1)^{|f|}f\star \tau.$$
\end{definition}
The Maurer-Cartan equation for $\tau$ ensures that $\partial^\tau$ squares to zero, and it is easy to verify that $\partial^\tau$ is a derivation with respect to the convolution product.

\begin{observation}
For every dga $A$, the twisted convolution algebra associated to the universal twisting morphism $\pi\colon BA\to A$ is isomorphic, as a dga, to the standard Hochschild cochain complex $C^*(A,A)$ computing $HH^*(A,A)$, with the cup product;
$$\Hom^\pi(BA,A) \cong C^*(A,A).$$
\end{observation}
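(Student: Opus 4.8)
The plan is to unwind both sides explicitly and exhibit a degreewise identity map that is compatible with all structure. The twisted convolution algebra $\Hom^\pi(BA,A)$ has underlying graded vector space $\Hom(BA,A)$; since $BA = \bigoplus_{n\geq 0} (s\bar A)^{\tensor n}$ as a graded coalgebra (where $\bar A$ is the augmentation ideal), an element $f\in \Hom(BA,A)$ of degree $p$ is precisely a sequence of maps $f_n\colon (s\bar A)^{\tensor n}\to A$, which after desuspension is the same datum as a family of maps $\bar A^{\tensor n}\to A$. This is exactly the underlying graded vector space of the normalized Hochschild cochain complex $C^*(A,A) = \prod_n \Hom(\bar A^{\tensor n},A)$. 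First I would make this dictionary precise, being careful to track the suspension signs coming from $s\bar A$ versus $\bar A$; this determines the sign conventions for everything that follows.

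Next I would check that the convolution product $\star$ on $\Hom(BA,A)$ matches the Hochschild cup product. Here the key input is that the comultiplication $\Delta_{BA}$ on the bar construction is deconcatenation of tensors, so $f\star g = \mu_A\circ(f\tensor g)\circ \Delta_{BA}$ splits an input bar word $a_1|\cdots|a_m$ into all ways $a_1|\cdots|a_k$ and $a_{k+1}|\cdots|a_m$, applies $f$ and $g$ respectively, and multiplies in $A$. On the Hochschild side the cup product of $\varphi$ of arity $k$ and $\psi$ of arity $\ell$ sends $a_1\tensor\cdots\tensor a_{k+\ell}$ to $\varphi(a_1,\ldots,a_k)\psi(a_{k+1},\ldots,a_{k+\ell})$. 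These visibly agree under the dictionary, again up to bookkeeping of Koszul signs.

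The last and most delicate point is matching the differentials. The twisted differential is $\partial^\pi = \partial + [\pi,-]$, where $\partial(f) = d_A\circ f - (-1)^{|f|} f\circ d_{BA}$ and $[\pi,f] = \pi\star f - (-1)^{|f|} f\star \pi$. The internal differential $d_{BA}$ on the bar construction of a dga has two pieces: the tensor differential induced by $d_A$, and the bar differential contracting adjacent tensor factors via $\mu_A$. I would verify that the $d_A$-pieces of $\partial$ assemble into the internal Hochschild differential induced by $d_A$, that the $\mu_A$-contraction piece of $f\circ d_{BA}$ produces the ``inner'' terms of the Hochschild coboundary $\varphi\cdot a_1\pm\sum \varphi(\ldots,a_ia_{i+1},\ldots)\pm\cdots$, and that the commutator term $[\pi,f]$ supplies the two ``outer'' terms $a_1\varphi(a_2,\ldots)$ and $\pm\varphi(a_1,\ldots)a_n$. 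The reason $[\pi,-]$ is responsible for precisely the outer terms is that the universal twisting morphism $\pi\colon BA\to A$ is the projection onto the length-one part $s\bar A\to A$, so convolving with $\pi$ on either side peels off one boundary tensor factor and multiplies it into the output.

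I expect the main obstacle to be the sign verification in matching the differentials: the desuspensions $s\bar A\rightsquigarrow \bar A$, the Koszul sign rule in the convolution product and in $\partial$, and the standard Hochschild sign conventions must all be reconciled simultaneously, and it is exactly here that one must confirm that the split of $d_{BA}$ into internal, inner, and outer contributions lines up term-by-term with the Hochschild coboundary. Once the signs are pinned down, the identification is a bijection on the nose and respects products and differentials, so it is an isomorphism of dgas.
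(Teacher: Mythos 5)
The paper states this Observation without proof, treating it as standard folklore, and your proposal supplies exactly the expected argument: identify $\Hom(BA,A)$ with Hochschild cochains via the (de)suspension dictionary, match deconcatenation-convolution against the cup product, and split $\partial^\pi$ into the internal part ($d_A$ on source and target), the inner Hochschild terms (from the bar differential contracting adjacent factors), and the two outer terms (from $[\pi,-]$, since $\pi$ projects onto the length-one part). The one point worth making fully explicit is that, because the classical bar construction is $T^c(s\bar A)$ built on the augmentation ideal, your dictionary lands in the \emph{normalized} cochain complex $\prod_n \Hom(\bar A^{\tensor n},A)$ — which is indeed what ``the standard complex computing $HH^*(A,A)$'' must mean for the claimed isomorphism (rather than mere quasi-isomorphism) to hold — and this is exactly how you set it up.
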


\begin{theorem} \label{hochschildcocomplextheorem}
Let $A$ be a Koszul algebra with Koszul twisting morphism $\kappa\colon A^{\as}\rightarrow A$. The associated quasi-isomorphism $\psi_\kappa\colon A^{\as} \to BA$ induces a quasi-isomorphism
$$\psi_\kappa^*\colon \Hom^\pi(BA,A) \xrightarrow{\sim} \Hom^\kappa(A^{\as},A)$$
of dgas. In particular, there is an isomorphism of graded algebras
$$H^*(\Hom^\kappa(A^{\as},A)) \cong HH^*(A,A).$$
\end{theorem}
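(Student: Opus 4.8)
The plan is to split the statement into two independent claims---that $\psi_\kappa^*$ is a morphism of dgas, and that it is a quasi-isomorphism---and to deduce the final sentence from both. Granting the two claims, the ``in particular'' is immediate: combining with the observation that $\Hom^\pi(BA,A)\cong C^*(A,A)$, we get $H^*(\Hom^\kappa(A^{\as},A))\cong H^*(\Hom^\pi(BA,A))=HH^*(A,A)$, and this is an isomorphism of graded algebras because every map in sight respects the convolution product.

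That $\psi_\kappa^*$ is a dga morphism is a routine verification. By construction $\psi_\kappa$ is the dgc morphism corresponding to $\kappa$ under the bijection $\pi_*$, so $\pi\circ\psi_\kappa=\kappa$. Precomposition $f\mapsto f\circ\psi_\kappa$ is multiplicative because $\psi_\kappa$ is a coalgebra map, i.e. $\Delta_{BA}\circ\psi_\kappa=(\psi_\kappa\otimes\psi_\kappa)\circ\Delta_{A^{\as}}$, which turns $(f\star g)\circ\psi_\kappa$ into $(f\circ\psi_\kappa)\star(g\circ\psi_\kappa)$; and it is a chain map for the twisted differentials because $\psi_\kappa$ commutes with the internal differentials (handling the $\partial$ part of $\partial^\pi,\partial^\kappa$) and satisfies $\pi\circ\psi_\kappa=\kappa$ (handling $[\pi,-]$ versus $[\kappa,-]$ via the same coalgebra identity).

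For the quasi-isomorphism I would pass to two-sided resolutions. Writing $A^e=A\otimes A^{\mathrm{op}}$, there is a natural identification $\Hom^\tau(C,A)\cong\Hom_{A^e}(A\otimes C\otimes A,A)$, under which the differential $\partial^\tau=\partial+[\tau,-]$ corresponds to the differential on the two-sided twisted complex $A\otimes C\otimes A$ built from $d_C$ and the left and right twistings by $\tau$, and under which $\psi_\kappa^*$ is induced by $1\otimes\psi_\kappa\otimes 1$. For $C=BA$ and $\tau=\pi$ this is the usual two-sided bar resolution, a free $A^e$-resolution of $A$. The main point is to show that the two-sided Koszul complex $A\otimes_\kappa A^{\as}\otimes_\kappa A$ is likewise a free $A^e$-resolution of $A$; granting this, $1\otimes\psi_\kappa\otimes 1$ is a quasi-isomorphism between bounded-below complexes of free $A^e$-modules, hence a chain homotopy equivalence, and applying the additive functor $\Hom_{A^e}(-,A)$ produces a homotopy equivalence, which by the previous paragraph is the dga map $\psi_\kappa^*$.

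The hard part is the acyclicity of $A\otimes_\kappa A^{\as}\otimes_\kappa A$. Here I would bootstrap from the one-sided statement supplied by Theorem \ref{thm:koszul twisting}: since $A$ is Koszul, $\kappa$ is a Koszul twisting morphism, so the one-sided twisted tensor product $A^{\as}\otimes_\kappa A$ is contractible. Filtering the two-sided complex by the left tensor factor kills the left twisting on the associated graded, so that the $E_0$-differential is the right one-sided twisting and $E_1\cong A\otimes H_*(A^{\as}\otimes_\kappa A)\cong A$; the spectral sequence then collapses to show $A\otimes_\kappa A^{\as}\otimes_\kappa A$ resolves $A$. The only remaining care is bookkeeping: checking that the identification $\Hom^\tau(C,A)\cong\Hom_{A^e}(A\otimes C\otimes A,A)$ matches the convolution product with the $A^e$-module structure and matches $\partial^\tau$ of Definition \ref{def:twisted convolution} with the two-sided twisted differential up to the usual signs, so that the resulting quasi-isomorphism is exactly $\psi_\kappa^*$. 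A direct spectral-sequence comparison on the convolution algebras themselves is tempting but treacherous: the coradical filtration annihilates the bar differential on the associated graded, destroying precisely the acyclicity that makes $\psi_\kappa$ a quasi-isomorphism, so $\psi_\kappa^*$ fails to be an $E_1$-isomorphism for that filtration; routing through resolutions avoids this pitfall.
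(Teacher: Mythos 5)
Your proof is correct, but it takes a genuinely different route from the paper's. The paper argues entirely inside the convolution algebras via homological perturbation theory: Koszulness gives a contraction of $(BA,d_{BA})$ onto $(A^{\as},0)$ with projection $\psi_\kappa$ a dgc map; applying $\Hom(-,A)$ gives a contraction of $(\Hom(BA,A),\partial)$ onto $(\Hom(A^{\as},A),0)$ whose projection $f^*=\psi_\kappa^*$ is a dga map; one then perturbs by $t=[\pi,-]$ and applies the basic perturbation lemma (Theorem \ref{basicperturbationlemma}), the weight grading ensuring pointwise convergence of $\sum_{n\geq 0}(h^*t)^n$. The key simplification there is that, since $f^*$ is an algebra morphism and $f^*h^*=0$, all higher terms in the perturbed formulas vanish, so the perturbed projection is still $\psi_\kappa^*$ and the perturbed differential on the small side is exactly $[\kappa,-]$: the dga quasi-isomorphism drops out in one stroke. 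You instead go through $HH^*(A,A)\cong \Ext_{A\otimes A^{\mathrm{op}}}^*(A,A)$: you identify $\Hom^\tau(C,A)$ with $\Hom_{A\otimes A^{\mathrm{op}}}(A\otimes_\tau C\otimes_\tau A,A)$, prove that the two-sided Koszul complex resolves $A$ by filtering on the left factor and bootstrapping from the one-sided contractibility supplied by Theorem \ref{thm:koszul twisting}, and then invoke the comparison theorem for bounded-below complexes of free modules, with multiplicativity checked separately by the (correct and elementary) computation from $\pi\circ\psi_\kappa=\kappa$ and the coalgebra-map property of $\psi_\kappa$. Both proofs consume the same two inputs---one-sided acyclicity from Koszulness and the identity $\pi\circ\psi_\kappa=\kappa$---and both use the weight grading to make a limiting process work (your spectral sequence, the paper's perturbation series). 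What the paper's approach buys is that it never leaves the convolution-algebra picture and produces explicit transferred data (homotopies included); what yours buys is a proof by standard homological algebra in which the two assertions (dga morphism, quasi-isomorphism) are cleanly decoupled. Your closing caveat about naive filtration arguments directly on the convolution algebras is apt: the basic perturbation lemma is precisely the paper's substitute for such an argument.
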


\begin{proof}
Write $f=\psi_\kappa$. Since $A$ is Koszul, there is a contraction
$$\bigSDR{\big(BA,d_{BA}\big)}{\big(A^{\as},0\big)}{g}{f}{h},$$
where $f$ is a dgc morphism. Applying the functor $\Hom(-,A)$, we obtain a contraction
$$\bigbigSDR{\big(\Hom(BA,A),\partial\big)}{\big(\Hom(A^{\as},A),0\big)}{f^*}{g^*}{h^*},$$
where $f^*$ is a dga morphism. Consider now the perturbation $t=[\pi,-]$ of the chain complex $\big(\Hom(BA,A),\partial\big)$, where $\pi$ is the universal twisting morphism. Applying the basic perturbation lemma with $t$ as initiator, we obtain a new contraction
$$\bigbigSDR{\big(\Hom(BA,A),\partial + t\big)}{\big(\Hom(A^{\as},A),t'\big)}{f'}{g'}{h'},$$
see Theorem \ref{basicperturbationlemma}. To see that the sum $\sum_{n\geq 0}(h^*t)^n$ converges, we use the fact that the algebra $A$ carries a weight-grading. The chain complex $\Hom(BA,A)$ inherits a filtration from this grading, and it is easy to see that $t$ increases the filtration degree while $h^*$ preserves it. It follows that $\sum_{n\geq 0}(h^*t)^n$ converges point-wise. Since $f^*$ is a dga morphism, the formulas for $f'$ and $t'$ simplify. Indeed, $f'$ is given explicitly by $f' = f^*+f^*th^*+f^*th^*th^*+\dots$, where $f^*th^*=f^*[\pi,h^*]=[f^*\pi,f^*h^*]=0$ since $f^*$ is an algebra morphism and $f^*h^*=0$ since it is a contraction. Thus $f'=f^*$ and in particular it is also an algebra morphism. Next, $t'=f^*tg^*+f^*th^*tg^*+\dots$, where $f^*tg^*=[f^*\pi,f^*g^*]=[\kappa,-]$. The higher terms all vanish in the same way as above, so we may identify $\big(\Hom(A^{\as},A),t'\big)$ with $\Hom^\kappa(A^{\as},A)$. Thus, we see that $f'=f^*$ is a dga quasi-isomorphism from $\Hom^\pi(BA,A) = \big(\Hom(BA,A),\partial + t\big)$ to $\Hom^\kappa(A^{\as},A)$.
\end{proof}

\begin{theorem} \label{thm:small}
Let $\kk$ be a field and let $X$ be a simply connected space of finite $\kk$-type. If $X$ is formal and coformal over $\kk$, then the transgression lifts to a Koszul twisting morphism $\tau\colon H_*(X)\to H_*(\Omega X)$ such that the twisted convolution algebra $\Hom^\tau(H_*(X),H_*(\Omega X))$ is dga quasi-isomorphic to the Hochschild cochain complex of $C_*(\Omega X)$.
\end{theorem}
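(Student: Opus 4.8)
The plan is to identify the sought-after twisting morphism with the canonical Koszul twisting morphism of the Koszul algebra $H_*(\Omega X)$, and then to feed this into Theorem \ref{hochschildcocomplextheorem}. First I would set $A = H_*(\Omega X)$. Since $X$ is simply connected of finite $\kk$-type and both formal and coformal, Theorem \ref{thm:formal-coformal} applies: $A$ is a Koszul algebra, its Koszul dual is $A^! \cong H^*(X)$, and the Koszul duality is realized by the homology suspension pairing. Dualizing $A^!$, the associated Koszul dual coalgebra $A^{\as}$ is identified with the coalgebra $H_*(X)$. Under this identification the canonical Koszul twisting morphism $\kappa\colon A^{\as} \to A$ becomes a twisting morphism $\tau\colon H_*(X) \to H_*(\Omega X)$, and it is a Koszul twisting morphism precisely because $A$ is a Koszul algebra (in the sense recorded in the remark following Theorem \ref{thm:formal-coformal-koszul}).

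The substantive point is to verify that $\tau$ is a lift of the transgression. By construction $\kappa$ is concentrated in weight one, where it is the degree $-1$ isomorphism from the cogenerators of $H_*(X)$ onto the generators $QH_*(\Omega X)$ induced by the Koszul dual pairing; under Theorem \ref{thm:formal-coformal} this pairing is exactly the homology suspension pairing $QH_*(\Omega X)\tensor QH^*(X)\to \kk$. I would then invoke the description of the homology suspension as the map induced by the linear projection from the cobar construction $\Omega C_*(X)$ onto $s^{-1}C_*(X)$ (already used in the proof of Theorem \ref{thm:formal-coformal}), together with the classical fact that the transgression in the path–loop fibration is computed, up to sign, by this same cobar projection: a transgressive cycle $c$ is sent to the class of $s^{-1}c$ in $H_{*-1}(\Omega X)$. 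Matching these two descriptions identifies $\tau$, on the weight-one part, with the transgression relation $\tau\colon H_*(X)\rightharpoonup H_{*-1}(\Omega X)$, so that $\tau$ is a genuine lift. This comparison of the algebraically defined Koszul pairing with the topologically defined transgression is the step I expect to be the main obstacle, since it requires tracking the identifications carefully through the cobar construction and the Serre spectral sequence.

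Finally I would assemble the quasi-isomorphisms. Theorem \ref{hochschildcocomplextheorem}, applied to the Koszul algebra $A=H_*(\Omega X)$, yields a dga quasi-isomorphism $\Hom^\pi(BA,A) \xrightarrow{\sim} \Hom^\kappa(A^{\as},A) = \Hom^\tau(H_*(X),H_*(\Omega X))$, and by the Observation that $\Hom^\pi(BA,A)\cong C^*(A,A)$ the source is the standard Hochschild cochain complex of $A$ with its cup product. Since $X$ is coformal, the dga $C_*(\Omega X)$ is formal, hence connected by a zig-zag of dga quasi-isomorphisms to $H_*(\Omega X)$. Because the Hochschild cochain complex, with its cup product, is a derived invariant of dgas up to a zig-zag of dga quasi-isomorphisms, the complex $C^*(H_*(\Omega X),H_*(\Omega X))$ is dga quasi-isomorphic to the Hochschild cochains of $C_*(\Omega X)$. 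Composing the two, the twisted convolution algebra $\Hom^\tau(H_*(X),H_*(\Omega X))$ is dga quasi-isomorphic to the Hochschild cochain complex of $C_*(\Omega X)$, as claimed.
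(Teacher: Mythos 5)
Your proposal is correct and takes essentially the same route as the paper's proof: coformality reduces the Hochschild cochains of $C_*(\Omega X)$ to those of $H_*(\Omega X)$, Theorem \ref{thm:formal-coformal} supplies the Koszul duality identifying $H_*(X)$ with the Koszul dual coalgebra of $H_*(\Omega X)$, Theorem \ref{hochschildcocomplextheorem} then gives the dga quasi-isomorphism to the twisted convolution algebra, and the twisting morphism is identified with the transgression through its weight-one component and the homology suspension. The paper phrases this last step by identifying the primitives of $H_*(X)$ and the indecomposables of $H_*(\Omega X)$ with the respective weight-one pieces and observing that the transgression is inverse to the homology suspension there, then extending by zero in other weights --- the same identification you carry out via the cobar projection.
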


\begin{proof}
Since $X$ is coformal, the Hochschild cochain algebra of $C_*(\Omega X)$ is quasi-isomorphic to that of $H_*(\Omega X)$, and then the claim follows from Theorem \ref{thm:formal-coformal} and Theorem \ref{hochschildcocomplextheorem}. Concerning the statement about the transgression: Under the stated hypotheses, $H_*(X)$ is a Koszul coalgebra and $H_*(\Omega X)$ is its Koszul dual algebra. In particular, there are decompositions
$$H_*(X) = \kk \oplus H_{*,1}(X) \oplus H_{*,2}(X) \oplus \dots,$$
$$H_*(\Omega X) = \kk \oplus H_{*,1}(\Omega X) \oplus H_{*,2}(\Omega X) \oplus \dots,$$
compatible with the coalgebra and algebra structures. We may identify the primitives $PH_*(X)$ with $H_{*,1}(X)$ and the indecomposables $QH_*(\Omega X)$ with $H_{*,1}(\Omega X)$. The homology suspension induces an isomorphism $\sigma\colon QH_*(\Omega X)\cong PH_*(X)$. The inverse is given by the transgression $\tau\colon  PH_*(X) \to QH_*(\Omega X)$ (under the stated hypotheses the transgression is actually a well-defined homomorphism with domain $PH_*(X)$ and codomain $QH_*(\Omega X)$). We may extend $\tau$ to a map $H_*(X)\to H_*(\Omega X)$ simply by letting it be zero on $H_{*,k}(X)$ for $k\ne 1$.
\end{proof}

\begin{remark} \label{rem:finite}
If $X$ is formal and coformal and of finite type over $\kk$, then we may choose a basis $x_1,\dots,x_r$ for the indecomposables of $H^*(X)$, and a basis $u_1,\dots,u_r$ for the indecomposables of $H_*(\Omega X)$, such that the homology suspension of $u_i$ is dual to $x_i$. If, in addition, the cohomology $H^*(X)$ is finite dimensional, then there is an isomorphism of dgas
$$\Hom^\tau(H_*(X),H_*(\Omega X))  \cong \big( H^*(X)\tensor H_*(\Omega X) , [\kappa,-]\big),$$
where the underlying algebra of the dga on the right hand side is simply the tensor product of the algebras $H^*(X)$ and $H_*(\Omega X)$, and the differential $[\kappa,-]$ is given by taking the commutator with the element
$$\kappa = x_1\tensor u_1 + \dots + x_r \tensor u_r \in H^*(X)\tensor H_*(\Omega X).$$
\end{remark}

\subsection{The Gerstenhaber algebra structure}
Gerstenhaber \cite{Gerstenhaber} observed that the Hochschild cohomology $HH^*(A,A)$ of an associative algebra $A$ carries a Lie bracket of degree $1$ that interacts well with the cup product. The structure is now called a \emph{Gerstenhaber algebra}.

\begin{definition}
A Gerstenhaber algebra is a graded commutative algebra together with skew-symmetric binary bracket $[,]$ raising degree by $1,$ satisfying the Jacobi identity and being a derivation of the product in both variables. 
\end{definition}

We will now show how to calculate the Gerstenhaber bracket on Hochschild cohomology in terms of certain dg Lie algebras of derivations. This is not a new idea, it is essentially dual to \cite{Stasheff}, but we have not found precisely the statements we need in the literature.

\begin{definition}
Let $f\colon A\to A'$ be a morphism of dgas. The chain complex of $f$-derivations $\Der_f(A,A')$ is defined to be the subcomplex of $\Hom(A,A')$ whose elements are the maps $\theta\colon A\to A'$ that satisfy
$$\theta \circ \mu_A = \mu_{A'}\circ (\theta \tensor f + f\tensor \theta),$$
where $\mu_A\colon A\tensor A \to A$ and $\mu_{A'}\colon A'\tensor A' \to A'$ are the multiplication maps. If $f$ is the identity map on $A$, then we write $\Der A$ for $\Der_f(A,A)$. The graded commutator
$$[\theta,\eta] = \theta \circ \eta - (-1)^{|\theta||\eta|} \eta \circ \theta$$
makes $\Der A$ into a dg Lie algebra.

Similarly, if $g\colon C\to C'$ is a dgc morphism, then the chain complex of $g$-coderivations $\Coder_g(C,C')$ is defined to be the subcomplex of $\Hom(C,C')$ whose elements are the maps $\theta\colon C\to C'$ that satisfy
$$\Delta_{C'} \circ \theta = (\theta \tensor g + g\tensor \theta)\circ \Delta_C.$$
If $g$ is the identity map on $C$, then we write $\Coder C$ for $\Coder_g(C,C)$. As before, the graded commutator makes $\Coder C$ into a dg Lie algebra.
\end{definition}

Let $\tDer_f(A,A')$ denote the chain complex $\Der_f(A,A')\oplus sA$ with differential
$$D(sa') = \ad_{a'} - sd_{A'}(a'),$$
where $\ad_{a'}$ is the $f$-derivation of degree $|a'|$ given by
$$\ad_{a'}(a) = [a',f(a)].$$
For $a\in A$, let $\omega_a\colon C\to A$ denote the map of degree $|a|$ given by $\omega_a(\lambda) = \lambda a$ for $\lambda\in \kk$ and $\omega_a(x) = 0$ for $x\in \overline{C}$.

\begin{lemma} \label{lemma:id}
Let $\tau\colon C\to A$ be a twisting morphism and let $\psi_\tau\colon \Omega C \to A$ be the unique dga morphism such that $\psi_\tau \circ \rho = \tau$, where $\rho\colon C\to \Omega C$ denotes the universal twisting morphism. The map
$$
\rho^*\colon \tDer_{\psi_\tau}(\Omega C,A) \to s\Hom^\tau(C,A),
$$
$$\theta \mapsto (-1)^{|\theta|}s(\theta \circ \rho),\quad sa \mapsto s\omega_a,$$
is an isomorphism of chain complexes.
\end{lemma}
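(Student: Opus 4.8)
The plan is to exploit that the cobar construction $\Omega C = T(s^{-1}\overline C)$ is free as a graded algebra on the desuspended coaugmentation coideal $s^{-1}\overline C$, and that the universal twisting morphism factors as $\rho = i \circ s^{-1} \circ \mathrm{pr}$, where $\mathrm{pr}\colon C \to \overline C$ is the projection, $s^{-1}\colon \overline C \to s^{-1}\overline C$ the desuspension, and $i\colon s^{-1}\overline C \hookrightarrow \Omega C$ the inclusion of generators. First I would record the standard fact that a $\psi_\tau$-derivation $\theta\colon \Omega C \to A$ is uniquely determined by, and may be prescribed arbitrarily through, its restriction to the generators $s^{-1}\overline C$; this is immediate from the defining identity $\theta \circ \mu_{\Omega C} = \mu_A \circ (\theta \tensor \psi_\tau + \psi_\tau \tensor \theta)$ together with freeness. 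Consequently $\theta \mapsto \theta\circ\rho$ is a linear isomorphism from $\Der_{\psi_\tau}(\Omega C, A)$ onto the subspace of $\Hom(C,A)$ consisting of maps that vanish on the coaugmentation $\kk \subseteq C$ (they vanish there because $\rho$ is a twisting morphism).

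On the complementary summand, $\omega_a$ is by construction supported on $\kk$ and kills $\overline C$, so $a \mapsto \omega_a$ identifies $A$ with the subspace of $\Hom(C,A)$ of maps vanishing on $\overline C$. Since $\Hom(C,A) = \Hom(\overline C, A) \oplus \Hom(\kk, A)$, the two partial isomorphisms assemble into a bijection $\tDer_{\psi_\tau}(\Omega C, A) = \Der_{\psi_\tau}(\Omega C, A) \oplus sA \to s\Hom(C,A)$, and a short degree count, using $|\rho| = -1$ and the shift $s$, confirms it is degree-preserving. The sign $(-1)^{|\theta|}$ and the shift are cosmetic at this stage; they exist precisely to make the next step sign-coherent.

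It remains to check that $\rho^*$ intertwines the differentials, which I would do summand by summand. On $\Der_{\psi_\tau}(\Omega C, A)$, where $D = \partial$, the crux is to unfold $\theta \circ d_{\Omega C} \circ \rho$. Using the Maurer-Cartan equation for $\rho$, namely $d_{\Omega C}\circ\rho = -\rho\circ d_C - \mu_{\Omega C}\circ(\rho\tensor\rho)\circ\Delta_C$, together with the $\psi_\tau$-derivation property and the relation $\psi_\tau\circ\rho = \tau$, the quadratic term becomes $(\theta\circ\rho)\star\tau \pm \tau\star(\theta\circ\rho)$, i.e. the convolution bracket $[\tau, \theta\circ\rho]$, while the linear term combines with $d_A\circ\theta\circ\rho$ to reproduce the ordinary differential $\partial$ on $\Hom(C,A)$. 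Thus $\theta\circ\rho$ is carried by $D$ to $\partial^\tau(\theta\circ\rho) = \partial(\theta\circ\rho) + [\tau,\theta\circ\rho]$, the target differential. On the $sA$ summand I would compute directly that $\ad_a\circ\rho$ equals, up to sign, $[\tau,\omega_a]$ (from $\ad_a\circ\rho(c) = [a,\tau(c)]$ and the fact that $\omega_a$ is concentrated on $\kk$), and that $-s\,d_A(a)$ is sent to $\pm s\,\partial(\omega_a)$ (using $\omega_a\circ d_C = 0$ and $d_A\circ\omega_a = \omega_{d_A a}$), so that the image of $D(sa)$ reassembles, up to the overall shift sign, into $\partial^\tau\omega_a$.

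I expect the main obstacle to be the sign bookkeeping — reconciling the Koszul signs produced by precomposition with the degree $-1$ map $\rho$, the desuspension buried inside $\rho$, and the shift $s$ on the target — and this is exactly what the factor $(-1)^{|\theta|}$ in the definition of $\rho^*$ is designed to absorb. The conceptual content, by contrast, is the single observation that the comultiplication part of the cobar differential, fed through a $\psi_\tau$-derivation, regenerates the convolution bracket with $\tau$; once this is isolated, the remaining verification is routine, if delicate.
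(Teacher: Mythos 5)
Your proposal is correct and follows essentially the same route as the paper: the paper's proof likewise deduces bijectivity from the fact that the underlying algebra of $\Omega C$ is the tensor algebra on $s^{-1}\overline{C}$ (so $\psi_\tau$-derivations are freely determined on generators), and it explicitly leaves the compatibility with differentials as a ``straightforward verification'' for the reader. The verification you sketch — the Maurer--Cartan equation for $\rho$ combined with the $\psi_\tau$-derivation property regenerating the bracket $[\tau,-]$, together with the computation $\ad_a\circ\rho = \pm[\tau,\omega_a]$ on the $sA$ summand — is precisely that omitted check, carried out correctly.
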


\begin{proof}
The underlying algebra of $\Omega C$ is the tensor algebra on $s^{-1}\overline{C}$, so it is clear that the map is a bijection. We leave the straightforward verification that $\rho^*$ commutes with the differentials to the reader.
\end{proof}

\begin{proposition} \label{prop:lie bracket}
Let $A$ be a Koszul algebra and let $\kappa\colon A^{\as} \to A$ be the associated Koszul twisting morphism. The isomorphism
$$H_*(\tDer \Omega A^{\as}) \cong HH^*(A,A),$$
induced by the quasi-isomorphism of chain complexes $\tDer \Omega A^{\as} \to s\Hom^\kappa(A^{\as},A)$, is an isomorphism of graded Lie algebras.
\end{proposition}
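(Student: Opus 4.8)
The plan is to realize the map of the proposition as a chain quasi-isomorphism out of a genuine dg Lie algebra, and then to identify the induced bracket on homology with the Gerstenhaber bracket by reducing, via the generator-wise formula for the commutator of derivations, to the classical ``dual to Stasheff'' identification on the bar construction.

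First I would pin down the dg Lie structure and the comparison map. The cobar construction $\Omega A^{\as}$ is the free (tensor) dga on $s^{-1}\overline{A^{\as}}$, so $\Der(\Omega A^{\as})$ is a dg Lie algebra under the graded commutator, and a derivation is determined by its restriction to the generators $s^{-1}\overline{A^{\as}}$. I would equip $\tDer\Omega A^{\as}=\Der(\Omega A^{\as})\oplus s\Omega A^{\as}$ with the dg Lie structure in which $s\Omega A^{\as}$ is an abelian ideal, $[\theta,sa]=s\theta(a)$ and $[sa,sb]=0$; the differential $D(sa)=\ad_a-sd(a)$ is then a derivation of this bracket because $a\mapsto\ad_a$ is a morphism of dg Lie algebras $\Omega A^{\as}\to\Der(\Omega A^{\as})$ (a routine check, and exactly the structure visible in the bracket formulas of Theorem \ref{thm:main1}). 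The map of the proposition is postcomposition with the Koszul quasi-isomorphism $\phi\colon\Omega A^{\as}\to A$ (the dga morphism of Lemma \ref{lemma:id}, a quasi-isomorphism since $A$ is Koszul by Theorem \ref{thm:koszul twisting}), giving $\tDer\Omega A^{\as}\to\tDer_{\phi}(\Omega A^{\as},A)$, followed by the isomorphism $\rho^*$ of Lemma \ref{lemma:id}. Since $\Omega A^{\as}$ is free and we work over a field, $\Der(\Omega A^{\as},-)$ preserves quasi-isomorphisms, and a cone argument shows the first map is a quasi-isomorphism; with Theorem \ref{hochschildcocomplextheorem} this already yields the underlying isomorphism $H_*(\tDer\Omega A^{\as})\cong sHH^*(A,A)$ of graded vector spaces, respecting cup products. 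What remains is the bracket.

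The heart of the matter is that the commutator of derivations, read on generators, is exactly Gerstenhaber's circle product. For $\theta,\eta\in\Der(\Omega A^{\as})$ and a generator $x\in s^{-1}\overline{A^{\as}}$ one has $[\theta,\eta](x)=\theta(\eta(x))-(-1)^{|\theta||\eta|}\eta(\theta(x))$, where the inner application produces a word in $\Omega A^{\as}$ on which the outer derivation acts by Leibniz; this ``insert-and-Leibniz'' operation is precisely the composition product whose commutator defines the Gerstenhaber bracket. To give ``Gerstenhaber'' a literal meaning I would compare with the universal model: the dgc quasi-isomorphism $\psi_\kappa\colon A^{\as}\to BA$ (Theorem \ref{thm:koszul twisting}) induces a dga quasi-isomorphism $\Omega\psi_\kappa\colon\Omega A^{\as}\to\Omega BA$, and naturality of the cobar construction together with the universal property of $\Omega A^{\as}$ gives $\epsilon\circ\Omega\psi_\kappa=\phi$, where $\epsilon\colon\Omega BA\to A$ is the counit. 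Postcomposition with $\epsilon$ followed by $\rho^*$ sends $\tDer\Omega BA$ to $s\Hom^\pi(BA,A)=sC^*(A,A)$, and there the generator-wise formula above becomes the classical identification of the commutator of coderivations of $BA$ with the Gerstenhaber bracket on Hochschild cochains (the statement dual to \cite{Stasheff}). Because the formula is natural in the dgc and $\Omega\psi_\kappa$ carries generators of $\Omega A^{\as}$ into the image of the cogenerators under $\psi_\kappa$, the bracket on $H_*(\tDer\Omega A^{\as})$ is carried to the Gerstenhaber bracket, compatibly with the cup-product identification $\psi_\kappa^*$ of Theorem \ref{hochschildcocomplextheorem}.

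I expect the main obstacle to be this passage between the two free resolutions at the level of brackets: the commutator lives only on the self-derivation complexes $\Der(R,R)$, whereas the maps relating the models factor through the $A$-valued complex $\Der_{\phi}(\Omega A^{\as},A)\cong s\Hom^\kappa(A^{\as},A)$, which carries no Lie structure, so there is no single dg Lie morphism between $\tDer\Omega A^{\as}$ and $\tDer\Omega BA$ on the nose. I would circumvent this using that a derivation of a free algebra is determined by its values on generators: this reduces the agreement of the two commutator brackets to the naturality of the generator-wise circle-product formula along $\Omega\psi_\kappa$, a check that is local on generators and hence insensitive to the missing structure on the intermediary. The residual work is sign and shift bookkeeping — verifying that the degree-$0$ commutator on $\tDer\Omega A^{\as}$ matches, after the shift $s$, the degree-$(-1)$ Gerstenhaber bracket — together with the routine check that the abelian ideal $s\Omega A^{\as}$, which via the cone differential $D$ accounts for inner derivations and the $HH^0$-part, is transported correctly.
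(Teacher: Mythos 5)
Your first two steps (the dg Lie structure on $\tDer\Omega A^{\as}$ with $s\Omega A^{\as}$ as abelian ideal, and the chain-level quasi-isomorphism to $s\Hom^\kappa(A^{\as},A)$ via Lemma \ref{lemma:id} and Theorem \ref{hochschildcocomplextheorem}) agree with the paper, and you correctly isolate the real obstacle: the brackets live only on the self-derivation/self-coderivation complexes, while every comparison map factors through $A$-valued complexes carrying no Lie structure. The gap is in your proposed circumvention. The commutator of derivations of a free algebra is \emph{not} local on generators in the sense you need: $[\theta,\theta']|_{\mathrm{gen}} = \theta(\theta'|_{\mathrm{gen}}) \mp \theta'(\theta|_{\mathrm{gen}})$, and the inner values $\theta'|_{\mathrm{gen}}$ are arbitrary elements of $\Omega A^{\as}$, on which the outer derivation must be known in full. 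Concretely, take $\delta\in\Der\Omega A^{\as}$ whose values on generators lie in $\ker\phi_\kappa$. Then $\delta$ maps to zero in $s\Hom^\kappa(A^{\as},A)$, and in fact $\phi_\kappa\circ\delta=0$ identically (it is a $\phi_\kappa$-derivation vanishing on generators); yet the image of $[\theta,\delta]$ is, up to sign, $\phi_\kappa\circ\theta\circ\delta|_{\mathrm{gen}}$, which is generally nonzero because $\phi_\kappa\circ\theta$ need not vanish on $\ker\phi_\kappa$. So agreement of two (co)derivations' images in the intermediary --- even on the nose, let alone up to homotopy --- gives no control whatsoever over their brackets, and there is no well-defined ``generator-wise formula'' for $\Omega\psi_\kappa$-naturality to apply to. Your reduction would be valid only for pairs that intertwine \emph{strictly}, $\Omega\psi_\kappa\circ\theta = \eta\circ\Omega\psi_\kappa$; but you have not shown that every class of $H_*(\tDer\Omega A^{\as})$ is represented by such a pair, nor that the comparison is independent of the choice of pair --- and that is the entire content of the proposition, not ``residual sign and shift bookkeeping.''

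This is exactly where the paper does its work: it forms the pullback $\mathcal{L}$ of the two \emph{surjective} quasi-isomorphisms $\tDer\Omega A^{\as}\to s\Hom^\kappa(A^{\as},A)\leftarrow \tCoder BA$ (surjectivity coming from the contractions behind Theorem \ref{hochschildcocomplextheorem}), endows it with componentwise differential and bracket so that the projections to $\tDer\Omega A^{\as}$ and $\tCoder BA$ are dg Lie morphisms, and deduces from surjectivity and the pullback property that both projections are quasi-isomorphisms; every homology class is then computed from a single compatible pair, forcing the two induced brackets on $H^*(\Hom^\kappa(A^{\as},A))$ to coincide. The essential mechanism you are missing is this interpolating dg Lie algebra of \emph{strictly} compatible pairs together with the surjectivity/quasi-isomorphism statements. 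If you want to salvage your route through $\Omega\psi_\kappa$, the repair has the same shape: the pairs $\mathcal{M}=\{(\theta,\eta):\Omega\psi_\kappa\circ\theta=\eta\circ\Omega\psi_\kappa\}$ \emph{are} closed under the componentwise bracket (strict intertwining, unlike agreement in the intermediary, is preserved by commutators), and one must then prove both projections are surjective quasi-isomorphisms using freeness of the cobar constructions. Note finally that your endpoint is $\tDer\Omega BA$, whereas the classical statement dual to Stasheff concerns $\tCoder BA$; so your route would still owe one more comparison of exactly this kind, which the paper sidesteps by working with $\tCoder BA$ from the start.
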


\begin{proof}
Stasheff \cite{Stasheff} observed that there is an isomorphism of chain complexes
$$\tCoder BA \cong s\Hom^{\pi}(BA,A)$$
such that the Lie bracket on $\tCoder BA$ corresponds to the Gerstenhaber bracket in cohomology. By Theorem \ref{hochschildcocomplextheorem}, and its dual version, there are surjective quasi-isomorphims of chain complexes
$${\tiny \tCoder BA\cong s\Hom^{\pi}(BA,A) \xrightarrow{\psi_\kappa^*} s\Hom^{\kappa}(A^{\as},A) \xleftarrow{(\phi_\kappa)_*} s\Hom^{\rho}(A^{\as},\Omega A^{\as}) \cong \tDer \Omega(A^{\as})}$$
% $$\xymatrix{ \tCoder BA\cong s\Hom^{\pi}(BA,A) \ar[r]^-{\psi_\kappa^*} & s\Hom^{\kappa}(A^{\as},A) & \ar[l]_-{(\phi_\kappa)_*} s\Hom^{\rho}(A^{\as},\Omega A^{\as}) \cong \tDer \Omega(A^{\as})}$$
We need to show that the two Lie brackets on the cohomology of $\Hom^{\kappa}(A^{\as},A)$ induced from the Lie brackets on $\tCoder BA$ and $\tDer \Omega(A^{\as})$, respectively, coincide. To see that this is the case, form the pullback
$$
\xymatrix{\tDer \Omega A^{\as} \ar[r] & s\Hom^\kappa(A^{\as},A) \\
\mathcal{L} \ar[r] \ar[u] & \tCoder BA .\ar[u]}
$$
Here $\mathcal{L}$ is the dg Lie algebra whose elements are pairs $(\theta,\eta)$, where $\theta\in \tDer \Omega A^{\as}$ and $\eta\in \tCoder BA$ are (co)derivations such that $(\phi_\tau)_*\rho^*(\theta) = \psi_\tau^* \pi_*(\eta)$. Differentials and Lie brackets are computed componentwise; in particular the maps from $\mathcal{L}$ to $\tDer \Omega A^{\as}$ and $\tCoder BA$ are morphisms of dg Lie algebras. Since the diagram is a pullback, and since the morphisms with target $s\Hom^\kappa(A^{\as},A)$ are surjective quasi-isomorphisms, it follows that the maps from $\mathcal{L}$ to $\tDer \Omega A^{\as}$ and $\tCoder BA$ are surjective quasi-isomorphisms of dg Lie algebras. This implies that the two Lie brackets in the cohomology of $s\Hom^\kappa(A^{\as},A)$ induced from $\tDer \Omega A^{\as}$ and $\tCoder BA$ are the same.
\end{proof}

\subsection{The Batalin-Vilkovisky algebra structure}
When $A$ is a Frobenius \\ (Poincar\'e duality) algebra, then the Gerstenhaber algebra structure on Hochschild cohomology $HH^*(A,A)$ can be enhanced to a BV-algebra structure.

\begin{definition}
A Batalin-Vilkovisky algebra (BV-algebra) is a Gerstenhaber algebra together with a square-zero unary operator $\Delta$ of degree $+1$ such that $$[a,b]=\Delta(ab)-\Delta(a)b-(-1)^{|a|}a\Delta(b).$$
\end{definition}

In \cite{Tradler}, a Batalin-Vilkovisky structure is put on the Hochschild cohomology of an algebra equipped with a non-degenerate bilinear form. The result is proved in greater generality, but we state a simpler version to avoid more definitions.

\begin{theorem}[See {\cite{Tradler}}] \label{tradlertheorem}
Let $\kk$ be a field of characteristic $0$ and let $A$ be a finite dimensional, graded, unital associative algebra equipped with a graded symmetric invariant non-degenerate bilinear form $A\otimes A\rightarrow \kk.$ Then there is a Batalin-Vilkovisky structure defined as follows. Suppose $f\in \Hom^\pi(BA,A)$ with support on the weight $n$ part. Then $\Delta f$ is the unique function with support on the weight $n-1$ part such that
$$\langle \Delta f(a_1,\dots,a_{n-1}),a_n \rangle = \langle \sum_{i=1}^n \pm f(a_i,\dots,a_n,a_1,\dots,a_{i-1}),1\rangle,$$
where $\pm$ is a Koszul sign coming from permutation of the elements, remembering that they have been suspended, explicitly given as $(-1)$ to the power
$$(|a_1|-1)(|a_2|-1+\dots +|a_n|-1)+\dots +(|a_i|-1)(|a_{i+1}|-1+\dots +|a_{i-1}|-1).$$
\end{theorem}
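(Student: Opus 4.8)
The plan is to realize $\Delta$ as the transpose of Connes' cyclic boundary operator $B$ under the self-duality of the Hochschild complex furnished by the Frobenius form, and then to transport the formal properties of the mixed complex to the cochain side. Write $C^*(A,A) = \Hom^\pi(BA,A)$ for the Hochschild cochains, with the tensor-length (weight) grading, and let $C_*(A,A)$ denote the normalized Hochschild chains equipped with the Hochschild differential $b$ and Connes' operator $B$. Since the form $\langle -,-\rangle$ is non-degenerate it induces an isomorphism $A\cong A^*$; because it is graded symmetric and invariant, this is an isomorphism of $A$-bimodules, and it upgrades to a weight-preserving pairing of complexes
$$\langle -,-\rangle\colon C^n(A,A)\tensor C_n(A,A)\to \kk,\quad \langle f, a_0\tensor a_1\tensor\cdots\tensor a_n\rangle = \pm\langle f(a_1,\dots,a_n),a_0\rangle,$$
identifying $C^*(A,A)$ with the linear dual of $C_*(A,A)$. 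Under this identification the cochain differential is adjoint to $b$, precisely because the form is symmetric and invariant.

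First I would check that the displayed formula defines $\Delta f$ unambiguously: for fixed $a_1,\dots,a_{n-1}$ the right-hand side is a linear functional of $a_n$, hence represented by a unique element of $A$ by non-degeneracy. Comparing the cyclic sum in the statement with the one defining $B$ (which inserts a unit in the $a_0$-slot), one sees that $\Delta = B^*$ under the pairing above; in particular $\Delta$ lowers weight by one and carries exactly the stated Koszul signs. The two formal properties then transport for free: $\Delta$ is a chain map because $bB+Bb=0$ in the mixed complex, so it descends to $HH^*(A,A)$; and $\Delta^2=0$ because $B^2=0$ on normalized chains. These steps are routine once the duality is set up, the only algebra-specific inputs being non-degeneracy (for well-definedness) and symmetry together with invariance (for the differentials to be adjoint).

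The essential content, and the step I expect to be the main obstacle, is the BV-identity
$$[f,g] = \Delta(f\star g)-\Delta(f)\star g-(-1)^{|f|}f\star\Delta(g),$$
where $\star$ is the cup product and $[-,-]$ is the Gerstenhaber bracket of Proposition \ref{prop:lie bracket}. I would prove this directly on cochains. Dualizing, $f\star g$ corresponds to a coproduct on $C_*(A,A)$, and the relation becomes the statement that the defect of $B$ being a coderivation for this coproduct is the operation dual to the bracket. Concretely, I would expand all three right-hand terms as cyclic sums over the arguments $a_1,\dots,a_n$, repeatedly use invariance $\langle ab,c\rangle=\langle a,bc\rangle$ to move multiplications across the pairing, and exhibit a sign-respecting bijection between the surviving terms and the terms of $[f,g]$. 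The key observation is that substituting the value of one cochain into a cyclic rotation of the arguments of the other is exactly the brace (circle) operation $f\circ g$ underlying the Gerstenhaber bracket, so that the non-derivation contributions of $\Delta$ assemble into $f\circ g\pm g\circ f$.

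The hard part will be the sign bookkeeping: tracking the explicit Koszul sign of the statement through the cyclic permutations and verifying that the unwanted terms cancel in pairs. I would organize this by first isolating two auxiliary identities — one rewriting $\Delta(f\star g)$ as a sum of cyclic reinsertions, and one rewriting $\Delta(f)\star g$ and $f\star\Delta(g)$ in the same basis of terms — so that the BV-relation collapses to a purely combinatorial cancellation. As an a priori cross-check that such a $\Delta$ must exist, the resulting data is the cochain-level incarnation of the cyclic Deligne conjecture for the symmetric Frobenius algebra $A$; I would use the characteristic-zero hypothesis only where it is genuinely needed to normalize signs and to match this algebraic operator with the topological BV-operator, as recorded in Remark \ref{rem:disclaimer}.
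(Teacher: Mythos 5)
First, note that the paper does not actually prove this theorem: it is quoted from Tradler's paper (the text says explicitly that the result is proved there in greater generality), so your attempt can only be measured against Tradler's argument, not against anything in the text. Your construction of $\Delta$ is the standard one and is correct as far as it goes: the Frobenius form identifies $\Hom^\pi(BA,A)$ with the linear dual of the (normalized) Hochschild chain complex, symmetry and invariance make the cochain differential adjoint to $b$, non-degeneracy gives well-definedness of $\Delta f$, and the mixed-complex relations $bB+Bb=0$ and $B^2=0$ dualize to show that $\Delta$ descends to cohomology and squares to zero. This is exactly how Tradler (and, later, Menichi) produce the operator.

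The genuine gap is in the step you yourself flag as the essential content: the BV relation. You propose to verify
$$[f,g]=\Delta(f\star g)-\Delta(f)\star g-(-1)^{|f|}f\star\Delta(g)$$
as an exact identity of cochains, by exhibiting ``a sign-respecting bijection between the surviving terms and the terms of $[f,g]$''. No such bijection exists: the relation is false at the cochain level and holds only modulo coboundaries, for cocycles $f$ and $g$. One way to see the obstruction: the Gerstenhaber bracket is exactly antisymmetric in the shifted grading, by its very definition as an antisymmetrized brace operation, whereas the right-hand side fails the corresponding symmetry by terms measuring the non-commutativity of the cup product; and $f\star g \mp g\star f$ is not zero on cochains but only a coboundary, the correcting homotopy being precisely the brace $f\circ g$ (Gerstenhaber's classical formula). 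So when you expand both sides as cyclic sums, the unmatched terms do not cancel in pairs --- they assemble into $\delta$ of an explicit cochain, and constructing that homotopy (this is what Tradler's ``infinity inner products'' formalism exists to organize) is where essentially all of the work of the theorem lies. Your plan should therefore be amended to: take $f,g$ cocycles, expand as you propose, and identify the discrepancy explicitly as a coboundary; as written, the purely combinatorial cancellation you are counting on cannot close.
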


\section{Highly connected manifolds}
In this section we will apply the results of the previous sections to highly connected manifolds.

\subsection{Formality and coformality of highly connected manifolds} \label{sec:fchcm}
It is well known that every $(n-1)$-connected space $X$ of dimension at most $3n-2$ is formal over $\QQ$ (see e.g.~\cite{FelixOpreaTanre}*{Proposition 2.99}). Neisendorfer and Miller \cite{NeisendorferMiller} observed that a closed manifold with the same connectivity and dimension constraints is also coformal over $\QQ$, provided the cohomology has rank $>3$. In this section we generalize these result to fields of arbitrary characteristic, using Koszul algebras and Theorem \ref{thm:formal-coformal-koszul}.

\begin{theorem} \label{thm:formal}
Let $\kk$ be a PID and $n\geq 2$. If $X$ is an $(n-1)$-connected space such that $H^i(X;\kk) = 0$ for all $i>3n-2$ and $H_i(X;\kk)$ is a free $\kk$-module for all $i$, then $X$ is formal over $\kk$.
\end{theorem}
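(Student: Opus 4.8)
The plan is to establish formality directly by homotopy transfer, exploiting the very narrow range in which the (co)homology of $X$ lives. Since $X$ is $(n-1)$-connected and $H^i(X;\kk)=0$ for $i>3n-2$, universal coefficients together with the freeness hypothesis show that the reduced homology $\bar H_*(X;\kk)$ is concentrated in degrees $n\leq i\leq 3n-2$. The strategy is to transport the dgc structure of $C_*(X)$ to an $A_\infty$-coalgebra structure $(\Delta_1,\Delta_2,\Delta_3,\dots)$ on $H_*(X)$ and then to show that all higher cooperations $\Delta_k$ with $k\geq 3$ vanish purely for degree reasons; this exhibits $C_*(X)$ as $A_\infty$-quasi-isomorphic to its homology coalgebra, which is exactly formality in the sense of the paper.

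First I would produce a chain-level contraction of $C_*(X)$ onto $H_*(X)$, and this is precisely where the hypotheses that $\kk$ is a PID and that $H_*(X;\kk)$ is free get used. Over a PID submodules of free modules are free, so in the degreewise free complex $C_*(X)$ the cycles $Z_i$ and boundaries $B_i$ are free, and the short exact sequences $0\to Z_i\to C_i\to B_{i-1}\to 0$ and $0\to B_i\to Z_i\to H_i\to 0$ both split: the first because $B_{i-1}$ is free, the second because $H_i$ is free by hypothesis. Splitting these degreewise yields strong deformation retract data from $C_*(X)$ onto $(H_*(X),0)$.

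With this contraction in hand, the homotopy transfer theorem (equivalently, the basic perturbation lemma, whose tree-level formulas are valid over an arbitrary commutative ring) endows $H_*(X)$ with an $A_\infty$-coalgebra structure together with an $A_\infty$-quasi-isomorphism to $C_*(X)$, with $\Delta_1=0$ and $\Delta_2$ the ordinary coproduct. The decisive step is a degree count. For a coaugmented $A_\infty$-coalgebra the higher cooperation $\Delta_k$ is reduced, so it maps $\bar H_*(X)$ into $\bar H_*(X)^{\otimes k}$ while raising degree by $k-2$. A nonzero value on a class of degree $d$ would require $d+(k-2)\geq kn$, i.e. $d\geq k(n-1)+2$; but $d\leq 3n-2=3(n-1)+1$, which forces $k(n-1)\leq 3(n-1)-1$ and hence $k\leq 2$. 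Therefore $\Delta_k=0$ for all $k\geq 3$, the transferred structure is an honest dgc, and $C_*(X)$ is $A_\infty$-quasi-isomorphic to the coalgebra $H_*(X)$.

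The final step is to convert this $A_\infty$-quasi-isomorphism into a genuine zig-zag of dgc quasi-isomorphisms, which is routine rectification (pass through the cobar--bar adjunction, or use that an $A_\infty$-quasi-isomorphism between strict dgcs is invertible up to homotopy). I expect the main obstacle to be technical rather than conceptual: one must check that the transfer machinery and its sign conventions behave over a PID exactly as over a field, and that strict counitality can be arranged so that cooperations with a coaugmentation input do not interfere with the degree count. It is worth emphasizing that this argument is independent of Koszulness; the Koszul-theoretic input of Theorem \ref{thm:formal-coformal-koszul} becomes essential only for the companion coformality statement, where the analogous higher operations on $H_*(\Omega X)$ are no longer forced to vanish by degree and one must instead verify that $H_*(\Omega X)$ is a Koszul algebra.
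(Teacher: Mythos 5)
Your proposal is correct and follows essentially the same strategy as the paper's proof: use the PID and freeness hypotheses to split the chain complex and produce a contraction onto homology, apply homotopy transfer, and kill all operations of arity $\geq 3$ by the same connectivity/dimension degree count (your bound $d \geq k(n-1)+2$ versus $d \leq 3(n-1)+1$ is exactly dual to the paper's $|m_i(x_1,\dots,x_i)| \geq 3n-1 > 3n-2$). The only difference is one of formulation: you stay on the chain/coalgebra side with $A_\infty$-coalgebra structures on $H_*(X;\kk)$, which matches the paper's definition of formality of $X$ directly, whereas the paper dualizes and runs the identical argument for the dga $C^*(X;\kk)$ with transferred $A_\infty$-algebra structure, where your counitality concern appears as the check, via the side conditions of the contraction, that $m_i(\dots,1,\dots)=0$ for $i\geq 3$.
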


\begin{proof}
Since both $C_*(X;\kk)$ and $H_*(X;\kk)$ are degreewise free as $\kk$-modules, and $\kk$ is a PID, it follows that the chain complex $C_*(X;\kk)$ is split. Hence, the cochain complex $C^*(X;\kk)$ is split as well. In other words, it is possible to find a contraction
$$
\bigbigSDR{C^*(X;\kk)}{H^*(X;\kk)}{f}{g}{h},
$$
where $dh+hd=1-gf$, $fg = 1$, and $fh=0,$ $hh=0$, $hg=0$. We may apply the homotopy transfer theorem to obtain an $A_\infty$-structure $\{m_i\}_{i\geq 2}$ on $H^*(X;\kk)$, such that $m_2$ is the standard cup product in cohomology, and $(H^*(X;\kk),\{m_i\})$ is $A_\infty$-equivalent to $C^*(X;\kk)$. By studying the explicit formulas for the transferred structure (see Section \ref{appendix:ainfty}), we see that $m_i(\dots,1,\dots)=0$ for all $i\geq 3$, because each term in the formula will contain $fh$, $hh$ or $hg$, which is zero. Next, let $i\geq 3$ and suppose that $x_1,\dots,x_i\in H^*(X;\kk)$ are non-zero classes of positive degree. Since we assume that $X$ is $(n-1)$-connected, we must have $|x_j|\geq n$ for all $j$. Hence,
$$|m_i(x_1,\dots,x_i)| = 2-i + |x_1|+\cdots +|x_i| \geq 2-i + in\geq 3n-1.$$
But we are assuming that $H^{\geq 3n-1}(X;\kk) = 0$, so $m_i(x_1,\ldots,x_i)$ is necessarily zero.
\end{proof}

\begin{theorem} \label{thm:koszul}
Let $\kk$ be a field and let $n\geq 2$. Suppose that $M$ is an $(n-1)$-connected closed manifold of dimension $d\leq 3n-2$. Then the cohomology algebra $H^*(M;\kk)$ is a Koszul algebra if and only if $\dim_\kk H^*(M;\kk)\ne 3$.
\end{theorem}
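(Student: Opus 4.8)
The plan is to first pin down the ring structure of $H^*(M;\kk)$ from the connectivity and dimension hypotheses together with Poincar\'e duality, and then to analyze Koszulity according to $\dim_\kk H^*(M;\kk)$. Write $H = H^*(M;\kk)$. Since $M$ is $(n-1)$-connected, every indecomposable sits in degree $\geq n$, so a product of two positive-degree classes lands in degree $\geq 2n$ and a product of three in degree $\geq 3n > d$; hence all triple products of positive-degree classes vanish. Moreover $2n > d-n$, so a nonzero product of two positive-degree classes can only land in the top degree $d$, and in particular nothing in degrees $[n,d-n]$ is decomposable. Using Poincar\'e duality (over $\kk$, taking $\kk=\FF_2$ if $M$ is non-orientable) the pairing $H^i\tensor H^{d-i}\to H^d\cong\kk$ is non-degenerate. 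I would conclude that, writing $V=\bigoplus_{i=n}^{d-n}H^i$ and $\omega$ for a generator of $H^d$, one has $H=\kk\oplus V\oplus\kk\omega$, where the only nonzero products are the non-degenerate pairing $\mu\colon V\tensor V\to\kk\omega$ and the unit multiplication; in particular $\dim_\kk H=\dim_\kk V+2$ (the degenerate case $V=0$ gives a cohomology sphere).

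With this structure in hand, the ``only if'' direction is immediate: if $\dim_\kk H=3$ then $\dim_\kk V=1$, so $H\cong\kk[x]/(x^3)$, with the single middle generator satisfying $x^2=\omega\ne 0$ and $x^3=0$. This algebra is not quadratic, since the quadratic algebra on one generator with its quadratic relations is either $\kk[x]$ or $\kk[x]/(x^2)$; hence $H$ is not Koszul, because Koszul algebras are quadratic by Definition~\ref{ref:koszul algebra}. The case $\dim_\kk H=2$ is also clear: then $H\cong\kk[x]/(x^2)$ is an exterior algebra on one generator, which is Koszul. It remains to treat $\dim_\kk H\geq 4$, i.e.\ $m:=\dim_\kk V\geq 2$, and show $H$ is Koszul.

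For the main case I would first check that $H$ is \emph{quadratic}, i.e.\ that $H\cong A:=TV/(R)$ with $R=\ker(\mu)\subseteq V^{\tensor 2}$ of codimension one. The evident surjection $A\to H$ is an isomorphism in weights $\leq 2$, so it suffices to show $A_3=V^{\tensor 3}/(R\tensor V+V\tensor R)=0$, as then $A_{\geq 3}=0$ by induction. Dualizing, $(R\tensor V+V\tensor R)^\perp=(\tilde\mu\tensor V^*)\cap(V^*\tensor\tilde\mu)$ inside $(V^*)^{\tensor 3}$, where $\tilde\mu$ spans the one-dimensional $R^\perp$ and corresponds to the invertible matrix $B=(b_{ij})$ of the non-degenerate form $\mu$. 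A short computation shows that any element of this intersection forces every row of $B$ to be proportional to a fixed vector, so $\rank B\leq 1$; since $B$ is invertible with $m\geq 2$ this is impossible, so the intersection is zero, $A_3=0$, and $H=A$ is quadratic.

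Finally I would prove that $A$ is Koszul by passing to its quadratic dual. Since $R$ has codimension one, $A^!=TV^*/(R^\perp)$ is a tensor algebra modulo a \emph{single} relation $q=\tilde\mu$, a non-degenerate quadratic form in $m\geq 2$ variables. To see that $A^!$ is Koszul I would exhibit a quadratic Gr\"obner (PBW) basis: after extending scalars to $\bar\kk$ (Koszulity is invariant under field extension) one can choose a basis vector $w_1$ with $q(w_1,w_1)=0$, which exists because the diagonal of $q$ is a quadratic form in $\geq 2$ variables and hence has a nonzero zero over an algebraically closed field (it vanishes identically in the alternating case). Ordering the basis with $w_1$ largest, the leading monomial of $q$ is then off-diagonal, say $w_1w_j$ with $j\ne 1$; a single relation whose leading monomial has two distinct letters has no overlap ambiguities, so it is a quadratic Gr\"obner basis and $A^!$ is Koszul by Priddy's PBW criterion~\cite{Priddy}. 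By Koszul duality $A=(A^!)^!$ is then Koszul. The main obstacle is this last step: arranging an off-diagonal leading monomial for the graded-symmetric form $q$ (keeping track of which generators lie in the middle degree $d/2$ and extending scalars to produce an isotropic vector) is the one genuinely delicate point, whereas everything else is either structural or a direct linear-algebra computation.
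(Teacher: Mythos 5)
Your argument is correct, and it rests on the same pivotal idea as the paper's proof: the small cases ($\dim_\kk H^*(M;\kk)\in\{2,3\}$) are handled identically, and the main case hinges in both proofs on producing an isotropic vector for the intersection form after a harmless field extension, using that Koszulity is preserved and reflected by extension of scalars. The execution differs in two genuine ways, however. First, you prove quadraticity of $H^*(M;\kk)$ explicitly, via the dual computation that $(\kk\tilde\mu\tensor V^*)\cap(V^*\tensor\kk\tilde\mu)=0$ when the form has rank $\geq 2$; the paper never isolates this step, leaving it implicit in the verification of the PBW property. Second, and more substantially, you verify Koszulity on the Koszul dual side: $A^!=T(V^*)/(q)$ is a one-relator algebra (it is exactly the presentation of $H_*(\Omega M;\kk)$ appearing in Corollary \ref{cor:main}), an isotropic top variable makes the leading monomial of $q$ off-diagonal, a single quadratic relation with off-diagonal leading word has no self-overlaps, hence gives a quadratic Gr\"obner basis and Koszulity; you then transfer back through $(A^!)^!\cong A$ and the standard fact that a quadratic algebra is Koszul iff its quadratic dual is (see \cite{PolishchukPositselski}). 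The paper instead builds a PBW basis $1,x_1,\dots,x_{r-1},x_r,x_rx_{r-1}$ directly on $H^*(M;\kk)$, which is order-compatible precisely because $x_r^2=0$; that is shorter given Priddy's criterion \cite{Priddy}, whereas your route makes the confluence check trivial (one relation, no overlaps) at the price of invoking the diamond lemma and the duality invariance of Koszulity. Your field-extension step (any nonzero quadratic form in $\geq 2$ variables has a nontrivial zero over $\bar\kk$, with the alternating case handled separately) is a mild generalization of the paper's, which first diagonalizes the form and then only needs to adjoin $\sqrt{-1}$. One cosmetic correction: the parenthetical ``taking $\kk=\FF_2$ if $M$ is non-orientable'' is not permissible, since the field is fixed in the statement of the theorem; but it is also vacuous, because $n\geq 2$ forces $M$ to be simply connected and hence orientable.
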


\begin{proof}
Let $r+2 = \dim_\kk H^*(M;\kk)$. If $r=0$, then $H^*(M;\kk)\cong \kk\langle x\rangle/(x^2)$, $|x|=d$, which is a Koszul algebra. If $r=1$, then $d$ is necessarily even and $H^*(M;\kk) \cong \kk\langle x \rangle/(x^3)$, $|x|=d/2$. This algebra is not Koszul because it does not admit any quadratic presentation.

Next, let $r\geq 2$. Suppose that we can find a non-zero class $x\in H^k(M;\kk)$, for some $k<d$, such that $x^2=0$. Then by Poincar\'e duality, we can find a class $x'\in H^{d-k}(M;\kk)$ such that $x'x$ is a generator for $H^d(M;\kk)$ (see e.g.~\cite{Hatcher}*{Corollary 3.39}). Setting $x_{r-1} = x'$ and $x_r = x$, we can complete to a basis for $H^*(M;\kk)$ of the form
$$1,x_1,\ldots,x_{r-1},x_r,x_rx_{r-1}.$$
% Let the coefficients $c_{ij}\in\kk$ be defined by
% $$x_i x_j = c_{ij}x_r x_{r-1}.$$
% (In particular, $c_{ij} = 0$ unless $|x_i|+|x_j| = d$.) 
% Then $H^*(M;\kk)$ admits a quadratic presentation of the form
% $$\kk\langle x_1,\ldots,x_r\rangle/(R),$$
% where $R$ is spanned by all graded commutators $x_ix_j-(-1)^{|x_i||x_j|}x_jx_i$ and all elements of the form $c_{ij}x_kx_\ell - c_{k\ell}x_ix_j$.
If we declare $x_rx_{r-1}$ to be the only admissible monomial, then the displayed basis is a PBW-basis in the sense of Priddy \cite{Priddy}*{\S5}. This implies that $H^*(M;\kk)$ is Koszul. Let us point out that unless $x_r^2= 0$, the above is not a PBW-basis.

So we would like to find a non-zero cohomology class $x\in H^k(M;\kk)$, for some $k<d$, such that $x^2=0$. Unless $d=2k$, it is automatic that $x^2 = 0$ because of the connectivity and dimension constraints. So we are done unless $d$ is even, say $d=2k$, and the only non-zero cohomology is in degrees $0$, $k$, $2k$. If $k$ is odd and $\kk$ is not of characteristic $2$, then $x^2 = 0$ is automatic because of the graded commutativity of the cup product. Otherwise, the cup product defines a symmetric bilinear form on $H^k(M;\kk)$, and we can find an orthogonal basis $e_1,\ldots,e_r$, such that $e_ie_j = \delta_{ij}y$, where $y$ is some chosen generator for $H^d(M;\kk)$ (see e.g.~\cite{HusemollerMilnor}). Finding $x = \lambda_1e_1+\cdots+\lambda_re_r\ne 0$ such that $x^2=0$ is then equivalent to finding a non-trivial solution to the equation
$$\lambda_1^2+\cdots +\lambda_r^2 = 0$$
in $\kk$. If $r\geq 2$ and $\kk$ has a square root of $-1$, then $\lambda_1=1,\lambda_2=\sqrt{-1},\lambda_3=\cdots=\lambda_r=0$ is a solution. Otherwise, one might get stuck. Fortunately, the property of being Koszul is preserved and reflected under field extensions, so we may extend scalars to $\kk[\sqrt{-1}]$ and use the same argument to conclude that $H^*(M;\kk[\sqrt{-1}]) \cong H^*(M;\kk)\tensor_\kk \kk[\sqrt{-1}]$ is Koszul, and hence that $H^*(M;\kk)$ is Koszul as well.

% Poincar\'e duality and the dimension constraint imply that we can find a basis for the cohomology of the form $1,x_1,\ldots,x_r,y$, where $1$ is the unit, and $y$ is the fundamental class. The algebra structure is determined by the structure coefficients $c_{ij}$, where
% $$x_i\cup x_j = c_{ij}y.$$
% (In particular, $c_{ij} = 0$ unless $|x_i|+|x_j| = d$.) If $r\geq 2$, then the cohomology ring admits the following quadratic presentation 
% $$H^*(M;\kk) \cong T(x_1,\ldots,x_r)/(R),$$
% where $R$ is spanned by all graded commutators $x_ix_j-(-1)^{|x_i||x_j|}x_jx_i$ and all elements of the form $c_{ij}x_kx_\ell - c_{k\ell}x_ix_j$.
% By Poincar\'e duality, the space $H$ spanned by $x_1,\ldots,x_r$ together with the bilinear form $H\tensor H \to \kk$ given by $\langle x_i,x_j \rangle = c_{ij}$ is an inner product space.
% 
% If we can find a basis $e_1,\ldots,e_r$ for $H$ such that $\langle e_{r-1},e_r\rangle =1$ and $\langle e_r ,e_r \rangle = 0$, then the basis $1,e_1,\ldots,e_r,e_{r-1}e_r$ for $H^*(M;\kk)$ satisfies Priddy's PBW-criterion \cite[\S5]{Priddy}, whence $H^*(M;\kk)$ is a Koszul algebra.
% 
% If $n$ is odd, then the bilinear form is skew-symmetric it is possible to find a symplectic basis $e_1,\ldots,e_t,f_1,\ldots,f_t$ such that $\langle e_i,e_j\rangle = \langle f_i,f_j\rangle = 0$ and $\langle e_i,f_j\rangle = \delta_{ij}$. Then 
% 
% 
% 
% Suppose that we can find a linear combination $w$ of $x_1,\ldots,x_r$ such that $w^2 = 0$. Then we may find a dual element $w'$ such that $ww' = y$ and then complete to a basis $w_1,\ldots,w_{r-2},w',w$.
\end{proof}

\begin{remark}
The equation $\lambda_1^2 + \lambda_2^2 =0$ has no non-trivial solutions over $\FF_3$, so the above argument will fail to produce a PBW-basis for the algebra $\FF_3[x,y]/(x^2-y^2,xy)$. In fact, it is possible to show that this algebra does not admit any PBW-basis at all. But it is Koszul.
\end{remark}

\begin{remark}
To see that the hypothesis on the dimension of $H^*(M;\kk)$ is necessary we can look at the $1$-connected $4$-manifold $\mathbb{C}\mathbb{P}^2$. This space is formal, but its cohomology algebra does not admit any quadratic presentation, so it cannot be Koszul.
\end{remark}

% \begin{proposition} \label{prop:koszul}
% Let $(H,\omega)$ be an inner product space over $\ZZ$.
% 
% Let $(c_{ij})$ be a non-singular integer $r\times r$-matrix. If $r\geq 2$, then the $\ZZ$-algebra
% $$U=T(\alpha_1,\ldots,\alpha_r)/(\omega),\quad \omega = \sum_{i,j} c_{ij} \alpha_i\alpha_j$$
% is Koszul.
% \end{proposition}
% 
% 
% \begin{proof}
% First we show that for $\kk$ equal to $\QQ$ or $\FF_p$, the $\kk$-algebra $U_\kk = U\tensor_\ZZ \kk$ is Koszul.
% 
% The quadratic
% \end{proof}

\begin{corollary} \label{cor:main}
Let $\kk$ be a field and let $n\geq 2$. Suppose that $M$ is an $(n-1)$-connected closed manifold of dimension at most $3n-2$ such that $\dim_\kk H^*(M;\kk) \ne 3$. Then $M$ is both formal and coformal. If we choose a basis $x_1,\ldots,x_r$ for the indecomposables of $H^*(M;\kk)$ and let $c_{ij}$ represent the intersection form in this basis, i.e., $\langle x_ix_j, [M] \rangle  = c_{ij}$, then the loop space homology algebra of $M$ admits the presentation
$$H_*(\Omega M;\kk) \cong \kk\langle u_1,\ldots,u_r\rangle/(\omega),\quad \omega = \sum_{i,j} (-1)^{|x_i|}c_{ji}u_i u_j,$$
where the homology suspension of $u_i$ is dual to $x_i$. In particular, $|u_i| = |x_i| - 1$.
\end{corollary}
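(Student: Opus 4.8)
The plan is to obtain formality, coformality, and the Koszul-dual description from the theorems already in hand, and then to compute that Koszul dual by hand. First I would verify the hypotheses of Theorem~\ref{thm:formal}: working over a field, every $H_i(M;\kk)$ is automatically a free $\kk$-module, and since $M$ is closed of dimension $d\le 3n-2$ we have $H^i(M;\kk)=0$ for $i>3n-2$; hence $M$ is formal over $\kk$. Since $\dim_\kk H^*(M;\kk)\ne 3$, Theorem~\ref{thm:koszul} shows that $H^*(M;\kk)$ is Koszul. Feeding ``formal with Koszul cohomology'' into Theorem~\ref{thm:formal-coformal}, the implication $(2)\Rightarrow(1)$ gives coformality, while statement $(4)$ identifies the loop homology algebra with the Koszul dual, $H_*(\Omega M;\kk)\cong H^*(M;\kk)^!$, via the homology suspension pairing. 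The remaining work is to make this Koszul dual explicit.

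To do so I would first pin down the quadratic structure of $A:=H^*(M;\kk)$. Being Koszul, $A$ is quadratic, so $A\cong T(V)/(R)$ with $V=QH^*(M;\kk)=\langle x_1,\dots,x_r\rangle$ the indecomposables and $R=\ker\!\big(V^{\tensor 2}\xrightarrow{\mu}A\big)$ the space of quadratic relations. The decisive input is a degree count. Since $M$ is $(n-1)$-connected, reduced cohomology sits in degrees $\ge n$, and Poincar\'e duality confines it to degrees $[n,d-n]\cup\{d\}$. The inequality $d\le 3n-2$ gives $d-n\le 2n-2<2n$, so any product $x_ix_j$ has degree $\ge 2n$ and therefore lands in $H^d(M;\kk)$ or vanishes; in every case $x_ix_j=c_{ij}z$, where $z$ spans $H^d(M;\kk)$ and $c_{ij}=\langle x_ix_j,[M]\rangle$. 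Consequently $\mu\big(\sum a_{ij}x_i\tensor x_j\big)=\big(\sum_{ij}a_{ij}c_{ij}\big)z$, and the relation space is the hyperplane $R=\{\sum a_{ij}x_i\tensor x_j:\sum_{ij}a_{ij}c_{ij}=0\}$ of codimension $\le 1$. (Triple products vanish, since their degree $\ge 3n$ exceeds $d$, consistent with $A$ being quadratic.)

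Next I would compute $A^!=T(W)/(R^\perp)$. Its generators are the dual basis $u_1,\dots,u_r$ of $W$, with $|u_i|=|x_i|-1$ and $\langle u_i,x_j\rangle=\delta_{ij}$ the homology suspension pairing, exactly as set up in Remark~\ref{rem:finite}; nondegeneracy of this pairing makes the induced degree-two pairing nondegenerate, so $\dim R^\perp=\operatorname{codim}R\le 1$, which already explains why a single relation $\omega$ appears. To identify $\omega=\sum b_{ij}u_i\tensor u_j$, I would impose $\langle\omega,\xi\rangle=0$ for all $\xi\in R$ using the induced degree-two pairing $\langle u_a\tensor u_b,x_i\tensor x_j\rangle=(-1)^{|u_b||x_i|}\delta_{ai}\delta_{bj}$; this forces the matrix $\big((-1)^{|u_j||x_i|}b_{ij}\big)$ to be proportional to $(c_{ij})$, so $b_{ij}=(-1)^{|u_j||x_i|}c_{ij}$ up to a scalar. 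Finally I would match this with the stated coefficient through the identity $(-1)^{|u_j||x_i|}c_{ij}=(-1)^{|x_i|}c_{ji}$, which follows from graded commutativity $c_{ji}=(-1)^{|x_i||x_j|}c_{ij}$ together with $|u_j||x_i|\equiv |x_i||x_j|+|x_i|\pmod 2$. This yields $\omega=\sum_{ij}(-1)^{|x_i|}c_{ji}u_iu_j$, as claimed; in the degenerate case $\dim_\kk H^*(M;\kk)=2$ all $c_{ij}$ vanish, $\omega=0$, and $A^!$ is free on $u_1$, consistent with $H_*(\Omega M;\kk)$ being the loop homology of a cohomology sphere.

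The citations of Theorems~\ref{thm:formal}, \ref{thm:koszul}, and \ref{thm:formal-coformal} are routine; the substance is the explicit quadratic description of $A$, for which Poincar\'e duality and the inequality $d-n<2n$ are the essential ingredients, and the sign-careful identification of the generator of $R^\perp$ with $\omega$. I expect the latter step --- carrying the suspension shift $|u_i|=|x_i|-1$ and the Koszul sign in the degree-two pairing correctly through the orthogonality computation --- to be the main place where care is needed, even though no genuine difficulty lurks there once the conventions are fixed.
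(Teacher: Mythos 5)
Your proposal is correct and follows essentially the same route as the paper's proof: formality from Theorem~\ref{thm:formal}, Koszulness of $H^*(M;\kk)$ from Theorem~\ref{thm:koszul}, coformality and the identification $H_*(\Omega M;\kk)\cong H^*(M;\kk)^!$ from Theorem~\ref{thm:formal-coformal}, followed by a linear-algebra identification of the quadratic dual. The only differences are presentational: the paper exhibits a spanning set for $R$ (graded commutators and the elements $c_{ij}x_kx_\ell-c_{k\ell}x_ix_j$) and merely verifies that $\omega$ is orthogonal to it, whereas you characterize $R$ as the kernel of $V^{\otimes 2}\to H^d(M;\kk)$ and solve the orthogonality condition to derive $\omega$ up to scalar --- which incidentally handles the edge case $\dim_\kk H^*(M;\kk)=2$ (where $R=V^{\otimes 2}$ and $R^\perp=0$) that the paper's assertion ``$V^{\otimes 2}/R\cong H^d(M;\kk)$ is one-dimensional'' glosses over.
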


\begin{proof}
By Theorem \ref{thm:formal}, the manifold $M$ is formal over $\kk$. By Theorem \ref{thm:koszul}, the cohomology ring $H^*(M;\kk)$ is a Koszul algebra. It follows from Theorem \ref{thm:formal-coformal} that $M$ is coformal, and that $H_*(\Omega M;\kk)$ may be calculated as the Koszul dual of $H^*(M;\kk)$. The cohomology $H^*(M;\kk)$ admits a quadratic presentation of the form
$$\kk\langle x_1,\ldots,x_r\rangle/(R),$$
where $R$ is spanned by all graded commutators $x_ix_j-(-1)^{|x_i||x_j|}x_jx_i$ and all elements of the form $c_{ij}x_kx_\ell - c_{k\ell}x_ix_j$. We know that the homology suspension is non-degenerate; choose a basis $u_1,\dots, u_r$ for the indecomposables $W = QH_*(\Omega X;\kk)$ dual to $x_1,\dots,x_r$ under the homology suspension pairing. Since $H^*(M;\kk) \cong TV/(R)$ and we know that $V^{\tensor 2}/R \cong H^d(M;\kk)$ is one-dimensional, the orthogonal subspace $R^\perp\subseteq W^{\tensor 2}$ must be one-dimensional. One checks that the element
$$\omega = \sum_{i,j} (-1)^{|x_i|}c_{ji}u_iu_j\in W^{\tensor 2}$$
is orthogonal to $R$, so it must generate $R^\perp$.
\end{proof}

\subsection{Homology of the free loop space} \label{explicitcomplex}
In this section, we will use Theorem \ref{thm:small} and the results of the previous section to calculate the free loop space homology of highly connected manifolds.

Before we can state the result, we need to recall some facts about graded derivations. For a graded algebra $U$, we let $\Der U$ denote the graded vector space of derivations of $U$. Its elements of degree $k$ are the linear maps $\theta \colon U \to U$ of degree $k$ such that $\theta(\alpha\beta) = \theta(\alpha)\beta + (-1)^{|\alpha|k} \alpha \theta(\beta)$ for all $\alpha,\beta \in U$. The graded commutator $[\theta,\eta] = \theta \circ \eta -(-1)^{|\theta||\eta|}\eta \circ \theta$ makes $\Der U$ into a graded Lie algebra.

The algebra $U$ may itself be viewed as a graded Lie algebra with the commutator Lie bracket $[\alpha,\beta] = \alpha\beta - (-1)^{|\alpha||\beta|} \beta \alpha$. The equality
$$[\xi,\alpha\beta] = [\xi,\alpha]\beta + (-1)^{|\xi||\alpha|} \alpha[\xi,\beta],$$
for $\xi,\alpha,\beta \in U$, shows that every element $\xi$ in $U$ defines a derivation $\ad_\xi = [\xi,-]$ of degree $|\xi|$. Derivations of this form are called \emph{inner derivations}. The equality
$$[\theta,\ad_\xi] = \ad_{\theta(\xi)},$$
for $\theta \in \Der U$ and $\xi\in U$, shows that the subspace $\ad U \subseteq \Der U$ spanned by all inner derivations is a Lie ideal. The quotient $\Der U / \ad U$ is the graded Lie algebra of \emph{outer derivations} on $U$.

\begin{theorem} \label{thm:algebra}
Let $\kk$ be a field and let $n\geq 2$. Let $M$ be an $(n-1)$-connected closed manifold of dimension $d\leq 3n-2$, such that $\dim_\kk H^*(M) > 4$, and let $U = H_*(\Omega M)$. There is an isomorphism of Gerstenhaber algebras
$$HH^*(C_*(\Omega M),C_*(\Omega M)) \cong \kk \oplus s^{-1} \Der U / \ad U \oplus s^{-d} U/[U,U].$$
The unit element of the left summand $\kk$ acts as a unit for the multiplication. Given two outer derivations $\theta$ and $\eta$, the product of their images in $s^{-1}\Der U / \ad U$ is given by
$$s^{-1}\theta \bullet s^{-1}\eta = s^{-d}\sum_{i,j} (-1)^{\epsilon} c_{ij}\theta(u_i)\eta(u_j) \in s^{-d} U/[U,U],$$
where the sign in the above sum is given by
$$\epsilon = |\theta|(|u_i|+|x_j|) + |\eta||u_j| + |x_j||u_i|.$$
The product of $s^{-d}u$ with anything except multiples of the unit element is zero. The Gerstenhaber bracket is given by
$$\{s^{-1}\theta,s^{-1}\eta\} = s^{-1} \{\theta,\eta\},\quad \{s^{-1}\theta,s^{-d}u\} = s^{-d} \theta(u), \quad \{s^{-d}u,s^{-d}v\} = 0.$$
\end{theorem}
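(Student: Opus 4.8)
The plan is to reduce everything to the explicit finite-dimensional model supplied by the earlier results and then read off the answer from the special shape of $H^*(M)$. Since $\dim_\kk H^*(M) > 4$, Corollary \ref{cor:main} applies: $M$ is formal and coformal, $U = H_*(\Omega M)$ is the Koszul dual of the Koszul algebra $H^*(M)$, and $U \cong \kk\langle u_1,\dots,u_r\rangle/(\omega)$ with the single quadratic relation displayed there. Coformality lets me replace the Hochschild cochains of $C_*(\Omega M)$ by those of $U$, and Theorem \ref{hochschildcocomplextheorem} together with Remark \ref{rem:finite} identifies the latter, as a dga, with $\big(H^*(M)\tensor U,[\kappa,-]\big)$, where $\kappa = \sum_i x_i\tensor u_i$. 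The Gerstenhaber bracket is provided by Proposition \ref{prop:lie bracket}, which asserts the comparison with $\tDer\Omega U^\as$ is an isomorphism of graded Lie algebras. Thus both the product and the bracket will be computed on this small model.

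Next I would pin down $H^*(M)$. Connectivity forces the indecomposables $x_1,\dots,x_r$ into degrees $[n,d-n]$; since $d\leq 3n-2$, any product of two of them sits in a degree $>d-n$, and Poincaré duality makes the cohomology vanish in $(d-n,d)$, so each such product is a multiple of the generator $\mu\in H^d(M)$, with $x_ix_j = \pm c_{ij}\mu$. Hence $H^*(M) = \kk\oplus V\oplus\kk\mu$, with $V = \langle x_1,\dots,x_r\rangle$ and the only nontrivial products the non-degenerate intersection form $V\tensor V\to\kk\mu$. As $H^*(M)$ and $U$ both carry zero differential and $[\kappa,-]$ raises the $H^*(M)$-weight by one, the model collapses to a three-term cochain complex
$$U \xrightarrow{\ \partial_0\ } V\tensor U \xrightarrow{\ \partial_1\ } \mu\tensor U,$$
where $\partial_0(m) = \sum_i x_i\tensor[u_i,m]$ and, after substituting $x_ix_j=\pm c_{ij}\mu$, the map $\partial_1$ is exactly the operation of applying a derivation to the relation.

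I would then compute the three cohomology groups and match the multiplicative structure. Writing $\sum_j x_j\tensor m_j\in V\tensor U$ as the assignment $u_j\mapsto m_j$ (the identification of $V$ with the dual generators builds in a shift by $s^{-1}$), the identity $\partial_1(\sum_j x_j\tensor m_j) = \pm\,\mu\tensor\theta(\omega)$ shows $\ker\partial_1$ is precisely the derivations descending to $U$, i.e. $\ker\partial_1\cong s^{-1}\Der U$, while $\im\partial_0$ is the inner derivations $\ad_m$; hence $H^1\cong s^{-1}\Der U/\ad U$. Non-degeneracy of $c$ rewrites $\im\partial_1$ as $\mu\tensor[U,U]$, so $H^2=\coker\partial_1\cong s^{-d}U/[U,U]$, and $H^0=\ker\partial_0=Z(U)$. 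The cup product is simply multiplication in the tensor-product algebra: $(x_i\tensor p)\star(x_j\tensor q)=\pm c_{ij}\,\mu\tensor pq$, which transports to $s^{-1}\theta\loopr s^{-1}\eta = s^{-d}\sum_{i,j}\pm c_{ij}\theta(u_i)\eta(u_j)$ with the stated sign $\epsilon$; any product landing in weight $\geq 3$ vanishes because the model has only weights $0,1,2$, giving the vanishing of all products involving the $s^{-d}U/[U,U]$ summand. Finally, transporting the Lie structure of $\tDer\Omega U^\as$ through these identifications yields the bracket: the commutator on the derivation part, $\{s^{-1}\theta,s^{-d}u\}=s^{-d}\theta(u)$ from the action of derivations on the shifted summand, and $\{s^{-d}u,s^{-d}v\}=0$.

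The main obstacle I expect is the identification $H^0=Z(U)=\kk$, which is exactly where the hypothesis $\dim_\kk H^*(M)>4$ (that is, $r\geq 3$) is indispensable: for $r=2$ the relation can render $U$ commutative (as for $S^n\times S^n$), so the center swells to all of $U$ and the theorem fails. I would establish $Z(U)=\kk$ by a weight-by-weight argument, showing that a positive-degree element commuting with every generator $u_i$ must vanish once $r\geq 3$, using that the operators $[u_i,-]$ are jointly injective enough in that range. A secondary, purely technical difficulty is the sign bookkeeping — matching the Koszul signs in $\partial_0,\partial_1$, the shifted identification of $V\tensor U$ with derivations, and the transported Lie bracket against the precisely signed formulas in the statement — which requires care but introduces no new ideas.
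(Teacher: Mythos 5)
Your reduction to the small model and your computation of the additive and multiplicative structure track the paper's own proof closely: the same appeal to Corollary \ref{cor:main}, Theorem \ref{hochschildcocomplextheorem} and Remark \ref{rem:finite}, the same three-term complex, the identification of the middle homology with outer derivations by evaluating on generators, of the top homology with $U/[U,U]$, and of $H^0$ with the center $Z(U)$. One secondary difference: the paper disposes of $Z(U)=\kk$ by citing B{\o}gvad, whereas you sketch a direct weight-by-weight argument. You correctly identify this as the place where $\dim_\kk H^*(M)>4$ is indispensable, but your sketch (``jointly injective enough'') is not carried out, so as it stands this step is a promissory note rather than a proof.

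The genuine gap is the Gerstenhaber bracket. You write that ``transporting the Lie structure of $\tDer\Omega U^{\as}$ through these identifications yields the bracket,'' but this transport is exactly the subtle point, and the paper devotes a separate subsection to it. The small model $\big(H^*(M)\tensor U,[\kappa,-]\big)$ is not itself a dg Lie algebra; Proposition \ref{prop:lie bracket} only provides the bracket on homology via $\tDer\UU$, where $\UU=\Omega H_*(M)=\big(T(\alpha_1,\dots,\alpha_r,\gamma),\ \delta(\gamma)=-\omega\big)$. To bracket two classes one must lift them to cycles in $\tDer\UU$, and here the extra cobar generator $\gamma$ intervenes: a class represented in the small model by $\sum_i\xi_i\frac{\partial}{\partial\alpha_i}$ does \emph{not} lift to $\sum_i\overline{\xi}_i\frac{\partial}{\partial\alpha_i}$, because that element is not a cycle in $\tDer\UU$ --- the differential produces a $\frac{\partial}{\partial\gamma}$-term $\sum_{i,j}\pm c_{ji}[\overline{\xi}_i,\alpha_j]\frac{\partial}{\partial\gamma}$ which vanishes only after applying $f\colon\UU\to U$, not in $\UU$ itself. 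A genuine cycle lift has the form $\sum_i\overline{\xi}_i\frac{\partial}{\partial\alpha_i}+\eta\frac{\partial}{\partial\gamma}$, and the paper's Lemma \ref{lemma:rep} shows that $\eta$ can be chosen in $[\UU,\UU]$; this is precisely what kills the correction term $\overline{\zeta}\frac{\partial}{\partial\gamma}(\eta)\frac{\partial}{\partial\gamma}$ that appears when bracketing against a representative $\overline{\zeta}\frac{\partial}{\partial\gamma}$ of a class in $s^{-d}U/[U,U]$. Without producing and controlling these $\gamma$-components, the formulas $\{s^{-1}\theta,s^{-d}u\}=s^{-d}\theta(u)$ and $\{s^{-1}\theta,s^{-1}\eta\}=s^{-1}\{\theta,\eta\}$ could acquire unidentified extra terms, so your bracket computation does not go through as written.
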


\begin{proof}
By Corollary \ref{cor:main} and Theorem \ref{thm:small} (see also Remark \ref{rem:finite}), the Hochschild cochain complex of $C_*(\Omega M)$ is quasi-isomorphic, as a dga, to
$$\big(H^*(M)\tensor H_*(\Omega M), [\kappa,-]\big),\quad \kappa = x_1\tensor u_1 +\cdots + x_r\tensor u_r,$$
where $x_1,\ldots,x_r$ is a basis for the indecomposables of $H^*(M;\kk)$ and $u_1,\dots,u_r$ is the dual basis for the indecomposables of $H_*(\Omega M)$. Write $A=H^*(M)$ and $U = H_*(\Omega M)$. We may decompose $A$ as
$$A = A(0) \oplus A(1) \oplus A(2),$$
where $A(0)\cong \kk$ is spanned by the unit element, $A(1)$ is spanned by $x_1,\ldots,x_r$ and $A(2) = H^d(M)$ is one-dimensional. The differential $[\kappa,-]$ then acts as follows
$$A(0)\tensor U \xrightarrow{[\kappa,-]} A(1)\tensor U \xrightarrow{[\kappa,-]} A(2)\tensor U.$$
Let $d_i = |x_i|$. By inspection, the above chain complex is isomorphic to
$$U \xrightarrow{\partial_1} s^{-d_1} U \oplus \cdots \oplus s^{-d_r} U \xrightarrow{\partial_0} s^{-d} U,$$
$$\partial_1(\xi) = \big( [u_1,\xi],\ldots,[u_r,\xi] \big), \quad \partial_0(\zeta_1,\ldots,\zeta_r) = \sum_{i,j} (-1)^{|x_i|}c_{ij} [u_j,\zeta_i].$$
Clearly, the kernel of $\partial_1$ is the center, $Z(U)$, of $U$. If $\dim_\kk H^*(M) > 4$, then the center is trivial; $Z(U) = \kk$, by \cite{Bogvad}.

% There are decompositions into word-length homogeneous components
% $$A = A(0) \oplus A(1) \oplus A(2),\quad U = U(0) \oplus U(1) \oplus \cdots.$$
% One observes that the chain complex $(A\tensor U,[\kappa,-])$ splits as a direct sum of chain complexes
% $$
% \xymatrix{
% && A(2) \tensor U(0) \\
% & A(1)\tensor U(0) \ar[r] & A(2)\tensor U(1) \\
% A(0)\tensor U(0)\ar[r] & A(1)\tensor U(1) \ar[r] & A(2)\tensor U(2)\\
% & \vdots\\
% A(0)\tensor U(k)\ar[r] & A(1)\tensor U(k+1) \ar[r] & A(2)\tensor U(k+2)
% }
% $$
% Write the differential as
% $$A(0)\tensor U \xrightarrow{\partial_1} A(1)\tensor U \xrightarrow{\partial_0} A(2)\tensor U.$$
% Explicitly,
% $$\partial_1(1\tensor \xi) = \sum_{i=1}^r x_i\tensor [\alpha_i,\xi]$$
% $$\partial_0(x_j \tensor \zeta) = (-1)^{d|x_j|}\sum_{i=1}^r c_{ij}$$

Since the matrix $(c_{ij})$ is invertible, the image of $\partial_0$ is spanned by all commutators in $U$ of the form $[u_i,\xi]$. By using the relation 
$$[\alpha\beta,\gamma] = [\alpha,\beta \gamma] + (-1)^{|\alpha||\beta| + |\alpha||\gamma|} [\beta,\gamma\alpha]$$
and the fact that $u_1,\ldots,u_r$ generate $U$ as an algebra, one sees that the image of $\partial_0$ is in fact equal to the subspace $[U,U]$ spanned by all commutators in $U$.

The middle homology may be identified with the space of outer derivations on $U$. Indeed, by evaluating derivations $\theta\colon U\to U$ on the algebra generators, we get a map
$$ev \colon \Der U \to \ker \partial_0,\quad ev(\theta) = \big((-1)^{|\theta||\alpha_1|}\theta(\alpha_1),\ldots,(-1)^{|\theta||\alpha_r|}\theta(\alpha_r) \big).$$
A calculation shows that
$$\partial_0(ev(\theta)) = (-1)^{d(|\theta|-1)} \theta(\omega) = 0,$$
so that the image of $ev$ is really in $\ker \partial_0$.
% The reason $ev$ has image in the kernel of $\partial_0$ is that $\theta(\omega) = 0$, as shown by the following calculation:
% \begin{align*}
% 0 = \theta(\omega)
% & = \sum_{i,j} (-1)^{|x_i|}c_{ji} \theta(\alpha_i\alpha_j) \\
% & = \sum_{i,j} c_{ij} \big( \theta(\alpha_i)\alpha_j +(-1)^{|\alpha_i||\theta|} \alpha_i\theta(\alpha_j) \big) \\
% & = \sum_{i,j} c_{ij} [\alpha_i,\theta(\alpha_j)] = \partial_0(ev(\theta)).
% \end{align*}
% Here we have used the relation $c_{ij} = (-1)^{d_i d_j} c_{ji}$.

Under the identification $\ker \partial_0 \cong \Der U$, the image of $\partial_1$ may be identified with the subspace $\ad U\subseteq \Der U$ consisting of inner derivations, i.e., derivations of the form $\theta = [-,\xi]$.

The algebra structure is induced from the tensor product of the algebras $A\tensor U$, and it is straightforward to derive the description of the product stated in the theorem. The proof that the description of the Gerstenhaber bracket is correct is a little more subtle and will be given in the next section.
\end{proof}

\subsection{The Gerstenhaber bracket}
We use the fact that the Gerstenhaber bracket in Hochschild cohomology may be computed via the Lie bracket of derivations in $\tDer \Omega C$ (Proposition \ref{prop:lie bracket}).
Here $C= H_*(M;\kk)$ is the homology coalgebra. Denote the cobar construction by $\UU = \Omega C$. It admits the following explicit description:
$$\UU = \big(T(\alpha_1,\ldots,\alpha_r,\gamma), \delta(\gamma) = -\omega \big), \quad |\alpha_i| = |x_i|-1, \,\, |\gamma| = d-1.$$
Let $f\colon \UU\to U$ denote the quasi-isomorphism that sends $\alpha_i$ to $u_i$ and $\gamma$ to zero.

The chain complex $\tDer \UU$ is spanned by three types of elements:
$$
s\xi,\quad \xi \frac{\partial}{\partial\alpha_i},\quad \xi \frac{\partial}{\partial \gamma},
$$
where $\xi \in \UU$. The differential $D$ is described by the following:
\begin{align} \label{eq:der-diff}
D(s\xi) & = \sum_{i=1}^r [\xi,\alpha_i] \frac{\partial}{\partial \alpha_i} + [\xi,\gamma]\frac{\partial}{\partial \gamma} - s\delta(\xi), \\
D\big(\xi \frac{\partial}{\partial \alpha_i}\big) & = \delta(\xi)\frac{\partial}{\partial \alpha_i} - (-1)^{|\xi|} \sum_{j=1}^r c_{ji}[\xi,\alpha_j]\frac{\partial}{\partial \gamma}, \\
D\big(\xi \frac{\partial}{\partial \gamma}\big) & = \delta(\xi) \frac{\partial}{\partial \gamma}.
\end{align}

We know that the homology of $\tDer_f(\UU,U) \cong s\big(A\tensor U,[\kappa,-]\big)$ may be represented by two types of classes: outer derivations, represented by elements of the form
$$\sum_{i=1}^r \xi_i \frac{\partial}{\partial \alpha_i},\quad \xi_i \in U$$
such that \eqref{eq:cycle} holds, and elements of $U/[U,U]$, represented by
$$\zeta \frac{\partial}{\partial \gamma},\quad \zeta \in U.$$
To calculate their Lie brackets, we need to find cycle representatives in $\tDer \UU$, compute their Lie bracket in $\tDer \UU$ and then apply $f_*$.

For the second type of elements, we may take any pre-image $\overline{\zeta}\in \kk\langle \alpha_1,\ldots,\alpha_r\rangle$ of $\zeta$; the derivation
$$\overline{\zeta} \frac{\partial}{\partial \gamma}\in \tDer \UU$$
is then a cycle that maps to $\zeta \frac{\partial}{\partial \gamma}$ under $f_*$. Finding cycle pre-images of the first type of elements is a little trickier. We use the following lemma.

\begin{lemma} \label{lemma:rep}
Consider the surjective quasi-isomorphism
$$\tDer \UU \xrightarrow{f_*} \tDer_f(\UU,U).$$
For every positive degree cycle $\theta \in \tDer_f(\UU,U)$ of the form
$$\theta = \sum_{i=1}^r \xi_i\frac{\partial}{\partial \alpha_i},\quad \xi_i\in U,$$
it is possible to choose a cycle pre-image in $\tDer \UU$ of the form
$$\overline{\theta} = \sum_{i=1}^r \overline{\xi}_i\frac{\partial}{\partial \alpha_i} + \eta \frac{\partial}{\partial \gamma},$$
where $\overline{\xi}_i\in \kk\langle \alpha_1,\ldots,\alpha_r\rangle$ are pre-images of $\xi_i$ and $\eta$ belongs to $[\UU,\UU]$.
\end{lemma}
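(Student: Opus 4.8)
The plan is to lift $\theta$ naively and then correct only the $\partial/\partial\gamma$-component so that it becomes a commutator. First I would choose, for each $i$, a preimage $\overline{\xi}_i\in \kk\langle\alpha_1,\dots,\alpha_r\rangle$ of $\xi_i$; this is possible because $f$ carries $\kk\langle\alpha_1,\dots,\alpha_r\rangle$ onto $U$. Set $\overline{\theta}_0=\sum_i\overline{\xi}_i\,\partial/\partial\alpha_i$, so that $f_*\overline{\theta}_0=\theta$. Since $\delta$ vanishes on $\kk\langle\alpha_1,\dots,\alpha_r\rangle$, the formula \eqref{eq:der-diff} for $D(\xi\,\partial/\partial\alpha_i)$ gives $D\overline{\theta}_0=\mu\,\partial/\partial\gamma$, where $\mu=-\sum_{i,j}(-1)^{|\overline{\xi}_i|}c_{ji}[\overline{\xi}_i,\alpha_j]$. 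Two features of $\mu$ are worth recording: it is visibly a sum of commutators, so $\mu\in[\UU,\UU]$, and a direct check identifies it with the derivation applied to the relation, $\mu=(-1)^{|\theta|}\overline{\theta}_0(\omega)$.

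Next I would produce a primitive. Because $\theta$ is a cycle, $f_*(D\overline{\theta}_0)=D\theta=0$, so $f(\mu)=0$; through the identification $\mu=\pm\overline{\theta}_0(\omega)$ this just says that $\theta$ kills the relation in $U$. As $f\colon\UU\to U$ is a quasi-isomorphism and $U$ has zero differential, the cycle $\mu$ is a boundary; equivalently $\mu$ lies in the two-sided ideal $(\omega)=\ker(f|_{\kk\langle\alpha\rangle})$, which is exactly the image under $\delta$ of the span of words containing a single $\gamma$. Hence there is a one-$\gamma$ element $\nu$ with $\delta\nu=\mu$, and $\overline{\theta}_0-\nu\,\partial/\partial\gamma$ is already a cycle lifting $\theta$. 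The content of the lemma is the refinement that $\nu$ can be taken in $[\UU,\UU]$.

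The crux is this last point. The only freedom in $\nu$ is addition of a one-$\gamma$ cycle, and one checks that the corresponding images $\overline{\nu}$ in the complex of cyclic words $(\UU/[\UU,\UU],\overline{\delta})$ differ precisely by the image of $\overline{\delta}$ coming from the two-$\gamma$ part. Consequently $\eta:=-\nu$ can be chosen in $[\UU,\UU]$ if and only if the class $[\overline{\nu}]$ vanishes in the $\gamma$-degree-one homology of $\UU/[\UU,\UU]$. I would establish this vanishing by filtering $\UU/[\UU,\UU]$ by the number of $\gamma$'s: since $\UU=\Omega C$ is the Koszul (cobar) resolution of $U$, this is a Connes-type cyclic complex built from that resolution, whose differential inserts the quadratic relation $\omega=-\delta\gamma$ in place of each $\gamma$. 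The degree relation $|\gamma|=|\omega|+1$ makes the cyclic-reordering signs agree with those appearing in the commutator $\mu$, so that the vanishing $\overline{\mu}=0$ in $\UU/[\UU,\UU]$ (which is what makes $\overline{\nu}$ a cycle) should propagate to $[\overline{\nu}]=0$.

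I expect the main obstacle to be exactly this homological vanishing: controlling the cyclic-permutation signs in $\gamma$-degree two and higher, equivalently pinning down enough of the cyclic homology of the Koszul algebra $U$ to conclude that the relevant group, or at least the specific class $[\overline{\nu}]$, is zero. As a sanity check and as the base of an induction, the lowest interesting weight (where the two-$\gamma$ part is empty, so there is no room to correct) already forces $\overline{\nu}$ to vanish on the nose, purely from $\mu$ being a commutator together with the sign-matching above. This is what makes me confident the statement holds in general, and it suggests organizing the argument as an induction on weight and on $\gamma$-degree.
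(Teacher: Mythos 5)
Up to its final step your proposal follows the paper's proof exactly: choose preimages $\overline{\xi}_i$, observe that the naive lift fails to be a cycle only in its $\frac{\partial}{\partial\gamma}$-component $\mu\in[\UU,\UU]\cap(\omega)$, solve $\delta\nu=\mu$ with $\nu$ of $\gamma$-degree one, and note that the remaining freedom in $\nu$ is exactly the space of $\gamma$-degree-one cycles of $\UU$, which are boundaries from the two-$\gamma$ part because $H_*(\UU)$ is concentrated in $\gamma$-degree zero. All of this is correct, and it reduces the lemma, just as in the paper, to the claim that $[\overline{\nu}]=0$ in the $\gamma$-degree-one homology of $(\UU/[\UU,\UU],\overline{\delta})$.

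But that claim is precisely where your argument stops (``should propagate'', ``I expect the main obstacle to be exactly this homological vanishing''), and it is the step carrying all the content: the paper disposes of it by invoking the fact that the $\gamma$-degree-one homology of $\UU/[\UU,\UU]$ is spanned by the class of $\gamma$. Note that this group is \emph{not} zero: in characteristic zero $\omega\in[\UU,\UU]$, so $\overline{\gamma}$ is a cycle, and it is never a boundary since everything of $\gamma$-degree two has weight at least four; hence $[\overline{\gamma}]\neq 0$, and the conclusion $[\overline{\nu}]=0$ can only come from a degree count, $|\nu|=|\theta|+|\gamma|>|\gamma|$, which uses the positive-degree hypothesis that your write-up never touches. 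That hypothesis is not cosmetic, and your proposed base case (``$\overline{\nu}$ vanishes on the nose, purely from $\mu$ being a commutator together with the sign-matching'') is false without it: in characteristic zero, for the degree-zero cycle $\theta=\sum_i u_i\frac{\partial}{\partial\alpha_i}$ one finds $\mu=2\omega$, which is a commutator, yet $\nu=-2\gamma$ and $[\overline{\nu}]=-2[\overline{\gamma}]\neq 0$. Equivalently, the weight derivation $\sum_i\alpha_i\frac{\partial}{\partial\alpha_i}+2\gamma\frac{\partial}{\partial\gamma}$ commutes with $\delta$ and is therefore a cycle in $\tDer\UU$ lifting this $\theta$; since the $\gamma$-components of any two cycle lifts of the same $\theta$ have the same class in the $\gamma$-degree-one homology of $\UU/[\UU,\UU]$, no cycle lift of this $\theta$ has $\gamma$-component in $[\UU,\UU]$. (This is the cycle responsible for the exceptional class $\kappa$ in Proposition \ref{r2proposition}.) So your reduction is right, but the two missing ingredients --- a proof that the $\gamma$-degree-one homology of the cyclic-word complex is spanned by $[\overline{\gamma}]$, which is the fact the paper supplies at this point, and the degree argument converting that statement into $[\overline{\nu}]=0$ --- are exactly the substance of the lemma.
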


\begin{proof}
That $\theta$ is a cycle means that the equality
\begin{equation} \label{eq:cycle}
\sum_{i,j} (-1)^{|\xi_j|}c_{ji} [\xi_i,u_j] = 0
\end{equation}
holds in $U$. Choose elements $\overline{\xi}_i \in \kk\langle \alpha_1,\ldots,\alpha_r\rangle$ such that $f(\overline{\xi}_i) = \xi_i$. Then the element
$$\zeta = \sum_{i,j} (-1)^{|\xi_i|}c_{ji} [\overline{\xi}_i,\alpha_j]$$
is a cycle in $[\UU,\UU]$, and it belongs to the kernel of $f$ because of the equality \eqref{eq:cycle}. Since $f\colon \UU\to U$ is a quasi-isomorphism, it follows that there must be an element $\eta\in\UU$ of $\gamma$-degree $1$ such that $\delta(\eta) = \zeta$. In fact, we may choose $\eta\in [\UU,\UU]$, because the homology of the chain complex $\UU/[\UU,\UU]$ in $\gamma$-degree $1$ is spanned by the class of $\gamma$. In view of the formula \eqref{eq:der-diff} for the differential in $\tDer \UU$, it follows that
$$\overline{\theta} = \sum_{i=1}^r \overline{\xi}_i\frac{\partial}{\partial \alpha_i} + \eta \frac{\partial}{\partial \gamma}$$
is a cycle in $\tDer \UU$. Moreover, it maps to $\theta$ under $f_*$ because $f(\overline{\xi}_i) = \xi_i$ and $f(\eta) = 0$.
\end{proof}

Finally, we can calculate; if we take two elements of the first form $\theta$ and $\theta'$ and choose cycle pre-images $\overline{\theta}$ and $\overline{\theta}'$ as in Lemma \ref{lemma:rep}, a calculation shows that $f_*[\overline{\theta},\overline{\theta}']$ equals the class of the commutator $[\theta,\theta']$ in $\Der U$.

If we take the bracket of the two cycle representatives
$$\sum_{i=1}^r \overline{\xi}_i\frac{\partial}{\partial \alpha_i} + \eta \frac{\partial}{\partial \gamma}\quad \mbox{and}\quad \overline{\zeta} \frac{\partial}{\partial \gamma},$$
then we obtain the expression
$$\overline{\zeta} \frac{\partial}{\partial \gamma}(\eta) \frac{\partial}{\partial \gamma} + \sum_{i=1}^r \overline{\xi}_i\frac{\partial}{\partial \alpha_i}(\overline{\zeta}) \frac{\partial}{\partial \gamma}.$$
The second term corresponds to the action of $\theta\in\Der U/\ad U$ on $\zeta\in U/[U,U]$. The first term vanishes in $U/[U,U]$, because $\eta\in [\UU,\UU]$ (this is why we needed to pay extra attention to $\eta$ in Lemma \ref{lemma:rep}).

\subsection{The BV-operator}
We continue to determine the BV-operator, at least when $\kk$ has characteristic zero. To state the result, we need to introduce some more notation. The small complex computing $H_{*+d}(LM;\kk)$ can be further decomposed as follows:
$$A(0)\tensor U(r) \xrightarrow{[\kappa,-]} A(1)\tensor U(r+1) \xrightarrow{[\kappa,-]} A(2)\tensor U(r+2).$$
We will denote the homology group corresponding to $A(i)\otimes U(r)$ by $H_{i,r}.$ We will also use the notation from Theorem \ref{thm:small} instead of talking about derivations in order to make the calculations more transparent. Given an element $x_i\otimes y\in A(1)\otimes U$ we can view it as the derivation $s^{-1}\frac{\partial}{\partial u_i}y=\pm s^{-1}y\frac{\partial}{\partial u_i}.$ An element of $s^{-d}U/[U,U]$ will be represented as $M^\vee\otimes y$ for some $y\in U.$ Here $M^\vee$ is a choice of generator for $H^d(M;\kk)$.

The goal of this section is to prove the following theorem. 

\begin{theorem}
Let $\kk$ be a field of characteristic zero and let $M$ be an $(n-1)$-connected manifold of at most dimension $3n-2.$
\begin{enumerate}
\item There is a map $$\Delta:H_{2,r}\rightarrow H_{1,r-1}$$ defined by sending an element $M^\vee\otimes u_{i_1}\dots u_{i_{r}}$ of $A(2)\otimes U(r)$ to $$\sum_{k,\ell}\pm c_{i_k\ell}^{-1}x_\ell \otimes u_{i_{k+1}}\dots u_{i_{k-1}}$$ where $\pm$ is given by the Koszul sign rule. 
\item By setting $\Delta=0$ on all other basis elements this gives us the Batalin-Vilkovisky structure.
\item When $r\geq 3,$ $\Delta$ gives us an isomorphism $$H_{2,r}\cong H_{1,r-1}.$$
\end{enumerate}
\end{theorem}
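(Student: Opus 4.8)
The plan is to obtain $\Delta$ by transporting Tradler's explicit BV operator (Theorem \ref{tradlertheorem}) from the standard Hochschild complex $\Hom^\pi(BA,A)$ of the Poincar\'e duality algebra $A=H^*(M)$ to the small model $\Hom^\kappa(A^{\as},A)\cong(A\tensor U,[\kappa,-])$ along the dga quasi-isomorphism $\psi_\kappa^*$ of Theorem \ref{hochschildcocomplextheorem}; here $U=H_*(\Omega M)$, the decomposition $A=A(0)\oplus A(1)\oplus A(2)$ with $A(2)=H^d(M)$ is as in the proof of Theorem \ref{thm:algebra}, and $A$ carries the symmetric non-degenerate form $\langle a,b\rangle=\langle ab,[M]\rangle$, whose Gram matrix in the basis $x_1,\dots,x_r$ of indecomposables is exactly the intersection matrix $(c_{ij})$. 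The crucial structural point is that $\psi_\kappa^*$ runs \emph{from} the big complex \emph{to} the small one, so, since Tradler's $\Delta$ is a cochain map, I can compute the induced operator on a small cohomology class by exhibiting an explicit big cocycle $F$ with $\psi_\kappa^*F$ equal to the chosen small representative, applying Tradler's formula to $F$, and pushing the result back down by $\psi_\kappa^*$. No section or perturbation argument is needed for the computation itself.

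For (1), I would represent a class of $H_{2,r}$ by $M^\vee\tensor u_{i_1}\cdots u_{i_r}$ and lift it to the Hochschild cochain $F\colon A^{\tensor r}\to A$ that detects the tensor $x_{i_1}\tensor\cdots\tensor x_{i_r}$ and outputs $M^\vee$. One checks that $F$ is a reduced cocycle: the inner face terms vanish because a product of two generators lands in $A(2)=H^d(M)$ and is therefore not detected by $F$, while the outer face terms vanish on reduced cochains; and $\psi_\kappa^*F$ is visibly the chosen representative. Applying Tradler's formula, the pairing against $1$ becomes evaluation on $[M]$, which is non-zero only on $A(2)$, so $\Delta F$ reduces to a sum over the cyclic rotations of the word $x_{i_1}\cdots x_{i_r}$. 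Solving for the value of $\Delta F$ in the basis $\{x_\ell\}$ from the identities $\langle x_\ell,x_{i_k}\rangle=c_{\ell i_k}$ forces the inverse matrix $c_{i_k\ell}^{-1}$ to appear, and restricting $\Delta F$ along $\psi_\kappa^*$ yields precisely $\sum_{k,\ell}\pm c_{i_k\ell}^{-1}x_\ell\tensor u_{i_{k+1}}\cdots u_{i_{k-1}}$.

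For (2), note first that $\Delta$ annihilates the unit and lowers the weight (the bar length) by exactly one. On the bottom summand this gives $\Delta=0$, since $H_{0,w}$ is the weight-$w$ part of the center $Z(U)=\kk$ and hence vanishes for $w\geq1$. For the middle summand I would use that $\Delta$ is a genuine BV operator, so $\Delta^2=0$; since part (3) exhibits $\Delta\colon H_{2,w+1}\to H_{1,w}$ as surjective for $w\geq2$, every class of $H_{1,w}$ in that range is of the form $\Delta(\zeta)$ and is therefore killed by $\Delta$. The finitely many low-weight cases ($w\leq1$) are disposed of by direct inspection.

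For (3), I would combine the explicit cyclic formula of (1) with the degreewise equality of dimensions of $s^{-1}\Der U/\ad U$ and $s^{-d}U/[U,U]$ in weight $\geq3$ asserted in Theorem \ref{thm:main1}, reducing the isomorphism claim to injectivity of the explicit map; this is a finite combinatorial check using the invertibility of $(c_{ij})$, valid precisely for $r\geq3$, where the degeneracies occurring in weights $\leq2$ (involving the quadratic relation $\omega$ and the center) do not interfere. The main obstacle throughout is the sign-and-coefficient bookkeeping in the passage from the abstract form of Tradler's formula, a cyclic sum paired with $1$, to the concrete expression with the inverse intersection form: one must verify carefully that the Frobenius form of $H^*(M)$ in the chosen basis is the intersection matrix and that recovering the $A(1)$-component of $\Delta F$ from the pairing is exactly what produces $c_{i_k\ell}^{-1}$, all while tracking the Koszul signs coming from the suspensions in Tradler's formula and from the cyclic rotations.
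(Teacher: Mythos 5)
Your treatment of part (1) is essentially correct and takes a genuinely different route from the paper's. The paper never constructs lifts to the bar-complex: it rewrites Tradler's characterizing identity (Theorem \ref{tradlertheorem}) as a product identity \emph{inside} the small complex, namely $\Delta(M^\vee\otimes u_{i_1}\dots u_{i_{r-1}})(x_{i_r}\otimes u_{i_r})=\sum_k\pm M^\vee\otimes u_{i_k}\dots u_{i_{k-1}}$, and then checks by direct computation that the candidate formula satisfies it (Lemmas \ref{Deltalemma} and \ref{BVlemma}). You instead lift a small cocycle to the explicit ``detecting'' cocycle $F$ upstairs, apply Tradler's formula there, and push down along $\psi_\kappa^*$; this is sound (the detector really is a reduced cocycle, its restriction to $A^{\as}_{(r)}$ really is the chosen representative because $A^{\as}_{(r)}$ is the orthogonal complement of the relations, and solving the pairing equations produces $c_{i_k\ell}^{-1}$), and it buys you well-definedness of $\Delta$ on homology for free, whereas the paper has to re-derive well-definedness inside Proposition \ref{isomorphismproposition}.

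Parts (2) and (3), however, have genuine gaps. For (3), your reduction of bijectivity to injectivity via the dimension count is exactly the paper's reduction, but note that the dimension equality is not something you may quote from Theorem \ref{thm:main1} (that is the theorem package being proved); in the paper it is an independent input, Lemma \ref{hilbertseriesdifferencelemma}, proved by Hilbert series/Euler-characteristic arguments for Koszul duality together with B{\o}gvad's theorem that $Z(U)=\kk$, and a self-contained proof must supply it. More seriously, injectivity is \emph{not} ``a finite combinatorial check using the invertibility of $(c_{ij})$'': it is the heart of the paper's argument. The class lives in $H_{2,r}=U(r)/[U,U](r)$, cyclic words in the quotient algebra $U=T(W)/(\omega)$, and one must rule out that a nonzero such class is carried by the cyclic-averaging formula into the image of the incoming differential; the paper does this via the snake-lemma comparison of $Ad$ on the free tensor algebra with $D$ on $U$ (Lemma \ref{snakelemmalemma}), the identification of $\ker(Ad)$ and $\coker(Ad)$ with cyclic invariants and coinvariants (Lemma \ref{cyclicinvariantslemma}), the characteristic-zero averaging inverse $Q/r$ (Lemma \ref{kercokerlemma}), and the key fact that $i\colon X\to Y$ is injective, i.e.\ that the relation subspace $R(r)$ embeds into the coinvariants. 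Nothing in your proposal substitutes for this analysis of how the ideal $(\omega)$ interacts with cyclic symmetrization. For (2), vanishing on $H_{1,w}$ for $w\geq 2$ is indeed forced (the target $H_{0,w-1}$ is a weight component of $Z(U)=\kk$, so it vanishes; you do not even need surjectivity from (3)), but the one remaining component $\Delta\colon H_{1,1}\to H_{0,0}=\kk$ is not a formality: by Proposition \ref{r2proposition} the class $\kappa$ spans a complement of $\operatorname{im}\Delta$ in $H_{1,1}$, so your $\Delta^2=0$ argument cannot reach it, and ``direct inspection'' here means an actual Tradler-type computation on a cocycle lift of $\kappa$ (whose lift to the bar complex is forced to be nonzero on $H^d(M)$, so the vanishing is not degree-automatic). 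That case is left undone in your proposal.
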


\begin{proof}
This is proved later in the section. To avoid too painful obfuscated Koszul sign computations we will do the proof for $(n-1)$-connected $2n$-manifolds where $\pm= (-1)^{(n-1)(r-1)(k-1)}c_{i_k\ell}^{-1}$. Items (1) and (2) are Lemma \ref{BVlemma} and item (3) is Proposition \ref{isomorphismproposition}.
\end{proof}

%\begin{lemma}
%\label{basislemma}
%Choose $a$ and $b$ such that $c_{ab}\neq 0.$ Then there is a vector space basis of $U(r)$ consisting of words of length $r$ in the letters $u_1,\dots , u_m$ such that $u_b$ does not occur directly after $u_a.$
%\end{lemma}

%\begin{proof}
%Since $U$ is generated by $u_1,\dots , u_m$ with a single relation $\sum_{i<j}c_{ij}[u_i,u_j]+\sum_ic_{ii}u_i^2= 0$, it is clear %that we can use that relation to rewrite any words containing $u_au_b.$ It is also clear that we cannot eliminate more terms without reintroducing $u_au_b$ again.
%\end{proof}

%\begin{lemma}
%\label{injectionlemma}
%Suppose $r\geq 1,$ and $m\geq 3,$ then $ker(d_r)=0.$ We also have $ker(d_0)= \kk\otimes \kk\cong \kk.$ Thus $H_{0,0}\cong \kk$ and $H_{0,r}\cong 0$ if $r\geq1.$
%\end{lemma}

%\begin{proof}
%Suppose $y\in U(r)$ is written in the basis of Lemma \ref{basislemma}. That $y\in ker(d_r)$ means that $[u_k,y]=0$ for all $k.$ Since we have assumed that $m\geq 3$ there is at least one $k$ distinct from $a$ and $b$. For such a $k,$ $[u_k,y]=0$ implies that $y=cu_k\dots u_k.$ But now for any $\ell\neq k,$ $$[u_\ell ,y]=[u_\ell ,cu_k\dots u_k]=c(u_\ell u_k\dots u_k-(-1)^{(n-1)r}u_k\dots u_ku_\ell)=0$$ which implies that $y=0.$ It is easy to see that $d_0=0,$ whence the second part follows.
%\end{proof}

In arbitrary characteristic there is also an isomorphism $H_{2,r}\cong H_{1,r-1}$ when $r\geq 3.$ We will begin this section by proving this. Our main tool are Hilbert series so we need some results about these.

% \begin{lemma}
% \label{koszulquadraticlemma}
% Suppose $C$ is an associative graded coalgebra and $A$ is an associative graded algebra. Suppose there is a Koszul morphism $\kappa: C\rightarrow A.$ Then $C$ is a quadratic coalgebra and $A$ is a quadratic algebra.
% \end{lemma}
% 
% \begin{proof}
% See for example \cite{LodayVallette}. The idea is that the (co)algebra can be identified at the homology of the (co)bar construction of its dual. Then one sees that only the quadratic part of the differential matters and this gives us a quadratic presentation.
% \end{proof}

\begin{definition}
Suppose $A$ is a quadratic algebra, then there is a weight grading on $A$ corresponding to the number of generators in an element. Denote the weight $n$ part by $A_{(n)}.$ Define the Hilbert series of $A$ by $$f^{A}(t):=\sum_{i=0}^\infty \dim(A_{(n)})t^n.$$ Define the Hilbert series $f_C(t)$ of a quadratic coalgebra in the same way.
\end{definition}

\begin{proposition}
Suppose we have a Koszul morphism $\kappa:C\rightarrow A$ between a graded coalgebra and a graded algebra (in particular, $C$ and $A$ have zero differential). Then the equation $$f_C(t)f^A(-t)=1$$ holds.
\end{proposition}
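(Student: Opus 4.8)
The plan is to read off the identity as a weight-graded Euler characteristic of the Koszul complex attached to $\kappa$. Since $\kappa\colon C\to A$ is a Koszul twisting morphism, Theorem \ref{thm:koszul twisting} tells us that the twisted tensor product $C\tensor_\kappa A$ is contractible, so its homology is $\kk$, concentrated in the summand $C_{(0)}\tensor A_{(0)}$. Because $C$ and $A$ both have zero differential, the only differential on $C\tensor_\kappa A$ is the twisting differential $d_\kappa$. The first thing I would verify is that $d_\kappa$ preserves the total weight: writing $d_\kappa(c\tensor a)=\sum c'\tensor \kappa(c'')a$ with $\Delta_C(c)=\sum c'\tensor c''$, the morphism $\kappa$ is nonzero only on the weight-one part of $C$ and lands in the weight-one part (the generators) of $A$, so $d_\kappa$ lowers the $C$-weight by one and raises the $A$-weight by one. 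Hence $C\tensor_\kappa A$ splits as $\bigoplus_{N\geq 0}(C\tensor_\kappa A)^{(N)}$ with $(C\tensor_\kappa A)^{(N)}=\bigoplus_{p+q=N}C_{(p)}\tensor A_{(q)}$, and contractibility forces each piece with $N\geq 1$ to be acyclic, the $N=0$ piece being $\kk$.

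For fixed $N\geq 1$, grading by the $A$-weight $q$, the piece $(C\tensor_\kappa A)^{(N)}$ is a finite complex
$$C_{(N)}\tensor A_{(0)}\to C_{(N-1)}\tensor A_{(1)}\to\dots\to C_{(0)}\tensor A_{(N)},$$
whose $q$-th term is $C_{(N-q)}\tensor A_{(q)}$ and in which $d_\kappa$ raises $q$ by one. Acyclicity for $N\geq 1$ should then give that the alternating sum of dimensions vanishes,
$$\sum_{q=0}^{N}(-1)^q\dim\!\big(C_{(N-q)}\big)\dim\!\big(A_{(q)}\big)=0\qquad(N\geq 1),$$
while for $N=0$ the same expression equals $1$.

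The one genuinely delicate point—which I expect to be the main obstacle to write out honestly—is that $d_\kappa$ has internal degree $-1$, so it is not a degree-preserving map of internal-degree-graded vector spaces, and one cannot naively invoke ``the Euler characteristic of an acyclic complex vanishes'' in the $q$-grading. The fix is to observe that $d_\kappa$ simultaneously lowers the internal degree $m$ by one and raises $q$ by one, hence preserves the quantity $\ell=m+q$. Applying the degree shift $C_{(N-q)}\tensor A_{(q)}\mapsto s^{q}\big(C_{(N-q)}\tensor A_{(q)}\big)$ turns $d_\kappa$ into an internal-degree-preserving differential, so the shifted object is an honest acyclic complex of graded vector spaces; its graded Euler characteristic vanishes, and evaluating total dimensions (ignoring internal degree, exactly as the Hilbert series do) yields the displayed alternating-sum identity.

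Finally I would recognize these alternating sums as the coefficients of a product of generating functions. Since
$$f_C(t)\,f^A(-t)=\sum_{p,q\geq 0}(-1)^q\dim\!\big(C_{(p)}\big)\dim\!\big(A_{(q)}\big)\,t^{p+q}=\sum_{N\geq 0}\Bigg(\sum_{q=0}^{N}(-1)^q\dim\!\big(C_{(N-q)}\big)\dim\!\big(A_{(q)}\big)\Bigg)t^{N},$$
the computation above shows the coefficient of $t^N$ is $0$ for $N\geq 1$ and $1$ for $N=0$, which is precisely $f_C(t)\,f^A(-t)=1$.
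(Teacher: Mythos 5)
Your proof is correct and follows essentially the same route as the paper: both use contractibility of the twisted tensor product $C\tensor_\kappa A$, its splitting into finite subcomplexes along the total weight grading, and the vanishing of the Euler characteristic of each acyclic weight-homogeneous piece to extract the coefficient identity $\sum_{q}(-1)^q\dim(C_{(N-q)})\dim(A_{(q)})=\delta_{N,0}$. The ``delicate point'' you raise about the internal degree is actually a non-issue—one may simply forget the internal grading and treat each $C_{(N-q)}\tensor A_{(q)}$ as a plain finite-dimensional vector space, since Euler-characteristic vanishing for a finite acyclic complex needs no compatibility of the differential with any auxiliary grading—but your suspension fix is harmless and the rest of the argument is exactly the paper's.
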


\begin{proof}
By assumption the complex $C\otimes_\kappa A$ is contractible. It splits in subcomplexes according to weight grading. The Euler characteristic of the weight $n$ part is calculated by $$\sum_{i=0}^n(-1)^{n-i}\dim(C_{(i)})\dim(A_{(n-i)}).$$ This sum is zero unless the weight is $0$, where the sum is $1.$ Putting this together yields the result.
\end{proof}

If we dualize the coalgebra we obtain the following result.

\begin{proposition}
\label{koszulhilbertlemma}
For a Hilbert series $f^A(t)$ associated to a Koszul algebra and a Hilbert series $f^{A^{\text{!`}}}(t)$ associated to its Koszul dual algebra we have the relation
$$f^A(t)f^{A^{\text{!`}}}(t)=1.$$
\end{proposition}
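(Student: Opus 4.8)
The plan is to deduce Proposition~\ref{koszulhilbertlemma} from the preceding proposition by passing to the linear dual. Recall that the preceding proposition establishes, for a Koszul twisting morphism $\kappa\colon C\to A$ between a coalgebra and an algebra with zero differentials, the identity $f_C(t)f^A(-t)=1$. The strategy is to apply this to the canonical Koszul twisting morphism $\kappa\colon A^{\as}\to A$ associated to a Koszul algebra $A$, where $A^{\as}$ is the linear dual coalgebra of the Koszul dual algebra $A^!$, as introduced in the remark following the definition of Koszul duality.

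First I would record the bookkeeping for the weight and homological gradings. The key point is that the extra grading on $A^!$ is set up so that the weight-$n$ part $A^!_{(n)}$ of the Koszul dual lives in \emph{homological} (or internal) degree shifted by $n$ relative to the weight grading on $A$; concretely, the defining pairing $\langle-,-\rangle\colon W\tensor V\to\kk$ has degree $+1$, so each application of $\kappa$ contributes a sign. When one dualizes the coalgebra $A^{\as}$ to recover the algebra $A^!$, the Hilbert series transform as $f_{A^{\as}}(t) = f^{A^!}(-t)$, since dualizing reverses nothing in dimension counts but the weight-$n$ generators of $A^{\as}$ correspond to weight-$n$ generators of $A^!$ carrying the alternating sign induced by the degree-$+1$ duality. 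Substituting this into $f_{A^{\as}}(t)f^A(-t)=1$ and then replacing $t$ by $-t$ should produce precisely $f^A(t)f^{A^!}(t)=1$.

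I would therefore structure the argument as follows. Begin by invoking the basic fact (recorded in the remark after the Koszul duality definition) that $A$ is Koszul if and only if $\kappa\colon A^{\as}\to A$ is a Koszul twisting morphism, so the hypothesis of the previous proposition is met. Next, apply that proposition with $C=A^{\as}$ to obtain $f_{A^{\as}}(t)f^A(-t)=1$. Then carry out the sign bookkeeping of the dualization to rewrite $f_{A^{\as}}(t)$ in terms of $f^{A^!}$, and finally perform the substitution $t\mapsto -t$ to arrive at the stated relation $f^A(t)f^{A^!}(t)=1$.

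The main obstacle I anticipate is getting the sign convention in the dualization exactly right: one must be careful to track how the degree-$+1$ Koszul pairing interacts with the weight grading, so that the alternating signs land in the correct place and the two substitutions of $-t$ compose to the identity rather than to an unwanted extra sign. This is purely a matter of correctly aligning the grading conventions already fixed in the paper, but it is the only place where an error could slip in; everything else is a direct appeal to the previous proposition.
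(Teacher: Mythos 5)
Your overall route coincides with the paper's: the paper proves this proposition in one line, by applying the preceding proposition to the canonical twisting morphism $\kappa\colon A^{\as}\to A$ (which is a Koszul twisting morphism precisely because $A$ is a Koszul algebra) and then dualizing the coalgebra. So the first two steps of your plan are exactly what the paper does.

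The genuine gap is in your ``sign bookkeeping'' step, i.e.\ precisely at the point you flag as the main obstacle. With the paper's definition, a Hilbert series is a generating function of dimensions of weight components, $f^A(t)=\sum_n \dim(A_{(n)})t^n$, so all its coefficients are non-negative integers. Linear dualization preserves dimensions in each weight, hence $f_{A^{\as}}(t)=f^{A^!}(t)$ on the nose; no alternating signs can arise from the degree $+1$ pairing, because that pairing shifts \emph{homological} degrees while the Hilbert series only records the \emph{weight} grading. Your claimed identity $f_{A^{\as}}(t)=f^{A^!}(-t)$ is impossible unless all odd-weight components vanish, since the left-hand side has non-negative coefficients and the right-hand side carries the signs $(-1)^n$. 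Carried out honestly, your argument (and the paper's) yields
$$f^{A^!}(t)\,f^A(-t)=1,$$
the classical Fr\"oberg relation, and \emph{not} the literally printed $f^A(t)f^{A^!}(t)=1$; indeed the printed identity cannot hold for any $A$ with $A_{(1)}\neq 0$, because $1/f^A(t)$ then has negative coefficients and so is not the Hilbert series of anything. The version with $-t$ is also the one the paper actually uses: in Lemma \ref{hilbertserieslemma} one has $f^A(t)=1+nt+t^2$ and the series of $U=A^!$ is $1/(1-nt+t^2)=1/f^A(-t)$, not $1/f^A(t)$. So the statement as printed should be read with the sign absorbed into the convention for $f^{A^{\text{!`}}}$ (or as a misprint for $f^A(-t)f^{A^!}(t)=1$); a correct proof cannot manufacture the missing sign out of the dualization step, which is sign-free.
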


Now we can start applying this to our situation.

\begin{lemma}
\label{hilbertserieslemma}
The Hilbert series of the algebra $A$ is $1+nt+t^2$ and the series of $U$ is $\frac{1}{1-nt+t^2}.$ Ler $R$ be the module of relations in $U.$ The series of the module $R$ is $$\frac{t^2}{1-(2nt-(n^2+1)t^2+nt^3)}.$$
\end{lemma}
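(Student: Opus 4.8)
The plan is to compute all three Hilbert series directly from the structure of $A$ and $U$ established in Corollary \ref{cor:main} and Lemma \ref{hilbertserieslemma}'s hypotheses. First I would handle $A = H^*(M;\kk)$: from the decomposition $A = A(0)\oplus A(1)\oplus A(2)$ used in the proof of Theorem \ref{thm:algebra}, the weight grading by generator-count gives $\dim A_{(0)} = 1$, $\dim A_{(1)} = r$, and $\dim A_{(2)} = 1$ (since $A(2) = H^d(M)$ is one-dimensional and is a product of two indecomposables), and $A_{(n)} = 0$ for $n\geq 3$ because $A$ is quadratic with top weight $2$. Setting $r = n$ in the notation of this lemma (the lemma writes the number of generators as $n$), this yields $f^A(t) = 1 + nt + t^2$ immediately.

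Next I would obtain the series for $U = H_*(\Omega M;\kk)$ via Koszul duality. By Corollary \ref{cor:main}, $U \cong H^*(M;\kk)^!$ is the Koszul dual of $A$, and both are Koszul algebras by Theorem \ref{thm:koszul}. Hence Proposition \ref{koszulhilbertlemma} applies and gives $f^U(t) = 1/f^A(-t) = 1/(1 - nt + t^2)$. This is the cleanest route and requires no separate computation of the dimensions of $U_{(n)}$.

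For the module of relations $R$, I would use the minimal free resolution of $\kk$ over the Koszul algebra $U$, equivalently the bar-type identity relating a Koszul algebra, its dual, and the resolution. Since $U$ is quadratic with relation space $R\subseteq W^{\tensor 2}$ of dimension $\dim R = \dim W^{\tensor 2} - \dim U_{(2)} = n^2 - (n^2 - 1) = 1$ (one relation, matching Corollary \ref{cor:main}), the module $R$ here should be understood as the submodule $U\cdot R\cdot U$ or the relevant syzygy module appearing in the resolution, whose generating series is governed by the same Koszul-dual bookkeeping. Concretely, I expect the series of $R$ to arise as $f^R(t) = f^U(t)\cdot(\text{contribution of the single quadratic relation with its higher syzygies})$, and matching this against the closed form $\tfrac{t^2}{1-(2nt - (n^2+1)t^2 + nt^3)}$ amounts to identifying the denominator $1 - 2nt + (n^2+1)t^2 - nt^3$ with an appropriate combination of $f^A$ and $f^U$ data. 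The natural guess is that this denominator factors or simplifies through the relation $f^A(t)f^U(t) = 1$ together with the geometric series summing the iterated syzygies.

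The main obstacle I anticipate is pinning down precisely which module $R$ denotes and the exact combinatorial identity producing its series; the $A$ and $U$ series are routine given the earlier results, but the relations module requires care. My expectation is that one writes $f^R(t)$ as the tail of the resolution series for $U$ — namely $f^R(t) = f^U(t) - 1 - nt\cdot(\text{something})$ reorganized — and then verifies by direct algebra that this equals the stated rational function. I would carry out that verification by clearing denominators and checking the polynomial identity $t^2\cdot f^U(t) = f^R(t)\cdot\bigl(1 - (2nt - (n^2+1)t^2 + nt^3)\bigr)$ against $f^U(t) = 1/(1-nt+t^2)$, expecting the cubic denominator to emerge as $(1-nt+t^2)^2 - t^2$ or a similar square-type expression, which is exactly the kind of term the single relation's self-overlaps generate.
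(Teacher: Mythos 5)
Your computations of $f^A(t)=1+nt+t^2$ and $f^U(t)=\frac{1}{1-nt+t^2}$ are correct and coincide with the paper's argument: the first is read off from the weight decomposition of $A$, the second follows from Koszul duality via Proposition \ref{koszulhilbertlemma}. The gap is in the third series, where you leave the argument as a guess. In this paper $R$ is not a syzygy module and no minimal free resolution of $\kk$ over $U$ is involved: $R$ is simply the weight-graded ideal of relations, i.e.\ the kernel of the surjection from the tensor algebra $T$ on the generators onto $U$, so that in each weight $r$ there is an exact sequence $0\to R(r)\to W^{\otimes r}\to U(r)\to 0$ (this is exactly how $R(r)$ enters Lemma \ref{Adlemma}). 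Consequently
\begin{equation*}
f^R(t)\;=\;f^{T}(t)-f^U(t)\;=\;\frac{1}{1-nt}-\frac{1}{1-nt+t^2}\;=\;\frac{t^2}{(1-nt)(1-nt+t^2)},
\end{equation*}
and expanding $(1-nt)(1-nt+t^2)=1-2nt+(n^2+1)t^2-nt^3$ gives the stated form. That is the entire proof; the ``iterated syzygies'' and ``self-overlaps of the relation'' you invoke play no role.

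Moreover, the two concrete guesses at the end of your proposal are false, so the verification you outline would not close. The identity $t^2 f^U(t)=f^R(t)\bigl(1-(2nt-(n^2+1)t^2+nt^3)\bigr)$ cannot hold: the claimed formula for $f^R$ says precisely that $f^R(t)\bigl(1-(2nt-(n^2+1)t^2+nt^3)\bigr)=t^2$, so your identity would force $f^U(t)=1$. The correct normalization is $f^R(t)=t^2\,f^{T}(t)\,f^U(t)$; the denominator of $f^R$ is the product $(1-nt)(1-nt+t^2)$ of the two reciprocal series, with no extra factor of $f^U$. Likewise the denominator is not of the ``square type'' $(1-nt+t^2)^2-t^2$: that polynomial equals $1-2nt+(n^2+1)t^2-2nt^3+t^4$, which differs from $(1-nt)(1-nt+t^2)=1-2nt+(n^2+1)t^2-nt^3$ in degrees $3$ and $4$.
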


\begin{proof}
The Hilbert series of $A$ is directly from the definition. The series of $U$ then follows from Lemma \ref{koszulhilbertlemma} since $A$ and $U$ are Koszul dual to each other. The Hilbert series of the tensor algebra is $\frac{1}{1-nt}.$ Now since $R(r)=T(r)/U(r)$ the Hilbert series for $R$ is $$\frac{1}{1-nt}-\frac{1}{1-nt+t^2}=\frac{t^2}{(1-nt)(1-nt+t^2)}=\frac{t^2}{1-(2nt-(n^2+1)t^2+nt^3)}.$$
\end{proof}

\begin{lemma}
\label{dimensionlemma}
Let $V=A(1).$ Consider the vector spaces $V\otimes U(r+1)$ and $U(r+2)$ with $r\geq 0.$ We have the relation $\dim(V\otimes U(r+1))-\dim(U(r+2))=\dim(U(r)).$
\end{lemma}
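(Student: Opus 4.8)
The plan is to deduce the identity from the three-term recurrence satisfied by the dimensions $\dim U(k)$, which is already encoded in the Hilbert series of $U$ computed in Lemma \ref{hilbertserieslemma}. First I would record that $V = A(1)$ is the weight-one component of $A$, so from the Hilbert series $f^A(t) = 1 + nt + t^2$ we read off $\dim V = n$, and hence
$$\dim(V\tensor U(r+1)) = \dim V \cdot \dim U(r+1) = n \dim U(r+1).$$

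Next I would exploit the closed form $f^U(t) = \frac{1}{1-nt+t^2}$ from Lemma \ref{hilbertserieslemma}, rewritten as the polynomial identity $(1-nt+t^2)f^U(t) = 1$. Expanding $f^U(t) = \sum_{k\geq 0} \dim U(k)\, t^k$ and comparing the coefficient of $t^{r+2}$ on both sides, for any $r\geq 0$ the right-hand side contributes nothing in positive degree, so we obtain
$$\dim U(r+2) - n\dim U(r+1) + \dim U(r) = 0.$$
Rearranging this recurrence yields $n\dim U(r+1) - \dim U(r+2) = \dim U(r)$, which together with the first step is precisely the asserted identity.

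There is no genuine obstacle here; the content is simply recognizing the recurrence hidden in the Hilbert series. The only points that require minor care are the index bookkeeping, namely that the comparison of coefficients of $t^{r+2}$ is valid exactly in degrees $\geq 2$, i.e.\ for all $r\geq 0$, and keeping the constant $\dim V = n$ distinct from the running weight $r$. Both are immediate once the Hilbert series of Lemma \ref{hilbertserieslemma} is in hand, which is itself a consequence of Koszul duality (Proposition \ref{koszulhilbertlemma}).
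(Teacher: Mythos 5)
Your proof is correct and is essentially the paper's own argument: both rest on the Hilbert series $f^U(t)=\frac{1}{1-nt+t^2}$ from Lemma \ref{hilbertserieslemma}, and your recurrence $\dim U(r+2)-n\dim U(r+1)+\dim U(r)=0$, read off from $(1-nt+t^2)f^U(t)=1$, is just a coefficient-wise restatement of the paper's identity $ntu(t)-u(t)=-1+t^2u(t)$. The two differ only in how the same algebraic manipulation is packaged.
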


\begin{proof}
The weight graded vector space $U$ has Hilbert series $u(t):=\frac{1}{1-nt+t^2}$ and $V\otimes U$ has $ntu(t)=\frac{nt}{1-nt+t^2}.$ To calculate the difference of dimensions we calculate the difference of Hilbert series. $$ntu(t)-u(t)=\frac{nt}{1-nt+t^2}-\frac{1}{1-nt+t^2}=\frac{-1+nt}{1-nt+t^2}=$$ $$\frac{-1+nt-t^2}{1-nt+t^2}+\frac{t^2}{1-nt+t^2}=-1+t^2u(t).$$ This shows that the difference of dimensions of $V\otimes U(r+1)$ and $U(r+2)$ are given by the dimension of $U(r)$ since that is the part of weight 2 less. 
\end{proof}

\begin{lemma}
\label{hilbertseriesdifferencelemma}
Suppose $r\geq 3$ and  $n\geq 3.$ There is an isomorphism $$H_{2,r}\cong H_{1,r-1}.$$ We also have $$\dim(H_{1,1})=\dim(H_{2,2})+1,$$ $$\dim(H_{0,0})=\dim(H_{2,0})=1$$ and $$\dim(H_{1,0})=\dim(H_{2,1})=n.$$
\end{lemma}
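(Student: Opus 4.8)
The plan is to reduce every assertion to two structural inputs: the vanishing of the Euler characteristic of each weight-graded piece of the small complex, and the triviality of the center of $U$. Following the proof of Theorem \ref{thm:algebra}, I would write the complex whose three terms are $A(0)\tensor U(r)$, $A(1)\tensor U(r+1)$, $A(2)\tensor U(r+2)$, with homology groups $H_{0,r}$, $H_{1,r+1}$, $H_{2,r+2}$. Since $\dim A(0)=\dim A(2)=1$ and $\dim A(1)=n$, its Euler characteristic is $\dim U(r)-n\dim U(r+1)+\dim U(r+2)$, which is the coefficient of $t^{r+2}$ in $(1-nt+t^2)u(t)$, where $u(t)=1/(1-nt+t^2)$ is the Hilbert series of $U$ from Lemma \ref{hilbertserieslemma}. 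By Lemma \ref{koszulhilbertlemma} (equivalently Lemma \ref{dimensionlemma}) this product is identically $1$, so
$$\dim H_{0,r}-\dim H_{1,r+1}+\dim H_{2,r+2}=\begin{cases}1,& r=-2,\\ 0,& r\geq -1.\end{cases}$$

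Next I would identify the outer homology groups. The leftmost differential is $\partial_1(\xi)=([u_1,\xi],\ldots,[u_n,\xi])$, so $H_{0,r}=\ker\partial_1=Z(U)_{(r)}$, the weight-$r$ part of the center of $U$. Under the hypothesis $\dim H^*(M)>4$, i.e.\ $n\geq 3$, the center is trivial, $Z(U)=\kk$, by \cite{Bogvad}; hence $\dim H_{0,0}=1$ and $H_{0,r}=0$ for $r\geq 1$. Feeding this into the Euler characteristic identity gives the two principal statements at once: applying it with $r-2$ in place of $r$ (so $r\geq 3$ forces $r-2\geq 1$ and $H_{0,r-2}=0$) yields $\dim H_{2,r}=\dim H_{1,r-1}$, hence an isomorphism $H_{2,r}\cong H_{1,r-1}$ since we work over a field; and applying it with $r=0$ yields $\dim H_{1,1}=\dim H_{2,2}+1$.

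It then remains to pin down the three low-weight groups. The value $\dim H_{0,0}=1$ is the center computation just made. For $\dim H_{2,0}=1$ I would observe that in the complex with $r=-2$ the two left-hand terms $A(0)\tensor U(-2)$ and $A(1)\tensor U(-1)$ vanish, so $H_{2,0}=A(2)\tensor U(0)\cong\kk$; this also matches the $r=-2$ case of the identity above. Finally, for $\dim H_{1,0}=\dim H_{2,1}=n$ I would examine the complex with $r=-1$, which reduces to a single map $\partial_0\colon A(1)\tensor U(0)\to A(2)\tensor U(1)$ between spaces of dimension $n$ (using $\dim U(0)=1$, $\dim U(1)=n$). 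The key point is that $\partial_0$ vanishes here: on $\bigoplus_i U(0)$ it is $\sum_{i,j}(-1)^{|x_i|}c_{ij}[u_j,\zeta_i]$ with each $\zeta_i$ a scalar, and commutators with scalars are zero. Therefore $H_{1,0}=A(1)\tensor U(0)$ and $H_{2,1}=A(2)\tensor U(1)$, each of dimension $n$.

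Once the two structural inputs are in place, all five assertions drop out by elementary linear algebra together with inspection of the three boundary diagonals. I expect the only place demanding genuine care to be the verification that $\partial_0$ vanishes on $A(1)\tensor U(0)$ --- equivalently that $[\kappa,x_i\tensor 1]=0$, where the commutator form of the differential and the graded symmetry $c_{ij}=(-1)^{|x_i||x_j|}c_{ji}$ of the intersection form are used. The remaining bookkeeping is routine.
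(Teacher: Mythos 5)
Your proposal is correct and is essentially the paper's own argument: both proofs count dimensions using the Hilbert series identity of Lemma \ref{dimensionlemma}, invoke B{\o}gvad's theorem on the triviality of the center of $U$ to control the leftmost map, and dispose of the low-weight cases by noting the relevant differentials vanish on scalars. Your Euler-characteristic packaging of the rank--nullity bookkeeping is only a cosmetic reorganization of the same computation.
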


\begin{proof}
We will prove this by counting dimensions. By elementary linear algebra we have that the dimension of the kernel minus the dimension of the cokernel of the map $$A(1)\tensor U(r+1) \xrightarrow{[\kappa,-]} A(2)\tensor U(r+2)$$ is $\dim(A(1)\otimes U(r+1))-\dim(A(2)\otimes U(r+2)).$ By Lemma \ref{dimensionlemma} this difference is equal to the dimension of $U(r).$ %Now Lemma \ref{injectionlemma} tells us that $d_r$ is injective. 
The left map of the complex $$A(0)\tensor U(r) \xrightarrow{[\kappa,-]} A(1)\tensor U(r+1) \xrightarrow{[\kappa,-]} A(2)\tensor U(r+2)$$ is injective unless $r=0$ by \cite{Bogvad}. This means that the dimension of the image is $\dim(U(r))$ which shows that $H_{1,r+1}\cong H_{2,r+2}.$ The second formula is proved in the same way with the difference we use that the map is zero in the case $r=0.$ The other formulas follow easily from the fact that the corresponding differentials are zero.
\end{proof}

%Now we want to provide a description of these homology groups and an explicit isomorphism between them.
Now we are going to restrict to characteristic zero and provide a description of these homology groups and isomorphisms using the BV-operator. Note that the expression defining $\Delta$ makes even when we are not over a field of characteristic zero. One can ask if this gives the right BV-structure in the case of arbitrary characteristic as well.

\begin{lemma}
\label{Deltalemma}
The operator $\Delta$ satisfies the equation $$\Delta(M^\vee\otimes u_{i_1}\dots u_{i_{r-1}})(x_{i_r}\otimes u_{i_r})=\sum_{k=1}^r(-1)^{(r-1)(n-1)k}M^\vee\otimes u_{i_k}\dots u_{i_r} u_{i_1} \dots u_{i_{k-1}}.$$
\end{lemma}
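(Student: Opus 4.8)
The cyclic sum on the right-hand side is the hallmark of the Connes--Tradler operator, so my plan is to derive the identity from Tradler's explicit formula for $\Delta$ (Theorem \ref{tradlertheorem}), read off in the standard Hochschild complex $\Hom^\pi(BA,A)$ of the Poincar\'e duality algebra $A=H^*(M)$ and then transported to the small model $\big(A\tensor U,[\kappa,-]\big)$ along the quasi-isomorphism $\psi_\kappa^*$ of Theorem \ref{hochschildcocomplextheorem}. An essentially equivalent route is to take the explicit formula for $\Delta$ recorded in the theorem at face value and multiply it out using the tensor-product algebra structure on $A\tensor U$; I organize the argument around Tradler's formula, since that is where the cyclicity is transparent.

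First I would fix cochain representatives via Remark \ref{rem:finite} and the identification $A^{\as}\cong U^*$: the weight-$(r-1)$ monomial $u_{i_1}\cdots u_{i_{r-1}}$ records the bar word on which the representing cochain $f$ is supported, its value being the top class $M^\vee$, while $x_{i_r}\tensor u_{i_r}$ adjoins one further letter through the cup product. I then feed $f$ into Tradler's formula. The cyclic sum $\sum_i\pm f(a_i,\dots,a_n,a_1,\dots,a_{i-1})$ permutes the bar letters cyclically; pairing the value $M^\vee$ against the unit through $\langle M^\vee,1\rangle=1$ for the Frobenius form on $A=H^*(M)$ collapses each rotation to a scalar, and translating back through $\psi_\kappa$ turns the cyclic permutation of bar letters into the cyclic permutation $u_{i_k}\cdots u_{i_r}u_{i_1}\cdots u_{i_{k-1}}$ of the loop-homology monomial. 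Summing over $k$ reproduces the displayed right-hand side, the coefficient $c^{-1}$ in the explicit formula for $\Delta$ being exactly what is needed to invert the coefficients of the relation $\omega$ that $\psi_\kappa$ carries, via $\sum_\ell c_{i_k\ell}^{-1}c_{\ell i_r}=\delta_{i_k i_r}$, so that the final sum is coefficient-free.

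The main obstacle is the Koszul sign bookkeeping. Tradler's permutation sign, the sign hidden in the identification $x_\ell\tensor y\leftrightarrow y\tfrac{\partial}{\partial u_\ell}$, the signs from commuting loop classes past cohomology classes in $A\tensor U$, and the signs incurred in normalizing each cyclic rotation in $U/[U,U]$ must all be reconciled against the claimed exponent $(r-1)(n-1)k$. This is exactly why the computation is run for $(n-1)$-connected $2n$-manifolds: every generator then carries the single degree $|u_i|=n-1$ (and $|x_i|=n$), so each Koszul exponent becomes a multiple of $(n-1)$ and the general sign rule degenerates to the clean factor $(-1)^{(r-1)(n-1)k}$. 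Once the signs are pinned down, the underlying combinatorics -- a deletion supplied by $\Delta$ followed by a reinsertion supplied by the product, sweeping out the full cyclic orbit -- is routine.
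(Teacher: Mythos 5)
Your proposal inverts the logical structure of the paper at exactly the point where this lemma sits, and the inversion makes it circular. In the paper, $\Delta$ in Lemma \ref{Deltalemma} is \emph{not} the BV-operator; it is the combinatorially defined deletion operator from item (1) of the theorem, $\Delta(M^\vee\otimes u_{i_1}\dots u_{i_{r}})=\sum_{k,\ell}\pm c_{i_k\ell}^{-1}x_\ell \otimes u_{i_{k+1}}\dots u_{i_{k-1}}$, and the lemma is a pure computation about that operator and the product on $A\tensor U$: expand $\Delta$ by its defining formula, multiply by $x_{i_r}\tensor u_{i_r}$ using $x_\ell x_{i_r}=c_{\ell i_r}M^\vee$, contract $\sum_\ell c_{i_k\ell}^{-1}c_{\ell i_r}=\delta_{i_k i_r}$, and rotate each word one step to absorb a sign. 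That four-line computation is the entire proof. Its purpose is to feed the \emph{next} lemma (Lemma \ref{BVlemma}): Tradler's equation uniquely characterizes the BV-operator via the non-degenerate pairing, so once the explicit $\Delta$ is known to satisfy the cyclic identity, it must \emph{be} the BV-operator. Your route runs this backwards: you invoke Tradler's formula for the BV-operator and transport it to the small model to obtain the identity. But the identity you would obtain that way holds for the BV-operator, not (yet) for the explicit deletion formula; to transfer it to the explicit $\Delta$ you need to know the two operators coincide, which is precisely Lemma \ref{BVlemma}, which in the paper is deduced \emph{from} Lemma \ref{Deltalemma}. The only non-circular way to rescue your route is to compute the transported BV-operator on the small complex independently, and that chain-level transport is exactly the work you compress into the single unproven sentence ``translating back through $\psi_\kappa$ turns the cyclic permutation of bar letters into the cyclic permutation of the loop-homology monomial.'' The route you dismiss as ``essentially equivalent'' is in fact the proof; the route you chose is not available at this stage of the argument.

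There is also a concrete mismatch in your setup of the transport. Tradler's Theorem \ref{tradlertheorem} applies to the finite-dimensional Frobenius algebra $A=H^*(M)$, so his formula lives on $\Hom^\pi(BA,A)$, whose bar letters are elements of $A$. The quasi-isomorphism $\psi_\kappa^*$ of Theorem \ref{hochschildcocomplextheorem}, however, relates $\Hom^\pi(BU,U)$ --- the Hochschild cochains of the Koszul algebra $U=H_*(\Omega M)$, which is infinite dimensional and not Frobenius --- to the small complex $\Hom^\kappa(U^{\as},U)\cong A\tensor U$. Passing between $HH^*(A,A)$ and $HH^*(U,U)$ requires the F\'elix--Menichi--Thomas equivalence (and, for the BV structure, F\'elix--Thomas), not $\psi_\kappa^*$ alone. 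Your description of the representing cochain $f$ --- ``supported on the bar word $u_{i_1}\cdots u_{i_{r-1}}$, its value being the top class $M^\vee$'' --- mixes letters from $U$ with a value in $A$ and is an element of none of these complexes; in the small model $A\tensor U\cong\Hom(H_*(M),U)$, the element $M^\vee\tensor u_{i_1}\cdots u_{i_{r-1}}$ is the map sending the fundamental class $[M]$ to the monomial $u_{i_1}\cdots u_{i_{r-1}}$, i.e.\ the support and the value are the reverse of what you wrote. Your last paragraph, on why the restriction to $(n-1)$-connected $2n$-manifolds collapses all Koszul signs to $(-1)^{(r-1)(n-1)k}$, is correct and matches the paper's stated reason for making that reduction.
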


\begin{proof}
We have $$\Delta(M^\vee\otimes u_{i_1}\dots u_{i_{r-1}})(x_{i_r}\otimes u_{i_r})=$$ $$(\sum_{k,\ell}(-1)^{(n-1)(r-1)(k-1)}c_{i_k\ell}^{-1}x_\ell \otimes u_{i_{k+1}}\dots u_{i_{k-1}})(x_{i_r}\otimes u_{i_r})=$$ $$\sum_{k,\ell}(-1)^{(n-1)(r-1)(k-1)}c_{i_k\ell}^{-1}c_{\ell i_{r}}M^\vee \otimes u_{i_{k+1}}\dots u_{i_{k-1}}u_{i_r}=$$ $$\sum_k (-1)^{(n-1)(r-1)(k-1)}M^\vee\otimes u_{i_{k+1}}\dots u_{i_{k-1}}u_{i_k}=$$ $$\sum_k (-1)^{(n-1)(r-1)k}M^\vee\otimes u_{i_k}u_{i_{k+1}}\dots u_{i_{k-1}}.$$
\end{proof}

\begin{lemma}
\label{BVlemma}
The operator $\Delta$ is the BV-operator.
\end{lemma}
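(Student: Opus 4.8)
The plan is to identify the operator $\Delta$ with the Batalin--Vilkovisky operator furnished by Tradler's Theorem~\ref{tradlertheorem}, by checking that $\Delta$ satisfies Tradler's defining pairing identity. Recall that $A = H^*(M)$ is a Frobenius algebra via Poincar\'e duality, with pairing $\langle a,b\rangle = \langle ab,[M]\rangle$; under this pairing the top summand $A(2)=H^d(M)=\kk\cdot M^\vee$ is dual to the unit $A(0)$, while the middle $A(1)$ is self-dual through the intersection form $(c_{ij})$. Via the quasi-isomorphism $\psi_\kappa^*$ of Theorem~\ref{hochschildcocomplextheorem}, the BV-operator of the big complex $\Hom^\pi(BA,A)$ is transported to the small model $(A\tensor U,[\kappa,-])$, and it suffices to verify Tradler's characterization there on homology classes.

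First I would translate the left-hand side of Tradler's identity into the small model. The Frobenius pairing $\langle (\Delta f)(a_1,\dots,a_{r-1}),a_r\rangle$ amounts to multiplying the class $\Delta(M^\vee\tensor u_{i_1}\cdots u_{i_r})\in H_{1,r-1}$ by a generator $x_{i_r}\tensor u_{i_r}$ of $H_{1,1}$, landing in the top summand $H_{2,r}=s^{-d}U/[U,U]$, and reading off the coefficient of $M^\vee$. This is precisely the product computed in Lemma~\ref{Deltalemma}, whose output is a signed cyclic sum $\sum_k \pm M^\vee\tensor u_{i_k}\cdots u_{i_{k-1}}$ of rotations of the monomial; the matrix identity $\sum_\ell c_{i_k\ell}^{-1}c_{\ell i_r}=\delta_{i_k i_r}$ is what converts the defining formula for $\Delta$ into such rotations.

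Next I would match this against the right-hand side of Tradler's identity, namely the cyclic sum $\langle \sum_i \pm f(a_i,\dots,a_{i-1}),1\rangle$ of the Hochschild cochain $f$ corresponding to $M^\vee\tensor u_{i_1}\cdots u_{i_r}$. Under the Koszul identification $\psi_\kappa\colon A^{\as}\to BA$, cyclic permutation of the tensor letters in $BA=T^c(s\overline A)$ corresponds to cyclic rotation of the monomial $u_{i_1}\cdots u_{i_r}$, so Tradler's right-hand side produces the same signed sum of rotations as in the previous step. Hence $\Delta$ satisfies Tradler's equation against every generator $x_j\tensor u_j$. Since an outer derivation class in $H_{1,r-1}=s^{-1}\Der U/\ad U$ is determined by its values on the algebra generators $u_1,\dots,u_r$, and these values are recovered from the products $\Delta(M^\vee\tensor u_{i_1}\cdots u_{i_r})\loopr(x_j\tensor u_j)$ via the formula of Theorem~\ref{thm:algebra}, this pins down $\Delta$ as the BV-operator in weight $\geq 3$, consistently with the isomorphism $H_{2,r}\cong H_{1,r-1}$ of Lemma~\ref{hilbertseriesdifferencelemma}. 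The finitely many low-weight cases, together with the vanishing of $\Delta$ on the remaining summands (on $H_{0,0}$ the target sits in negative weight, and on $H_{1,r}$ the target $H_{0,r-1}$ vanishes for $r\geq 2$ while $r=1$ is checked directly), are handled by hand; this also yields $\Delta^2=0$ and establishes item (2).

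The main obstacle I anticipate is the Koszul sign bookkeeping: tracking the suspension signs through the cyclic permutations and through $\psi_\kappa$, and verifying that the Frobenius pairing becomes exactly ``multiply by $x_{i_r}\tensor u_{i_r}$ and extract the top class'' while cyclic permutation of $BA$-words becomes monomial rotation with matching signs. This is precisely the computation that forces the restriction to $(n-1)$-connected $2n$-manifolds, where the sign simplifies to $(-1)^{(n-1)(r-1)k}c_{i_k\ell}^{-1}$ as recorded before Lemma~\ref{Deltalemma}. As an independent consistency check one can confirm the BV-identity $\{s^{-1}\theta,s^{-1}\eta\}=\pm\Delta(s^{-1}\theta\loopr s^{-1}\eta)$ using the bracket of Proposition~\ref{prop:lie bracket} and the product of Theorem~\ref{thm:algebra}, all the other instances of the identity being forced by the vanishing of the product on the $s^{-d}U/[U,U]$ summand.
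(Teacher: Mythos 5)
Your proposal is correct and follows essentially the same route as the paper: both invoke Tradler's characterization of the BV-operator (Theorem~\ref{tradlertheorem}), translate the defining pairing identity into the small model $(A\tensor U,[\kappa,-])$ where the Frobenius pairing becomes ``multiply by a generator $x_{i_r}\tensor u_{i_r}$ and read off the coefficient of $M^\vee$,'' and then conclude by the cyclic-rotation computation of Lemma~\ref{Deltalemma} via the identity $\sum_\ell c_{i_k\ell}^{-1}c_{\ell i_r}=\delta_{i_k i_r}$. Your additional care about transporting the operator through $\psi_\kappa^*$, the low-weight cases, and the vanishing on the remaining summands only fills in details the paper leaves implicit.
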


\begin{proof}
In characteristic zero there is a BV-operator on Hochschild cohomology is defined by the equation $$<\Delta f(a_1,\dots,a_{n-1}),a_r>= <\sum_{k=1}^r (-1)^{(r-1)(n-1)k} f(a_i,\dots,a_r,a_1,\dots,a_{i-1}),1>,$$ by Theorem \ref{tradlertheorem} \cite{Tradler}. By \cite{FelixThomas} this coincides with the Chas-Sullivan BV-algebra structure on $H_*(LX).$ Using that the non-degenerate product is given by the multiplication and that we can pick any cycle representative this equation turns into $$\Delta(M^\vee\otimes u_{i_1}\dots u_{i_{r-1}})(x_{i_r}\otimes u_{i_r})=\sum_{k=1}^r(-1)^{(r-1)(n-1)k}M^\vee\otimes u_{i_k}\dots u_{i_r} u_{i_1} \dots u_{i_{k-1}},$$ which is satisfied by Lemma \ref{Deltalemma}.
\end{proof}

Next we would like to prove that $\Delta$ is an isomorphism in certain degrees; the strategy is to first observe that $\Delta$ is similar to first go from cyclic coinvariants to cyclic invariants by averaging and then apply an automorphism depending on the intersection form. By using the snake lemma we will compare the effect of doing this before and after quotioning $T(V)$ with the relations in $U.$

\begin{lemma}
\label{Clemma}
There is a vector space automorphism $C$ of $A(1)\otimes U(r+1)$ given by $$C(\sum_ix_i\otimes y_i)=\sum_{i,j}c_{ji}x_j\otimes y_i.$$
\end{lemma}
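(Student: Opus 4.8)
The statement to prove is Lemma~\ref{Clemma}: that the map
$$C\Bigl(\sum_i x_i\otimes y_i\Bigr)=\sum_{i,j}c_{ji}\,x_j\otimes y_i$$
is a vector-space automorphism of $A(1)\otimes U(r+1)$. Since $A(1)$ has basis $x_1,\ldots,x_r$ and $U(r+1)$ is a fixed weight-graded piece, the tensor product is a finite-dimensional vector space isomorphic to $\kk^r\otimes U(r+1)$, and the map $C$ acts only on the left tensor factor. The plan is therefore to reduce the claim to a statement purely about the $r\times r$ matrix $(c_{ij})$.

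First I would make explicit that $C$ is the operator $B\otimes \mathrm{id}_{U(r+1)}$, where $B\colon A(1)\to A(1)$ is the linear map sending $x_i\mapsto \sum_j c_{ji}x_j$, i.e.\ the map whose matrix in the basis $x_1,\ldots,x_r$ is the intersection-form matrix $(c_{ij})$ (up to transpose, according to the indexing convention). This is a well-defined linear endomorphism of the finite-dimensional space $A(1)$, and tensoring with the identity on $U(r+1)$ gives a well-defined linear endomorphism of $A(1)\otimes U(r+1)$. So the only thing left to check is that $C$ is invertible.

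The key step is invertibility, and this is immediate from a fact already established in the paper: the intersection form $(c_{ij})$ is non-degenerate, hence the matrix is invertible. Indeed, in Corollary~\ref{cor:main} and throughout Theorem~\ref{thm:algebra} the matrix $(c_{ij})$ is used as an invertible matrix (the proof of Theorem~\ref{thm:algebra} explicitly says ``Since the matrix $(c_{ij})$ is invertible''), which reflects Poincar\'e duality for the closed manifold $M$: the pairing $\langle x_ix_j,[M]\rangle$ on the indecomposables is non-degenerate. Consequently $B$ is invertible on $A(1)$, with inverse given by the transpose of the inverse matrix $(c_{ij})^{-1}$, and therefore $C=B\otimes \mathrm{id}$ is invertible on $A(1)\otimes U(r+1)$, with inverse $B^{-1}\otimes \mathrm{id}$.

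There is essentially no obstacle here; the content is entirely the observation that $C$ factors as an automorphism of the finite-dimensional factor $A(1)$ tensored with the identity, together with the already-recorded non-degeneracy of the intersection form. The only point requiring mild care is bookkeeping of which index is summed (so that $C$ genuinely corresponds to the matrix $(c_{ij})$ rather than to something degenerate), but since $(c_{ij})$ and its transpose are simultaneously invertible, either reading gives an automorphism. Thus the lemma follows.
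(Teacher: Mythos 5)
Your proof is correct and is essentially the same as the paper's: the paper's one-line argument likewise observes that $C$ acts through the matrix $(c_{ji})$, which is invertible because it is the intersection form of a closed orientable manifold (Poincar\'e duality). Your additional bookkeeping — writing $C$ as $B\otimes\mathrm{id}_{U(r+1)}$ and noting that a matrix and its transpose are simultaneously invertible — just makes the same argument explicit.
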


\begin{proof}
$C$ is an automorphism since $c_{ji}$ are the elements of an invertible matrix (it is invertible since it is the intersection form of an orientable manifold).
\end{proof}

\begin{lemma}
\label{Dequaldprime}
Let $D:A(1)\otimes U(r+1)\rightarrow A(2)\otimes U(r+2)$ be given by $D(x_k\otimes y)= M^\vee \otimes [u_k,y].$ We have %$d'_r=D\circ C$
$[\kappa,-]=D\circ C$ and thus $ker([\kappa,-])=C^{-1}(ker(D))$ and $im([\kappa,-])=im(D).$
\end{lemma}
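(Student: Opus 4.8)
The plan is to reduce everything to the single identity $[\kappa,-]=D\circ C$ on $A(1)\tensor U(r+1)$; the statements about kernel and image are then formal. Indeed, granting the factorization, the fact that $C$ is a linear automorphism of $A(1)\tensor U(r+1)$ (Lemma~\ref{Clemma}) immediately yields $\ker([\kappa,-])=\ker(D\circ C)=C^{-1}(\ker D)$ and $\operatorname{im}([\kappa,-])=\operatorname{im}(D\circ C)=\operatorname{im}(D)$, since pre-composition with a bijection pulls back the kernel and leaves the image of the composite unchanged.

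To establish the factorization I would evaluate both sides on a basis element $x_k\tensor y$, with $x_k\in A(1)$ and $y\in U(r+1)$, and compare. The right-hand side is immediate: $C(x_k\tensor y)=\sum_j c_{jk}\,x_j\tensor y$ by definition, so $D\big(C(x_k\tensor y)\big)=\sum_j c_{jk}\,M^\vee\tensor[u_j,y]$. For the left-hand side I would expand the graded commutator $[\kappa,x_k\tensor y]$ in the tensor-product algebra $A\tensor U$, using the Koszul sign rule $(a\tensor u)(b\tensor v)=(-1)^{|u||b|}\,ab\tensor uv$ for its product. Left multiplication by $\kappa=\sum_i x_i\tensor u_i$ produces the terms $\sum_i\pm(x_ix_k)\tensor(u_iy)$ and right multiplication the terms $\sum_i\pm(x_kx_i)\tensor(yu_i)$. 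Since $A(1)\cdot A(1)\subseteq A(2)=H^d(M)$ is one-dimensional with chosen generator $M^\vee$, I would substitute $x_ix_k=c_{ik}M^\vee$ and $x_kx_i=c_{ki}M^\vee$, so that both families become multiples of $M^\vee$.

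The heart of the lemma is that these two families assemble into the single sum $\sum_i c_{ik}\,M^\vee\tensor[u_i,y]$, with the commutator $[u_i,y]=u_iy-(-1)^{|u_i||y|}yu_i$ taken in $U$; this matches $D\circ C$ after renaming $i\leftrightarrow j$. Conceptually the match is forced: the index on the surviving loop-space factor $u_i$ is the same as that of the cohomology factor $x_i$ coming from $\kappa$, which is precisely why $D$ (commutator against the matching generator $u_j$) appears pre-composed with $C$ (which supplies the matrix $c_{jk}$ produced by the cup products $x_ix_k$). I expect the only real obstacle to be the bookkeeping of Koszul signs: one must check that the signs on the left- and right-multiplication terms are exactly opposite in the manner needed to form $[u_i,y]$, after using graded commutativity $c_{ik}=(-1)^{|x_i||x_k|}c_{ki}$ to align the two sums over a common index, and that no global sign survives. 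To keep this transparent I would, as elsewhere in this section, carry out the verification for $(n-1)$-connected $2n$-manifolds, where every $x_i$ has degree $n$ and every $u_i$ degree $n-1$; there all the relevant exponents (products such as $n(n-1)$ and $n(n+1)$) are automatically even, the signs collapse, and the identity $[\kappa,-]=D\circ C$ follows at once. The general case is the same computation with a more elaborate, but entirely routine, sign analysis.
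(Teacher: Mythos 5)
Your proposal is correct and follows essentially the same route as the paper: both verify the factorization $[\kappa,-]=D\circ C$ by direct evaluation on basis elements $x_k\otimes y$, using that cup products of indecomposables land in the one-dimensional $A(2)=H^d(M)$ with coefficients $c_{ik}$, and then deduce the kernel and image statements formally from the fact that $C$ is an automorphism (Lemma~\ref{Clemma}). Your explicit attention to the Koszul signs, handled by restricting to the $(n-1)$-connected $2n$-manifold case, matches the convention adopted throughout this section of the paper, where the one-line proof simply asserts the final equality.
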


\begin{proof} We have
$$D\circ C(x_\ell\otimes y)=D(\sum_k c_{kl}x_k\otimes y)=\sum_k c_{k\ell}M^\vee\otimes [u_k,y]= [\kappa,x_\ell\otimes y].$$
\end{proof}

%\begin{lemma}
%\label{cycliclemma}
%We have an isomorphism $(A(2)\otimes U(r))/im(d'_{r-2})\cong U(r)/[U(r),U(r)].$
%\end{lemma}

%\begin{proof}
%By lemma \ref{Dequaldprime} we see that $im(D)=im(d'_r).$ By definition $$D(x_k\otimes y)= M^\vee \otimes [u_k,y]=M^\vee \otimes( u_ky-(-1)^{(n-1)(r-1)}yu_k).$$ which corresponds to the relation $M^\vee \otimes u_ky=(-1)^{(n-1)(r-1)}M^\vee \otimes yu_k$ in $(A(2)\otimes U(r))/im(d'_r).$ Iterating the use of this relation gives us all relations coming from  $[U(r),U(r)]$. Thus it is isomorphic to $U(r)/[U(r),U(r)].$
%\end{proof}

\begin{lemma}
\label{Adlemma}
Let $V:=\kk\{x_1,\dots,x_m\}$ and $W:=\kk\{w_1,\dots,w_m\}.$ There is a map $Ad:V\otimes W^{\otimes r-1}\rightarrow W^{\otimes r}$ given by $Ad(v_i\otimes w_{j_1}\dots w_{j_{r-1}})=[w_i,w_{j_1}\dots w_{j_{r-1}}]=w_iw_{j_1}\dots w_{j_{r-1}}-(-1)^{(n-1)(r-1)}w_{j_1}\dots w_{j_{r-1}}w_i.$

This map induces maps to give a commutative diagram with exact rows as follows.

\begin{tikzpicture}[>=triangle 60]
\matrix[matrix of math nodes,column sep={85pt,between origins},row
sep={60pt,between origins},nodes={anchor=center}] (s)
{
|[name=00]|0 &|[name=A]| V\otimes R(r-1) &|[name=B]| V\otimes W^{\otimes r-1} &|[name=C]| V\otimes U(r-1) &|[name=01]| 0 \\
|[name=02]| 0 &|[name=A']| R(r) &|[name=B']| W^{\otimes r} &|[name=C']| U(r) & |[name=03] |0 \\
};
\draw[-cm to]  
          (A) edge (B)
          (B) edge (C)
          (C) edge (01)
          (A) edge node[auto] {Ad'} (A')
          (B) edge node[auto] {Ad} (B')
          (C) edge node[auto] {D} (C')
          (02) edge (A')
          (A') edge (B')
          (B') edge (C')
          (00) edge (A)
          (C') edge (03)
;
\end{tikzpicture}

We have denoted the rightmost map by $D$ since it coincides with the map $D$ of Lemma \ref{Dequaldprime} up to the isomorphism $A(2)\otimes U(r) \cong U(r).$

\end{lemma}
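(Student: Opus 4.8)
The plan is to verify that the middle map $Ad$ is compatible with the two defining short exact sequences, and then to read off the induced maps $Ad'$ and $D$ together with the commutativity of the two squares. I would organize the whole argument around the single key observation that $R$ is the weight-graded piece of a \emph{two-sided} ideal.

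First I would establish exactness of the rows. By definition $U = TW/(\omega)$, so if we set $R(r) := \ker\big(W^{\otimes r}\twoheadrightarrow U(r)\big)$, the weight-$r$ component of the homogeneous ideal $(\omega)$, then the bottom row is exactly the tautological short exact sequence $0 \to R(r) \to W^{\otimes r}\to U(r)\to 0$. The top row is the analogous sequence in weight $r-1$, tensored over $\kk$ with the finite-dimensional vector space $V$; since $-\tensor_\kk V$ is exact over a field, the top row is exact as well.

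Next I would check that $Ad$ carries the left-hand subspaces into one another, which produces $Ad'$ and the commutativity of the left square. The map $Ad$ itself is well defined directly from its formula. The essential point is that for $\zeta\in R(r-1)\subseteq (\omega)$ and a generator $w_i$, both products $w_i\zeta$ and $\zeta w_i$ lie in $(\omega)$ because the ideal is two-sided; hence the commutator $Ad(v_i\tensor \zeta) = [w_i,\zeta]$ lies in $(\omega)$, and being of weight $r$ it lies in $R(r)$. Thus $Ad$ restricts to a map $Ad'\colon V\tensor R(r-1)\to R(r)$, and the left square commutes because both horizontal arrows are inclusions and $Ad'$ is by construction the corestriction of $Ad$.

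Finally I would induce $D$ on the cokernels and identify it. Because $Ad$ sends $V\tensor R(r-1)$ into $R(r)$, it descends to a map $V\tensor U(r-1)\to U(r)$ on quotients, and the right square commutes by the universal property of the cokernel, both vertical arrows being the canonical projections. Since the projection $W^{\otimes r}\twoheadrightarrow U(r)$ intertwines the commutator $[w_i,-]$ with $[u_i,-]$, the induced map sends $x_k\tensor y\mapsto [u_k,y]$; under the identification $A(2)\tensor U(r)\cong U(r)$ sending $M^\vee\tensor(-)\mapsto(-)$ this is precisely the map $D(x_k\tensor y)=M^\vee\tensor[u_k,y]$ of Lemma \ref{Dequaldprime}. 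I expect the only step requiring genuine verification to be the containment $Ad(V\tensor R(r-1))\subseteq R(r)$, but this reduces immediately to the ideal being two-sided; everything else is formal diagram bookkeeping, so there should be no serious obstacle.
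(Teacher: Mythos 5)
Your proof is correct. The paper in fact gives no proof of this lemma at all, treating it as routine; your argument --- exactness of the tautological sequences $0\to R(r)\to W^{\otimes r}\to U(r)\to 0$ (and their tensor with $V$), the two-sided-ideal property of $(\omega)$ giving the corestriction $Ad'$, and the induced map on cokernels identified with $D$ of Lemma \ref{Dequaldprime} --- is exactly the routine verification the authors implicitly rely on, and it is consistent with the explicit formula for $Ad'$ they later use in the proof of Lemma \ref{snakelemmalemma}.
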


Here we are after computing the kernel and cokernel of $D,$ to do this we will first analyze the kernel and cokernel of $Ad.$

\begin{lemma}
\label{cyclicinvariantslemma}
The cyclic group of $r$ elements with generator $\sigma$ acts on $V\otimes W^{\otimes r-1}$ by $\sigma(v_i\otimes w_{j_1}\dots w_{j_{r-1}})=(-1)^{(n-1)(r-1)}v_{j_1}\otimes w_{j_2} \dots w_{j_{r-1}}w_i.$ Similarly it acts on $W^{\otimes r}$ by $\sigma(w_{j_1}w_{j_2}\dots w_{j_r})=(-1)^{(n-1)(r-1)}w_{j_2}\dots w_{j_r}w_{j_1}.$ Then $ker(Ad)$ are the invariants of the first action and $coker(Ad)$ are the coinvariants of the second action. We also have $ker(Ad)\cong coker(Ad).$
\end{lemma}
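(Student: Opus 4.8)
The plan is to analyze the map $Ad\colon V\otimes W^{\otimes r-1}\to W^{\otimes r}$ by recognizing it as essentially the difference between the identity and the cyclic shift operator. First I would unwind the definition: writing $Ad(v_i\otimes w_{j_1}\dots w_{j_{r-1}}) = w_iw_{j_1}\dots w_{j_{r-1}} - (-1)^{(n-1)(r-1)}w_{j_1}\dots w_{j_{r-1}}w_i$, I identify the source $V\otimes W^{\otimes r-1}$ with $W^{\otimes r}$ via $v_i\otimes w_{j_1}\dots w_{j_{r-1}}\mapsto w_iw_{j_1}\dots w_{j_{r-1}}$ (a linear isomorphism since $\dim V=\dim W=m$ and the $w$'s form a basis). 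Under this identification, the first term of $Ad$ becomes the identity map, and the second term becomes precisely the cyclic shift $\sigma$ described in the statement: moving $w_i$ from the front to the back with the sign $(-1)^{(n-1)(r-1)}$. Thus $Ad = \mathrm{id} - \sigma$ after the identification, where $\sigma$ is the generator of the $\ZZ/r$-action on $W^{\otimes r}$.

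Once $Ad$ is expressed as $\mathrm{id}-\sigma$, the kernel and cokernel descriptions follow from standard facts about group actions. The kernel of $\mathrm{id}-\sigma$ consists of the $\sigma$-fixed elements, i.e.\ the invariants, and the cokernel of $\mathrm{id}-\sigma$ is the quotient $W^{\otimes r}/\mathrm{im}(\mathrm{id}-\sigma)$, which is by definition the module of coinvariants. I would be careful here to match the two cyclic actions in the statement: the action on the source $V\otimes W^{\otimes r-1}$ (which computes $\ker Ad$) and the action on the target $W^{\otimes r}$ (which computes $\mathrm{coker}\, Ad$) are compatible under the identification, both being generated by the same shift $\sigma$ with the same sign. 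A brief check confirms $\sigma$ has order $r$ as an operator (the sign $(-1)^{(n-1)(r-1)}$ raised to the $r$-th power is $+1$, since either $r$ is odd or the exponent $(n-1)(r-1)$ is even).

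For the final isomorphism $\ker(Ad)\cong \mathrm{coker}(Ad)$, the cleanest route is the averaging (norm) map available in characteristic zero. The norm element $N = \sum_{k=0}^{r-1}\sigma^k$ induces a map $W^{\otimes r}\to W^{\otimes r}$ whose image lands in the invariants and which factors through the coinvariants, giving a map $\mathrm{coker}(Ad) = (W^{\otimes r})_{\ZZ/r}\to (W^{\otimes r})^{\ZZ/r} = \ker(Ad)$ (after transporting $\ker Ad$ to the target via the identification). In characteristic zero, $\tfrac{1}{r}N$ is an idempotent projection onto the invariants, so this norm map is an isomorphism between coinvariants and invariants. The main obstacle I anticipate is bookkeeping the signs and the identification consistently: one must verify that the shift appearing in the target action genuinely coincides with the operator $\sigma$ whose failure to be the identity is measured by $Ad$, and that the averaging map respects the grading conventions so that no characteristic-$r$ obstruction creeps in. Since we are working over a field of characteristic zero, invertibility of $r$ is guaranteed, so the norm argument goes through cleanly; the real care is purely combinatorial in tracking the Koszul signs through the cyclic shifts.
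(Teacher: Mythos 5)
Your proposal is correct, and its first two steps coincide with the paper's argument: the paper likewise observes that under the identification $v_i\mapsto w_i$ the map $Ad$ becomes $1-\sigma$, so that $\ker(Ad)$ is the invariants of the first action and $\coker(Ad)$ the coinvariants of the second. Where you genuinely diverge is the final isomorphism $\ker(Ad)\cong\coker(Ad)$: you build it from the norm element $N=\sum_{k=0}^{r-1}\sigma^k$, which requires $r$ to be invertible in $\kk$ (you invoke characteristic zero). The paper instead uses a purely linear-algebraic fact: $\ker(1-\sigma)$ and $\coker(1-\sigma)$ are the kernel and cokernel of a single endomorphism of the finite-dimensional space $W^{\otimes r}$, hence have equal dimension by rank--nullity and are (non-canonically) isomorphic. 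This keeps the lemma characteristic-free, which matters since the paper also asserts dimension identities (necklace counts, and the isomorphism $H_{2,r}\cong H_{1,r-1}$) outside characteristic zero; your canonical averaging map is precisely what the paper defers to the subsequent Lemma \ref{kercokerlemma} (the maps $P$ and $Q/r$), where characteristic zero is explicitly assumed. In short, your route buys a canonical isomorphism at the cost of generality, while the paper's buys generality at the cost of canonicity. One small slip in your write-up: the clean reason that $\sigma^r=\mathrm{id}$ is that the accumulated sign is $(-1)^{(n-1)(r-1)r}$ and $(r-1)r$ is always even; your stated dichotomy (``either $r$ is odd or the exponent $(n-1)(r-1)$ is even'') is garbled, though the conclusion is of course true.
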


\begin{proof}
By inspection of the defining formulas of $Ad$ we see that $ker(Ad)=ker(1-\sigma)$ and $coker(Ad)=coker(1-\sigma).$ To see the isomorphism we first note that there is an isomorphism between $ker(Ad)$ and the invariants of the second action by taking $v_i\mapsto w_i.$ Then there is an isomorphism between invariants and coinvariants of the second action since they are the kernel and cokernel of the map $1-\sigma,$ respectively, and kernel and cokernels of endomorphisms are isomorphic as vector spaces.
\end{proof}

\begin{lemma}
\label{kercokerlemma}
There is a map $P:ker(Ad)\rightarrow coker(Ad)$ by first mapping $V$ to $W$ given by $x_i \mapsto w_i$ and then considering the equivalence class.
Over a field of characteristic zero there is an explicit inverse $Q/r$ where $$Q(w_{j_1}\dots w_{j_{r}})=\sum_{i=1}^r (-1)^{(i-1)(n-1)(r-1)}v_{j_i}\otimes w_{j_{i+1}}\dots w_{j_{i-1}}.$$ 
\end{lemma}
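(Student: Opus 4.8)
Lemma~\ref{kercokerlemma}: the map $P\colon \ker(Ad)\to \coker(Ad)$, sending $x_i\mapsto w_i$ and passing to the equivalence class, is an isomorphism over a field of characteristic zero, with explicit inverse $Q/r$ for the $Q$ displayed above.

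\textbf{Overall approach.} The plan is to exploit the identifications established in Lemma~\ref{cyclicinvariantslemma}: under the assignment $v_i\mapsto w_i$, the kernel $\ker(Ad)$ is isomorphic to the $\sigma$-invariants $(W^{\tensor r})^{C_r}$ of the cyclic action, and $\coker(Ad)$ is the $\sigma$-coinvariants $(W^{\tensor r})_{C_r}$. So it suffices to show that the composite
$$
(W^{\tensor r})^{C_r}\hookrightarrow W^{\tensor r}\twoheadrightarrow (W^{\tensor r})_{C_r}
$$
— inclusion of invariants followed by projection onto coinvariants — is an isomorphism in characteristic zero, and that the averaging operator $\tfrac1r\sum_{i=0}^{r-1}\sigma^i$ provides its inverse. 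This is the classical fact that over a field whose characteristic does not divide the group order, the canonical map from invariants to coinvariants of a finite group action is an isomorphism, with the normalized transfer $\tfrac1r N$ (where $N=\sum_i\sigma^i$) as its two-sided inverse. The characteristic-zero hypothesis is exactly what makes $r$ invertible in $\kk$.

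\textbf{Key steps.} First I would unwind the definition of $P$ through Lemma~\ref{cyclicinvariantslemma}: under $v_i\mapsto w_i$, $P$ becomes precisely the natural map $(W^{\tensor r})^{C_r}\to (W^{\tensor r})_{C_r}$. Second, I would verify that $Q$, after the identification $v_{j_i}\mapsto w_{j_i}$, is literally the norm operator $N(w_{j_1}\cdots w_{j_r})=\sum_{i=1}^r \sigma^{i-1}(w_{j_1}\cdots w_{j_r})$; this is a direct check against the cyclic $\sigma$-formula, the sign $(-1)^{(i-1)(n-1)(r-1)}$ matching the $i-1$ applications of $\sigma$. Since the image of $N$ lies in the invariants and $N$ is constant on orbits, $Q$ indeed descends to a map $\coker(Ad)\to\ker(Ad)$. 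Third, the two composites: for $z\in(W^{\tensor r})^{C_r}$ we have $\sigma^{i}z=z$, so $\tfrac1r N(z)=\tfrac1r\cdot r\,z=z$, giving $\tfrac1r Q\circ P=\mathrm{id}$; and on coinvariants, $P\circ\tfrac1r Q$ sends the class of $w_{j_1}\cdots w_{j_r}$ to the class of $\tfrac1r N(w_{j_1}\cdots w_{j_r})=\tfrac1r\sum_i \sigma^i(\cdots)$, each summand equal to the original class in $(W^{\tensor r})_{C_r}$, so the total is again the original class. Hence $P\circ\tfrac1r Q=\mathrm{id}$ as well.

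\textbf{Main obstacle.} The conceptual content is the standard transfer argument and is short; the only real friction is bookkeeping of the Koszul signs. I must confirm that the sign $(-1)^{(n-1)(r-1)}$ accumulated by a single application of $\sigma$ (as fixed in Lemma~\ref{cyclicinvariantslemma}) compounds correctly to the exponent $(i-1)(n-1)(r-1)$ appearing in $Q$, and that in the identification of $\ker(Ad)$ with the invariants of the first action versus the invariants of the second action (the two distinct $\sigma$-actions of Lemma~\ref{cyclicinvariantslemma}) the signs remain consistent under $v_i\mapsto w_i$. I expect this to be purely mechanical, but it is where an error would hide, so I would carry the signs explicitly through the computation $\sigma^{i-1}(w_{j_1}\cdots w_{j_r})=(-1)^{(i-1)(n-1)(r-1)}w_{j_i}\cdots w_{j_{i-1}}$ once and cite it thereafter.
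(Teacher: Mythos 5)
Your proof is correct and follows essentially the same route as the paper: the paper's own proof simply observes that, under the identification $V\cong W$, the statement is the standard fact that averaging over the cyclic group inverts the natural map from invariants to coinvariants in characteristic zero. Your elaboration — identifying $P$ with the map $(W^{\tensor r})^{C_r}\to (W^{\tensor r})_{C_r}$ via Lemma~\ref{cyclicinvariantslemma}, recognizing $Q$ as the norm operator, and checking both composites — is exactly the argument the paper leaves implicit.
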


\begin{proof}
If one implicitly uses the isomorphism $V\cong W$ then this is just the usual statement that one can go from coinvariants to invariants by averaging over the group.
\end{proof}

The following combinatorial description will also be useful when we count the dimension of the homology groups.

\begin{lemma}
\label{kernelnecklacelemma}
%Suppose that $n$ or $r$ is odd or that $char(\kk)=2.$ Then $ker(Ad)$ has a basis with one basis element for each necklace (= word up to cyclic permutation) of length $r$ with letters in the set ${1,\dots, m}.$ If instead $n$ and $r$ are even and $char(\kk)\neq 2$ there is a basis with one basis element for each necklace with even period length.
Suppose that $n$ or $r$ is odd. Then $ker(Ad)$ has a basis with one basis element for each necklace (= word up to cyclic permutation) of length $r$ with letters in the set ${1,\dots, m}.$ If instead $n$ and $r$ are even there is a basis with one basis element for each necklace with even period length.
\end{lemma}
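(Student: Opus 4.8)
The plan is to reduce the statement to an elementary computation of fixed points for a cyclic group acting with a sign, since Lemma \ref{cyclicinvariantslemma} has already done the structural work. By that lemma, the correspondence $x_i\mapsto w_i$ identifies $\ker(Ad)$ with the fixed subspace $\big(W^{\otimes r}\big)^{C_r}$ of the signed cyclic action generated by
$$\sigma(w_{j_1}\cdots w_{j_r}) = \epsilon\, w_{j_2}\cdots w_{j_r}w_{j_1}, \qquad \epsilon := (-1)^{(n-1)(r-1)}.$$
First I would fix the basis of $W^{\otimes r}$ consisting of the words $w_{j_1}\cdots w_{j_r}$ with $j_i\in\{1,\dots,m\}$. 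Forgetting the sign $\epsilon$, the action of $C_r$ is the cyclic shift, whose orbits are \emph{by definition} the necklaces of length $r$ on $m$ letters. The fixed-point computation therefore decouples orbit by orbit, and the task becomes to determine the invariant subspace inside the span of each orbit.

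Next I would analyze a single orbit. If a word $u_0$ has primitive period $p$ (the least $p\mid r$ with $u_0$ fixed by the shift through $p$ places), then, writing $u_k$ for the shift of $u_0$ through $k$ places, its orbit $\{u_0,\dots,u_{p-1}\}$ has exactly $p$ elements and $\sigma$ acts by $\sigma[u_k]=\epsilon[u_{k+1}]$ (indices mod $p$). For a general element $v=\sum_{k=0}^{p-1}a_k[u_k]$, the condition $\sigma v=v$ becomes the recursion $a_k=\epsilon\,a_{k-1}$, so $a_k=\epsilon^k a_0$ subject to the cyclic consistency $a_0=\epsilon^p a_0$. Hence the orbit contributes a one-dimensional invariant, spanned by $\sum_{k=0}^{p-1}\epsilon^k[u_k]$, exactly when $\epsilon^p=1$, and contributes nothing when $\epsilon^p\neq 1$. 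This is the heart of the matter; the one point requiring care is to keep the primitive period $p$, the orbit size, and the ``period length'' of the necklace aligned (they all coincide).

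Finally I would run the sign analysis on $\epsilon=(-1)^{(n-1)(r-1)}$. If $n$ or $r$ is odd, then $(n-1)(r-1)$ is even, so $\epsilon=1$ and $\epsilon^p=1$ for every $p$; each necklace then contributes a single generator $\sum_k[u_k]$, which gives the first assertion. If $n$ and $r$ are both even, then $\epsilon=-1$, so $\epsilon^p=1$ precisely when $p$ is even; only the necklaces of even period survive, which gives the second assertion. Collecting one generator $\sum_k\epsilon^k[u_k]$ per surviving orbit yields the claimed basis of $\ker(Ad)$. The hard part will not be any single step but the bookkeeping: making the sign $\epsilon^p$ and the phrase ``even period length'' precise, while the genuine content is just the fixed-point count for a signed cyclic action.
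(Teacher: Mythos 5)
Your proposal is correct and follows essentially the same route as the paper: both arguments compute the invariants of the signed cyclic action orbit-by-orbit, observing that each necklace contributes a one-dimensional invariant exactly when the sign consistency condition (your $\epsilon^p=1$) holds, which is automatic when $n$ or $r$ is odd and forces even period when both are even. Your version merely formalizes the paper's informal ``continuing like this'' recursion as $a_k=\epsilon^k a_0$ with the closing condition $a_0=\epsilon^p a_0$, working on $W^{\otimes r}$ rather than $V\otimes W^{\otimes r-1}$ via the identification $v_i\mapsto w_i$ already established in Lemma \ref{cyclicinvariantslemma}.
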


\begin{proof}
%Suppose that $n$ or $r$ is odd or that $char(\kk)=2.$ We would like to find the invariants of the action $\sigma(v_i\otimes w_{j_1}\dots w_{j_{r-1}})=v_{j_1}\otimes w_{j_2} \dots w_{j_{r-1}}w_i.$ Suppose $v_i\otimes w_{j_1}\dots w_{j_{r-1}}$ is a term of an element that is invariant. That element also have to contain the term $v_{j_1}\otimes w_{j_2} \dots w_{j_{r-1}}w_i.$ But then it also have to contain the term $v_{j_2}\otimes w_{j_3}\dots w_{j_1}.$ Continuing like this we have to include all cyclic permutations until we come back to the term we started with. This shows the first part. To prove the case when $n$ and $r$ even and $char(\kk)\neq 2$ we look at the action $\sigma(v_i\otimes w_{j_1}\dots w_{j_{r-1}})=-v_{j_1}\otimes w_{j_2} \dots w_{j_{r-1}}w_i.$ Suppose $v_i\otimes w_{j_1}\dots w_{j_{r-1}}$ is a term of an element that is invariant. Then that element has to include the term $-v_{j_1}\otimes w_{j_2} \dots w_{j_{r-1}}w_i.$ as well. In the same way it has to include $v_{j_2}\otimes w_{j_3}\dots w_{j_1}.$ 
Suppose that $n$ or $r$ is odd. We would like to find the invariants of the action $\sigma(v_i\otimes w_{j_1}\dots w_{j_{r-1}})=v_{j_1}\otimes w_{j_2} \dots w_{j_{r-1}}w_i.$ Suppose $v_i\otimes w_{j_1}\dots w_{j_{r-1}}$ is a term of an element that is invariant. That element also have to contain the term $v_{j_1}\otimes w_{j_2} \dots w_{j_{r-1}}w_i.$ But then it also have to contain the term $v_{j_2}\otimes w_{j_3}\dots w_{j_1}.$ Continuing like this we have to include all cyclic permutations until we come back to the term we started with. This shows the first part. To prove the case when $n$ and $r$ are even we look at the action $\sigma(v_i\otimes w_{j_1}\dots w_{j_{r-1}})=-v_{j_1}\otimes w_{j_2} \dots w_{j_{r-1}}w_i.$ Suppose $v_i\otimes w_{j_1}\dots w_{j_{r-1}}$ is a term of an element that is invariant. Then that element has to include the term $-v_{j_1}\otimes w_{j_2} \dots w_{j_{r-1}}w_i.$ as well. In the same way it has to include $v_{j_2}\otimes w_{j_3}\dots w_{j_1}.$
Continuing like this we obtain an element in the kernel that is non-zero if the period length is even. If the period length is odd we see that we are forced to add terms such that we obtain a zero vector.
\end{proof}

\begin{lemma}
\label{snakelemmalemma}
We have the following diagram given by the snake lemma.

\begin{tikzpicture}[>=triangle 60]
\matrix[matrix of math nodes,column sep={72pt,between origins},row
sep={60pt,between origins},nodes={anchor=center}] (s)
{
|[name=000]|0 &|[name=ka]| A &|[name=kb]| B &|[name=kc]| \ker D \\
|[name=00]|0 &|[name=A]| V\otimes R(r-1) &|[name=B]| V\otimes W^{\otimes r-1} &|[name=C]| V\otimes U(r-1) &|[name=01]| 0 \\
|[name=02]| 0 &|[name=A']| R(r) &|[name=B']| W^{\otimes r} &|[name=C']| U(r) & |[name=03] |0 \\
&|[name=ca]| X &|[name=cb]| Y &|[name=cc]| \coker D & |[name=04]| 0\\
};
\draw[-cm to] (ka) edge (A)
          (kb) edge (B)
          (kc) edge (C)
          (A) edge (B)
          (B) edge (C)
          (C) edge (01)
          (A) edge node[auto] {Ad'} (A')
          (B) edge node[auto] {Ad} (B')
          (C) edge node[auto] {D} (C')
          (02) edge (A')
          (A') edge (B')
          (B') edge (C')
          (A') edge (ca)
          (B') edge (cb)
          (C') edge (cc)
          (000) edge (ka)
          (00) edge (A)
          (C') edge (03)
          (cc) edge (04)
;
\draw[-cm to] (ka) edge (kb)
               (kb) edge node[auto] {p} (kc)
               (ca) edge node[auto] {i} (cb)
               (cb) edge (cc)
;
\draw[-cm to,rounded corners] (kc) -| node[auto,text=black,pos=.7]
{\(\partial\)} ($(01.east)+(.5,0)$) |- ($(B)!.35!(B')$) -|
($(02.west)+(-.5,0)$) |- (ca);
\end{tikzpicture}

When $r\geq 3,$ $\partial=0,$ $i$ injective and $p$ surjective. If $r=2,$ we have $i=0.$

\end{lemma}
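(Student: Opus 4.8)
The diagram is precisely what the snake lemma yields from the commutative ladder with exact rows of Lemma \ref{Adlemma}, so I would begin by recording the resulting six-term exact sequence
$$0\to A\to B\xrightarrow{p}\ker D\xrightarrow{\partial}X\xrightarrow{i}Y\to\coker D\to 0,$$
with $A=\ker Ad'$, $B=\ker Ad$, $X=\coker Ad'$, $Y=\coker Ad$. The first observation is that for $r\geq 3$ all three assertions collapse to the single statement $\partial=0$: exactness at $\ker D$ then makes $p$ surjective, and exactness at $X$ makes $i$ injective. So the plan is to prove $\partial=0$ for $r\geq 3$ and, separately, $i=0$ for $r=2$.

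To attack $\partial=0$ I would first make the connecting map explicit. A class in $\ker D$ is represented by some $\bar\xi\in V\otimes U(r-1)$; lifting it to $\xi\in V\otimes W^{\otimes r-1}$, commutativity of the ladder gives $Ad(\xi)\in R(r)$ (this is exactly the condition $D\bar\xi=0$), and by definition $\partial[\bar\xi]=[Ad(\xi)]\in\coker Ad'=R(r)/\operatorname{im}(Ad')$. As $\bar\xi$ ranges over $\ker D$ the elements $Ad(\xi)$ range over exactly $R(r)\cap\operatorname{im}(Ad)$. Since $\operatorname{im}(Ad')\subseteq R(r)\cap\operatorname{im}(Ad)$ automatically, the vanishing $\partial=0$ is \emph{equivalent} to the reverse inclusion $R(r)\cap\operatorname{im}(Ad)\subseteq\operatorname{im}(Ad')$. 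This is the heart of the matter.

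I would prove this inclusion by recognizing both sides inside the tensor algebra $T(W)$. By Lemma \ref{cyclicinvariantslemma} the map $Ad$ is the operator $1-\sigma$, and $(1-\sigma)(w_i w_{j_1}\cdots w_{j_{r-1}})=[w_i,w_{j_1}\cdots w_{j_{r-1}}]$, so $\operatorname{im}(Ad)=\operatorname{im}(1-\sigma)$ equals the graded commutator subspace $[T(W),T(W)]_{(r)}$, while $\operatorname{im}(Ad')=[W,R]_{(r)}$, where $R=(\omega)$ is the ideal of relations. Thus the claim becomes $\bigl(R\cap[T(W),T(W)]\bigr)_{(r)}=[W,R]_{(r)}$. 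Passing to the quotient of cyclic words $T(W)/[T(W),T(W)]$, the left side is the kernel of $R(r)\to (T(W)/[T(W),T(W)])_{(r)}$, and the content is that this kernel is no larger than $[W,R]$. I would establish the required dimension equality from the necklace basis of Lemma \ref{kernelnecklacelemma} together with the special anti-invariance $\sigma\omega=-\omega$ of the generating relation, cross-checking the count against the Hilbert-series identities of Lemmas \ref{hilbertserieslemma} and \ref{dimensionlemma} (and using B\"ogvad's injectivity of the left-hand differential, valid for $r\geq 3$). Characteristic zero enters here through the averaging retraction $Q/r$ of Lemma \ref{kercokerlemma}. The hypothesis $r\geq 3$ is needed so that $R(r-1)\neq 0$ and the argument has a nonempty base.

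The case $r=2$ is separate and completely explicit. Here $R(1)=0$, so $Ad'=0$ and $X=\coker Ad'=R(2)=\langle\omega\rangle$, and $i$ sends $[\omega]$ to its class in $Y=W^{\otimes 2}/\operatorname{im}(Ad)$. Using $|u_i|=n-1$, the Koszul sign of the transposition, and the symmetry $c_{ij}=(-1)^n c_{ji}$ of the intersection form of a $2n$-manifold, a direct computation gives $\sigma\omega=-\omega$; hence $\omega=(1-\sigma)(\tfrac12\omega)\in\operatorname{im}(1-\sigma)=\operatorname{im}(Ad)$, so $i(\omega)=0$ and $i=0$. This accounts for the off-by-one appearing in Lemma \ref{hilbertseriesdifferencelemma}. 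The main obstacle throughout is the image equality $R(r)\cap\operatorname{im}(Ad)=\operatorname{im}(Ad')$: showing that a relation which is a cyclic coboundary already arises from $[W,R]$ is exactly where the anti-invariant structure of $\omega$ and the averaging available in characteristic zero must be exploited, and it is precisely what forces the snake connecting map to vanish.
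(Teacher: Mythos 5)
Your skeleton is sound and agrees with the paper's: you set up the six-term snake sequence, observe that for $r\geq 3$ all three claims follow from a single statement (you pick $\partial=0$; the paper proves the equivalent statement that $i$ is injective, since both amount to $R(r)\cap\operatorname{im}(Ad)=\operatorname{im}(Ad')$), and your $r=2$ case is complete and correct: $R(1)=0$ forces $X=\langle\omega\rangle$, and $\sigma\omega=-\omega$ together with characteristic zero gives $\omega=\tfrac{1}{2}(1-\sigma)\omega\in\operatorname{im}(Ad)$, hence $i=0$ — this is exactly the computation hidden behind the paper's assertion that $Y\cong\coker D$ when $r=2$.

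The genuine gap is at the heart of the $r\geq 3$ case: the inclusion $\bigl(R\cap[T(W),T(W)]\bigr)_{(r)}\subseteq[W,R]_{(r)}$ is correctly identified as the key claim, but your plan for proving it cannot be carried out with the tools you cite. The Hilbert-series identities of Lemmas \ref{hilbertserieslemma} and \ref{dimensionlemma} only control the Euler characteristic $\dim\ker D-\dim\coker D=\dim U(r-2)$; to conclude $\partial=0$ by counting you would need one of the individual dimensions, e.g. $\dim\coker D=\dim Y-\dim X$, and $\coker D\cong U(r)/[U,U]_{(r)}$ is precisely the quantity the paper has no independent access to — its dimension is computed only as a \emph{consequence} of Lemma \ref{snakelemmalemma} (that is the whole point of the dimension-counting section), so your count is circular. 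Likewise, B\"ogvad's theorem gives injectivity of the \emph{other} differential $\partial_1\colon A(0)\otimes U(r)\to A(1)\otimes U(r+1)$ (triviality of the center of $U$) and says nothing about $D$, and the necklace counts of Corollaries \ref{Ydimensioncorollary} and \ref{Xdimensioncorollary} compute $\dim Y$ and $\dim X$, not $\dim\bigl(R(r)\cap\operatorname{im}(Ad)\bigr)$. What must be supplied instead is a structural argument, which is the paper's actual route: one shows that $\operatorname{im}(Ad')$ consists exactly of the signed cyclic-rotation relations among elements of $R(r)$ (for $r\geq3$, a one-step rotation of $w_i\rho'$ with $\rho'\in R(r-1)$ is literally $Ad'(v_i\otimes\rho')$, and an equivariant choice of basis, as in the proof of Corollary \ref{Xdimensioncorollary}, handles rotations that would split $\omega$), so that $i\colon X\to Y$ is a map of necklace spaces induced by a cyclic-equivariant inclusion and distinct classes in $X$ remain distinct in $Y$. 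Until an argument of this kind replaces the dimension count, your proposal proves the $r=2$ case but not the case $r\geq 3$.
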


\begin{proof}
When $r=2,$ one sees that $Y\cong \coker D,$ showing that $i=0$ by exactness. Now suppose $r\geq3.$
We first want to prove that $i$ is injective. Note that $Y$ is the cyclic coinvariants of Lemma \ref{cyclicinvariantslemma}. The map $Ad':V\otimes R(r-1)\rightarrow R(r)$ is defined by $Ad'(v_i\otimes w_{j_1}\dots w_{j_{r-1}})=[w_i,w_{j_1}\dots w_{j_{r-1}}]=w_iw_{j_1}\dots w_{j_{r-1}}-(-1)^{(n-1)(r-1)}w_{j_1}\dots w_{j_{r-1}}w_i.$ The image gives us exactly the relations that enable us to permute cyclically (recall the sign in the definition of cyclic action) in the submodule $R(r)\subset W^{\otimes r}.$ From this description it is clear that two distinct elements in $X$ cannot be identified in $Y$ since there are no more relations than cyclic permutations. This shows that $i$ is injective and that $\partial=0$ and $p$ surjective follows from exactness.
\end{proof}

\begin{proposition}
\label{isomorphismproposition}
%Suppose $\kk$ is a field of characteristic zero. 
\begin{enumerate}
\item The map $\Delta$: $H_{2,r}\rightarrow H_{1,r-1}$ is well-defined.
\item When $r\geq 3,$ this map is an isomorphism and when $r=2$ it is injective.
\end{enumerate}
\end{proposition}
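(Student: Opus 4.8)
The plan is to recognize $\Delta$ as the cyclic averaging map $Q$ of Lemma \ref{kercokerlemma}, twisted by the inverse intersection form $C^{-1}$ of Lemma \ref{Clemma}, and to extract both statements from the snake-lemma diagram of Lemma \ref{snakelemmalemma}. The first move is to replace the differential by $D$: by Lemma \ref{Dequaldprime} we have $[\kappa,-]=D\circ C$ with $C$ invertible, so $H_{2,r}\cong\coker D$, the cycles in bidegree $(1,r-1)$ are $C^{-1}\ker D$, and hence $H_{1,r-1}\cong\ker D/C(\operatorname{im}\partial_1)$ with $\operatorname{im}\partial_1$ the space of inner derivations. Comparing the defining formula for $\Delta$ with the formula for $Q$ shows that the cyclic sum in $\Delta$ is precisely $Q$ and the coefficients $c_{i_k\ell}^{-1}$ are $C^{-1}$; thus $\Delta$ is the map induced by $Q$, followed by $C^{-1}$ and the projection to $H_{1,r-1}$. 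By Lemma \ref{cyclicinvariantslemma} the corner terms $\coker Ad$ and $\ker Ad$ of the snake diagram are the cyclic coinvariants and invariants, and right-exactness of coinvariants identifies $\coker D\cong (U/[U,U])(r)$.

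Because $H_{2,r}$ is a cokernel, every representative is automatically a cycle, so for part (1) I only need that $\Delta$ takes values in cycles and sends boundaries to boundaries. The cycle condition is immediate: $Q$ lands in the cyclic invariants $\ker Ad$, which map into $\ker D$ under the diagram of Lemma \ref{Adlemma}, and applying $C^{-1}$ gives an element of $\ker[\kappa,-]$. For the boundary condition, a boundary in $H_{2,r}$ lies in $\operatorname{im} D=[U,U]$; tracking it through the relations row $Ad'$ of Lemma \ref{Adlemma} and using that $i$ is injective and $p$ surjective (Lemma \ref{snakelemmalemma}), one checks that its image under $Q$ represents an inner derivation, i.e.\ lands in $\operatorname{im}\partial_1$ after projecting to $H_{1,r-1}$.

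For part (2), note that over a field of characteristic zero $P$ and $Q/r$ are mutually inverse isomorphisms between $\ker Ad$ and $\coker Ad$ (Lemma \ref{kercokerlemma}). Descending $P$ along the same snake diagram produces a map $\ker D\to\coker D$ that annihilates the inner derivations $C(\operatorname{im}\partial_1)$, hence factors as $L\colon H_{1,r-1}\to H_{2,r}$; the relation $P\circ Q=r\cdot\mathrm{id}$ then descends to $L\circ\Delta=r\cdot\mathrm{id}$, proving $\Delta$ injective for every $r\geq 2$. For $r\geq 3$ the equality $\dim H_{2,r}=\dim H_{1,r-1}$ from Lemma \ref{hilbertseriesdifferencelemma} promotes injectivity to an isomorphism, while for $r=2$ the same lemma gives $\dim H_{1,1}=\dim H_{2,2}+1$, so $\Delta$ is injective but cannot be surjective, consistent with $i=0$ recorded there.

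The step I expect to be the main obstacle is exactly the matching of the two auxiliary subspaces: verifying that $Q$ carries the boundaries $[U,U]$ into the inner derivations $C(\operatorname{im}\partial_1)$ and, dually, that the descended $P$ kills those inner derivations — equivalently, that the image of the averaged cyclic words meets $C(\operatorname{im}\partial_1)$ only in $0$. This is precisely what the relations row $Ad'$ and the conclusions $\partial=0$, $i$ injective of Lemma \ref{snakelemmalemma} are set up to control, and it is where the Koszul signs, specialized here to the $2n$-dimensional case as $(-1)^{(n-1)(r-1)(k-1)}$, must be handled with care; the necklace basis of Lemma \ref{kernelnecklacelemma} provides the concrete bookkeeping for the accompanying dimension counts.
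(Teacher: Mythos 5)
Your overall framework coincides with the paper's: identify $H_{2,r}$ with $\coker D$ and $H_{1,r-1}$ with $\ker D$ modulo inner derivations via Lemma \ref{Dequaldprime}, describe $\Delta$ as the averaging map $Q$ of Lemma \ref{kercokerlemma} followed by $C^{-1}$, and upgrade injectivity to an isomorphism using the dimension count of Lemma \ref{hilbertseriesdifferencelemma}. Your injectivity argument is packaged slightly differently — you descend $P$ to a one-sided inverse $L$ satisfying $L\circ\Delta=r\cdot\mathrm{id}$, whereas the paper argues directly that the kernel of the composite map out of $Y$ is exactly $i(X)$ — but both versions hinge on the same compatibility statement, and that is precisely what you leave unproved.

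The genuine gap is that you never carry out the one computation the whole proposition turns on: showing that $i(X)\subseteq Y$ (equivalently, the classes of words containing the relation $\omega$, which is exactly the ambiguity in lifting a class of $\coker D$ to $Y$) is sent by $pQ$ into $\operatorname{im}(C\circ d_{r-2})$. You flag this as "the main obstacle" and stop, but it is not a diagram chase that follows from "$i$ injective and $p$ surjective"; it is an explicit calculation. In the paper one takes a basis element $x=\omega\otimes w_{j_1}\cdots w_{j_{r-2}}$ of $X$, computes $Qi(x)$ in Sweedler notation, observes that after applying $p$ only two terms survive, and identifies the surviving sum with $\pm\tfrac12\,C\circ d_{r-2}(u_{j_1}\cdots u_{j_{r-2}})$ — an inner derivation, but only after dividing by $2$. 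This is a second, independent place where characteristic zero enters; your proposal locates the char-$0$ hypothesis only in the invertibility of $P$ (division by $r$) and misses the factor of $2$ entirely. Without this computation, neither the well-definedness of $\Delta$, nor the existence of your map $L$ (the statement that the descended $P$ kills inner derivations is the same computation read backwards), nor injectivity is established. A smaller error: your construction of $L$ requires lifting along $p$, which Lemma \ref{snakelemmalemma} guarantees to be surjective only for $r\geq 3$; for $r=2$ one has $i=0$ but $\partial\neq 0$ (since $X=\kk\omega\neq 0$), so $p$ is not surjective and your claim $L\circ\Delta=r\cdot\mathrm{id}$ "for every $r\geq 2$" does not stand as written. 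The paper instead handles $r=2$ by the separate observation that $i=0$, $A=0$ and $C\circ d_0=0$, which makes $pQ$ injective on $Y\cong\coker D$ with no quotient to take.
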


\begin{proof} That $\Delta$ is well-defined is proved in \cite{Tradler} but we prefer to reprove it in this special case since the computation helps us prove that it is an isomorphism in certain degrees.
The map $\Delta$ can be described as follows. Pick a representative of an element in $H_{2,r}.$ Consider it as an element in $\coker(D)$ in the diagram of Lemma \ref{snakelemmalemma}. Pick an element in $Y$ mapping to it. Apply $Q$ to get an element in $B.$ Map it to $\ker(D)$ with $p.$ Take the quotient with the image of $C\circ d_{r-2}$ and finally apply $C^{-1}.$ We will first treat the case $r\geq 3.$ To see that the map described is well defined we need to show that it is independent of the representative chosen in $Y.$ Equivalently we need to prove that any element $i(x)\in Y$ with $x\in X$ is mapped to $0.$ There is a basis of $X$ given by elements of the type $x=\omega\otimes w_{j_1}\dots w_{j_{r-2}}$ where $\omega\in W^{\otimes 2}$ is the relation of $U.$ By abuse of notation we will write $\omega$ for the corresponding element in $V\otimes W$ as well and also use Sweedler type notation $\omega=\omega_{(1)}\otimes \omega_{(2)}.$ Then $Qi(x)=$ $$\omega\otimes u_{j_1}\dots u_{j_{r-2}}+(-1)^{(n-1)(r-1)}\omega_{(2)}\otimes u_{j_1}\dots u_{j_{r-2}} \otimes \omega_{(1)}+\sum\pm \dots v_{r-2}\otimes \omega \otimes v_1 \dots$$ 
All terms except the first two vanish when we apply $p$ to land in $\ker(D).$ We want to see that these two terms vanish when we take the quotient with $C\circ d_{r-2}.$ Thus we look at the image of $u_{j_1}\dots u_{j_{r-2}} \in U(r-2)$ in $\ker(D).$ We see that $C\circ d_{r-2}=$ $$\sum_{i,j}c_{ji}x_i\otimes[u_j,u_{j_1}\dots u_{j_{r-2}}]=\pm2pQi(x).$$ Since we assumed that the characteristic was zero, $pQi(x)$ lies in the image and thus the map $\Delta$ is well defined. To see that the map is an isomorphism for $r\geq 3,$ it is enough to prove that it is injective since by Lemma \ref{hilbertseriesdifferencelemma} we know that the homology groups have the same dimension. Going backwards in the previous computation we see that the terms from $B$ that are mapped to zero in $H_{1,r-1}$ are exactly those containing $\omega$ in some way. These are precisely the ones mapping to $X\subset Y$ under $Q^{-1}.$ Thus none of the non-trivial elements coming from $\coker(D)$ maps to zero. This shows that the map is injective and therefore an isomorphism. To prove that $\Delta$ well-defined and injective when $r=2$ we observe that in this case $i=0,$ $A=0$ and $C\circ d_0=0.$
\end{proof}

\begin{proposition}
\label{r2proposition}
%Suppose $char(k)\neq 2.$
Suppose $char(k)=0.$
Let $V:=\kk\{x_1,\dots,x_m\}$ and $W:=\kk\{u_1,\dots,u_m\}.$ We have,
\begin{enumerate}
\item $H_{2,2}=W\otimes W/[W,W].$ %Let $\mathbb{S}_2$ act on $V\otimes W$ by $\tau(x_i\otimes u_j)=-(-1)^nx_j\otimes u_i.$ 
\item $H_{1,1}= im(\Delta)+ \kk\{\kappa\}.$
\end{enumerate}
\end{proposition}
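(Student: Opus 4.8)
The plan is to read off both groups from the weight $r=0$ part of the complex
$$A(0)\tensor U(r)\xrightarrow{[\kappa,-]}A(1)\tensor U(r+1)\xrightarrow{[\kappa,-]}A(2)\tensor U(r+2),$$
using the factorisation $[\kappa,-]=D\circ C$ of Lemma \ref{Dequaldprime}. For item (1), $H_{2,2}$ is the cokernel of $D\colon A(1)\tensor U(1)\to A(2)\tensor U(2)$. Choosing the generator $M^\vee$ identifies $A(2)\tensor U(2)$ with $U(2)=W^{\tensor 2}/(\omega)$, under which $\im D$ becomes the image of $[W,W]$. Hence $H_{2,2}\cong W^{\tensor 2}/([W,W]+(\omega))$, and the claim reduces to showing $\omega\in[W,W]$. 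In the $2n$-manifold case all $u_i$ lie in degree $n-1$ and $c_{ij}=(-1)^n c_{ji}$, so grouping $\omega=\sum_{i,j}(-1)^n c_{ji}u_iu_j$ by unordered pairs writes it as a combination of graded commutators $[u_i,u_j]$ (the diagonal terms requiring $\operatorname{char}\kk\ne 2$). Thus $H_{2,2}\cong W\tensor W/[W,W]$.

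For item (2), the differential $A(0)\tensor U(0)\to A(1)\tensor U(1)$ is bracket with the central unit and hence vanishes (cf.\ the $r=0$ case in Lemma \ref{hilbertseriesdifferencelemma}), so $H_{1,1}=\ker([\kappa,-])=C^{-1}(\ker D)$. I would then invoke the $r=2$ instance of the snake sequence of Lemma \ref{snakelemmalemma}: here $\ker(Ad')=0$ because $R(1)=0$, and $i=0$, so
$$0\to\ker(Ad)\xrightarrow{\ p\ }\ker D\xrightarrow{\ \partial\ }X\to 0$$
is exact with $X=\coker(Ad')=R(2)=(\omega)$ one-dimensional; moreover for $r=2$ the map $p$ is simply the inclusion $\ker(Ad)\hookrightarrow\ker D$ inside $V\tensor W$. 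Since $i=0$ forces $Y\cong\coker D=H_{2,2}$, the recipe for $\Delta$ in Proposition \ref{isomorphismproposition} (invert $Y\cong\coker D$, average by $Q$, include by $p$, apply $C^{-1}$; the correction term $C\circ d_0$ is zero) exhibits $\Delta$ as a composite of isomorphisms with the inclusion $p$, so that $\im(\Delta)=C^{-1}(\ker(Ad))$.

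It then suffices to produce a single class of $H_{1,1}$ outside $\im(\Delta)$, and $\kappa$ is the natural candidate. It is a cycle, since the Maurer--Cartan equation gives $\kappa\star\kappa=0$, whence $[\kappa,\kappa]=0$ and $C\kappa\in\ker D$. The decisive point is that $\kappa\notin\im(\Delta)=C^{-1}(\ker(Ad))$, i.e.\ $C\kappa\notin\ker(Ad)$: a direct evaluation on $C\kappa=\sum_{i,j}c_{ji}\,x_j\tensor u_i$ gives
$$Ad(C\kappa)=\sum_{i,j}c_{ji}\bigl(u_ju_i-(-1)^{n-1}u_iu_j\bigr)=2\omega,$$
using $c_{ij}=(-1)^n c_{ji}$ and the definition of $\omega$. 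As $\operatorname{char}\kk=0$, $2\omega\ne 0$, so $C\kappa\notin\ker(Ad)$. Because $\dim\ker D=\dim\ker(Ad)+1$ by the exact sequence, we obtain $\ker D=\ker(Ad)\oplus\kk\{C\kappa\}$; applying $C^{-1}$ yields $H_{1,1}=\im(\Delta)\oplus\kk\{\kappa\}$, which in particular gives the asserted equality $H_{1,1}=\im(\Delta)+\kk\{\kappa\}$.

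The main difficulty I anticipate is bookkeeping rather than conceptual: nailing the signs in $Ad(C\kappa)=2\omega$ and using the conventions for $C$, $Ad$ and the averaging $Q$ from Lemmas \ref{Clemma}, \ref{Adlemma} and \ref{kercokerlemma} consistently, and confirming that $Q$ identifies $\im(\Delta)$ with exactly $C^{-1}(\ker(Ad))$ rather than a larger subspace. The emergence of the factor $2$ (hence the need for $\operatorname{char}\kk\ne 2$) is precisely what lets $\kappa$ detect the one-dimensional quotient $X=(\omega)$, so that is where I would concentrate the care.
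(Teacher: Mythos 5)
Your proof is correct. For item (1) it is essentially the paper's argument: reduce to the computation of the image of the last differential from Theorem \ref{thm:algebra}, so that $H_{2,2}\cong W^{\otimes 2}/([W,W]+(\omega))$, and then check $\omega\in[W,W]$ using characteristic zero; your pairing-by-unordered-pairs verification of that last point is exactly what the paper's parenthetical ``since $char(k)=0$'' is standing in for. For item (2) your route is genuinely different in mechanism, and in one respect more complete. The paper gets the codimension-one statement by a dimension count: Lemma \ref{hilbertseriesdifferencelemma} (via Hilbert series) gives $\dim H_{1,1}=\dim H_{2,2}+1$, and Proposition \ref{isomorphismproposition} gives injectivity of $\Delta$ for $r=2$; it then merely \emph{asserts} that the cycle $\kappa$ does not lie in $\operatorname{im}(\Delta)$. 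You instead obtain the codimension-one statement structurally from the $r=2$ snake sequence $0\to\ker(Ad)\to\ker D\to X\to 0$ with $X=R(2)=\kk\{\omega\}$ one-dimensional, together with the identification $\operatorname{im}(\Delta)=C^{-1}(\ker(Ad))$ extracted from the paper's own description of $\Delta$ in Proposition \ref{isomorphismproposition} (this is where characteristic zero enters, through the averaging map $Q/r$ of Lemma \ref{kercokerlemma}); and your computation $Ad(C\kappa)=2\omega\neq 0$ supplies an explicit certificate for the non-membership $\kappa\notin\operatorname{im}(\Delta)$ that the paper leaves unjustified — namely, $C\kappa$ fails cyclic invariance — while also making visible why the hypothesis on the characteristic is needed (the factor $2$). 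What the paper's route buys is brevity, given that the Hilbert-series lemmas were already in place; what yours buys is that the one unproved assertion in the paper's proof is actually verified, and the two facts ($\kappa$ is a cycle, $\kappa$ spans the missing line) are tied to the same element $2\omega$, which vanishes in $U(2)$ but not in $W^{\otimes 2}$.
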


\begin{proof}
That $H_{2,2}$ is  $W/[W,W]$ follows from the same computation as in the proof of Theorem \ref{thm:algebra} if one also notes that $\omega$ is in $[W,W]$ (since $char(k)=0$). To compute the kernel of $$A(1)\tensor U(1) \xrightarrow{[\kappa,-]} A(2)\tensor U(2)$$  we have to work a bit more. First note that by Lemma \ref{dimensionlemma} we know that it has to have dimension one more than $H_{2,2}$. By Lemma \ref{isomorphismproposition} we know that $im(\Delta)$ sits in the kernel. We see that we only need one more generator to generate the whole kernel. Now we see that $[\kappa,\kappa]=2\omega=0,$ and note that $\kappa$ is not contained in $im(\Delta).$
\end{proof}

%\begin{proposition}
%Suppose the field has characteristic $2.$ Suppose that $c_{ii}=0$ for all $i.$ We have that $$H_{2,2}=W\otimes W/[W,W]$$ and $$H_{1,1}=C^{-1}(inv(V\otimes W)+k\{\sum_{i<j}c_{ij}x_i\otimes u_j\} ).$$ Suppose instead that $c_{ii}\neq 0$ for some $i.$  Then $$H_{1,1}= C^{-1}(inv(V\otimes W))$$ and $$H_{2,2}=(W\otimes W)/([W,W]+\omega).$$
%\end{proposition}

%\begin{proof}
%The first part follows as in Proposition \ref{r2proposition} by noting that $$D(\sum_{i<j}c_{ij}x_i\otimes u_j)=\omega.$$ To prove the second part we note that $\omega$ has a term $a(c_{ii}u_i^2)$ with $a\neq 0.$ Such a term is never contained in $[W,W]$ since $2$ is not invertible. By noting this the proof proceeds exactly as before.
%MORE HERE?
%\end{proof}

%\subsection{Description of the String Algebra}
%We can now describe the product structure in more detail in the case of a field of characteristic $0$. %By Theorem \ref{hochschildcocomplextheorem} the complex described in Lemma \ref{differentialformulalemma} is a dg algebra model for the Hochschild cohomology algebra. 
We would now like to describe the product structure in more detail using the description of $H_{2,r}$ as $U(r)/[U(r),U(r)]$ and the isomorphism of Proposition \ref{isomorphismproposition}.

\begin{lemma}
Suppose we have elements $a$ and $b.$ Their product is zero unless both of them come from groups $H_{1,*}$ or at least one of them come from $H_{0,0}.$
\end{lemma}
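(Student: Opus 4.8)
The plan is to read the statement off directly from the explicit product computed in Theorem~\ref{thm:algebra}. Under the isomorphism of Gerstenhaber algebras established there, the Hochschild cohomology decomposes into three pieces, $\kk \oplus s^{-1}\Der U/\ad U \oplus s^{-d}U/[U,U]$, which correspond respectively to $H_{0,0}$, to $\bigoplus_r H_{1,r}$, and to $\bigoplus_r H_{2,r}$. Since the product is bilinear, it suffices to verify the claim for homogeneous $a,b$ each lying in a single one of these three summands, and by graded commutativity of the product we may treat the pairs $(a,b)$ and $(b,a)$ together.

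First I would dispose of the cases involving the unit. The element of the left summand $\kk = H_{0,0}$ acts as the multiplicative identity, so whenever one of $a,b$ lies in $H_{0,0}$ the product equals a scalar multiple of the other factor; this is exactly the ``at least one from $H_{0,0}$'' clause, and no vanishing is asserted there. Next, when both $a$ and $b$ lie in $s^{-1}\Der U/\ad U$, that is, both come from $H_{1,*}$, their product $s^{-1}\theta \bullet s^{-1}\eta = s^{-d}\sum_{i,j}(-1)^{\epsilon} c_{ij}\theta(u_i)\eta(u_j)$ lands in $H_{2,*}$ and is in general nonzero; this is the ``both from $H_{1,*}$'' clause.

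It then remains to treat the products in which at least one factor comes from $H_{2,*} = s^{-d}U/[U,U]$ while the other comes from $H_{1,*}$ or again from $H_{2,*}$, so that neither factor is the unit. These are precisely the products $s^{-1}\theta \bullet s^{-d}u$ and $s^{-d}u \bullet s^{-d}v$, both of which vanish by the assertion in Theorem~\ref{thm:algebra} that the product of $s^{-d}u$ with anything except a multiple of the unit element is zero. Running through the three paragraphs exhausts all combinations of source summands and yields the lemma.

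I do not expect a genuine obstacle here: the result is a bookkeeping consequence of the product formula already proved. The only point requiring care is maintaining the dictionary between the index notation $H_{i,r}$ used in this section and the three summands $\kk$, $s^{-1}\Der U/\ad U$, $s^{-d}U/[U,U]$ of Theorem~\ref{thm:algebra}, together with the observation that the vanishing is insensitive to the finer weight grading by $r$, so that it is enough to argue at the level of the coarse decomposition indexed by $i\in\{0,1,2\}$.
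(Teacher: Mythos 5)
Your proof is correct. The paper's own proof is shorter and more direct: it observes that the multiplication respects the bidegree — so any product of a class from $H_{2,*}$ with a class from $H_{1,*}$ or $H_{2,*}$ lands in $H_{\geq 3,*}=0$, since the complex $A(0)\tensor U \to A(1)\tensor U \to A(2)\tensor U$ has only three columns — together with the observation that $H_{0,*}$ is one-dimensional, spanned by the identity element. You instead read the vanishing off from the explicit product description of Theorem~\ref{thm:algebra} (``the product of $s^{-d}u$ with anything except multiples of the unit element is zero''); this is legitimate and not circular, since that theorem precedes this lemma, and its vanishing clause was itself derived from the same bidegree considerations on the algebra $A\tensor U$, so the two routes have the same mathematical content. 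The one point your write-up glosses over, but which your dictionary between the summands and the groups $H_{i,r}$ implicitly contains, is that the lemma's second clause concerns $H_{0,0}$ rather than all of $H_{0,*}$: one needs $H_{0,r}=0$ for $r>0$, i.e.\ triviality of the center of $U$ (B{\o}gvad), which is exactly what identifies the summand $\kk$ of Theorem~\ref{thm:algebra} with $H_{0,0}$; the paper makes this explicit in its proof, and it would be worth one sentence in yours.
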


\begin{proof}
The multiplication respects the bidegree. This yields the result together with the observation that $H_{0,*}$ is one-dimensional, spanned by the identity element.
%From Lemma \ref{injectionlemma} we see that the only homology group coming from a chain group of the type $A(0)\otimes U(-)$ is the one-dimensional $H_{0,0}.$ It is easy to see that this corresponds to the identity element of the cohomology algebra. Suppose now that $a\in H_{s_1,r_1}$ and $b\in H_{s_2,r_2}$ with $s_1,s_2\geq1.$ The product lands in $H_{s_1+s_2,t_1+r_2}$ thus the only way for them to be non-zero is if $s_1=s_2=1.$
\end{proof}

First we will start by looking at the multiplication of two classes in $im(\Delta).$ This is the generic case, we only have to compute a couple of extra special cases to determine the whole product.

\begin{theorem}
\label{producttheorem}
%Suppose $r_1,r_2\geq 2.$ 
%CHECK THE SIGNS AGAIN, REMEMBER n NOT NECESSARILY ODD. 
Suppose we have elements in $H_{1,r_1}$ and $H_{1,r_2}$ given by $\Delta(M^\vee\otimes u_{i_1}\dots u_{i_{r_1+1}})$ and $\Delta(M^\vee\otimes u_{j_1}\dots u_{j_{r_2+1}})$  Then their product is given by $$\Delta(M^\vee\otimes u_{i_1}\dots u_{i_{r_1+1}})\Delta(M^\vee\otimes u_{j_1}\dots u_{j_{r_2+1}})=$$ $$(-1)^n\sum_{k,\ell}(-1)^{(n-1)r(k+\ell)}c^{-1}_{i_kj_\ell}M^\vee \otimes u_{i_{k+1}}\dots u_{i_{k-1}} u_{j_{\ell+1}}\dots u_{j_{\ell-1}},$$ where $c^{-1}_{i_kj_\ell}$ is a coefficient of the inverse matrix of $C.$
\end{theorem}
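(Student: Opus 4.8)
The plan is to reduce the computation to the product formula of Theorem~\ref{thm:algebra} by first reinterpreting each $\Delta$-image as an honest outer derivation. Recall the conventions fixed just before Lemma~\ref{Deltalemma}: a basis element $x_\ell\tensor w$ of $A(1)\tensor U$ represents the outer derivation $s^{-1}\frac{\partial}{\partial u_\ell}w$, sending $u_\ell\mapsto w$ and the other generators to $0$. Applying this to the explicit formula for $\Delta$, the first factor $\Delta(M^\vee\tensor u_{i_1}\dots u_{i_{r_1+1}})$ becomes the derivation $\theta$ with
$$\theta(u_a)=\sum_{k}(-1)^{(n-1)r_1(k-1)}c_{i_ka}^{-1}\,u_{i_{k+1}}\dots u_{i_{k-1}},$$
and the second factor becomes a derivation $\eta$ with the analogous expression $\eta(u_b)=\sum_{\ell}(-1)^{(n-1)r_2(\ell-1)}c_{j_\ell b}^{-1}u_{j_{\ell+1}}\dots u_{j_{\ell-1}}$. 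Here I have used that $x_\ell$ occurs in the $k$-th term of $\Delta$ exactly when the derivation is evaluated on $u_\ell$, so setting the second matrix index equal to the argument selects a single term.

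I would then substitute these into the product formula $s^{-1}\theta\loopr s^{-1}\eta=s^{-d}\sum_{a,b}(-1)^{\epsilon}c_{ab}\,\theta(u_a)\eta(u_b)$ of Theorem~\ref{thm:algebra} (renaming its summation indices to $a,b$ to avoid clashing with the $i_k,j_\ell$) and perform the decisive contraction of the three matrix coefficients. Summing over the internal indices gives $\sum_{a,b}c_{i_ka}^{-1}c_{ab}c_{j_\ell b}^{-1}=c_{j_\ell i_k}^{-1}$ by $c^{-1}c=1$, so only the pairs with $b$ identified to $i_k$ and $a$ to $j_\ell$ survive. Because every generator $x_i$ has degree $n$ in the $2n$-dimensional case, the intersection form obeys $c_{ij}=(-1)^{n}c_{ji}$, and this symmetry is inherited by the inverse matrix, whence $c_{j_\ell i_k}^{-1}=(-1)^{n}c_{i_kj_\ell}^{-1}$; this produces exactly the global prefactor $(-1)^n$. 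The product $\theta(u_a)\eta(u_b)$ simply concatenates the two opened-up necklaces into $u_{i_{k+1}}\dots u_{i_{k-1}}u_{j_{\ell+1}}\dots u_{j_{\ell-1}}$, giving the monomial on the right-hand side, and the surviving double sum is indexed by $k$ and $\ell$ as claimed.

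The main obstacle, and the reason I would restrict to the $2n$-dimensional case, is assembling the total sign from its four sources: the sign $(-1)^{(n-1)r_1(k-1)}$ carried by the $k$-th term of $\theta$, the sign $(-1)^{(n-1)r_2(\ell-1)}$ from $\eta$, the Koszul sign $(-1)^\epsilon$ of the product formula, and the symmetry sign $(-1)^n$ just extracted. The plan is to reduce $\epsilon$ modulo $2$ by inserting the degrees $|u_a|=|u_b|=n-1$, $|x_b|=n$, $|\theta|=(r_1-1)(n-1)$ and $|\eta|=(r_2-1)(n-1)$; using $2n-1\equiv1$, $(n-1)^2\equiv n-1$ and $n(n-1)\equiv0$ one finds $\epsilon\equiv(n-1)(r_1+r_2)\pmod 2$. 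Adding the four exponents and setting $r_1=r_2=r$ then collapses the total to $n+(n-1)r(k+\ell)$, which is precisely the sign $(-1)^n(-1)^{(n-1)r(k+\ell)}$ in the statement. The only delicate point is to confirm that no residual sign depending on the internal ordering of the necklace letters survives the contraction; this holds because $\theta$ and $\eta$ act nontrivially only on the single generators $u_\ell$ and $u_b$, so the Koszul signs involve only the total degrees $|\theta|$ and $|\eta|$ rather than the internal structure of the words.
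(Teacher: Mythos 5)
Your proposal is correct and takes essentially the same route as the paper's own proof: both substitute the explicit formula for $\Delta$ into each factor, multiply using the product on the small complex (your invocation of the product formula of Theorem~\ref{thm:algebra} is exactly that multiplication in $A\tensor U$ rewritten in derivation language), contract the three matrix coefficients via $\sum_{a,b}c^{-1}_{i_ka}c_{ab}c^{-1}_{j_\ell b}=c^{-1}_{j_\ell i_k}$, and extract the prefactor $(-1)^n$ from the $(-1)^n$-symmetry of the intersection form. Your sign bookkeeping, restricted as in the paper to the $2n$-dimensional case with $r_1=r_2=r$, collapses to the same total exponent $n+(n-1)r(k+\ell)$.
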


\begin{proof}
%The basis elements corresponding to the cyclic words $u_{i_1}\dots u_{i_{r_1+1}}$ and $u_{i_1}\dots u_{i_{r_2+1}}$ are $$\sum_{k,\ell}(-1)^{(n-1)r(k-1)}c^{-1}_{i_k\ell}x_\ell\otimes u_{i_{k+1}}\dots u_{i_{k-1}}$$ and $$\sum_{j,p}(-1)^{(n-1)r(j-1)}c^{-1}_{i_jp}x_p\otimes u_{i_{j+1}}\dots u_{i_{j-1}}.$$ 
We compute by using the formula for $\Delta.$
$$(\sum_{k,s}(-1)^{\epsilon_1}c^{-1}_{i_ks}x_s\otimes u_{i_{k+1}}\dots u_{i_{k-1}})(\sum_{\ell,p}(-1)^{\epsilon_2}c^{-1}_{j_\ell p}x_p\otimes u_{j_{\ell+1}}\dots u_{j_{\ell-1}})=$$
$$\sum_{k,s,\ell,p}(-1)^{\epsilon_3}c^{-1}_{i_ks}c^{-1}_{j_\ell p}c_{s p}M^\vee \otimes u_{i_{k+1}}\dots u_{i_{r_1+1}}u_{i_1}\dots u_{i_{k-1}}u_{j_{\ell+1}}\dots u_{j_{r_2+1}}u_{j_1}\dots u_{j_{\ell-1}}=$$
$$(-1)^n\sum_{k,\ell}(-1)^{\epsilon_4}c^{-1}_{i_kj_\ell}M^\vee \otimes u_{i_{k+1}}\dots u_{i_{r_1+1}}u_{i_1}... u_{i_{k-1}}u_{j_{\ell+1}}\dots u_{j_{r_2+1}}u_{j_1}\dots u_{j_{\ell-1}},$$ where $\epsilon_1=(n-1)r(k-1)$,  $\epsilon_2=(n-1)r(\ell-1)$, $\epsilon_3= (n-1)r(k+\ell-2)$ and $\epsilon_4=(n-1)r(k+\ell)$

\end{proof}

\begin{example}
Suppose the intersection form $C$ is given by the identity matrix (and $n$ is even). Then we have $\Delta(u_1u_2u_3)\Delta(u_1u_3u_2)=$ $$(x_1\otimes u_2u_3+x_2\otimes u_3u_1+x_3\otimes u_1u_2)(x_1\otimes u_3u_2+x_3\otimes u_2u_1+x_2\otimes u_1u_3)$$ $$=M^\vee\otimes (u_2u_3u_3u_2+u_3u_1u_1u_3+u_1u_2u_2u_1).$$ \end{example}

\begin{remark}
The conceptual way of understanding the formula is as follows (ignoring signs for a moment). Given two cyclic words (describing elements of $H_{1,r_1})$ and $H_{1,r_2}$), their product is a sum of cyclic words (as elements of $H_{2,r_1+r_2}$) given as a sum over all ways of choosing one letter from each word, deleting the letters and gluing the words together at the incisions with a weight depending on the deleted letters. This gives a combinatorial description of the product if we describe both invariants and coinvariants as cyclic words. %The point is that if one describes both types of homology groups as linear combinations of of equivalence classes of cyclic words, the multiplication can be described combinatorially.
\end{remark}

%Even though we have described the generic case we still have to check what happens when one multiplies with elements coming from $H_{1,0}$ or $H_{1,1}$ Keep in mind that $H_{1,0}$ is spanned by elements $x_i\otimes 1,$ and $H_{1,1}$ is spanned by elements $\sum_\ell c^{-1}_{{i_1}\ell}x_\ell\otimes u_{i_2}+(-1)^{n-1}c^{-1}_{{i_2}\ell}x_\ell\otimes u_{i_1}$ and $\sum_ix_i\otimes u_i$ according to Lemma \ref{differentialformulalemma} and Proposition \ref{r2proposition}.

To finish off the description of the multiplication we need to consider multiplication with the element $\kappa$ since it is not in the image of $\Delta.$

%To finish off the description of the multiplication we need to consider multiplication with the elements $x_i\otimes 1$ and $\sum_i x_i\otimes u_i,$ since they are not in the image of $\Delta.$

%\begin{lemma}
%We have the following formulas for the multiplication.
%$$(x_i\otimes 1)(x_j\otimes 1)=c_{ij}M^\vee\otimes 1,$$ $$(x_i\otimes 1)(\sum_jx_j\otimes u_j)=\sum_jc_{ij}M^\vee\otimes u_j,$$ and $$(x_i\otimes 1)\Delta(M^\vee\otimes u_{i_1}\dots u_{i_r})= \sum_k(-1)^{(n-1)r(k-1)+n}\delta_{i_ki}M^\vee\otimes u_{i_{k+1}}\dots u_{i_{k-1}}$$ where $\delta_{ab}$ are entries of the identity matrix.
%\end{lemma}

%\begin{proof}
%The first two expressions are automatic and the third one is due to the following computation. $$(x_i\otimes 1)(\sum_{k,\ell}(-1)^{(n-1)r(k-1)}c^{-1}_{i_k\ell}x_\ell\otimes u_{i_{k+1}}\dots u_{i_{k-1}})=$$ $$\sum_{k,\ell}(-1)^{(n-1)r(k-1)}c_{i\ell}c^{-1}_{i_k\ell}M^\vee\otimes u_{i_{k+1}}\dots u_{i_{k-1}} =$$ $$\sum_k(-1)^{(n-1)r(k-1)+n}\delta_{i_ki}M^\vee\otimes u_{i_{k+1}}\dots u_{i_{k-1}}.$$ 
%\end{proof}

\begin{lemma}
We have the following formulas for the multiplication.
%$$(\sum_ix_i\otimes u_i)(\sum_jx_j\otimes u_j)=0$$ 
$$\kappa^2=0$$ and
%$$(\sum_ix_i\otimes u_i)(\sum_\ell c^{-1}_{{i_1}\ell}x_\ell\otimes u_{i_2}+(-1)^{n-1}c^{-1}_{{i_2}\ell}x_\ell\otimes u_{i_1})=(-1)^nM^\vee\otimes u_iu_{i_2}+M^\vee\otimes u_iu_{i_1},$$ 
$$\kappa\Delta(M^\vee\otimes u_{i_1}\dots u_{i_r})=(-1)^nrM^\vee\otimes u_{i_1}\dots u_{i_r}.$$
%$$(\sum_ix_i\otimes u_i)(\sum_jx_j\otimes u_j)=0,$$ $$(\sum_ix_i\otimes u_i)(\sum_\ell c^{-1}_{{i_1}\ell}x_\ell\otimes u_{i_2}+(-1)^{n-1}c^{-1}_{{i_2}\ell}x_\ell\otimes u_{i_1})=(-1)^nM^\vee\otimes u_iu_{i_2}+M^\vee\otimes u_iu_{i_1},$$ $$(\sum_ix_i\otimes u_i)(\sum_{k,\ell}(-1)^{(n-1)r(k-1)}c^{-1}_{i_k\ell}x_\ell\otimes u_{i_{k+1}}\dots u_{i_{k-1}})=$$ $$(-1)^n\sum_k (-1)^{(n-1)r(k-1)}M^\vee\otimes u_iu_{i_{k+1}}\dots u_{i_{k-1}}$$ and $$(\sum_\ell c^{-1}_{{i_1}\ell}x_\ell\otimes u_{i_2}+(-1)^{n-1}c^{-1}_{{i_2}\ell}x_\ell\otimes u_{i_1})(\sum_k c^{-1}_{{j_1}k}x_k\otimes u_{k_2}+(-1)^{n-1}c^{-1}_{{j_2}k}x_k\otimes u_{j_1})= $$ $$(-1)^n\sum_{k=1,2\atop j=1,2}(-1)^{(n-1)(k+j)}c^{-1}_{i_ki_j}M^\vee \otimes u_{i_{k\pm1}}u_{i_{j\pm1}}.$$

\end{lemma}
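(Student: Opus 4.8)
The plan is to verify both identities by direct computation in the small model $\big(A\tensor U,[\kappa,-]\big)$ of Remark \ref{rem:finite}, where $A = H^*(M)$, $U = H_*(\Omega M)$, the multiplication is that of the tensor product algebra, and $\kappa = \sum_p x_p\tensor u_p$ is the cycle representing the class of $H_{1,1}$ singled out in Proposition \ref{r2proposition}. Throughout I would lean on three facts: that $x_px_q = c_{pq}M^\vee$ in $A(2) = H^d(M)$; that the intersection form is (anti)symmetric, $c_{pq} = (-1)^n c_{qp}$, by graded commutativity of the cup product (in the $2n$-dimensional case all generators sit in degree $n$, and $|u_i|=n-1$); and that in $H_{2,r}\cong U(r)/[U(r),U(r)]$ a cyclic shift by one generator costs a sign $(-1)^{(n-1)(r-1)}$. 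The whole content is sign bookkeeping, and the device that keeps it uniform in the parity of $n$ is to package every use of the (anti)symmetry into a single factor $(-1)^n$.

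For the first identity I would expand
$$\kappa^2 = \sum_{p,q}(-1)^{|u_p||x_q|}\, x_px_q\tensor u_pu_q = M^\vee\tensor \sum_{p,q} c_{pq}\,u_pu_q,$$
using that the Koszul sign $(-1)^{|u_p||x_q|}=(-1)^{(n-1)n}$ is trivial. Applying $c_{pq}=(-1)^nc_{qp}$ rewrites the inner sum as $(-1)^n\sum_{p,q}c_{qp}u_pu_q$, which is precisely $(-1)^n$ times the relation $\omega$ of Corollary \ref{cor:main} (after absorbing the $(-1)^n$ that appears in its definition), and therefore vanishes in $U$. Hence $\kappa^2=0$ already at the chain level, not merely in $U/[U,U]$.

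For the second identity I would substitute the defining formula
$$\Delta(M^\vee\tensor u_{i_1}\cdots u_{i_r}) = \sum_{k,\ell}(-1)^{(n-1)(r-1)(k-1)}\,c_{i_k\ell}^{-1}\,x_\ell\tensor u_{i_{k+1}}\cdots u_{i_{k-1}},$$
multiply by $\kappa$ on the left, and use $x_px_\ell=c_{p\ell}M^\vee$ together with the trivial tensor sign $(-1)^{(n-1)n}$ to obtain
$$\sum_{p,k,\ell}(-1)^{(n-1)(r-1)(k-1)}\,c_{i_k\ell}^{-1}c_{p\ell}\; M^\vee\tensor u_p u_{i_{k+1}}\cdots u_{i_{k-1}}.$$
The key algebraic step is the matrix identity $\sum_\ell c_{i_k\ell}^{-1}c_{p\ell} = (-1)^n\delta_{i_kp}$, which follows from the defining relation of the inverse matrix combined with $c_{p\ell}=(-1)^nc_{\ell p}$; it forces $p=i_k$ and extracts a global $(-1)^n$, leaving $(-1)^n\sum_{k=1}^r(-1)^{(n-1)(r-1)(k-1)}M^\vee\tensor u_{i_k}u_{i_{k+1}}\cdots u_{i_{k-1}}$. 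Each summand is the cyclic shift of $u_{i_1}\cdots u_{i_r}$ by $k-1$ positions, so in $U/[U,U]$ it equals $(-1)^{(n-1)(r-1)(k-1)}u_{i_1}\cdots u_{i_r}$; the two signs cancel term by term, every summand collapses to $M^\vee\tensor u_{i_1}\cdots u_{i_r}$, and summing over $k=1,\dots,r$ yields the claimed $(-1)^n r\, M^\vee\tensor u_{i_1}\cdots u_{i_r}$.

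The main obstacle is purely the coherent tracking of Koszul signs through the tensor-product multiplication, the matrix inversion, and the passage to $U/[U,U]$. The strategy to contain it is to check at the outset that the only sign surviving each of these three steps is a single copy of $(-1)^n$, so that the even and odd cases need never be separated. Well-definedness of the expressions, i.e.\ independence of the chosen cycle representatives, is not a concern here, having already been established in Proposition \ref{isomorphismproposition}.
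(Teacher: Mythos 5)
Your proof of the second identity is, step for step, the paper's own computation: left-multiply the defining expression for $\Delta$ by $\kappa$, use $x_px_\ell=c_{p\ell}M^\vee$ and the contraction $\sum_\ell c_{p\ell}c^{-1}_{i_k\ell}=(-1)^n\delta_{p\,i_k}$ to force $p=i_k$ and extract a global $(-1)^n$, then cancel $(-1)^{(n-1)(r-1)(k-1)}$ against the equal cyclic-shift sign in $U/[U,U]$, so that each of the $r$ values of $k$ contributes $(-1)^nM^\vee\otimes u_{i_1}\cdots u_{i_r}$. (Your sign exponent, taken from the statement of the main theorem, is the coherent one: the paper's proof displays $(n-1)r(k-1)$, with which the term-by-term cancellation against the cyclic-shift sign would fail for even $n$, so your bookkeeping is in fact cleaner.) The only genuine divergence is the first identity: the paper disposes of $\kappa^2=0$ in one line, since $\kappa$ has odd degree and the cup product on Hochschild cohomology is graded commutative (characteristic zero being in force in this section), whereas you expand $\kappa^2$ directly in $A\otimes U$ and identify it with $M^\vee\otimes\omega$, which vanishes because $\omega$ is the defining relation of $U$ (Corollary \ref{cor:main}). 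Your argument proves slightly more---vanishing at the chain level, with no appeal to graded commutativity or to the characteristic---and it squares with the observation $[\kappa,\kappa]=2\omega$ made in Proposition \ref{r2proposition}; the cost is only the extra symmetry bookkeeping $c_{pq}=(-1)^nc_{qp}$. Both routes are valid.
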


\begin{proof}
The first formula follows since the square of an odd element is always zero by graded commutativity. The second follows by the following calculation. 
$$\sum_i (x_i\otimes u_i)(\sum_{k,\ell}(-1)^{(n-1)r(k-1)}c^{-1}_{i_k\ell}x_\ell\otimes u_{i_{k+1}}\dots u_{i_{k-1}})=$$ $$\sum_{i,k,\ell}(-1)^{(n-1)r(k-1)}c_{i\ell}c^{-1}_{i_k\ell}M^\vee\otimes u_iu_{i_{k+1}}\dots u_{i_{k-1}}=$$ $$\sum_{i,k}(-1)^{(n-1)r(k-1)+n}\delta_{ii_k}M^\vee\otimes u_iu_{i_{k+1}}\dots u_{i_{k-1}}=$$ $$\sum_{k}(-1)^{(n-1)r(k-1)+n}M^\vee\otimes u_ku_{i_{k+1}}\dots u_{i_{k-1}}=$$ $$(-1)^nrM^\vee\otimes u_{i_1}\dots u_{i_r}.$$
%$$\sum_i(x_i\otimes u_i)(\sum_\ell c^{-1}_{{i_1}\ell}x_\ell\otimes u_{i_2}+(-1)^{n-1}c^{-1}_{{i_2}\ell}x_\ell\otimes u_{i_1})=$$ $$\sum_{i,\ell} c^{-1}_{{i_1}\ell}c_{i\ell}M^\vee\otimes u_iu_{i_2}+(-1)^{n-1}c^{-1}_{{i_2}\ell}c_{i\ell}M^\vee\otimes u_iu_{i_1} =$$ $$\sum_i(-1)^n\delta_{i_1i}M^\vee\otimes u_iu_{i_2}+(-1)^n(-1)^{n-1}\delta_{i_2i}M^\vee\otimes u_iu_{i_1}=$$ $$(-1)^nM^\vee\otimes u_iu_{i_2}+M^\vee\otimes u_iu_{i_1}.$$ The third one follow by an almost identical calculation. The calculation for the last formula is the same as in the proof of Theorem \ref{producttheorem}. 
\end{proof}

Using this description of the product we can describe the Gerstenhaber bracket easily.

\begin{proposition} Let $a\in H_{1,r_1}$ and $b\in H_{1,r_2}.$ Then $$[a,b]=\Delta(ab).$$
Let $c\in H_{2,r_3}$ and $d\in H_{1,r_4}.$ Then $$[c,d]=-\Delta(c)d.$$
Let $e\in H_{i,r}$ and $f\in H_{0,0}.$ Then $[e,f]=0.$
\end{proposition}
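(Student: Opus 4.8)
The plan is to deduce all three bracket formulas directly from the defining identity of a Batalin--Vilkovisky algebra,
$$[a,b]=\Delta(ab)-\Delta(a)b-(-1)^{|a|}a\Delta(b),$$
which is available because $\Delta$ is the genuine BV-operator by Lemma \ref{BVlemma}. The only additional inputs are two structural facts already in hand. First, $\Delta$ is supported on the summand $H_{2,*}$ and vanishes identically on $H_{0,*}$ and $H_{1,*}$, since it was defined to be zero outside $A(2)\otimes U$. Second, the product structure from Theorem \ref{thm:algebra}: the product $H_{1,r_1}\cdot H_{1,r_2}$ lands in $H_{2,r_1+r_2}$, while any product of an element of the $H_{2,*}$ summand with a non-unit vanishes.

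For $a\in H_{1,r_1}$ and $b\in H_{1,r_2}$, both $\Delta(a)$ and $\Delta(b)$ vanish because $\Delta=0$ on $H_{1,*}$, so the BV-identity collapses to $[a,b]=\Delta(ab)$, as claimed. For $c\in H_{2,r_3}$ and $d\in H_{1,r_4}$, we again have $\Delta(d)=0$, and moreover $cd=0$ by the vanishing of products out of the $H_{2,*}$ summand; hence $\Delta(cd)=0$ and the identity reduces to $[c,d]=-\Delta(c)d$. Note that the sign $(-1)^{|c|}$ attached to the term $c\Delta(d)$ never enters the computation, precisely because that term is already zero.

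For the bracket with the unit, write $f=\lambda\cdot 1$ with $1\in H_{0,0}$. Since $e\cdot 1=e$ and $\Delta(1)=0$ (the unit lies in $H_{0,0}$, where $\Delta$ vanishes), the BV-identity gives
$$[e,1]=\Delta(e)-\Delta(e)-(-1)^{|e|}e\Delta(1)=0,$$
so $[e,f]=0$ by linearity.

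I do not anticipate a serious obstacle: once Lemma \ref{BVlemma} and the product description of Theorem \ref{thm:algebra} are invoked, the argument is a three-case check. The only point requiring care is bookkeeping of which terms in the BV-identity survive, and the apparent sign difficulties dissolve because in every case at least one of $\Delta(a)$, $\Delta(b)$, or the product $ab$ already vanishes, leaving a surviving term with a trivial sign. The one thing worth double-checking is that the Gerstenhaber unit genuinely coincides with the generator of $H_{0,0}$ and is annihilated by $\Delta$, which is immediate from the defining support of $\Delta$.
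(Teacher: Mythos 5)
Your proposal is correct and follows essentially the same route as the paper: the paper's proof likewise just applies the BV-identity $[x,y]=\Delta(xy)-\Delta(x)y-(-1)^{|x|}x\Delta(y)$ and observes that the unwanted terms vanish for (bi)degree reasons. Your version merely makes explicit which structural facts (the support of $\Delta$ on $H_{2,*}$ and the vanishing of products out of $H_{2,*}$ with non-units) force those vanishings, which is a faithful elaboration rather than a different argument.
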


\begin{proof}
All these identities follows easily by applying the identity $$[x,y]=\Delta(xy)-\Delta(x)y-(-1)^{deg(x)}x\Delta(y)$$ and noticing that most terms vanish because of degree reasons.
\end{proof}

\section{Dimension Counting}
From the Hilbert series of Lemma \ref{hilbertserieslemma} and \ref{dimensionlemma} we can calculate the dimensions of the parts $U(r)$ and $R(q).$ These give us Lemma \ref{hilbertseriesdifferencelemma}, which tells us that some groups have the same dimensions. However, to analyze what these dimensions are we have to do some combinatorics, which we are going to do in this section. Since these calculations will rely on Lemma \ref{snakelemmalemma} the whole section assumes characteristic zero.
\begin{lemma}
The dimension of $U(r)$ is $$\sum_{a,b\geq 0\atop a+2b=r}{{a+b}\choose{a}} n^a(-1)^b$$ and the dimension of $R(q)$ is $$\sum_{a,b,c\geq 0\atop a+2b+3c=q-2}{{a+b+c}\choose{a,b,c}}2^an^a(-1)^b(n^2+1)^bn^c,$$ where we are using multinomial coefficients.
\end{lemma}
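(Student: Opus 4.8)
The plan is to read off the stated dimensions as the Taylor coefficients of the Hilbert series computed in Lemma \ref{hilbertserieslemma}, using nothing beyond the geometric series expansion together with the binomial and multinomial theorems.

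First I would treat $U(r)$. Writing the denominator of $u(t)=\frac{1}{1-nt+t^2}$ as $1-(nt-t^2)$, the geometric series gives
$$u(t)=\sum_{k\geq 0}(nt-t^2)^k=\sum_{k\geq 0}\sum_{a+b=k}\binom{a+b}{a}(nt)^a(-t^2)^b=\sum_{a,b\geq 0}\binom{a+b}{a}n^a(-1)^b\,t^{a+2b}.$$
The monomial $t^{a+2b}$ contributes to the coefficient of $t^r$ precisely when $a+2b=r$, and summing over all such pairs yields the first formula for $\dim U(r)$.

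The same strategy handles $R(q)$, whose Hilbert series is $\frac{t^2}{1-y}$ with $y=2nt-(n^2+1)t^2+nt^3$. Expanding $\frac{1}{1-y}=\sum_{k\geq 0}y^k$ and applying the multinomial theorem to each $y^k$, with the three terms of $y$ indexed by exponents $a,b,c$, produces the coefficient $\binom{a+b+c}{a,b,c}2^an^a(-1)^b(n^2+1)^bn^c$ attached to the monomial $t^{a+2b+3c}$. The leading factor $t^2$ shifts exponents by $2$, so the coefficient of $t^q$ collects exactly the triples with $a+2b+3c=q-2$, giving the second formula.

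I expect no genuine obstacle: all manipulations are valid as identities of formal power series, so convergence never enters. The only point needing care is the bookkeeping, namely correctly identifying which monomial lands in the target degree and tracking the sign $(-1)^b$ arising from the negative middle term of each denominator.
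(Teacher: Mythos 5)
Your proposal is correct and takes essentially the same approach as the paper: the paper's own proof is the one-line instruction ``Expand the Hilbert series to get the coefficients,'' and your argument is precisely that expansion, carried out via the geometric series together with the binomial and multinomial theorems applied to the series of Lemma \ref{hilbertserieslemma}. You have merely made explicit the bookkeeping that the paper leaves to the reader.
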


\begin{proof} Expand the Hilbert series to get the coefficients.
%The Hilbert series for $U$ is $$\frac{1}{1-nt+t^2}=1+(nt-t^2)+(nt-t^2)^2+\dots=$$ $$\sum_{a,b\geq 0}{{a+b}\choose{a}} n^a(-1)^bt^{a+2b}=\sum_r \sum_{a,b\geq 0\atop a+2b=r}{{a+b}\choose{a}} n^a(-1)^bt^{r}.$$ Similarly, the Hilbert series for $R(q)$ is $$\frac{t^2}{1-(2nt-(n^2+1)t^2+nt^3)}=$$ $$t^2(1+(2nt-(n^2+1)t^2+nt^3)+(2nt-(n^2+1)t^2+nt^3)^2+\dots)=$$ $$\sum_{a,b,c\geq 0}{{a+b+c}\choose{a,b,c}}2^an^a(-1)^b(n^2+1)^bn^ct^{a+2b+3c+2}=$$ $$\sum_q \sum_{a,b,c\geq 0\atop a+2b+3c=q-2}{{a+b+c}\choose{a,b,c}}2^an^a(-1)^b(n^2+1)^bn^ct^q.$$
\end{proof}

The first part following lemma is quite standard, but we provide a proof since we are going to prove a variant later. We follow the approach of \cite{Reutenauer}, but without using generating functions.

\begin{lemma}
\label{necklacelemma}
The number of necklaces of length $r$ with letters from an alphabet of $m$ letters are $$\sum_{e\mid r}\frac{1}{e}\sum_{d\mid e}\mu(d)m^{e/d}=\frac{1}{r}\sum_{f\mid r} \phi(f)m^{r/f},$$ where $\mu$ is the M\"obius function and $\phi$ is Euler's totient function. Similarly, the number of necklaces with even period length are $$\sum_{e\mid r\atop 2\mid e}\frac{1}{e}\sum_{d\mid e}\mu(d)m^{e/d}.$$
\end{lemma}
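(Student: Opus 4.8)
The plan is to count necklaces by \emph{period}, reducing the problem to a count of aperiodic (primitive) necklaces that I can evaluate by Möbius inversion; the equivalence of the two closed forms is then a short arithmetic identity, and the even-period variant comes for free. First I would set up the period decomposition. For a word $w$ of length $r$ over the alphabet $\{1,\dots,m\}$, let its \emph{period} be the least $p>0$ with $\sigma^p(w)=w$, where $\sigma$ is cyclic rotation; standard arguments give $p\mid r$, and $w$ is an aperiodic word of length $p$ repeated $r/p$ times. Calling a necklace \emph{primitive} when its words have period equal to their length, every necklace of length $r$ has a well-defined period $e\mid r$ and corresponds bijectively to a primitive necklace of length $e$. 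Hence the number of necklaces of length $r$ is $\sum_{e\mid r}\nu(e)$, where $\nu(e)$ denotes the number of primitive necklaces of length $e$.

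Next I would compute $\nu(e)$. Each of the $m^e$ words of length $e$ has a period $d\mid e$, and a primitive necklace of length $d$ accounts for exactly $d$ words, namely its $d$ rotations, which are pairwise distinct by primitivity. Sorting the $m^e$ words by period yields
\[
m^e=\sum_{d\mid e}d\,\nu(d).
\]
Möbius inversion applied to the arithmetic functions $e\mapsto m^e$ and $e\mapsto e\,\nu(e)$ gives $e\,\nu(e)=\sum_{d\mid e}\mu(d)m^{e/d}$, so that $\nu(e)=\frac{1}{e}\sum_{d\mid e}\mu(d)m^{e/d}$. Summing over $e\mid r$ produces the first displayed formula; since the period length of a necklace is precisely $e$, restricting the sum to even $e$ gives the stated count $\sum_{e\mid r,\,2\mid e}\frac{1}{e}\sum_{d\mid e}\mu(d)m^{e/d}$ of necklaces of even period length.

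Finally I would verify that the two closed forms agree. Reindexing the double sum by $(d,k)$ with $k=e/d$, so that the summands range over pairs with $dk\mid r$, and collecting terms with a fixed power $m^k$, gives
\[
\sum_{e\mid r}\frac{1}{e}\sum_{d\mid e}\mu(d)m^{e/d}=\sum_{k\mid r}\frac{m^k}{k}\sum_{d\mid r/k}\frac{\mu(d)}{d}.
\]
The inner sum is the elementary identity $\sum_{d\mid N}\mu(d)/d=\prod_{p\mid N}(1-p^{-1})=\phi(N)/N$ with $N=r/k$, which turns the right-hand side into $\frac{1}{r}\sum_{k\mid r}\phi(r/k)m^{k}$; the substitution $f=r/k$ recovers $\frac{1}{r}\sum_{f\mid r}\phi(f)m^{r/f}$, as claimed. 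This whole argument uses only Möbius inversion and the totient identity, in keeping with the generating-function-free approach of \cite{Reutenauer}.

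The computations are all routine; the only point requiring genuine care is the bookkeeping in the period decomposition, specifically checking that a primitive necklace of length $d$ contributes \emph{exactly} $d$ words to the count $m^e$, so that $m^e=\sum_{d\mid e}d\,\nu(d)$ and hence the Möbius inversion is valid. One could instead obtain the $\phi$-formula directly from Burnside's lemma for the cyclic action — a rotation by $j$ fixes $m^{\gcd(j,r)}$ words, and grouping the $j\in\{0,\dots,r-1\}$ by $\gcd(j,r)=r/f$ counts $\phi(f)$ indices for each $f\mid r$ — but I prefer the period decomposition, since it simultaneously yields the Möbius form needed for the even-period variant.
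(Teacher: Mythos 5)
Your proof is correct and follows essentially the same route as the paper: count primitive necklaces via $m^e=\sum_{d\mid e}d\,\nu(d)$ and M\"obius inversion, sum over period lengths to get both the general and even-period formulas, then verify the closed-form identity by reindexing the double sum. The only cosmetic difference is in that last step, where you substitute $k=e/d$ and use $\sum_{d\mid N}\mu(d)/d=\phi(N)/N$, while the paper reindexes via $e\mapsto r/e$ and $de=f$ and uses the equivalent identity $\sum_{d\mid f}(f/d)\mu(d)=\phi(f)$.
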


\begin{proof}
First we want to count the number of primitive necklaces $Prim(r)$, that is, necklaces that are not repeating. Necklaces with a fixed period length is in bijection with primitive necklaces of total length the same as the period length. The total number of words of length $r$ from an alphabet of $m$ letters is $m^r.$ This gives the formula $$m^r=\sum_{d\mid r}dPrim(d).$$ Now M\"obius inversion gives $$Prim(r)=1/r\sum_{d\mid r}\mu(d)m^{r/d}.$$ Now, to get the number of necklaces we can just sum over all period lengths to obtain the formulas $\sum_{e\mid r}1/e\sum_{d\mid e}\mu(d)m^{e/d}$ and $\sum_{e\mid r\atop 2\mid e}1/e\sum_{d\mid e}\mu(d)m^{e/d}.$ To get the expression $1/r\sum_{f\mid r} \phi(f)m^{r/f},$ we need to do some simplification of \\ $\sum_{e\mid r}1/e\sum_{d\mid e}\mu(d)m^{e/d}.$ We have $$\sum_{e\mid r}1/e\sum_{d\mid e}\mu(d)m^{e/d}=\sum_{e\mid r}Prim(e)=\sum_{e\mid r}Prim(r/e)=$$ $$\sum_{e\mid r}e/r\sum_{d\mid r/e}\mu(d)m^{r/de}=1/r\sum_{e\mid r}\sum_{d\mid r/e}e\mu(d)m^{r/de}.$$ Now let $de=f$ 
and notice that $d\mid r/e\Leftrightarrow f\mid r.$ By then summing in the other order we obtain that $$1/r\sum_{e\mid r}\sum_{d\mid r/e}e\mu(d)m^{r/de}=1/r\sum_{f\mid r}\sum_{d\mid f}e\mu(d)m^{r/f}=$$ $$1/r\sum_{f\mid r}m^{r/f}\sum_{d\mid f}f/d\mu(d)=1/r\sum_{f\mid r} \phi(f)m^{r/f},$$ where the last equality comes from the classical identity $\sum_{d\mid f}f/d\mu(d)=\phi(f)$ which can be gotten by applying M\"obius inversion to $f=\sum_{d\mid f}\phi(d).$
\end{proof}

\begin{corollary}
\label{Ydimensioncorollary}
Let the notation be as in Lemma \ref{Adlemma}. The dimension $ker(Ad)=coker(Ad)$ is $$\sum_{e\mid r}\frac{1}{e}\sum_{d\mid e}\mu(d)m^{e/d}=\sum_{e\mid r}\frac{1}{e}\sum_{d\mid e}\mu(e/d)m^{d}=\frac{1}{r}\sum_{f\mid r} \phi(f)m^{r/f}=\frac{1}{r}\sum_{f\mid r} \phi(r/f)m^{f},$$ if $n$ or $r$ is odd. If $n$ and $r$ are even the dimension is $$\sum_{e\mid r\atop 2\mid e}\frac{1}{e}\sum_{d\mid e}\mu(d)m^{e/d}=\sum_{e\mid r\atop 2\mid e}\frac{1}{e}\sum_{d\mid e}\mu(e/d)m^{d}.$$
\end{corollary}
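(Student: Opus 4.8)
The plan is to read the dimension off directly from the two preceding combinatorial lemmas and then to verify that the displayed expressions are merely different bookkeeping forms of the same number. First I would invoke Lemma \ref{cyclicinvariantslemma}, which supplies the isomorphism $ker(Ad)\cong coker(Ad)$, so that it suffices to compute a single dimension. By Lemma \ref{kernelnecklacelemma}, this dimension equals the number of necklaces of length $r$ on $m$ letters when $n$ or $r$ is odd, and the number of necklaces of even period length when $n$ and $r$ are both even. Substituting the counts furnished by Lemma \ref{necklacelemma} then produces the leftmost expression in each of the two displayed formulas.

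It then remains to check the chains of equalities, which is purely formal. For the inner sum I would substitute $d\mapsto e/d$: as $d$ runs over the divisors of $e$ so does $e/d$, and under this substitution $\mu(d)m^{e/d}$ becomes $\mu(e/d)m^{d}$, which yields the second form in both the odd and the even case. The equality $\sum_{e\mid r}\tfrac1e\sum_{d\mid e}\mu(d)m^{e/d}=\tfrac1r\sum_{f\mid r}\phi(f)m^{r/f}$ is exactly the second identity already recorded in Lemma \ref{necklacelemma}, so I would simply cite it. Finally, the substitution $f\mapsto r/f$ permutes the divisors of $r$ and sends $\phi(f)m^{r/f}$ to $\phi(r/f)m^{f}$, giving the last form.

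There is essentially no obstacle here, since all of the genuine content has already been absorbed into Lemma \ref{kernelnecklacelemma} (identifying $ker(Ad)$ and $coker(Ad)$ with sets of necklaces) and Lemma \ref{necklacelemma} (counting necklaces by M\"obius inversion). The only point demanding a little care is the bookkeeping of the reindexing substitutions, together with the observation that restricting the outer sum to even $e$ destroys the simplification that led to the $\phi$-form in Lemma \ref{necklacelemma}; this is precisely why only the two M\"obius expressions, and no $\phi$-expression, are asserted in the even case.
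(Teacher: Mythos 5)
Your proposal is correct and follows essentially the same route as the paper: identify $\ker(Ad)\cong\operatorname{coker}(Ad)$, use the necklace basis of Lemma \ref{kernelnecklacelemma}, plug in the counts from Lemma \ref{necklacelemma}, and note that the alternate displayed forms are just reindexings of the same sums (the paper cites Lemma \ref{kercokerlemma} for the kernel--cokernel identification where you cite Lemma \ref{cyclicinvariantslemma}, but both lemmas supply exactly that isomorphism, so this is an immaterial difference). Your closing remark explaining why no $\phi$-form is asserted in the even case is a correct and slightly more explicit justification than the paper's one-line dismissal.
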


\begin{proof}
This follows from Lemma \ref{kercokerlemma} and Lemma \ref{kernelnecklacelemma}. The alternate formulas hold by observing that we still sum over the same terms, just in another order.
\end{proof}

\begin{lemma}
\label{chicaplemma}
Let $\omega\in V^{\otimes2}$ be the relation in $U.$ We have $\omega\otimes V\cap V\otimes \omega=0.$
\end{lemma}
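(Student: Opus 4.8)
The key input is that the coefficient matrix of $\omega$ is invertible. Writing $\omega = \sum_{i,j}\omega_{ij}\,x_i\otimes x_j$, the formula for the relation in $U$ from Corollary \ref{cor:main} together with $|x_i|=n$ gives $\omega_{ij} = (-1)^{n}c_{ji}$, so $(\omega_{ij})$ is $\pm$ the transpose of the intersection form $(c_{ij})$ and is therefore invertible (Lemma \ref{Clemma}). Under the standing hypothesis $\dim H^*(M) > 4$ we have $m = r \geq 3$, so in particular $\dim V = m \geq 2$. (The same argument works in the general case, where $(\omega_{ij}) = (-1)^{|x_i|}c_{ji}$ is invertible as the product of a diagonal sign matrix with $(c_{ij})^{\top}$.)

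The plan is to compare coefficients in $V^{\otimes 3}$. Suppose $z \in \omega\otimes V \cap V\otimes \omega$. Since $\omega \neq 0$, every element of $\omega\otimes V$ is uniquely of the form $\omega\otimes w$ and every element of $V\otimes \omega$ is uniquely of the form $u\otimes \omega$, for $w,u\in V$; write $w = \sum_k w_k x_k$ and $u = \sum_i u_i x_i$. Expanding $z = \omega\otimes w = u\otimes \omega$ in the basis $\{x_i\otimes x_j\otimes x_k\}$ and equating coefficients yields
$$\omega_{ij}\,w_k = u_i\,\omega_{jk}\quad\text{for all } i,j,k.$$

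First I would rule out $w\neq 0$. If some $w_{k_0}\neq 0$, then setting $b_j := \omega_{jk_0}/w_{k_0}$, the equation with $k = k_0$ reads $\omega_{ij} = u_i b_j$, exhibiting $(\omega_{ij}) = (u_i b_j)_{i,j}$ as a matrix of rank at most one. Since $(\omega_{ij})$ is invertible of size $m\geq 2$, this is a contradiction. Hence $w = 0$, so $z = \omega\otimes w = 0$, which proves $\omega\otimes V \cap V\otimes \omega = 0$.

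The argument is short, and the only genuine content is the invertibility of $\omega$'s coefficient matrix; the main thing to get right is the index bookkeeping in the coefficient comparison and the identification of $(\omega_{ij})$ with the transpose of the intersection form. Conceptually, this is the degree-three instance of the absence of overlaps among the defining relations of the Koszul algebra $U = TV/(\omega)$: it is precisely what forces $\dim(\omega\otimes V + V\otimes \omega) = 2m$, hence $\dim U(3) = m^3 - 2m$, in agreement with the Hilbert series $u(t) = (1-nt+t^2)^{-1}$ of Lemma \ref{hilbertserieslemma}.
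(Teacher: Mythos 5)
Your proof is correct, but it takes a genuinely different route from the paper. The paper's own argument is a Hilbert series dimension count: using the Koszulness of $U$ (so that $f^U(t)=1/(1-nt+t^2)$, $n$ here denoting the number of generators), it expands the series of the relation module $R$ to read off $\dim R(3)=2n$; since $R(3)=\omega\otimes V+V\otimes\omega$ and $\dim(\omega\otimes V\oplus V\otimes\omega)=2n$ as well, the intersection must vanish. You instead argue by direct coefficient comparison in $V^{\otimes 3}$: writing an element of the intersection as $\omega\otimes w=u\otimes\omega$ gives $\omega_{ij}w_k=u_i\omega_{jk}$, and if $w\neq 0$ this exhibits $(\omega_{ij})$ as a rank-one matrix, contradicting its invertibility (which follows from Poincar\'e duality, since $(\omega_{ij})$ is a signed transpose of the intersection form) once $\dim V\geq 2$. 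Your approach is more elementary and self-contained: it isolates exactly what is needed (non-degeneracy of the intersection form and at least two generators, the latter genuinely necessary since the lemma fails for $\omega=x\otimes x$ in one variable), and it does not invoke Koszulness of $U$ at all. The paper's approach, by contrast, is a one-line deduction given the Hilbert series machinery that the surrounding section develops anyway for the Betti number computations; note also that your closing remark runs the logic in the opposite direction from the paper, deriving the lemma independently and observing its consistency with $\dim U(3)=m^3-2m$, where the paper uses the Hilbert series as the source of the lemma.
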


\begin{proof}
The Hilbert series of $R$ is $$\frac{t^2}{1-(2nt-(n^2+1)t^2+nt^3)}=$$ $$t^2(1+(2nt-(n^2+1)t^2+nt^3)+(2nt-(n^2+1)t^2+nt^3)^2+\dots)=t^2+2nt^3+O(t^4),$$ showing that $\dim(\omega\otimes V+ V\otimes \omega)=2n.$ But $\dim(\omega\otimes V\oplus V\otimes \omega)=2n$ as well, so the only possibility is that $\dim(\omega\otimes V\cap V\otimes \omega)=0.$

\end{proof}

\begin{lemma}
\label{12necklacelemma}
Let $W_{12}(d)$ be the number of words of length $d$ containing at least one $"1"$ directly in front of a $"2"$ and let $W_{\neg 12}(d)$ be the number of words of length $d$ containing no $"1"$ directly on front of a $"2".$
Let $R_c(d)$ be the number of words containing "12" or beginning in "2" and ending in "1". Then $$R_c(d)=W_{12}(d)+W_{\neg12}(d-2).$$ The number of necklaces of length $r\ge3$ in an alphabet $\{1,2,\dots,m\}$ containing at least one $"1"$ directly in front of one $2$ is $$\sum_{e\mid r}\frac{1}{e}\sum_{d\mid e}\mu(d)R_c(e/d)=\sum_{e\mid r}\frac{1}{e}\sum_{d\mid e}\mu(e/d)R_c(d)=$$ $$\frac{1}{r}\sum_{f\mid r} \phi(f)R_c(r/f)=\frac{1}{r}\sum_{f\mid r} \phi(r/f)R_c(f).$$ The number of such necklaces that have even period length are $$\sum_{e\mid r\atop 2\mid e}\frac{1}{e}\sum_{d\mid e}\mu(d)R_c(e/d)=\sum_{e\mid r\atop 2\mid e}\frac{1}{e}\sum_{d\mid e}\mu(e/d)R_c(d).$$
\end{lemma}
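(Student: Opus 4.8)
The plan is to prove the first identity by an elementary bijection at the cyclic ``seam,'' and then to count the necklaces by the same Möbius-inversion scheme used in Lemma \ref{necklacelemma}, the only new ingredient being that ``contains $12$ cyclically'' is an invariant of the necklace that depends only on its primitive root. First I would establish $R_c(d)=W_{12}(d)+W_{\neg 12}(d-2)$. Write $A$ for the set of length-$d$ words containing the factor $12$ and $B$ for the set of words beginning in $2$ and ending in $1$, so that $R_c(d)=|A\cup B|=|A|+|B\setminus A|=W_{12}(d)+|B\setminus A|$, the overlap being the reason one must use $|B\setminus A|$ rather than $|B|$. A word in $B\setminus A$ has the form $2w1$ with $w$ of length $d-2$; since the leading $2$ can only begin a pair $2x$ and the trailing $1$ can only end a pair $x1$, neither boundary letter can ever create the factor $12$, so $2w1$ avoids $12$ iff $w$ does. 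Hence $2w1\mapsto w$ is a bijection from $B\setminus A$ onto the length-$(d-2)$ words avoiding $12$, giving $|B\setminus A|=W_{\neg 12}(d-2)$.

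Next I would count the necklaces. Reading a linear word $a_1\cdots a_d$ cyclically, its adjacent pairs are $(a_1,a_2),\dots,(a_{d-1},a_d),(a_d,a_1)$, so the associated necklace contains $12$ exactly when the word lies in $A\cup B$; thus $R_c(d)$ is precisely the number of length-$d$ words whose necklace contains $12$. Cyclic $12$-containment is invariant under rotation and, since repetition preserves the cyclic multiset of adjacent pairs, depends only on the primitive root: a word of length $r$ with minimal period $e$ has a necklace containing $12$ iff its primitive root of length $e$ does. Writing $\operatorname{Prim}_{12}(e)$ for the number of primitive necklaces of length $e$ containing $12$, each such necklace contributes its $e$ rotations, so $R_c(r)=\sum_{e\mid r}e\operatorname{Prim}_{12}(e)$. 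This is formally identical to the identity $m^r=\sum_{e\mid r}e\operatorname{Prim}(e)$ underlying Lemma \ref{necklacelemma}, with $m^r$ replaced by $R_c(r)$; Möbius inversion then gives $\operatorname{Prim}_{12}(r)=\tfrac1r\sum_{d\mid r}\mu(d)R_c(r/d)$, and summing over period lengths yields the count of necklaces of length $r$ containing $12$ as $\sum_{e\mid r}\operatorname{Prim}_{12}(e)=\sum_{e\mid r}\tfrac1e\sum_{d\mid e}\mu(d)R_c(e/d)$. The even-period count is obtained by restricting the outer sum to $e$ with $2\mid e$, exactly as in Lemma \ref{necklacelemma}, and the three alternate index forms follow from the same reindexings $d\leftrightarrow e/d$ together with the classical identity $\sum_{d\mid f}(f/d)\mu(d)=\phi(f)$ used there.

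The hypothesis $r\ge 3$ is simply the regime in which this count is applied; with the conventions that the empty word is the unique length-$0$ word avoiding $12$ and that $W_{\neg 12}(-1)=0$, the first identity is valid for all $d\ge 1$ (for instance $R_c(1)=0$ and $R_c(2)=2=W_{12}(2)+W_{\neg 12}(0)$), so no special treatment of the small divisors $e/d$ is needed. I expect the single genuinely new point — and the step to get exactly right — to be the reduction to the primitive root: once one verifies that the cyclic multiset of adjacent pairs of $w^{r/e}$ equals that of $w$, including the wrap-around pair $(w_e,w_1)$, the pattern constraint ``contains $12$'' slots into the Reutenauer counting of Lemma \ref{necklacelemma} in place of the unconstrained alphabet count, and the remainder is exactly the bookkeeping already carried out there.
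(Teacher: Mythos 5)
Your proposal is correct and follows essentially the same route as the paper: the paper likewise obtains $R_c(d)=W_{12}(d)+W_{\neg 12}(d-2)$ by observing that words beginning in $2$, ending in $1$, and avoiding $12$ are counted by $W_{\neg 12}(d-2)$, interprets $R_c(d)$ as counting words containing $12$ cyclically, and then invokes the proof of Lemma \ref{necklacelemma} \emph{mutatis mutandis}. Your write-up simply makes explicit the details the paper leaves implicit, in particular the key verification that cyclic $12$-containment depends only on the primitive root, which is exactly what justifies the Möbius-inversion step.
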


\begin{proof}
First we want to establish that $$R_c(d)=W_{12}(d)+W_{\neg12}(d-2).$$ The number of words that begin in "2" and end in "1" but does not contain "12" is $W_{\neg12}(d-2).$ Thus $R_c(d)=W_{12}(d)+W_{\neg12}(d-2)$ counts the number of words containing $12$ "cyclically". To establish the formulas we can now follow the proof of Lemma \ref{necklacelemma} mutatis mutandis.
\end{proof}

\begin{corollary}
\label{Xdimensioncorollary}
Let the notation be as in Lemma \ref{snakelemmalemma}  and \ref{12necklacelemma} and let $d\geq 3$. Then $ W_{12}(d)=\dim(R(d))$ and $W_{\neg12}(d)=\dim(U(d)).$ The dimension of $X$ is $$\sum_{e\mid r}\frac{1}{e}\sum_{d\mid e}\mu(d)R_c(e/d)=\sum_{e\mid r}\frac{1}{e}\sum_{d\mid e}\mu(e/d)R_c(d)=$$ $$\frac{1}{r}\sum_{f\mid r} \phi(f)R_c(r/f)=\frac{1}{r}\sum_{f\mid r} \phi(r/f)R_c(f),$$  if $n$ or $r$ is odd. If $n$ and $r$ are even, the dimension is $$\sum_{e\mid r\atop 2\mid e}\frac{1}{e}\sum_{d\mid e}\mu(d)R_c(e/d)=\sum_{e\mid r\atop 2\mid e}\frac{1}{e}\sum_{d\mid e}\mu(e/d)R_c(d).$$
\end{corollary}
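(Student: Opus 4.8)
The plan is to handle the two dimension identities first and then deduce the formula for $\dim X$ by a Burnside count that runs exactly parallel to the computation of $\dim Y$ in Corollary~\ref{Ydimensioncorollary}. Throughout, $m$ denotes the number of algebra generators, i.e.\ the size of the alphabet $\{1,\dots,m\}$.

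Since $0\to R(d)\to W^{\otimes d}\to U(d)\to 0$ is exact and $W_{12}(d)+W_{\neg12}(d)=m^d=\dim W^{\otimes d}$, the two identities are equivalent, and it suffices to prove $W_{\neg12}(d)=\dim U(d)$. I would obtain this from the generating function of words avoiding the factor ``$12$''. A standard last-letter (transfer-matrix) argument gives the recursion $W_{\neg12}(d+1)=m\,W_{\neg12}(d)-W_{\neg12}(d-1)$, whence $\sum_{d\ge 0}W_{\neg12}(d)t^d=\tfrac{1}{1-mt+t^2}$; by Lemma~\ref{hilbertserieslemma} this is precisely the Hilbert series of $U$. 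Hence $W_{\neg12}(d)=\dim U(d)$ for all $d$, and therefore $W_{12}(d)=m^d-W_{\neg12}(d)=\dim R(d)$. (The fact that the pattern ``$12$'' cannot overlap itself---the combinatorial shadow of Lemma~\ref{chicaplemma}---is what makes this series close up so simply.)

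For $\dim X$, the first step is to reduce to a count. By Lemma~\ref{snakelemmalemma}, for $r\ge 3$ the map $i\colon X\to Y$ is injective, so $X$ is the image of the relation module $R(r)$ inside the cyclic coinvariants $Y=(W^{\otimes r})_{C_r}$ of Lemma~\ref{cyclicinvariantslemma}. Working in characteristic zero, I would pass to invariants via the averaging idempotent $e=\tfrac1r\sum_{k=0}^{r-1}\sigma^k$ for the signed cyclic action $\sigma$, so that $\dim X=\dim e\bigl(R(r)\bigr)=\dim\widehat{R}^{\,C_r}$, where $\widehat{R}=\sum_k\sigma^k R(r)$ is the smallest $C_r$-submodule of $W^{\otimes r}$ containing $R(r)$ (the equality holds because $e\sigma^k=e$). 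Burnside's formula then gives $\dim\widehat{R}^{\,C_r}=\tfrac1r\sum_{k=0}^{r-1}\operatorname{tr}\bigl(\sigma^k\mid\widehat{R}\bigr)$, and grouping the $k$ by $\gcd(k,r)=r/f$ converts this into the necklace-type sums of Lemma~\ref{12necklacelemma} once each trace is identified.

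The crux---and the step I expect to be the main obstacle---is the identification $\operatorname{tr}(\sigma^k\mid\widehat{R})=R_c(\gcd(k,r))$. The danger is that $\omega$ is not a monomial but a sum $\sum\pm c_{ji}u_iu_j$, so cyclic rotations might create linear dependencies among the placements of $\omega$ that do not occur for the honest word ``$12$''. Here Lemma~\ref{chicaplemma} is exactly what is needed: because $\omega\otimes V\cap V\otimes\omega=0$, two cyclically adjacent copies of $\omega$ never interfere, and the inclusion--exclusion governing $\widehat{R}$, together with each $\sigma^k$-fixed subspace, matches the monomial model term by term. Concretely, the $\sigma^k$-fixed part of $\widehat{R}$ is built from period-$\gcd(k,r)$ configurations, which split into the placements of $\omega$ lying entirely inside the word---counted by $\dim R(\gcd(k,r))=W_{12}(\gcd(k,r))$---and the single placement in which $\omega$ straddles the cyclic junction with no further occurrence inside---counted by $\dim U(\gcd(k,r)-2)=W_{\neg12}(\gcd(k,r)-2)$; by parts (1)--(2) their sum is $R_c(\gcd(k,r))$, which is the defining identity of Lemma~\ref{12necklacelemma}. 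Feeding this into Burnside yields $\dim X=\tfrac1r\sum_{f\mid r}\phi(f)R_c(r/f)$. Finally, when $n$ and $r$ are both even the action $\sigma$ carries the sign $(-1)^{(n-1)(r-1)}=-1$, so, exactly as in Lemma~\ref{kernelnecklacelemma} and Corollary~\ref{Ydimensioncorollary}, only necklaces of even period survive and one lands on the second formula.
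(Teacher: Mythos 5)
Your proposal is correct, and it hinges on the same three ingredients as the paper's proof: Lemma \ref{chicaplemma} (non-self-overlap of $\omega$), the injectivity of $i$ from Lemma \ref{snakelemmalemma}, and a cyclic count of words containing ``12'' cyclically. But two steps are executed genuinely differently. For the identities $W_{12}(d)=\dim R(d)$ and $W_{\neg 12}(d)=\dim U(d)$, the paper builds an explicit basis of $V^{\otimes d}$ adapted to the placements $V^{\otimes i}\otimes\omega\otimes V^{\otimes(d-i-2)}$ and bijects it with words containing ``12''; you instead prove the recursion $W_{\neg12}(d+1)=m\,W_{\neg12}(d)-W_{\neg12}(d-1)$ and match generating functions against Lemma \ref{hilbertserieslemma}. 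Your route is purely numerical and arguably cleaner, but it buys less: the paper reuses its basis bijection, upgraded to be cyclic-equivariant, to read off a necklace basis of $X$ directly and then quotes Lemma \ref{12necklacelemma}, whereas you must rebuild exactly that equivariant identification later. For the count itself, you replace the paper's orbit count (primitive necklaces plus M\"obius inversion) by Burnside's formula on $\widehat R^{C_r}$; this is a nice economy, since grouping $k$ by $\gcd(k,r)$ gives the $\phi$-form $\frac1r\sum_{f\mid r}\phi(r/f)R_c(f)$ in one stroke, and in the case where $n$ and $r$ are both even the factor $(-1)^k$ in $\operatorname{tr}(\sigma^k)$ automatically kills the odd-period necklaces. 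The crux you flag---that $\widehat R$ admits a (signed) permutation basis matching the monomial ``12'' model, so that $\operatorname{tr}\bigl(\sigma^k\mid\widehat R\bigr)=(\pm1)^k R_c(\gcd(k,r))$---is exactly the paper's one-sentence equivariance claim, and your sketch of it via Lemma \ref{chicaplemma} is at the same level of detail as the paper's, so this is not a gap relative to the published argument. One small imprecision: the trace of $\sigma^k$ is a signed count of basis vectors carried to $\pm$ themselves, not the dimension of the $\sigma^k$-fixed subspace; your wording conflates the two, though once the permutation basis is in hand the computation you intend is the correct one.
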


\begin{proof}
We can pick a basis of $V^{\otimes d}$ where some of the basis elements are of the type $V^i\otimes \omega \otimes V^{d-i-2}.$ Biject these to the set of words of length $d$ such that these basis elements go to words with $12$ in the corresponding place. This is possible since by Lemma \ref{chicaplemma} we know that $\omega\otimes V\cap V\otimes \omega=0.$ %LONGER DESCRIPTION HERE? 
By counting these special basis elements of their complement we establish that $ W_{12}(d)=\dim(R(d))$ and $W_{\neg12}(d)=\dim(U(d)).$ Similarly (by also making sure treating words starting in $"2"$ and ending in $"1"$ specially), we establish that $R_c(d)$ is the dimension of a vector space $R(d)+(\omega_{(2)}V^{d-2}\omega_{(1)})\subset V^{\otimes d}.$ 
%CHANGE TO BETTER + SYMBOL. 
The choice of basis and bijection can be done so that it is equivariant with respect to the cyclic group. This gives us a bijection of a basis and necklaces with restrictions. %DESCRIBE MORE ABOUT $X.$ 

\end{proof}

\begin{theorem} Suppose that $r\geq 3.$ Let $S_c(d)=m^d-R_c(d)=\dim(V^{\otimes d})-(\dim(R(d))+\dim(U(d-2))).$ The dimensions of the homology groups $H_{1,r-1}$ and $H_{2,r}$ are $$\sum_{e\mid r}\frac{1}{e}\sum_{d\mid e}\mu(d)S_c(e/d)=\sum_{e\mid r}\frac{1}{e}\sum_{d\mid e}\mu(e/d)S_c(d)=$$ $$\frac{1}{r}\sum_{f\mid r} \phi(f)S_c(r/f)=\frac{1}{r}\sum_{f\mid r} \phi(r/f)S_c(f),$$ if $n$ or $r$ is odd. If $n$ and $r$ are even, the dimension is $$\sum_{e\mid r\atop 2\mid e}\frac{1}{e}\sum_{d\mid e}\mu(d)S_c(e/d)=\sum_{e\mid r\atop 2\mid e}\frac{1}{e}\sum_{d\mid e}\mu(e/d)S_c(d).$$
\end{theorem}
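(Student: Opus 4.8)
The plan is to reduce the entire count to the two dimension formulas already packaged in Corollaries \ref{Ydimensioncorollary} and \ref{Xdimensioncorollary}, and then exploit the fact that the necklace-counting template is \emph{linear} in the underlying word-counting function. First, by Lemma \ref{hilbertseriesdifferencelemma} (equivalently, via the isomorphism $\Delta$ of Proposition \ref{isomorphismproposition}) one has $H_{2,r}\cong H_{1,r-1}$ for $r\geq 3$, so it suffices to compute $\dim H_{2,r}$. Since $A(2)\otimes U(r)$ is the terminal term of the complex and $A(2)\otimes U(r)\cong U(r)$, the group $H_{2,r}$ is exactly $\coker D$ in the notation of Lemma \ref{snakelemmalemma}.

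Next I would read $\dim\coker D$ off the snake diagram. For $r\geq 3$ the connecting map $\partial$ vanishes and $i$ is injective (Lemma \ref{snakelemmalemma}), so the bottom row of that diagram supplies a short exact sequence
$$0 \to X \xrightarrow{i} Y \to \coker D \to 0,$$
whence $\dim\coker D = \dim Y - \dim X$. This is the only place where the hypothesis $r\geq 3$ (and, implicitly, characteristic zero, through the averaging inverse $Q/r$) is used.

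It then remains to substitute. By Corollary \ref{Ydimensioncorollary}, $\dim Y=\dim\coker(Ad)$ is the result of feeding the word-count function $d\mapsto m^d$ into the M\"obius/totient summation, while by Corollary \ref{Xdimensioncorollary} the \emph{same} summation applied instead to $d\mapsto R_c(d)$ computes $\dim X$; both corollaries produce their answers through the identical combinatorial template and agree on the even/odd case split governed by the parities of $n$ and $r$. Because the template $g\mapsto \tfrac{1}{r}\sum_{f\mid r}\phi(r/f)\,g(f)$ (and likewise its even-period variant) is $\QQ$-linear in $g$, subtracting term by term yields precisely the summation applied to $d\mapsto m^d - R_c(d) = S_c(d)$, which is the asserted formula in each parity case.

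The only genuine point requiring care is the bookkeeping that makes $\dim Y$ and $\dim X$ share a single necklace template so that the subtraction is term-by-term: one must confirm that the two corollaries sum over divisors of the same $r$ (matching the length-$r$ necklaces) and select the same branch of the parity dichotomy. Once these indices are checked to line up, the linearity step is immediate and the theorem follows; the parity hypotheses do no further work beyond choosing which of the two displayed formulas applies, exactly as in the source corollaries.
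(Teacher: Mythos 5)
Your proposal is correct and follows essentially the same route as the paper: the paper's own proof likewise reads off $\dim H_{2,r}=\dim Y-\dim X$ from the short exact sequence supplied by the snake-lemma diagram of Lemma \ref{snakelemmalemma} (using $\partial=0$ and $i$ injective for $r\geq 3$) and then substitutes the counts of Corollaries \ref{Ydimensioncorollary} and \ref{Xdimensioncorollary}. The only difference is that you make explicit the identification $H_{2,r}\cong\coker D$ and the term-by-term linearity of the M\"obius/totient template, steps the paper leaves implicit.
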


\begin{proof}
By Lemma \ref{snakelemmalemma} we know that the dimension is the dimension of $Y$ minus the dimension of $X.$ We have counted the dimensions of $X$ and $Y$ in Corollaries \ref{Ydimensioncorollary} and \ref{Xdimensioncorollary}.
\end{proof}

\begin{corollary}
\label{exponentialgrowthcorollary}
The sequence $\dim(H_{2,2k+1})$ grows at least exponentially.
\end{corollary}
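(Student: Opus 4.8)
The plan is to read the exponential growth straight off the dimension formula established in the preceding theorem. For odd weight $r = 2k+1$ the parity hypothesis ``$n$ or $r$ odd'' is automatic, so
\begin{equation*}
\dim H_{2,2k+1} = \frac{1}{2k+1}\sum_{f\mid 2k+1}\phi(f)\,S_c\!\left(\frac{2k+1}{f}\right),\qquad S_c(d)=m^d-R_c(d).
\end{equation*}
Every summand is non-negative: $\phi(f)\geq 0$, while $S_c(d)\geq 0$ because $R_c(d)$ counts a subset of the $m^d$ words of length $d$. Discarding all terms except $f=1$ therefore gives the legitimate lower bound $\dim H_{2,2k+1}\geq \tfrac{1}{2k+1}S_c(2k+1)$, and it remains only to show that $S_c(2k+1)$ grows exponentially.

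Next I would put $S_c$ in closed form. The preceding theorem records $S_c(d)=m^d-\bigl(\dim R(d)+\dim U(d-2)\bigr)$, and Lemma \ref{hilbertserieslemma} gives $\dim R(d)+\dim U(d)=\dim V^{\otimes d}=m^d$ (the tensor algebra splits in each weight), whence
\begin{equation*}
S_c(d)=\dim U(d)-\dim U(d-2).
\end{equation*}
By Lemma \ref{hilbertserieslemma} the generating function is $\sum_d \dim U(d)\,t^d = \frac{1}{1-mt+t^2}=\frac{1}{(1-\lambda t)(1-\lambda^{-1}t)}$, where $\lambda=\tfrac{1}{2}\bigl(m+\sqrt{m^2-4}\bigr)$ is the larger root of $x^2-mx+1$. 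Partial fractions yield $\dim U(d)=\frac{\lambda^{d+1}-\lambda^{-(d+1)}}{\lambda-\lambda^{-1}}$, and a short computation using $\lambda-\lambda^{-1}=(\lambda^2-1)/\lambda$ collapses the difference to the clean expression
\begin{equation*}
S_c(d)=\lambda^{d}+\lambda^{-d}.
\end{equation*}

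Finally I would invoke the standing hypothesis. Since $\dim_\kk H^*(M)>4$ and the cohomology is concentrated in degrees $0,n,2n$ with one-dimensional top and bottom, the number of algebra generators is $m=\dim H^*(M)-2\geq 3$; hence $m^2-4>0$ and $\lambda>1$. Combining the two displays,
\begin{equation*}
\dim H_{2,2k+1}\ \geq\ \frac{S_c(2k+1)}{2k+1}\ =\ \frac{\lambda^{2k+1}+\lambda^{-(2k+1)}}{2k+1}\ \geq\ \frac{\lambda^{2k+1}}{2k+1},
\end{equation*}
which grows at least exponentially in $k$ (the base $\lambda^2$ exceeds $1$, and the polynomial denominator does not affect the rate). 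The argument is essentially routine once the dimension formula is available; the only points demanding care are the non-negativity of the individual summands, so that the single term $f=1$ alone furnishes a valid lower bound rather than forcing a domination estimate over all divisors of $r$, and the bookkeeping identity $S_c(d)=\dim U(d)-\dim U(d-2)$, after which the exponential rate $\lambda>1$ is forced precisely by $m\geq 3$.
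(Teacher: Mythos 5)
Your proof is correct, and its first half coincides with the paper's: both start from the dimension formula $\dim H_{2,r}=\frac{1}{r}\sum_{f\mid r}\phi(f)S_c(r/f)$ for odd $r$ and keep only the $f=1$ term (your explicit justification via non-negativity of $\phi$ and of $S_c$ is exactly what the paper leaves implicit in writing that inequality). Where you genuinely diverge is in the lower bound for $S_c(r)$. The paper stays combinatorial: $S_c(r)$ counts the length-$r$ words with no cyclic occurrence of the subword $12$, the words avoiding the letter $1$ altogether form a subset of these, hence $S_c(r)\geq (m-1)^r$ and $\dim H_{2,2k+1}\geq (m-1)^{2k+1}/(2k+1)$. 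You instead compute $S_c$ exactly: from $S_c(d)=m^d-\dim R(d)-\dim U(d-2)$ together with the Hilbert-series identity $\dim R(d)+\dim U(d)=m^d$ of Lemma \ref{hilbertserieslemma} you get $S_c(d)=\dim U(d)-\dim U(d-2)$, and partial fractions applied to $1/(1-mt+t^2)$ collapse this to $S_c(d)=\lambda^d+\lambda^{-d}$ with $\lambda=\tfrac{1}{2}\bigl(m+\sqrt{m^2-4}\bigr)$, which exceeds $1$ precisely because the hypothesis $\dim H^*(M)>4$ forces $m\geq 3$ (this also guarantees the two roots are distinct, so the partial-fraction step is legitimate). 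Your route costs some generating-function algebra but buys a sharper conclusion: since $\lambda>m-1$ for every $m\geq 3$, your bound $\dim H_{2,2k+1}\geq \lambda^{2k+1}/(2k+1)$ identifies the true exponential growth rate --- the discarded divisor terms involve $S_c(r/f)$ with $r/f\leq r/3$ and are of strictly smaller order, so in fact $\dim H_{2,r}\sim \lambda^r/r$ --- whereas the paper's subcounting only certifies the rate $m-1$. Both arguments are confined, as they must be, to the setting where the necklace-counting theorem applies: equal-degree generators, characteristic zero, and $r\geq 3$.
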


\begin{proof}
The dimension of $H_{2,r}$ when $r$ is odd is given by $\frac{1}{r}\sum_{f\mid r} \phi(f)S_c(r/f).$ We have $$\frac{1}{r}\sum_{f\mid r} \phi(f)S_c(r/f)\geq \frac{1}{r}S_c(r).$$ The number of words of length $r$ not containing $12$ cyclically is counted by $S_c(r).$ A subset of these are the words not containing $1$ at all. Thus $S_c(r)\geq (m-1)^r$ and it follows that $$\dim(H_{2,2k+1})\geq \frac{(m-1)^{2k+1}}{2k+1}.$$
\end{proof}

\begin{remark}
The exponential growth can be proven in the setting of $(n-1)$-connected closed manifolds of dimension at most $(3n-2)$ as well. Since we did not derive an explicit formula for the dimension when the generators have mixed degrees, we will have to use a different argument to find enough words. First note that the we always have the vector space $U/[U,U]$ contained in the homology. Pick one variable and consider only words not containing that variable. This gives a subspace isomorphic to $T/[T,T]$ where $T$ is the tensor algebra on the $m-1$ remaining variables. Since we have assumed that $m\ge3,$ there are at least two variables left, which gives us three cases. If we have two even or two odd variables we are back in the case of Lemma \ref{necklacelemma} and one can do the same calculation as in the end of Corollary \ref{exponentialgrowthcorollary}. If we only have one even and one odd variable we can look at tensor words with odd total degree. This gives a subsequence that has exponential growth. Taking the quotient with the cyclic action corresponds to dividing by $r$ (taking into account that we don't get any sign troubles because we are in a situation with an odd number of odd variables) and thus it will still have exponential growth.
\end{remark}

The rate of growth of the Betti numbers of the free loop space $LM$ is interesting because of the connection to the number of closed geodesics on $M$, as explained in e.g. \cite{FelixOpreaTanre}*{Chapter 5}. As a consequence of our calculation, we get an affirmative answer to Gromov's conjecture (see \cite{FelixOpreaTanre}*{Conjecture 5.3}) for the highly connected manifolds considered here. This generalizes \cite{BasuBasu}*{Theorem C(1)} for 4-manifolds.

\begin{corollary}
Let $\kk$ be a field and let $M$ be an $(n-1)$-connected closed manifold of dimension at most $3n-2$ ($n\geq 2$) with $\dim H^*(M;\kk)>4$. For a generic metric on $M$, the number of geometrically distinct closed geodesics of length $\leq T$ grows exponentially in $T$.
\end{corollary}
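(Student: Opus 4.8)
The plan is to derive this geodesic-growth statement formally from the exponential growth of the Betti numbers of $LM$ obtained above, feeding it into a standard Morse-theoretic result of Gromov and Ballmann--Ziller recorded in \cite{FelixOpreaTanre}*{Chapter 5}. Since $n\geq 2$ forces $M$ to be simply connected, all hypotheses of that machinery are available, and because the Morse inequalities hold over an arbitrary field it suffices to exhibit exponential Betti growth over the single field $\kk$ at hand.

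First I would record that some sequence of Betti numbers $\dim H_{j}(LM;\kk)$ grows exponentially. By Theorem \ref{thm:main1} the summand $s^{-d}U/[U,U]$ of $H_{*+d}(LM)$ contains the groups $H_{2,r}$, and Corollary \ref{exponentialgrowthcorollary} shows that $\dim H_{2,2k+1}$ grows exponentially in the weight $k$. The remaining elementary step is to transport this from the weight grading to the homological grading: each generator $u_i$ has degree $|u_i|=|x_i|-1$ with $n-1\leq |u_i|\leq d-n-1$, so a class of weight $r$ lives in homological degree pinched between two positive constant multiples of $r$. In the pure $2n$-dimensional case all generators share the degree $n-1$, so $H_{2,2k+1}$ is concentrated in a single homological degree and an individual Betti number grows exponentially; in the mixed-degree case the closing remark after Corollary \ref{exponentialgrowthcorollary} supplies the same conclusion, and a pigeonhole argument over the $O(r)$ many degrees occurring in weight $r$ shows that $\max_{j\leq D}\dim H_j(LM;\kk)$ still grows exponentially in $D$.

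Next I would invoke the geodesic-growth theorem in the form attributed to Gromov (via Ballmann--Ziller): for a $C^\infty$-generic (bumpy) metric on a closed simply connected manifold one has a lower bound of the shape $N(T)\geq C\max_{j\leq aT}\dim H_j(LM;\kk)$, where $N(T)$ counts geometrically distinct closed geodesics of length $\leq T$ and the constants $C,a>0$ depend only on the metric. The mechanism behind this bound is the Morse theory of the energy functional on $LM$: Abraham's bumpy metric theorem makes the closed geodesics nondegenerate for a generic metric, Bott's iteration formula shows that the Morse index of the $k$-th iterate of a primitive geodesic grows linearly in $k$, and the Morse inequalities then bound the homology that a single primitive geodesic together with its iterates of length $\leq T$ can contribute. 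Combining this lower bound with the exponential growth of $\max_{j\leq aT}\dim H_j(LM;\kk)$ from the previous paragraph yields exponential growth of $N(T)$, as required.

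The only genuine subtlety lies in the grading bookkeeping of the second paragraph: one must check that the exponential growth in the \emph{weight} parameter of Corollary \ref{exponentialgrowthcorollary} really does produce exponential growth of the homological Betti numbers that Gromov's theorem consumes. Everything else is a direct citation, since the hard analytic and topological work---both the free loop space computation and the Morse-theoretic lower bound---has already been carried out.
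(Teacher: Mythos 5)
Your proposal is correct and follows essentially the same route as the paper: the paper likewise obtains the corollary by combining the exponential growth of $\dim H_{2,2k+1}$ (Corollary \ref{exponentialgrowthcorollary}, together with the remark following it that handles generators of mixed degree in the general $(n-1)$-connected, dimension $\leq 3n-2$ case) with the Morse-theoretic geodesic-counting theorem of Gromov/Ballmann--Ziller cited from \cite{FelixOpreaTanre}*{Chapter 5}. The only difference is that you make explicit the weight-to-homological-degree bookkeeping and the bumpy-metric/Bott-iteration mechanism, both of which the paper leaves implicit.
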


\section{Appendix: Homological perturbation theory} \label{appendix:hpt}

\begin{definition}
\label{contractiondefinition}
Suppose $(C,d_C)$ and $(D,d_D)$ are chain complexes with maps $f\colon C\rightarrow D,$ $g\colon D\rightarrow C,$ and a map $h\colon C\rightarrow C$ with $|f|=|g|=0\text{ and } |h|=1$, 
$$
\bigSDR{(C,d_C)}{(D,d_D)}{f}{g}{h}.
$$
Suppose moreover that $f$ and $g$ are chain maps and that
$$d_Ch+hd_C=gf-id_C, \quad fg=id_D,$$ and $$fh=0,\quad hh=0,\quad hg=0.$$
Call a diagram like this a \emph{contraction} with data $(C,D,d_C,d_D,f,g,h)$.
\end{definition}

\begin{remark}
It is harmless to assume the extra identities $$fh=0,\quad hh=0,\quad hg=0.$$ Suppose we have data satisfying all the above identities except these. In \cite{LambeStasheff} it is noted that we can always redefine $h$ such that these are satisfied.
\end{remark}

\begin{definition}
Given a complex with differential $d,$ we say that a perturbation of $d$ is a map $t$ of degree $-1$ on the same complex such that $(d+t)^2=0.$
\end{definition}

\begin{theorem}
\label{basicperturbationlemma}
Suppose given a contraction as in Definition \ref{contractiondefinition} and a perturbation $t$ of $d_C$. If $1-ht$ is invertible, then, setting $\Sigma=t(1-ht)^{-1}$, there is a new contraction with data $(C,D,d_C+t,d_D+t',f',g',h')$,
$$
\bigbigSDR{(C,d_C+t)}{(D,d_D+t')}{f'}{g'}{h'},
$$
where
$$
t' = f\Sigma g, \quad f' = f + f\Sigma h \quad g' = g + h\Sigma g,\quad h' = h + h\Sigma h.
$$

% \begin{align*}
% t' & = f\Sigma g, \\
% f' & = f + f\Sigma h, \\
% g' & = g + h\Sigma g, \\
% h' & = h + h\Sigma h.
% \end{align*}

% \begin{center}
% 
% \tikzset{every loop/.style={min distance=10mm,in=-165,out=165,looseness=5}}
% \begin{tikzpicture}[>=triangle 60]
% \matrix[matrix of math nodes,column sep={130pt,between origins},row
% sep={60pt,between origins},nodes={asymmetrical rectangle}] (s)
% {
% |[name=0]| &|[name=1]| (C,d_C+t) &|[name=2]| (D,d_D+f\Sigma g) &|[name=3]|  \\
% };
% \draw[transform canvas={yshift=0.5ex},-cm to]
%           (1) edge node[auto] {$f+f \Sigma h$} (2)
% ;
% \draw[transform canvas={yshift=-0.5ex},-cm to]
%           (2) edge node[auto] {$g+h\Sigma g$} (1)
% ;
% \draw[-cm to,looseness=10,min distance=30mm]
%           (1) edge [loop left] node {$h+h\Sigma h$} (1)
% ;
% \end{tikzpicture}
% \end{center}

\end{theorem}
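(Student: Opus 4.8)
The plan is to verify directly that the perturbed data $(C,D,d_C+t,d_D+t',f',g',h')$ satisfies every axiom in Definition \ref{contractiondefinition}. The engine of all the computations is the pair of recursions satisfied by $\Sigma=t(1-ht)^{-1}$, namely
$$\Sigma = t + \Sigma h t = t + t h \Sigma,$$
which follow at once from $\Sigma(1-ht)=t$ and the equivalent identity $(1-th)\Sigma=t$. Besides these I will use the hypotheses of the unperturbed contraction: the homotopy relation $d_C h + h d_C = gf - id_C$, the fact that $f$ and $g$ are chain maps ($f d_C = d_D f$ and $d_C g = g d_D$), the normalization $fg=id_D$, the three side conditions $fh=0$, $hh=0$, $hg=0$, and finally the perturbation hypothesis $(d_C+t)^2=0$, which (since $d_C^2=0$) is equivalent to $d_C t + t d_C = -t^2$.

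First I would dispose of the identities that do not involve the differentials. Expanding $f'h'$, $h'h'$, $h'g'$ and $f'g'$ and collecting terms, every summand beyond the leading one carries an interior factor $fh$, $hh$ or $hg$; by the side conditions these vanish, so $f'h'=0$, $h'h'=0$, $h'g'=0$ follow immediately, and $f'g'=fg=id_D$. In particular the three new side conditions hold without any appeal to $\Sigma$.

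Next come the substantive relations: that $t'=f\Sigma g$ is a perturbation of $d_D$ (i.e.\ $(d_D+t')^2=0$), that $f'$ and $g'$ are chain maps for the perturbed differentials, and the perturbed homotopy identity $(d_C+t)h'+h'(d_C+t)=g'f'-id_C$. In each case the method is the same: expand, replace $gf$ by $id_C + d_C h + h d_C$ wherever it appears, and push $d_C$ (or, after using that $f,g$ are chain maps, $d_D$) through $\Sigma$ by means of the recursions together with $d_C t + t d_C = -t^2$. The side conditions then annihilate the cross terms and the remaining series telescopes. For instance, combining $\Sigma = t + th\Sigma$ with $d_C t = -t d_C - t^2$ and $d_C h = gf - id_C - h d_C$ expresses the graded commutator of $d_C$ with $\Sigma$ in a form whose leftover factors are of the types $gf$, $fh$ and $hg$ that get annihilated precisely when one pre- and post-composes with $f$ and $g$, as happens in the computation of $(d_D+t')^2$ via $t'^2 = f\Sigma(gf)\Sigma g$.

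I expect the homotopy identity and the verification of $(d_D+t')^2=0$ to be the main obstacle. The difficulty is not conceptual but organizational: one must keep track of the Koszul signs (the maps $d_C$, $t$ are of degree $-1$ and $h$ of degree $+1$, so every transposition contributes a sign) and confirm that the infinite geometric series genuinely telescope rather than merely cancel term by term. The invertibility of $1-ht$ assumed in the statement is what guarantees $\Sigma$ is well defined; in the applications of interest (as in the proof of Theorem \ref{hochschildcocomplextheorem}) it is ensured by a weight grading with respect to which $t$ strictly raises filtration, so that $\sum_{n\geq0}(ht)^n$ converges pointwise and all the manipulations above are legitimate.
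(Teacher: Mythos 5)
Your approach is correct, but note that the paper does not actually prove this theorem: its ``proof'' is a citation to Brown, Gugenheim and Barnes--Lambe, where this classical Basic Perturbation Lemma is established by exactly the direct verification you propose. So your route is the standard self-contained one, and the portions you executed are right: both recursions $\Sigma = t + \Sigma h t = t + t h \Sigma$ hold, and your observation that $f'h'=h'h'=h'g'=0$ and $f'g'=fg=id_D$ follow purely from the side conditions is a complete argument as stated. What your sketch leaves implicit---and what makes the remaining verifications genuinely telescope rather than require ad hoc cancellation---is the single commutator identity
$$d_C\Sigma + \Sigma d_C = -\Sigma\, gf\, \Sigma,$$
which follows from precisely your three ingredients: substituting $d_Ct=-td_C-t^2$ and $d_Ch=gf-id_C-hd_C$ into the recursion gives $(1-th)\bigl(d_C\Sigma+\Sigma d_C\bigr)=-tgf\Sigma$, and one solves using $(1-th)^{-1}t=\Sigma$. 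With this identity in hand, everything else is short: $(d_D+t')^2 = f\bigl(d_C\Sigma+\Sigma d_C\bigr)g + f\Sigma(gf)\Sigma g = 0$; the chain-map condition for $f'$ reduces, after one substitution, to $f(\Sigma - t - \Sigma ht)=0$, which is the recursion (similarly for $g'$); and the perturbed homotopy identity $(d_C+t)h'+h'(d_C+t)=g'f'-id_C$ reduces to the side conditions together with the interchange relation $th\Sigma=\Sigma ht$, both sides being $(1-th)^{-1}tht$. Finally, your worry about Koszul signs is unfounded: all manipulations here are compositions of homogeneous linear maps, not transpositions of tensor factors, so no signs arise beyond those already encoded in the identities $d_Ct+td_C=-t^2$ and $d_Ch+hd_C=gf-id_C$; this is why the computation, though lengthy, is purely mechanical.
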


\begin{proof}
See \cite{RBrown}, \cite{Gugenheim} and \cite{BarnesLambe}. 
\end{proof}

\section{Appendix: $A_\infty$-structures and formality} \label{appendix:ainfty}

\begin{definition}
An $A_\infty$-structure on a homologically graded dg vector space $(V,d_V)$ consists of a collection of maps $\{a_n\}_{n\geq 2}$ of degree $n-2$ such that the following identities are satisfied,

$$\sum_{n=i+j+k\atop k\geq 1, i,j \geq 0}(-1)^{i+jk}a_{i+1+j}(id^{\otimes i}\otimes a_k \otimes id^{\otimes j})=0,$$ where we denote $d_V=a_1.$
\end{definition}

\begin{remark}
Note that if $a_n=0$ dor all $n\geq 3$ this is equivalent to the data of a usual dg algebra.
\end{remark}

\begin{proposition}
\label{transfertheorem}
Given a contraction
$$
\bigSDR{(A,d_A)}{(H_*(A),0)}{f}{g}{h}
$$
where $(A,d_A)$ is a dg algebra, there is an $A_\infty$-structure $\{a_n\}_{n\geq 2}$ on the homology $H_*(A)$, where $a_n$ is given by an alternating sum over all rooted trees with $n$ leaves as described in \cite{KontsevichSoibelman} (or in \cite{Berglund2} for a proof using homological perturbation theory). If this $A_\infty$-structure is such that $a_n=0$ for $n\geq 3,$ then $(A,d_A)$ is a formal dg-algebra.
\end{proposition}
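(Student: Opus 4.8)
The plan is to deduce both assertions from the Basic Perturbation Lemma (Theorem \ref{basicperturbationlemma}) applied to bar constructions, and then to convert the resulting $A_\infty$-quasi-isomorphism into a zig-zag of honest dga quasi-isomorphisms by means of the bar--cobar adjunction.

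First I would reformulate the data coalgebraically. An $A_\infty$-structure on a graded vector space $V$ is the same as a degree $-1$ square-zero coderivation on the cofree conilpotent coalgebra $T^c(sV)$; when $V$ carries a strict dga structure this is the ordinary bar construction $BV$. The differential and multiplication of $A$ assemble into the bar differential $b=b_1+b_2$ on $T^c(s\overline{A})$, where $b_1$ is induced by $d_A$ and $b_2$ by $\mu_A$. Using the \emph{tensor trick}, the contraction of Definition \ref{contractiondefinition} lifts to a contraction between $T^c(s\overline{A})$ carrying only $b_1$ and $T^c(s\overline{H_*(A)})$ with zero differential, in which the structure maps are morphisms of coalgebras. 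I would then regard $b_2$ as a perturbation $t$ of $b_1$; it raises cotensor length, so $1-ht$ is invertible and the series $\Sigma=t(1-ht)^{-1}=\sum_{k\geq 0}t(ht)^{k}$ converges termwise. Theorem \ref{basicperturbationlemma} now produces a perturbed contraction whose transferred differential $b'=f\Sigma g$ on $T^c(s\overline{H_*(A)})$ is again a square-zero coderivation, hence encodes an $A_\infty$-structure $\{a_n\}_{n\geq 2}$ on $H_*(A)$. Reading off the component of $b'$ that lands in cotensor-length one, each summand is a composite inserting one copy of $\mu_A$ at every internal vertex and one copy of $h$ along every internal edge of a planar binary rooted tree, with $g$ at the leaves and $f$ at the root; the Koszul signs supplied by the Lemma reproduce the alternating tree sum of \cite{KontsevichSoibelman} (see \cite{Berglund2} for this identification). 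In particular $a_2=f\circ\mu_A\circ(g\tensor g)$ is the usual product on homology.

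For the second assertion, suppose $a_n=0$ for all $n\geq 3$. Then $(H_*(A),a_2)$ is a genuine dga with trivial differential, and its $A_\infty$-bar construction is just the ordinary bar construction $BH_*(A)$. The transferred maps $f'$ and $g'$ delivered by Theorem \ref{basicperturbationlemma} are morphisms of dgcs, and since their linear parts are the quasi-isomorphisms $f$ and $g$, they are themselves quasi-isomorphisms. Thus $g'\colon BH_*(A)\to BA$ is a quasi-isomorphism of conilpotent coaugmented dgcs. Applying the cobar functor and the universal counit $\Omega B(-)\to(-)$, which is a dga quasi-isomorphism for every augmented dga, I obtain a zig-zag of dga quasi-isomorphisms
$$H_*(A)\xleftarrow{\;\sim\;}\Omega B H_*(A)\xrightarrow{\;\Omega g'\;}\Omega BA\xrightarrow{\;\sim\;}A,$$
the middle arrow being a quasi-isomorphism because cobar preserves quasi-isomorphisms between conilpotent dgcs (see \cite{LodayVallette}). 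Hence $A$ is quasi-isomorphic to its homology, i.e.\ $A$ is formal.

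The main obstacle is the bookkeeping of the first step: one must arrange the lifted contraction so that the transferred differential is genuinely a \emph{coderivation} and the transferred maps are \emph{coalgebra} morphisms, since this is exactly what makes ``$b'$ squares to zero'' equivalent to the $A_\infty$-relations and what licenses the cobar zig-zag of the previous paragraph. This compatibility is precisely what the tensor-trick construction guarantees, and it is carried out in detail in \cite{Berglund2}. A secondary point, already built into the paper's conventions, is that every dga here is augmented, so that $\Omega BA\to A$ is a quasi-isomorphism; note that the termwise convergence of $\Sigma$ is ensured by the cotensor-length filtration rather than by any finiteness hypothesis on $A$.
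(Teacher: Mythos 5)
Your proposal is correct in substance, and it is essentially an unpacking of the paper's proof rather than a different route: the paper's own proof is citation-only, deferring the construction of $\{a_n\}$ to \cite{KontsevichSoibelman} and \cite{Berglund2} (whose argument is exactly the tensor-trick-plus-perturbation-lemma transfer you sketch, using Theorem \ref{basicperturbationlemma}), and the formality conclusion to \cite{Kadeishvili} (whose content is essentially your bar--cobar zig-zag). So what you have written is the proof behind the paper's references, with the genuinely delicate bookkeeping (that the transferred differential is a coderivation and the transferred maps are coalgebra morphisms) correctly identified and correctly deferred to \cite{Berglund2}.

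Two points deserve correction. A minor one: the perturbation $b_2$ \emph{lowers} tensor word length (it merges two adjacent factors), it does not raise it; convergence of $\Sigma=\sum_{k\geq 0}t(ht)^k$ still holds because the word-length filtration is bounded below, $t$ strictly decreases it, and $h$ preserves it. A more serious one: your citation of \cite{LodayVallette} for ``cobar preserves quasi-isomorphisms between conilpotent dgcs'' overstates what is true --- the cobar functor does \emph{not} preserve arbitrary quasi-isomorphisms of conilpotent dgcs, and the positive statement requires connectivity hypotheses (e.g.\ $2$-connectedness of the coalgebras). This is harmless here, since under the paper's standing conventions $A$ is connected (or simply connected in the cochain setting), so the coaugmentation coideals of $BA$ and $BH_*(A)$ sit in degrees where the word-length spectral sequence converges and $\Omega g'$ is indeed a quasi-isomorphism. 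Alternatively, you can stay entirely within the paper's toolkit and avoid the functoriality question: the dgc quasi-isomorphism $g'\colon BH_*(A)\to BA$ corresponds to the twisting morphism $\tau=\pi\circ g'\colon BH_*(A)\to A$, and the equivalence of conditions (2) and (3) in Theorem \ref{thm:koszul twisting} converts ``$\psi_\tau=g'$ is a quasi-isomorphism'' directly into ``$\phi_\tau\colon \Omega BH_*(A)\to A$ is a quasi-isomorphism,'' which together with the counit quasi-isomorphism $\Omega BH_*(A)\to H_*(A)$ gives your zig-zag.
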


\begin{proof}
The $A_\infty$-structure is proven in \cite{KontsevichSoibelman} (or in \cite{Berglund2} for a proof using homological perturbation theory). That it is formal if the higher products vanish follows from \cite{Kadeishvili}.
\end{proof}

\begin{example}
The map $a_3$ is described by the following expression.
$$a_3 = \rtree - \ltree$$
The higher arity case is similar but with more internal edges decorated with $h.$ For our application we will not need to worry about signs, since we are interested in the case when they vanish.
\end{example}

% \section{Acknowledgements}
% THANK SOME PEOPLE.

\begin{bibdiv}
    \begin{biblist}[\normalsize]

\bib{Adams}{article}{
   author={Adams, J. F.},
   author={Hilton, P. J.},
   title={On the chain algebra of a loop space},
   journal={Comment. Math. Helv.},
   volume={30},
   date={1956},
   pages={305--330},
%    issn={0010-2571},
   %review={\MR{0077929 (17,1119b)}},
}

\bib{BarnesLambe}{article}{
   author={Barnes, Donald W.},
   author={Lambe, Larry A.},
   title={A fixed point approach to homological perturbation theory},
   journal={Proc. Amer. Math. Soc.},
   volume={112},
   date={1991},
   number={3},
   pages={881--892},
%    issn={0002-9939},
   %review={\MR{1057939 (91j:55019)}},
   %doi={10.2307/2048713},
}

\bib{BasuBasu}{article}{
  author={Basu, Samik},
  author={Basu, Somnath},
  title={Homotopy groups and periodic geodesics of closed 4-manifolds},
  eprint={arXiv:1303.3328v2 [math.AT]},
  date={2013},
}

\bib{BebenSeeliger}{article}{
   author={Beben, Piotr},
   author={Seeliger, Nora},
   title={Free loop space homology of $(n-1)$-connected $2n$-manifolds},
   eprint={arXiv:1207.2344v4 [math.AT]},
   date={2012},
}

\bib{Berglund}{article}{
   author={Berglund, Alexander},
   title={Koszul spaces},
   journal={Trans. Amer. Math. Soc.},
   volume={366},
   date={2014},
   number={9},
   pages={4551--4569},
%    issn={0002-9947},
   %review={\MR{3217692}},
   %doi={10.1090/S0002-9947-2014-05935-7},
}

\bib{Berglund2}{article}{
   author={Berglund, Alexander},
   title={Homological perturbation theory for algebras over operads},
   journal={Algebr. Geom. Topol.},
   volume={14},
   date={2014},
   number={5},
   pages={2511--2548},
%    issn={1472-2747},
%    review={\MR{3276839}},
%    doi={10.2140/agt.2014.14.2511},
}

\bib{BerglundBorjeson}{article}{
   author={Berglund, Alexander},
   author={B\"orjeson, Kaj},
   title={Free loop space homology of moment-angle complexes},
   note={In preparation},
}

\bib{Bogvad}{article}{
   author={B{\o}gvad, Rikard},
   title={Some elementary results on the cohomology of graded Lie algebras},
   conference={
      title={Algebraic homotopy and local algebra},
      address={Luminy},
      date={1982},
   },
   book={
      series={Ast\'erisque},
      volume={113},
      publisher={Soc. Math. France, Paris},
   },
   date={1984},
   pages={156--166},
   %review={\MR{749051 (87h:17023)}},
}

\bib{Brown}{article}{
   author={Brown, Edgar H., Jr.},
   title={Twisted tensor products. I},
   journal={Ann. of Math. (2)},
   volume={69},
   date={1959},
   pages={223--246},
%    issn={0003-486X},
   %review={\MR{0105687 (21 \#4423)}},
}

\bib{RBrown}{article}{
   author={Brown, R.},
   title={The twisted Eilenberg-Zilber theorem},
   conference={
      title={Simposio di Topologia},
      address={Messina},
      date={1964},
   },
   book={
      publisher={Edizioni Oderisi, Gubbio},
   },
   date={1965},
   pages={33--37},
  % review={\MR{0220273 (36 \#3339)}},
}

\bib{ChasSullivan}{article}{
   author={Chas, Moira},
   author={Sullivan, Dennis},
   title={String Topology},
   eprint={arXiv:math/9911159 [math.GT]},
   year={1999},
}

\bib{ChataurLeBorgne}{article}{
   author={Chataur, David},
   author={Le Borgne, Jean-Fran{\c{c}}ois},
   title={On the loop homology of complex projective spaces},
   language={English, with English and French summaries},
   journal={Bull. Soc. Math. France},
   volume={139},
   date={2011},
   number={4},
   pages={503--518},
%    issn={0037-9484},
%    review={\MR{2869302}},
}

\bib{CohenJones}{article}{
   author={Cohen, Ralph L.},
   author={Jones, John D. S.},
   title={A homotopy theoretic realization of string topology},
   journal={Math. Ann.},
   volume={324},
   date={2002},
   number={4},
   pages={773--798},
   issn={0025-5831},
   %review={\MR{1942249 (2004c:55019)}},
   %doi={10.1007/s00208-002-0362-0},
}

\bib{CohenJonesYan}{article}{
   author={Cohen, Ralph L.},
   author={Jones, John D. S.},
   author={Yan, Jun},
   title={The loop homology algebra of spheres and projective spaces},
   conference={
      title={Categorical decomposition techniques in algebraic topology},
      address={Isle of Skye},
      date={2001},
   },
   book={
      series={Progr. Math.},
      volume={215},
      publisher={Birkh\"auser, Basel},
   },
   date={2004},
   pages={77--92},
   %review={\MR{2039760 (2005c:55016)}},
}

\bib{DGMS}{article}{
   author={Deligne, Pierre},
   author={Griffiths, Phillip},
   author={Morgan, John},
   author={Sullivan, Dennis},
   title={Real homotopy theory of K\"ahler manifolds},
   journal={Invent. Math.},
   volume={29},
   date={1975},
   number={3},
   pages={245--274},
%    issn={0020-9910},
%    review={\MR{0382702 (52 \#3584)}},
}

\bib{FelixMenichiThomas}{article}{
   author={F{\'e}lix, Yves},
   author={Menichi, Luc},
   author={Thomas, Jean-Claude},
   title={Gerstenhaber duality in Hochschild cohomology},
   journal={J. Pure Appl. Algebra},
   volume={199},
   date={2005},
   number={1-3},
   pages={43--59},
%    issn={0022-4049},
  % review={\MR{2134291 (2006a:16017)}},
%    doi={10.1016/j.jpaa.2004.11.004},
}

\bib{FelixThomas}{article}{
   author={F{\'e}lix, Yves},
   author={Thomas, Jean-Claude},
   title={Rational BV-algebra in string topology},
%  language={English, with English and French summaries},
   journal={Bull. Soc. Math. France},
   volume={136},
   date={2008},
   number={2},
   pages={311--327},
%    issn={0037-9484},
 %  review={\MR{2415345 (2009c:55015)}},
}

\bib{FelixOpreaTanre}{book}{
   author={F{\'e}lix, Yves},
   author={Oprea, John},
   author={Tanr{\'e}, Daniel},
   title={Algebraic models in geometry},
   series={Oxford Graduate Texts in Mathematics},
   volume={17},
   publisher={Oxford University Press, Oxford},
   date={2008},
   pages={xxii+460},
%    isbn={978-0-19-920651-3},
   %review={\MR{2403898 (2009a:55006)}},
}

\bib{Gerstenhaber}{article}{
   author={Gerstenhaber, Murray},
   title={The cohomology structure of an associative ring},
   journal={Ann. of Math. (2)},
   volume={78},
   date={1963},
   pages={267--288},
%    issn={0003-486X},
   %review={\MR{0161898 (28 \#5102)}},
}

\bib{GoreskyHingston}{article}{
   author={Goresky, Mark},
   author={Hingston, Nancy},
   title={Loop products and closed geodesics},
   journal={Duke Math. J.},
   volume={150},
   date={2009},
   number={1},
   pages={117--209},
%    issn={0012-7094},
   %review={\MR{2560110 (2010k:58021)}},
%    doi={10.1215/00127094-2009-049},
}

\bib{Gugenheim}{article}{
   author={Gugenheim, V. K. A. M.},
   title={On the chain-complex of a fibration},
   journal={Illinois J. Math.},
   volume={16},
   date={1972},
   pages={398--414},
%    issn={0019-2082},
   %review={\MR{0301736 (46 \#891)}},
}

\bib{Hatcher}{book}{
   author={Hatcher, Allen},
   title={Algebraic topology},
   publisher={Cambridge University Press, Cambridge},
   date={2002},
   pages={xii+544},
%    isbn={0-521-79160-X},
%    isbn={0-521-79540-0},
   %review={\MR{1867354 (2002k:55001)}},
}

\bib{Hepworth}{article}{
   author={Hepworth, Richard A.},
   title={String topology for Lie groups},
   journal={J. Topol.},
   volume={3},
   date={2010},
   number={2},
   pages={424--442},
%    issn={1753-8416},
%    review={\MR{2651366 (2011m:55009)}},
%    doi={10.1112/jtopol/jtq012},
}

\bib{HusemollerMooreStasheff}{article}{
   author={Husemoller, Dale},
   author={Moore, John C.},
   author={Stasheff, James},
   title={Differential homological algebra and homogeneous spaces},
   journal={J. Pure Appl. Algebra},
   volume={5},
   date={1974},
   pages={113--185},
%    issn={0022-4049},
   %review={\MR{0365571 (51 \#1823)}},
}

\bib{Kadeishvili}{article}{
   author={Kadeishvili, T. V.},
   title={The algebraic structure in the homology of an $A(\infty
   )$-algebra},
   language={Russian, with English and Georgian summaries},
   journal={Soobshch. Akad. Nauk Gruzin. SSR},
   volume={108},
   date={1982},
   number={2},
   pages={249--252 (1983)},
   %issn={0132-1447},
   %review={\MR{720689 (84k:55009)}},
}

\bib{KallelSalvatore}{article}{
   author={Kallel, Sadok},
   author={Salvatore, Paolo},
   title={The free loop space, coHochschild models and Koszul spaces},
   note={To appear},
}

\bib{KontsevichSoibelman}{article}{
   author={Kontsevich, Maxim},
   author={Soibelman, Yan},
   title={Homological mirror symmetry and torus fibrations},
   conference={
      title={Symplectic geometry and mirror symmetry},
      address={Seoul},
      date={2000},
   },
   book={
      publisher={World Sci. Publ., River Edge, NJ},
   },
   date={2001},
   pages={203--263},
   %review={\MR{1882331 (2003c:32025)}},
   %doi={10.1142/9789812799821_0007},
}

\bib{LambeStasheff}{article}{
   author={Lambe, Larry},
   author={Stasheff, Jim},
   title={Applications of perturbation theory to iterated fibrations},
   journal={Manuscripta Math.},
   volume={58},
   date={1987},
   number={3},
   pages={363--376},
%    issn={0025-2611},
   %review={\MR{893160 (89d:55041)}},
  % doi={10.1007/BF01165893},
}

\bib{LambrechtsStanley}{article}{
   author={Lambrechts, Pascal},
   author={Stanley, Don},
   title={Poincar\'e duality and commutative differential graded algebras},
%    language={English, with English and French summaries},
   journal={Ann. Sci. \'Ec. Norm. Sup\'er. (4)},
   volume={41},
   date={2008},
   number={4},
   pages={495--509},
%    issn={0012-9593},
%    review={\MR{2489632 (2009k:55022)}},
}

\bib{Loday}{book}{
   author={Loday, Jean-Louis},
   title={Cyclic homology},
   series={Grundlehren der Mathematischen Wissenschaften [Fundamental
   Principles of Mathematical Sciences]},
   volume={301},
   edition={2},
   %note={Appendix E by Mar\'\i a O. Ronco; Chapter 13 by the author in collaboration with Teimuraz Pirashvili},
   publisher={Springer-Verlag},
   place={Berlin},
   date={1998},
   pages={xx+513},
%    isbn={3-540-63074-0},
   %review={\MR{1600246 (98h:16014)}},
}

\bib{LodayVallette}{book}{
   author={Loday, Jean-Louis},
   author={Vallette, Bruno},
   title={Algebraic operads},
   series={Grundlehren der Mathematischen Wissenschaften [Fundamental
   Principles of Mathematical Sciences]},
   volume={346},
   publisher={Springer},
   place={Heidelberg},
   date={2012},
   pages={xxiv+634},
   isbn={978-3-642-30361-6},
   %review={\MR{2954392}},
   %doi={10.1007/978-3-642-30362-3},
}

\bib{MacLane}{book}{
   author={Mac Lane, Saunders},
   title={Homology},
   series={Classics in Mathematics},
   note={Reprint of the 1975 edition},
   publisher={Springer-Verlag, Berlin},
   date={1995},
   pages={x+422},
%    isbn={3-540-58662-8},
%    review={\MR{1344215 (96d:18001)}},
}

\bib{Malm}{thesis}{
   author={Malm, Eric},
   title={String topology and the based loop space},
   date={2010},
   organization={Stanford University},
   type={PhD thesis},
}

\bib{Menichi}{article}{
   author={Menichi, Luc},
   title={String topology for spheres},
   note={With an appendix by Gerald Gaudens and Menichi},
   journal={Comment. Math. Helv.},
   volume={84},
   date={2009},
   number={1},
   pages={135--157},
   issn={0010-2571},
   %review={\MR{2466078 (2009k:55017)}},
   %doi={10.4171/CMH/155},
}

\bib{HusemollerMilnor}{book}{
   author={Milnor, John},
   author={Husemoller, Dale},
   title={Symmetric bilinear forms},
   note={Ergebnisse der Mathematik und ihrer Grenzgebiete, Band 73},
   publisher={Springer-Verlag, New York-Heidelberg},
   date={1973},
   pages={viii+147},
   %review={\MR{0506372 (58 \#22129)}},
}

\bib{Neisendorfer}{book}{
   author={Neisendorfer, Joseph},
   title={Algebraic methods in unstable homotopy theory},
   series={New Mathematical Monographs},
   volume={12},
   publisher={Cambridge University Press, Cambridge},
   date={2010},
   pages={xx+554},
%    isbn={978-0-521-76037-9},
%    review={\MR{2604913 (2011j:55026)}},
%    doi={10.1017/CBO9780511691638},
}

\bib{NeisendorferMiller}{article}{
   author={Neisendorfer, Joseph},
   author={Miller, Timothy},
   title={Formal and coformal spaces},
   journal={Illinois J. Math.},
   volume={22},
   date={1978},
   number={4},
   pages={565--580},
   issn={0019-2082},
   %review={\MR{0500938 (58 \#18429)}},
}

\bib{PolishchukPositselski}{book}{
   author={Polishchuk, Alexander},
   author={Positselski, Leonid},
   title={Quadratic algebras},
   series={University Lecture Series},
   volume={37},
   publisher={American Mathematical Society, Providence, RI},
   date={2005},
   pages={xii+159},
   isbn={0-8218-3834-2},
  % review={\MR{2177131 (2006f:16043)}},
}

\bib{Priddy}{article}{
   author={Priddy, Stewart B.},
   title={Koszul resolutions},
   journal={Trans. Amer. Math. Soc.},
   volume={152},
   date={1970},
   pages={39--60},
%    issn={0002-9947},
   %review={\MR{0265437 (42 \#346)}},
}

\bib{Reutenauer}{book}{
   author={Reutenauer, Christophe},
   title={Free Lie algebras},
   series={London Mathematical Society Monographs. New Series},
   volume={7},
   note={Oxford Science Publications},
   publisher={The Clarendon Press, Oxford University Press, New York},
   date={1993},
   pages={xviii+269},
%    isbn={0-19-853679-8},
   %review={\MR{1231799 (94j:17002)}},
}

\bib{Stasheff}{article}{
   author={Stasheff, Jim},
   title={The intrinsic bracket on the deformation complex of an associative
   algebra},
   journal={J. Pure Appl. Algebra},
   volume={89},
   date={1993},
   number={1-2},
   pages={231--235},
%    issn={0022-4049},
   %review={\MR{1239562 (94f:16052)}},
   %doi={10.1016/0022-4049(93)90096-C},
}

% \bib{Switzer}{book}{
%    author={Switzer, Robert M.},
%    title={Algebraic topology---homotopy and homology},
%    series={Classics in Mathematics},
%    note={Reprint of the 1975 original [Springer, New York;  MR0385836 (52
%    \#6695)]},
%    publisher={Springer-Verlag, Berlin},
%    date={2002},
%    pages={xiv+526},
%    isbn={3-540-42750-3},
%    review={\MR{1886843}},
% }

\bib{Szczarba}{article}{
   author={Szczarba, R. H.},
   title={The homology of twisted cartesian products},
   journal={Trans. Amer. Math. Soc.},
   volume={100},
   date={1961},
   pages={197--216},
%    issn={0002-9947},
   %review={\MR{0137111 (25 \#567)}},
}

\bib{Tamanoi}{article}{
   author={Tamanoi, Hirotaka},
   title={Batalin-Vilkovisky Lie algebra structure on the loop homology of
   complex Stiefel manifolds},
   journal={Int. Math. Res. Not.},
   date={2006},
   pages={Art. ID 97193, 23},
%    issn={1073-7928},
%    review={\MR{2211159 (2006m:55026)}},
%    doi={10.1155/IMRN/2006/97193},
}

\bib{Tradler}{article}{
   author={Tradler, Thomas},
   title={The Batalin-Vilkovisky algebra on Hochschild cohomology induced by
   infinity inner products},
   %language={English, with English and French summaries},
   journal={Ann. Inst. Fourier (Grenoble)},
   volume={58},
   date={2008},
   number={7},
   pages={2351--2379},
%    issn={0373-0956},
   %review={\MR{2498354 (2010a:16020)}},
}

\bib{Vaintrob}{article}{
   author={Vaintrob, Dmitry},
   title={The string topology BV algebra, Hochschild cohomology and the Goldman bracket on surfaces},
   eprint={arXiv:math/0702859 [math.AT]},
   year={2007},
}

\bib{Whitehead}{book}{
   author={Whitehead, George W.},
   title={Elements of homotopy theory},
   series={Graduate Texts in Mathematics},
   volume={61},
   publisher={Springer-Verlag, New York-Berlin},
   date={1978},
   pages={xxi+744},
%    isbn={0-387-90336-4},
%    review={\MR{516508 (80b:55001)}},
}

\end{biblist}
\end{bibdiv}

% ~\\

% \noindent\textsc{Department of Mathematics \\ Stockholm University \\ 106 91 Stockholm, Sweden \\
% \emph{E-mail address:} {\bf kaj@math.su.se}}

\end{document}